\documentclass[12pt,oneside,english]{amsart}
\usepackage[T1]{fontenc}
\usepackage[latin9]{inputenc}
\usepackage[a4paper]{geometry}
\geometry{verbose,tmargin=3cm,bmargin=3cm,lmargin=2cm,rmargin=2cm}
\usepackage{babel}
\usepackage{prettyref}
\usepackage{mathrsfs}
\usepackage{mathtools}
\usepackage{amstext}
\usepackage{amsthm}
\usepackage{amssymb}
\usepackage[unicode=true,
 bookmarks=true,bookmarksnumbered=false,bookmarksopen=false,
 breaklinks=false,pdfborder={0 0 1},backref=false,colorlinks=false]
 {hyperref}
\usepackage{breakurl}

\makeatletter
\numberwithin{equation}{section}
\numberwithin{figure}{section}
\theoremstyle{plain}
\newtheorem{thm}{\protect\theoremname}
  \theoremstyle{plain}
  \newtheorem{lem}[thm]{\protect\lemmaname}
  \theoremstyle{definition}
  \newtheorem{defn}[thm]{\protect\definitionname}
  \theoremstyle{plain}
  \newtheorem{prop}[thm]{\protect\propositionname}
  \theoremstyle{remark}
  \newtheorem{rem}[thm]{\protect\remarkname}

\makeatother

  \providecommand{\definitionname}{Definition}
  \providecommand{\lemmaname}{Lemma}
  \providecommand{\propositionname}{Proposition}
  \providecommand{\remarkname}{Remark}
\providecommand{\theoremname}{Theorem}

\begin{document}

\title[Szeg\H{o} kernel]{Bergman-Szeg\H{o} kernel asymptotics in weakly pseudoconvex finite
type cases }

\author{Chin-Yu Hsiao and Nikhil Savale}

\thanks{C.-Y. H. is partially supported by Taiwan Ministry of Science and
Technology projects 108-2115-M-001-012-MY5 and 109-2923-M-001-010-MY4 }

\thanks{N.S. is partially supported by the DFG funded project CRC/TRR 191.}

\address{Institute of Mathematics, Academia Sinica, 6F, Astronomy-Mathematics
Building, No. 1, Sec. 4, Roosevelt Road, Taipei 10617, TAIWAN}

\email{chsiao@math.sinica.edu.tw}

\address{Universität zu Köln, Mathematisches Institut, Weyertal 86-90, 50931
Köln, Germany}

\email{nsavale@math.uni-koeln.de}
\begin{abstract}
We construct a pointwise Boutet de Monvel-Sjöstrand parametrix for
the Szeg\H{o} kernel of a weakly pseudoconvex three dimensional CR
manifold of finite type assuming the range of its tangential CR operator
to be closed; thereby extending the earlier analysis of Christ \cite{Christ88,Christ89-embedding}.
This particularly extends Fefferman's boundary asymptotics of the
Bergman kernel \cite{Fefferman74} to weakly pseudoconvex domains
in $\mathbb{C}^{2}$, in agreement with D'Angelo's example \cite{Dangelo78}.
Finally our results generalize a three dimensional CR embedding theorem
of Lempert \cite{Lempert92}.
\end{abstract}

\maketitle

\section{Introduction}

Cauchy Riemann (CR) manifolds are natural analogues of complex manifolds
in odd dimensions. Their structure being modeled on that of a real-hypersurface
inside a complex manifold, the natural question of when an abstract
CR manifold can be embedded as such into complex space $\mathbb{C}^{N}$
has been long studied. In dimensions at least five a classical embedding
theorem for strongly pseudo-convex CR manifolds was proved by Boutet
de Monvel \cite{Boutet75}; thereby leaving unresolved the cases of
three dimensional manifolds and weakly pseudoconvex manifolds. In
dimension three the problem is well known to be more subtle as there
are examples of non-embeddable strongly pseudo-convex manifolds \cite{Rossi65,Andreotti-Siu70}.
However stronger conditions implying three dimensional embeddability
are known; Kohn \cite{Kohn85,Kohn86} showed that embeddability of
a strongly pseudoconvex CR manifold is equivalent its tangential Cauchy-Riemann
operator $\bar{\partial}_{b}$ having closed range. Thereafter Lempert
\cite{Lempert92} (see also Epstein \cite{Epstein1992}) showed embeddability
of a strongly pseudoconvex CR manifold assuming the existence of transversal
CR circle action. In the weakly pseudoconvex case fewer results are
known; Christ \cite{Christ88,Christ89-embedding} (see also Kohn \cite{Kohn85})
showed embeddability of a weakly pseudoconvex CR three manifold of
finite type assuming the range of its tangential Cauchy-Riemann operator
$\bar{\partial}_{b}$ to be closed. 

A closely related problem is to study the behavior of the Szeg\H{o}
kernel, the Schwartz kernel of the projector from smooth functions
onto CR functions. When the manifold is the boundary of a strictly
pseudoconvex domain the singularity of the Szeg\H{o} kernel was described
by Boutet de Monvel and Sjöstrand in \cite{Boutet-Sjostrand76}, the
Szeg\H{o} projector is in this case is a Fourier integral operator
with complex phase. Combined with the results of \cite{Boutet75,Harvey-Lawson1975,Kohn85,Kohn86}
this description extends to strongly pseudo-convex manifolds whose
tangential CR operator $\bar{\partial}_{b}$ has closed range; and
in particular those of dimension at least five. In particular this
description can be used to derive Fefferman's boundary asymptotics
of the Bergman kernel \cite{Fefferman74} of a strongly pseudoconvex
domain. The weakly pseudoconvex case analog of the problem has been
studied by several authors before, with prior results including pointwise
bounds on the kernels \cite{Christ88,Machedon88,McNeal89,Nagel-Rosay-Stein-Wainger-89}
besides special cases of the asymptotics \cite{Dangelo78,BoasStraubeYu95}.

In the present article we obtain a similar description for the pointwise
Szeg\H{o} kernel of weakly pseudoconvex CR three manifolds of finite
type whose tangential CR operator $\bar{\partial}_{b}$ has closed
range. Further we prove the boundary asymptotic expansion for the
Bergman kernel for weakly pseudoconvex finite type domains in $\mathbb{C}^{2}$.
Our results thereby extend the aforementioned analysis of Christ,
the boundary Bergman kernel asymptotics of Fefferman and the embedding
result of Lempert.

Let us now state our results more precisely. Let $\left(X,T^{1,0}X\right)$
be a compact CR manifold of dimension three. Thus $T^{1,0}X\subset T_{\mathbb{C}}X$
is a complex subbundle of dimension one satisfying $T^{1,0}X\cap T^{0,1}X=\emptyset$,
$T^{0,1}X\coloneqq\overline{T^{1,0}X}$. Denote by $HX\coloneqq\textrm{Re}\left(T^{1,0}X\oplus T^{0,1}X\right)$
the Levi-distribution and $J$ its induced integrable almost complex
structure. The Levi form is defined as
\begin{align}
\mathscr{L} & \in\left(HX^{*}\right)^{\otimes2}\otimes\left(T_{x}X/H_{x}X\right)\nonumber \\
\mathscr{L}\left(u,v\right) & \coloneqq\left[\left[u,v\right]\right]\in T_{x}X/H_{x}X\label{eq:Levi form}
\end{align}
for $u,v\in C^{\infty}\left(HX\right)$. Given a locally defined vector
field $T\in C^{\infty}\left(TX\right)$ transversal to $HX$ the Levi
form can be thought of as a skew-symmetric bi-linear form on $HX$.
We say that the point $x$ is weakly/strongly pseudoconvex iff the
corresponding bi-linear form $\mathscr{L}\left(.,J.\right)$ is positive
semi-definite/definite for some choice of orientation for $T$. The
manifold is weakly/strongly pseudoconvex if each point $x\in X$ is
weakly/strongly pseudoconvex. The CR manifold is said to be of finite
type if the Levi-distribution $HX$ is bracket generating: $C^{\infty}\left(HX\right)$
generates $C^{\infty}\left(TX\right)$ under the Lie bracket. More
precisely, the type of a point $x\in X$ is the smallest integer $r\left(x\right)$
such that $HX_{r\left(x\right)}=TX$, where $HX_{j}$, $j=1,\ldots$
are inductively defined by $HX_{1}\coloneqq HX$ and $HX_{j+1}\coloneqq HX+\left[HX_{j},HX\right],\,\forall j\geq1.$
The function $x\mapsto r\left(x\right)$ is in general only an upper
semi-continuous function. The finite type hypothesis is then equivalent
to $r\coloneqq\max_{x\in X}r\left(x\right)<\infty.$ Note that the
type of a strongly pseudoconvex point $x$ is $r\left(x\right)=2$.
For points of higher type it shall be useful to analogously define
the $r\left(x\right)-2$ jet of the Levi-form at $x$ 
\begin{align}
j^{r_{x}-2}\mathscr{L} & \in\left(HX^{*}\right)^{\otimes r_{x}}\otimes\left(T_{x}X/H_{x}X\right)\quad\textrm{ by}\nonumber \\
\left(j^{r_{x}-2}\mathscr{L}\right)\left(u_{1},\ldots,u_{r_{x}}\right) & \coloneqq\left[\textrm{ad}_{u_{1}}\textrm{ad}_{u_{2}}\ldots\textrm{ad}_{u_{r-1}}u_{r}\right]\in T_{x}X/H_{x}X\label{eq:first jet Levi form}
\end{align}
for $u_{j}\in C^{\infty}\left(HX\right)$, $j=1,\ldots,r$.

Next let $\bar{\partial}_{b}:\Omega^{0,*}\left(X\right)\rightarrow\Omega^{0,*+1}\left(X\right)$
denote the tangential CR operator and choose a smooth volume form
$\mu$ on $X$. The Szeg\H{o} kernel $\Pi\left(x,x'\right)$ is by
definition the Schwartz kernel of the $L^{2}$ projection $\Pi:L^{2}\left(X\right)\rightarrow\textrm{ker }\left(\bar{\partial}_{b}\right)$.
To describe our parametrix for $\Pi$, first recall the well known
symbol class $S_{\rho,\delta}^{m}\left(U\times\mathbb{R}_{t}\right)$,
$m\in\mathbb{R}$, $\rho,\delta\in\left(0,1\right]$, $U\subset\mathbb{R}^{2}$,
of Hörmander \cite{Hormander67-proc}: these are smooth functions
$a\left(x,t\right)$ satisfying the estimates $\partial_{t}^{k}\partial_{x}^{\alpha}a=O\left(t^{m-\rho k+\delta\left|\alpha\right|}\right)$,
$\forall\left(k,\alpha\right)\in\mathbb{N}_{0}\times\mathbb{N}_{0}^{3}$,
as $t\rightarrow\infty$, uniformly on compact subsets of $U$. Further
denote by the notation $S_{\delta}^{m}\left(U\times\mathbb{R}_{t}\right)$
the special case when $\rho=1$. We now introduce the subspace of
classical symbols $S_{\delta,\textrm{cl}}^{m}\left(U\times\mathbb{R}_{t}\right)\subset S_{\delta}^{m}\left(U\times\mathbb{R}_{t}\right)$
as those $a\left(x,t\right)$ for which there exist functions $a_{j}\in\mathcal{S}\left(\mathbb{R}^{2}\right)$,
$j=0,1,2,\ldots$, satisfying 
\begin{equation}
a\left(x,t\right)-t^{m}\left[\sum_{j=0}^{N}t^{-\delta j}a_{j}\left(t^{\delta}x\right)\right]\in S_{\delta}^{m-\delta N}\left(U\times\mathbb{R}_{t}\right),\quad\forall N\in\mathbb{N}_{0}.\label{eq:classical symbols}
\end{equation}
Our first theorem is now the following.
\begin{thm}
\label{thm:main thm parametrix} Let $X$ be a compact weakly pseudoconvex
three dimensional CR manifold of finite type for which the range of
the tangential CR operator $\bar{\partial}_{b}$ is closed. At any
point $x'\in X$ of type $r=r\left(x'\right)$, there exists a set
of coordinates $\left(x_{1},x_{2},x_{3}\right)$ centered at $x'$
and a classical symbol $a\in S_{\frac{1}{r},\textrm{cl}}^{\frac{2}{r}}\left(\mathbb{R}_{x_{1},x_{2}}^{2}\times\mathbb{R}_{t}\right)$,
with $a_{0}>0$, such that the pointwise Szeg\H{o} kernel at $x'$
satisfies
\begin{equation}
\Pi\left(x,x'\right)=\int_{0}^{\infty}dt\,e^{itx_{3}}a\left(x;t\right)+C^{\infty}\left(X\right).\label{eq:Szego parametrix}
\end{equation}
\end{thm}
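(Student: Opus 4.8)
The plan is to build the parametrix microlocally near the positive characteristic ray of $\bar\partial_{b}$ and then use the closed-range hypothesis to pin it down to the true Szeg\H{o} projector modulo smoothing. First I would exploit the hypotheses to reduce to a purely local, microlocal statement near $x'$. Since $\bar\partial_{b}$ has closed range, Kohn's Hodge theory gives a bounded partial inverse $N^{(1)}$ of $\Box_{b}^{(1)}=\bar\partial_{b}\bar\partial_{b}^{*}$ and the identity $\Pi=\mathrm{Id}-\bar\partial_{b}^{*}N^{(1)}\bar\partial_{b}$; in particular $\Pi$ is a bona fide self-adjoint idempotent onto $\ker\bar\partial_{b}\subset L^{2}(X)$. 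On functions $\bar\partial_{b}$ is overdetermined-elliptic off the characteristic variety $\Sigma=\{(x,\xi):\xi|_{H_{x}X}=0\}=\Sigma^{+}\sqcup\Sigma^{-}$, and weak pseudoconvexity together with finite type (via Christ's hypoellipticity estimates) makes $\Box_{b}^{(0)}$ hypoelliptic with loss along $\Sigma^{-}$; hence $\Pi(\,\cdot\,,x')\in C^{\infty}$ away from the fibre of $\Sigma^{+}$ over $x'$, and it suffices to produce an approximate projector whose kernel has the claimed form near $\Sigma^{+}$.

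Next I would choose CR coordinates $(x_{1},x_{2},x_{3})$ centered at $x'$, writing $z=x_{1}+ix_{2}$ and taking $x_{3}$ transversal to $HX$, in which $T^{0,1}X$ is locally framed (modulo lower-order terms) by $\bar Z=\partial_{\bar z}-i(\partial_{\bar z}\varphi)\,\partial_{x_{3}}$ for a real-valued $\varphi$ with $\varphi(0)=0$, $d\varphi(0)=0$, $\Delta\varphi\ge 0$ (pseudoconvexity), and $\Delta\varphi$ vanishing to order exactly $r-2$ at $0$ (type $r=r(x')$); write $\varphi=\varphi_{0}+O(|z|^{r+1})$ with $\varphi_{0}$ the homogeneous degree-$r$ Taylor part, whose Laplacian is the jet $j^{r-2}\mathscr{L}$ of \eqref{eq:first jet Levi form}. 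Taking the Fourier transform in $x_{3}$, with dual variable $t$, conjugates $\bar\partial_{b}$ on functions into the $t$-family $\bar\partial_{b,t}=e^{-t\varphi}\,\partial_{\bar z}\,e^{t\varphi}$ on $L^{2}(\mathbb{R}^{2}_{z})$, and $\Pi$ into the family $\Pi_{t}$ of orthogonal projections onto $\ker\bar\partial_{b,t}$. The relevant half-line is $t>0$: on $t\le 0$ the family contributes only a smooth remainder, again by the hypoellipticity on $\Sigma^{-}$. Thus, modulo $C^{\infty}(X)$, $\Pi(x,x')=\int_{0}^{\infty}e^{itx_{3}}\,\Pi_{t}\big((x_{1},x_{2}),0\big)\,dt$, and the theorem becomes an asymptotic analysis of $\Pi_{t}((x_{1},x_{2}),0)$ as $t\to\infty$.

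The heart of the argument is an anisotropic rescaling adapted to the type. Setting $(x_{1},x_{2})=t^{-1/r}(y_{1},y_{2})$ and using the homogeneity of $\varphi_{0}$, one has $t\,\varphi(t^{-1/r}y)=t^{2/r}\varphi_{0}(y)+O\big(t^{2/r}\cdot t^{-1/r}|y|^{r+1}\big)$, so after this dilation and the conjugation $t^{-1/r}\bar\partial_{b,t}$ becomes $\bar\partial_{b,0}+t^{-1/r}R_{1}+t^{-2/r}R_{2}+\cdots$, an asymptotic series in powers of $t^{-1/r}$ whose leading term is the model operator $\bar\partial_{b,0}=e^{-\varphi_{0}}\partial_{\bar y}e^{\varphi_{0}}$ on $L^{2}(\mathbb{R}^{2}_{y})$ and whose $R_{j}$ are polynomial-coefficient differential operators. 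Finite type yields the subelliptic estimate $\|u\|_{1/r}\lesssim\|\bar\partial_{b,0}u\|+\|u\|$, stable under the rescaling; hence $\Box_{b,0}^{(0)}=\bar\partial_{b,0}^{*}\bar\partial_{b,0}$ has $0$ as an isolated spectral point, the model Szeg\H{o} projector $\Pi_{0}$ onto $\ker\bar\partial_{b,0}=e^{-\varphi_{0}}A^{2}(e^{-2\varphi_{0}})$ is well defined, and $\mathrm{Id}-\Pi_{0}-\Box_{b,0}^{(0)}$ is invertible on $(\ker\bar\partial_{b,0})^{\perp}$. Transplanting $\Pi_{0}$ through the rescaling gives the leading amplitude $a_{0}$ with $t^{2/r}a_{0}(t^{1/r}x)$ the first term, which displays precisely the class $S^{2/r}_{1/r,\mathrm{cl}}$. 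One then solves iteratively for the full symbol $a\sim t^{2/r}\sum_{j\ge 0}t^{-j/r}a_{j}(t^{1/r}x)$, $a_{j}\in\mathcal{S}(\mathbb{R}^{2})$, so that the operator $\Pi^{(0)}$ with kernel $\int_{0}^{\infty}e^{itx_{3}}a(x;t)\,dt$ satisfies $\bar\partial_{b}\Pi^{(0)}\equiv 0$, $(\Pi^{(0)})^{*}\equiv\Pi^{(0)}$, $(\Pi^{(0)})^{2}\equiv\Pi^{(0)}$ modulo $C^{\infty}$; each $a_{j}$ comes from inverting $\Box_{b,0}^{(0)}$ on $(\ker\bar\partial_{b,0})^{\perp}$ against an expression in $R_{1},\dots,R_{j}$ and $a_{0},\dots,a_{j-1}$, with errors landing in the residual class of the $t^{1/r}$-calculus by the uniform estimates. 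Positivity $a_{0}>0$ follows since $a_{0}(0)=\Pi_{0}(0,0)=B_{\varphi_{0}}(0,0)$ is the on-diagonal value at the origin of the Bergman kernel of $A^{2}(\mathbb{C},e^{-2\varphi_{0}})$, a space which (because $\varphi_{0}$ is subharmonic with $\Delta\varphi_{0}\not\equiv 0$) contains a holomorphic function nonvanishing at $0$, e.g.\ $e^{P(z)}$ for a suitable polynomial $P$; a reproducing-kernel estimate then gives $B_{\varphi_{0}}(0,0)>0$, and continuity gives $a_{0}>0$ near $0$.

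Finally, since $\bar\partial_{b}$ has closed range and both $\Pi$ and $\Pi^{(0)}$ are, microlocally near $\Sigma^{+}$, self-adjoint idempotents with range $\ker\bar\partial_{b}$ modulo smoothing, writing $\Pi-\Pi^{(0)}=\Pi(\mathrm{Id}-\Pi^{(0)})-(\mathrm{Id}-\Pi)\Pi^{(0)}$ and using that $\mathrm{Id}-\Pi^{(0)}$ maps into $C^{\infty}$ plus $\overline{\mathrm{Im}\,\bar\partial_{b}^{*}}$ on which $N^{(1)}$ is controlled, one gets $\Pi\equiv\Pi^{(0)}$ modulo $C^{\infty}(X)$, which is \eqref{eq:Szego parametrix}. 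The main obstacle is the third step: setting up the anisotropic, $t^{1/r}$-adapted symbol calculus in which the rescaled $\bar\partial_{b}$-family is an elliptic-in-the-calculus perturbation of $\bar\partial_{b,0}$, establishing the uniform maximal-hypoellipticity estimates that make the model inverse bounded with the correct $t$-decay, and verifying that the iterative errors are genuinely residual. This is exactly where the finite-type hypothesis enters quantitatively, replacing the classical elliptic-type symbolic calculus available in the strongly pseudoconvex case of Boutet de Monvel--Sj\"ostrand; the closed-range hypothesis supplies both the global existence of $\Pi$ and the global estimates needed to run the argument without appealing to an embedding.
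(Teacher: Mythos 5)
Your overall architecture matches the paper's: anisotropic $t^{1/r}$ rescaling, a model projector $\Pi_{0}=B_{\varphi_{0}}$ for the homogeneous weight, a symbol calculus graded in powers of $t^{-1/r}$, and the closed-range hypothesis to identify $\Pi$ with the parametrix modulo $C^{\infty}$. But two steps you pass over lightly are where the real difficulty lies, and as stated they contain genuine gaps.

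First, the reduction ``$\Pi(x,x')=\int_{0}^{\infty}e^{itx_{3}}\Pi_{t}((x_{1},x_{2}),0)\,dt$ mod $C^{\infty}$, with $\Pi_{t}$ the orthogonal projector onto $\ker\bar{\partial}_{b,t}$'' is not a step you can simply take. The coordinate $x_{3}$ is local and carries no global action; the $x_{3}$-Fourier modes of the global $\Pi$ on the compact manifold $X$ have no a priori relation to the Bergman projectors of the $t$-conjugated family on $\mathbb{R}^{2}$. (This identification is valid in the circle-invariant setting, which the paper treats separately in Section 5, but not here.) The paper instead builds a local operator $B$ from the local Bergman kernels $B_{t}$, and then compares it to the actual $\Pi$ via the closed-range identity $\Pi=I-G\bar{\partial}_{b}$, a Neumann series in the remainder $R=B_{\infty}-B^{*}$, and --- decisively --- the a priori pointwise kernel bounds $\Pi(\cdot,0)\in S_{H}^{2/r}(U)$ from Christ and Nagel--Rosay--Stein--Wainger, without which one cannot show that $R^{N}\Pi$ becomes arbitrarily smooth. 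Your closing paragraph gestures at this via ``$N^{(1)}$ is controlled'' but never invokes the pointwise bounds, and without them the difference $\Pi-\Pi^{(0)}$ cannot be shown to lie in $C^{\infty}$.

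Second, the normal form $\bar{Z}=\partial_{\bar{z}}-i(\partial_{\bar{z}}\varphi)\partial_{x_{3}}$ is not achievable in $C^{\infty}$ coordinates: in Christ's coordinates \eqref{eq:Christ normal form} there is an unavoidable remainder vector field $R$ of nonnegative weight. Your ``modulo lower-order terms'' is doing considerable work here; since your calculus is graded in $t^{-1/r}$ but the remainder is merely weight $\geq 0$, it is not automatically perturbative. The paper resolves this by constructing almost analytic coordinates that kill the remainder exactly on the real slice, which requires a complexification, a Borel construction, Nelson's scattering trick, and a careful almost analytic continuation of functions in the class $S_{H}^{m}$ --- an entire subsection of work. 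Your parametrix construction would need some analogue of this coordinate normalization, or a separate argument that the remainder is perturbative in your calculus, to close. The remaining ingredients (spectral gap for the model, positivity of $a_{0}$ via a reproducing-kernel argument, the iterative construction of the $a_{j}$) are correct and parallel the paper, though the paper obtains the symbolic expansion of $B_{t}$ by a conjugation trick $\tilde{B}_{t}=e^{-t\varphi_{1}}B_{t}^{0}e^{t\varphi_{1}}$ plus a Neumann series in $R_{t}=\tilde{B}_{t}-\tilde{B}_{t}^{*}$ rather than by solving a transport hierarchy directly.
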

We note again that the point $x'\in X$ above is fixed and thus our
'pointwise parametrix' is a distribution on the manifold $X$ rather
than the product. The direction $\partial_{x_{3}}$ is locally transverse
to the Levi distribution $HX$. More can be said about the amplitude
in \prettyref{eq:Szego parametrix}: each coefficient $a_{j}\in\mathcal{S}\left(\mathbb{R}^{2}\right)$
in its symbolic expansion \prettyref{eq:classical symbols} is a linear
combination of functions of the form $x_{1}^{\alpha_{1}}x_{2}^{\alpha_{2}}a_{j,\alpha}$,
$\alpha_{1}+\alpha_{2}\leq2jr$, with the functions $a_{j,\alpha}\in\mathcal{S}\left(\mathbb{R}^{2}\right)$
further depending only on the first jet of the Levi-form $j^{r_{x'}-2}\mathscr{L}$
at $x'$ and the indices $j,\alpha$. Furthermore at a strongly pseudoconvex
point $x'$ we may take each $a_{j,\alpha}=e^{-\left(x_{1}^{2}+x_{2}^{2}\right)}$
to be a Gaussian. Following this \prettyref{thm:main thm parametrix}
is seen to recover the pointwise version of the Boutet de Monvel-Sjöstrand
parametrix at strongly pseudoconvex points (see Remark \prettyref{rem:-(Strongly-pseudoconvex}
below). At points of higher type however the functions $a_{j,\alpha}$
are no longer Gaussians.

An important classical case arises when the CR manifold $X=\partial D$
is the boundary of a domain, i.e. a relatively compact open subset
$D\subset\mathbb{C}^{2}$. The analogous Bergman kernel $\Pi_{D}\left(z,z'\right)$
is the Schwartz kernel of the projector $\Pi_{D}:L^{2}\left(D\right)\rightarrow\textrm{ker }\left(\bar{\partial}\right)$
onto the $L^{2}$-holomorphic functions in the interior. One is then
interested in the on-diagonal behavior of the Bergman kernel as one
approaches the boundary in terms of a boundary defining function $\rho\in C^{\infty}\left(\mathbb{C}^{2}\right)$,
satisfying $D=\left\{ \rho<0\right\} $, $\left.d\rho\right|_{\partial D}\neq0$.
This is given as below.
\begin{thm}
\label{thm:Fefferman thm.} Let $D\subset\mathbb{C}^{2}$ be a domain
with boundary $X=\partial D$ being smooth, weakly pseudoconvex of
finite type. For any point $x'\in X=\partial D$ on the boundary,
of type $r=r\left(x'\right)$, the Bergman kernel satisfies the asymptotics
\[
\Pi_{D}\left(z,z\right)=\sum_{j=0}^{N}\frac{1}{\left(-\rho\right){}^{2+\frac{2}{r}-\frac{1}{r}j}}a_{j}+\sum_{j=0}^{N}b_{j}\left(-\rho\right){}^{j}\log\left(-\rho\right)+O\left(\left(-\rho\right)^{\frac{N-2-2r}{r}}\right),\quad\forall N\in\mathbb{N},
\]
as $z\rightarrow x'$ for some set of reals $a_{j},b_{j}$ with $a_{0}>0$. 
\end{thm}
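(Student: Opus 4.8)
### Proof strategy for Theorem~\ref{thm:Fefferman thm.}

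The plan is to deduce the boundary asymptotics of the interior Bergman kernel $\Pi_D$ from the pointwise Szeg\H{o} parametrix of Theorem~\ref{thm:main thm parametrix} by means of the classical Kohn--Rossi / Ligocka relation between the Bergman projector on $D$ and the Szeg\H{o} projector on $X = \partial D$. Concretely, I would first choose the defining function $\rho$ together with a collar neighborhood $X \times [0,\varepsilon)_\tau$ of the boundary, writing $z = (x, \tau)$ with $\tau \simeq -\rho$ the normal variable. After verifying that the hypotheses of Theorem~\ref{thm:main thm parametrix} hold --- weak pseudoconvexity and finite type of $\partial D$ are given, and closed range of $\bar\partial_b$ on a compact weakly pseudoconvex finite type CR three-manifold follows from Kohn's subelliptic estimates as used by Christ \cite{Christ88,Christ89-embedding} --- I obtain, in the adapted coordinates $(x_1, x_2, x_3)$ centered at $x'$, the representation $\Pi(x,x') = \int_0^\infty e^{itx_3} a(x;t)\, dt$ modulo smooth, with $a \in S^{2/r}_{1/r,\mathrm{cl}}$ and $a_0 > 0$.

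Next I would set up the transfer to the interior. One route is via the Poisson-type extension: using that holomorphic $L^2$ functions on $D$ are boundary values of elements in a Hardy-type space, there is an operator $E$ mapping the Hardy space on $X$ to $L^2(D)$ with $\Pi_D = E\,\Pi\, E^*$ up to a smoothing and lower-order error, where $E$ has an explicit oscillatory-integral form whose phase is, to leading order, $i\tau(|\xi|+\cdots)$ in the fiber variable dual to $x_3$. Substituting the Szeg\H{o} parametrix and evaluating on the diagonal $z = z' = (0,\tau)$, the $x$-integrations localize near $x=0$ and the $t$-integral becomes, schematically,
\[
\Pi_D\bigl((0,\tau),(0,\tau)\bigr) \;=\; \int_0^\infty e^{-2\tau t}\, \tilde a(\tau; t)\, dt \;+\; O(\tau^\infty),
\]
where $\tilde a(\tau;t)$ inherits a classical symbol expansion of order $\tfrac{2}{r}$ in $t$ with the additional $\tau$-dependence coming from the stationary-phase evaluation of the transverse and tangential $x$-integrals against the scaled amplitude $a(x;t) = t^{2/r}\sum_j t^{-j/r} a_j(t^{1/r}x)$. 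The scaling $x \mapsto t^{1/r}x$ built into the classical symbol class is precisely what produces the non-integer powers of $\tau$: changing variables $s = \tau t$ turns $\int_0^\infty e^{-2\tau t} t^{2/r - j/r}\,(\text{smooth in }t^{1/r}x\text{-integral})\, dt$ into $\tau^{-(2 + 2/r - j/r)}$ times a convergent integral, which gives the first sum $\sum_j a_j (-\rho)^{-(2 + 2/r - j/r)}$; the remainder terms in the symbol class that are $O(t^{2/r - N/r})$ with exponent crossing the integers contribute, upon the same Laplace integral, the integer-power logarithmic terms $\sum_j b_j (-\rho)^j \log(-\rho)$ (the $\log$ arising exactly when the exponent of $t$ is a nonnegative integer, by the standard $\int_0^\infty e^{-2\tau t} t^k\, dt = k!/(2\tau)^{k+1}$ versus the $\Gamma$-function values at non-integers). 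Positivity $a_0 > 0$ is inherited from $a_0 > 0$ in the Szeg\H{o} parametrix, since $a_0$ is a positive Schwartz function whose relevant integral is strictly positive.

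The main obstacle I anticipate is making the transfer operator $E$ --- and hence the identity $\Pi_D = E\,\Pi\,E^* + (\text{smoothing})$ --- rigorous in the \emph{weakly} pseudoconvex, finite-type setting, where the Hardy/Bergman comparison is no longer a clean statement about Fourier integral operators with a non-degenerate complex phase. In the strictly pseudoconvex case this is Boutet de Monvel--Sj\"ostrand plus the Kohn--Ligocka argument; here the phase degenerates along the directions where the Levi form vanishes to higher order, so the composition $E\,\Pi\,E^*$ must be controlled within the degenerate symbol calculus $S^m_{1/r,\delta}$ rather than the classical one, and one must check that all the error terms are genuinely smooth up to the boundary (or at least of the claimed remainder order $O((-\rho)^{(N-2-2r)/r})$). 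Handling the transverse derivative of $\rho$ versus the intrinsic $x_3$ direction --- i.e.\ relating the abstract CR normal coordinate to the Euclidean defining function --- and tracking how the normal-variable power shifts from $\tfrac{2}{r}$ at the level of $t$ to $2 + \tfrac{2}{r}$ at the level of $\rho$ (the extra $2$ coming from the two-dimensional interior) will require care but is essentially bookkeeping once the operator identity is in place. A secondary technical point is uniformity: the coordinates in Theorem~\ref{thm:main thm parametrix} depend on $x'$, so to get the stated pointwise-in-$x'$ asymptotics one simply applies the result at each fixed boundary point, with no uniformity over $x'$ claimed, which keeps this step routine.
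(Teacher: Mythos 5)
Your overall strategy --- transfer the Szeg\H{o} parametrix of Theorem~\ref{thm:main thm parametrix} to the interior via the Poisson operator and then read off the asymptotics from a Laplace transform in the frequency variable $t$ --- is exactly the route the paper takes. However, the precise operator identity you posit, $\Pi_D = E\,\Pi\,E^*$ modulo smoothing, is not what gets proved, and the discrepancy matters for the power counting. The paper does not compose the interior projection directly with the boundary Szeg\H{o} projector; instead it constructs a properly supported approximate reproducing operator $A = \hat P B L$, where $B$ is the local Bergman-type operator from the Szeg\H{o} parametrix construction, $\hat P$ is a proper version of the Poisson operator, and $L$ is a proper version of $(P^*P)^{-1}P^*$. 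It shows $\bar\partial A \equiv 0$, hence $\Pi_D A \equiv A$ mod smooth via Kohn's $\Pi_D = I - \bar\partial^* N \bar\partial$, and then bootstraps by taking adjoints and iterating (Lemma~\ref{Bergman kernel in terms of Poisson}) to obtain $\Pi_D = PQP^* + C^\infty$ where $Q \in \hat L^{1 + 2/r}_{1/r,\mathrm{cl}}$ is constructed by asymptotic summation from $(P^*P)^{-1}(I+E+E^2+\cdots)B^*$. The factor $(P^*P)^{-1}$, a first-order elliptic pseudodifferential operator with principal symbol $2\sqrt{-\sigma_{\triangle_X}}$, raises the symbolic order from $2/r$ to $1 + 2/r$. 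This is the step that is missing in your account.

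Your power counting is consequently off. You attribute the shift from $t$-order $2/r$ to $\rho$-power $-(2 + 2/r)$ to ``the two-dimensional interior,'' which is not the mechanism; the correct ledger is $+1$ from $(P^*P)^{-1}$ (giving $Q$ order $1+2/r$) and $+1$ from the Laplace integral $\int_0^\infty e^{-c\tau t} t^{k}\,dt = \Gamma(k+1)(c\tau)^{-(k+1)}$, so the term $t^{1+2/r - j/r}$ produces $\tau^{-(2 + 2/r - j/r)}$, not, as your displayed computation would give, $\tau^{-(1 + 2/r - j/r)}$. The logarithmic terms do arise as you say when $k$ crosses nonnegative integers. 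Positivity of $a_0$ does come through as you assert.

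The obstacle you flag --- controlling $E\Pi E^*$ in the degenerate symbol calculus --- is real but the needed resolution is more specific than you suggest: one must construct a phase function $\phi(z,y)$ with $\phi|_{\rho=0} = x_3 - y_3$ and $\phi = x_3 - y_3 - i\rho\sqrt{-\sigma_{\triangle_X}} + O(\rho^2)$ such that $q_0(z,d_z\phi)$ vanishes to infinite order at $\rho=0$ (Lemma~\ref{l-gue200522ycdh}), then solve transport equations for the amplitude $\alpha(z,y,t)$ inside an extension of the class $\hat S^{m,k}_{1/r,\mathrm{cl}}$ that tracks the $\rho$-dependence with the weight $(t\rho)^q$ (Lemmas~\ref{l-gue200523yyd} and \ref{l-gue200523yydI}), using the Green operator estimates~\prettyref{eq:Green function Schwartz estimates} to control the remainder. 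None of this is ``essentially bookkeeping'' once a clean operator identity is in place; the construction of the identity and the transport machinery are both substantive, and the iteration uses the interior Bergman-kernel pointwise bounds of McNeal and Nagel--Rosay--Stein--Wainger to kill the remainder $R^N\Pi_D$.
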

Our description of Szeg\H{o} kernel \prettyref{thm:main thm parametrix}
becomes more concrete in the case when the CR manifold $X$ is circle
invariant. In this case one obtains an on diagonal expansion $\Pi_{m}\left(x,x\right)$,
$m\rightarrow\infty$, for the $m$th Fourier mode of the Szeg\H{o}
kernel, we refer to \prettyref{thm:Szego kernel expansion theorem}
in \prettyref{sec:S1 invariant CR geometry} below for the precise
statement. The asymptotics of these higher Fourier modes of the Szeg\H{o}
kernel allows one to construct a sufficient number of CR peak functions.
These can be used to prove the following embedding theorem.
\begin{thm}
\label{thm: main embedding thm}Let $X$ be a compact weakly pseudoconvex
three dimensional CR manifold of finite type admitting a transversal,
CR circle action. Then it has an equivariant CR embedding into some
$\mathbb{C}^{N}$ , $N\in\mathbb{N}$. 
\end{thm}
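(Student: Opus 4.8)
The plan is to follow the route sketched in the introduction: use the on-diagonal asymptotics of the Fourier modes $\Pi_{m}\left(x,x\right)$ from \prettyref{thm:Szego kernel expansion theorem} to manufacture CR peak functions, and then run a Boutet de Monvel type embedding argument \cite{Boutet75}. Let $T\in C^{\infty}\left(TX\right)$ generate the circle action (replacing $S^{1}$ by the effectively acting quotient circle if necessary); transversality means $T$ is nowhere tangent to $HX$, and the CR condition means $T$ commutes with $\bar{\partial}_{b}$ and hence with the Szeg\H{o} projector. Decomposing into weight spaces $C_{m}^{\infty}\left(X\right)=\left\{ f\in C^{\infty}\left(X\right):f\left(e^{i\theta}\cdot x\right)=e^{im\theta}f\left(x\right)\right\}$ and their $L^{2}$-closures, one gets $\Pi=\bigoplus_{m}\Pi_{m}$ with $\Pi_{m}$ the orthogonal projector onto the weight $m$ CR functions; its Schwartz kernel satisfies $\Pi_{m}\left(e^{i\theta}x,y\right)=e^{im\theta}\Pi_{m}\left(x,y\right)$ and $\Pi_{m}\left(x,y\right)=\frac{1}{2\pi}\int_{0}^{2\pi}e^{-im\theta}\Pi\left(e^{i\theta}x,y\right)d\theta$. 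Taking $x_{3}$ in \prettyref{thm:main thm parametrix} to be the orbit coordinate, the parametrix \prettyref{eq:Szego parametrix} gives the $m\to\infty$ expansion of \prettyref{thm:Szego kernel expansion theorem}; in particular, locally uniformly in $x$, $\Pi_{m}\left(x,x\right)=c\left(x\right)m^{2/r\left(x\right)}\left(1+o\left(1\right)\right)$ with $c\left(x\right)>0$.

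The first step is to build CR peak functions at an arbitrary $p\in X$. I would set $S_{m,p}\left(x\right)\coloneqq\Pi_{m}\left(p,p\right)^{-1/2}\Pi_{m}\left(x,p\right)$, a weight $m$ CR function with $S_{m,p}\left(p\right)=\Pi_{m}\left(p,p\right)^{1/2}\rightarrow\infty$ and $\left|S_{m,p}\left(x\right)\right|\leq\Pi_{m}\left(x,x\right)^{1/2}$ by the reproducing-kernel inequality. When $x$ does not lie on the $S^{1}$-orbit of $p$ the integrand $\theta\mapsto\Pi\left(e^{i\theta}x,p\right)$ above is smooth on the whole circle, so $\Pi_{m}\left(x,p\right)=O\left(m^{-\infty}\right)$ uniformly on compacta away from the orbit of $p$, and hence $S_{m,p}\left(x\right)\rightarrow0$ rapidly there. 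Next, applying $\Pi_{m}$ to a cut-off supported near $p$ times the coordinate monomials $x_{1}^{\alpha_{1}}x_{2}^{\alpha_{2}}$ of \prettyref{thm:main thm parametrix} and reading the outcome off the parametrix \prettyref{eq:Szego parametrix}, one shows that after the parabolic rescaling $x\mapsto m^{1/r\left(p\right)}x$ the weight $m$ CR functions localized near $p$ realize, up to their $1$-jet at $p$, a fixed finite-dimensional jet space; in particular for $m$ large there is a weight $m$ CR function $f_{m,p}^{\left(1\right)}$ with $f_{m,p}^{\left(1\right)}\left(p\right)=0$ and $Zf_{m,p}^{\left(1\right)}\left(p\right)\neq0$, where $Z$ is a local nonvanishing section of $T^{1,0}X$.

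With these in hand, point separation and local immersion are routine. If $p,q$ lie on distinct orbits, then for $m$ large $S_{m,p}\left(p\right)\rightarrow\infty$ while $S_{m,p}\left(q\right)\rightarrow0$, so $S_{m,p}$ separates them; if $q=e^{i\theta_{0}}p$ lies on the orbit of $p$, with $\theta_{0}$ outside the (possibly finite) stabilizer, then along the orbit the pair $\left(S_{m,p},S_{m+1,p}\right)$ equals $\left(e^{im\theta}S_{m,p}\left(p\right),e^{i\left(m+1\right)\theta}S_{m+1,p}\left(p\right)\right)$, and equality of two such values forces $e^{i\theta}=e^{i\theta'}$, so this pair separates the orbit once $m$ is large. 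Near $p$ the map $F_{p}\coloneqq\left(S_{m,p},S_{m+1,p},f_{m,p}^{\left(1\right)}\right)$ is an immersion: if $v+tT\in\ker dF_{p}$ with $v\in H_{p}X$ and $t\in\mathbb{R}$, then its last component gives $df_{m,p}^{\left(1\right)}\left(v\right)=0$, and since $df_{m,p}^{\left(1\right)}|_{H_{p}X}$ is $J$-complex-linear with $Zf_{m,p}^{\left(1\right)}\left(p\right)\neq0$ it is injective, forcing $v=0$; then $t\,dF_{p}\left(T\right)=0$, and the first component of $dF_{p}\left(T\right)$ being $imS_{m,p}\left(p\right)\neq0$ forces $t=0$. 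Since being an immersion and separating nearby points are open conditions, a standard compactness argument (cf.\ \cite{Boutet75}) produces finitely many points and weights whose associated CR functions assemble into an injective immersion $F\colon X\rightarrow\mathbb{C}^{N}$, which is an embedding because $X$ is compact; as each component of $F$ lies in some $C_{m}^{\infty}\left(X\right)$, the map $F$ intertwines the circle action with the linear action $e^{i\theta}\cdot\left(z_{1},\dots,z_{N}\right)=\left(e^{im_{1}\theta}z_{1},\dots,e^{im_{N}\theta}z_{N}\right)$ on $\mathbb{C}^{N}$, so the embedding is equivariant.

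The hard part will be the construction of $f_{m,p}^{\left(1\right)}$ at a point $p$ of higher type, where the Szeg\H{o} amplitude in \prettyref{eq:Szego parametrix} is no longer Gaussian: one must extract from the explicit structure of the parametrix --- the dependence of each $a_{j}$ on the monomials $x_{1}^{\alpha_{1}}x_{2}^{\alpha_{2}}$ with $\alpha_{1}+\alpha_{2}\leq2jr$ --- that the $1$-jets at $p$ of the appropriately rescaled weight $m$ CR functions do exhaust the relevant finite-dimensional jet space. A secondary nuisance is that the normalizing data $r\left(x\right)$, $m^{-1/r\left(x\right)}$, $m^{2/r\left(x\right)}$ are only upper semicontinuous in $x$; this is handled by proving all of the above estimates on a small neighborhood of the chosen $p$ and invoking compactness of $X$ only at the very end.
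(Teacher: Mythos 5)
Your overall strategy---manufacture CR peak functions from the Fourier modes $\Pi_{m}$ of the Szeg\H{o} kernel, show a local immersion and point separation near each point, and close with a compactness argument in the spirit of Boutet de Monvel---is the same route the paper takes. However the proposal has two genuine gaps and one unaddressed technical issue.

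\textbf{The stabilizer problem.} Your same-orbit separation uses the pair $\left(S_{m,p},S_{m+1,p}\right)$. But if $p$ has stabilizer of order $s_{p}\geq2$, Theorem \ref{thm:Szego kernel expansion theorem} shows $\Pi_{m}\left(p,p\right)=0$ whenever $s_{p}\nmid m$, because $\phi_{m}\left(s_{p}\right)=0$; equivalently every weight-$m$ CR function vanishes at $p$. Hence at most one of $m,m+1$ is admissible at $p$, the quantity $\Pi_{m+1}\left(p,p\right)^{-1/2}$ is $\infty$ (or the numerator is identically zero), and $S_{m+1,p}$ is useless. Passing to the effectively acting quotient circle only removes a global stabilizer; on a locally free but non-free action the stabilizer order $s_{x}$ jumps from point to point, so this cannot be dispensed with. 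The paper's augmented Kodaira map $\Psi_{p}$ in \prettyref{eq:augmented Kodaira map} is designed precisely to fix this: for each possible stabilizer order $1\leq k\leq s$ it includes the sections at the Fourier levels $p\cdot k$ and $\left(p+1\right)\cdot k$, both multiples of $k$; since $\gcd\left(p,p+1\right)=1$, agreement of the two phases $e^{ipk\theta}$ and $e^{i\left(p+1\right)k\theta}$ forces $e^{ik\theta}=1$, i.e.\ $e^{i\theta}\in S_{p}$. You need some version of this device.

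\textbf{Uniformity in $x$.} You flag the upper semicontinuity of $x\mapsto r\left(x\right)$ as a ``secondary nuisance'' to be handled by working near each $p$ and invoking compactness at the end. This understates the difficulty: the leading asymptotic $\Pi_{m}\left(x,x\right)\sim c\left(x\right)m^{2/r\left(x\right)}$ is not continuous across strata of different type, and the parabolic rescaling $x\mapsto m^{1/r\left(p\right)}x$ you rescale by is the ``wrong'' scale at nearby points of strictly smaller type. Making the peak-section and 1-jet estimates uniform on a neighborhood of $p$ is exactly the content of Theorem \ref{thm:uniform est. on local Bergman} in the paper, whose proof is a multi-scale rescaling ($\delta_{m^{-1/2}},\delta_{m^{-1/3}},\ldots,\delta_{m^{-1/r}}$) exhausting the region where each jet of the curvature dominates. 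Without some argument of this sort the constants in your local estimates degenerate near the boundary of a higher-type stratum, and the compactness step does not close.

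\textbf{Construction of $f_{m,p}^{\left(1\right)}$.} You correctly identify this as the hard part and sketch the right idea (project rescaled coordinate monomials by $\Pi_{m}$ and read off leading jets from the parametrix), which is what the paper does with $v_{1},v_{2}$ in \prettyref{eq:embedding CR functions}--\prettyref{eq:derivative 31}. As written, though, this remains an unproved claim, and it is where the actual analysis lives; in particular the nondegeneracy $Zf_{m,p}^{\left(1\right)}\left(p\right)\neq0$ is obtained in the paper only after the explicit Jacobian estimates that feed into the uniform bounds above. Your immersion argument and the dichotomy distinct-orbit/same-orbit are fine modulo these three points.
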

The Szeg\H{o} kernel parametrix of Boutet de Monvel-Sjöstrand \cite{Boutet-Sjostrand76}
has had a broad impact in complex analysis and geometry, we refer
to \cite{Hsiao2010} for a detailed account of this technique and
its applications. Particularly this recovered the prior results of
Fefferman \cite{Fefferman74} on the full boundary asymptotics for
the Bergman kernel of a strongly pseudoconvex domain, which in turn
refined its leading asymptotics by Hörmander \cite{Hormander65-L2est}.
The weakly pseudoconvex analog of the problem has also been considered
by several authors before. Prior results have included pointwise upper
\cite{Christ88,Machedon88,McNeal89,Nagel-Rosay-Stein-Wainger-89}
and lower \cite{Catlin89} bounds on the Bergman and Szeg\H{o} kernels
in low dimensions, besides particular special cases of the asymptotics
for complex ovals \cite{Dangelo78}, h-extendible/semiregular domains
\cite{BoasStraubeYu95} and certain toric domains \cite{Kamimoto2004}.
In higher dimensional weakly pseudoconvex cases the analogous bounds
as well as the asymptotics of \prettyref{thm:main thm parametrix}
and \prettyref{thm:Fefferman thm.} are wide open, some known results
in higher dimensions include weak estimates on the Bergman kernel
\cite{Ohsawa84} along with estimates on the Bergman metric \cite{Mcneal92}
and distance \cite{Diederich-Ohsawa-95}.

In the presence of a transversal circle action a weakly pseudoconvex
CR manifold is the unit circle bundle of a semi-positive holomorphic
orbifold line bundle over a complex orbifold. When the action is free
and the manifold strongly pseudoconvex, the Szeg\H{o} kernel expansion
\prettyref{thm:Szego kernel expansion theorem}corresponds to the
Bergman kernel expansion of positive line bundles and was first obtained
in \cite{Catlin97-Bergmankernel,Zelditch98-Bergmankernel}. This was
recently generalized to the Bergman kernel expansion of semipositive
line bundles over a Riemann surface in \cite{Marinescu-Savale18}.
The first author had earlier in \cite{Hsiao-Marinescu-JDG2017} given
a proof of the Szeg\H{o} kernel expansion of a circle invariant weakly
pseudoconvex CR manifold, although only on its strictly pseudoconvex
part. For general non-free actions on strongly-pseudoconvex manifolds,
the Szeg\H{o} kernel expansion corresponds to the Bergman kernel expansion
of positive orbifold line bundles and was first proved in \cite{Dai-Liu-Ma2006-Bergman-kernel},
\cite[Sec. 5.4]{Ma-Marinescu}. 

As mentioned, the embeddability of strongly pseudoconvex CR manifolds
equipped with a transversal CR circle action was shown in \cite{Lempert92}
and thus generalized by our last \prettyref{thm: main embedding thm}.
In the weakly pseudoconvex case, \cite{Christ89-embedding} showed
embeddability of a CR three manifold of finite type assuming the range
of its tangential Cauchy-Riemann operator $\bar{\partial}_{b}$ to
be closed. 

The paper is organized as follows. In \prettyref{sec:CR-Preliminaries}
we begin with some preliminaries in CR geometry including a construction
of almost analytic coordinates adapted to the CR structure in \prettyref{subsec:Construction-of-coordinates}.
In \prettyref{sec:Szego-parametrix} we construct an appropriate symbol
calculus in \prettyref{subsec:Symbol-spaces-and} and construct the
pointwise Szeg\H{o} parametrix to prove \prettyref{thm:main thm parametrix}.
In \prettyref{sec:Pseudoconvex-domains} we consider the Bergman kernel
of a weakly pseudoconvex domain in $\mathbb{C}^{2}$ and prove \prettyref{thm:Fefferman thm.}.
In \prettyref{sec:S1 invariant CR geometry} we turn to the circle
invariant case and prove \prettyref{thm:Szego kernel expansion theorem}.
In the final section \prettyref{sec:Equivariant-CR-Embedding} we
prove our embedding theorem \prettyref{thm: main embedding thm}. 

\section{\label{sec:CR-Preliminaries}CR Preliminaries}

Let $\left(X,T^{1,0}X\right)$ be a compact CR manifold of dimension
three. Thus $T^{1,0}X\subset T_{\mathbb{C}}X$ is a complex subbundle
of dimension one satisfying $T^{1,0}X\cap T^{0,1}X=\emptyset$, $T^{0,1}X\coloneqq\overline{T^{1,0}X}$.
Let $HX\coloneqq\textrm{Re}\left(T^{1,0}X\oplus T^{0,1}X\right)$
be the Levi-distribution. This carries an almost complex structure
$J:HX\rightarrow HX$
\[
J\left(v+\bar{v}\right)\coloneqq i\left(v-\bar{v}\right),\quad\forall v\in T^{1,0}X,
\]
 satisfying $J^{2}=-1$ and the integrability condition 
\begin{align}
\left[Jv,u\right]+\left[v,Ju\right] & \in C^{\infty}\left(HX\right)\nonumber \\
\left[Jv,Ju\right]-\left[v,u\right] & =J\left(\left[Jv,u\right]+\left[v,Ju\right]\right)\label{eq: NN integrability}
\end{align}
$\forall u,v\in C^{\infty}\left(HX\right)$. The antisymmetric Levi-form
defined via \prettyref{eq:Levi form} consequently satisfies $\mathscr{L}\left(Ju,v\right)=\mathscr{L}\left(u,Jv\right)$.
The last contraction $\mathscr{L}\left(.,J.\right)$ is equivalently
thought of as a Hermitian form on $T^{1,0}X$ and denoted by the same
notation via
\begin{equation}
\mathscr{L}\left(u,v\right)\coloneqq\left[\frac{2}{i}\left[u,\bar{v}\right]\right]\in\left(TX/HX\right)\otimes\mathbb{C},\quad\label{eq:Levi form-1}
\end{equation}
$u,v\in T^{1,0}X.$ The point $x\in X$ is strongly/weakly pseudoconvex
if the Levi form above is positive definite/semi-definite at $x$
for some choice of local orientation for $TX/HX$. Next one defines
the flag of subspaces$HM_{1,x}\subset HM_{2,x}\subset\ldots$, at
$x\in X$ inductively via 
\begin{align*}
HM_{1,x} & \coloneqq HM_{x}\\
HM_{j+1,x} & \coloneqq HM_{x}+\left[HM_{j,x},HM_{x}\right],\quad j\geq1.
\end{align*}
The point $x$ is said to be of finite type if $HM_{r\left(x\right),x}=TX$
for some $r\left(x\right)\in\mathbb{N}$, the minimum such integer
being the type of the point $x$. The weight vector at the point $x$
is defined to be $w\left(x\right)\coloneqq\left(1,1,r\left(x\right)\right)$.
The CR structure is of finite type if each point is of finite type.
Note that the type of a strongly pseudoconvex point $x\in X$ is $r\left(x\right)=2$
by definition.

The set of \textit{horizontal} paths of Sobolev regularity one connecting
the two points $x,x'\in X$ is denoted by
\begin{equation}
\Omega_{H}\left(x,x'\right)\coloneqq\left\{ \gamma\in H^{1}\left(\left[0,1\right];X\right)|\gamma\left(0\right)=x,\,\gamma\left(1\right)=x',\,\dot{\gamma}\left(t\right)\in HX_{\gamma\left(t\right)}\textrm{ a.e.}\right\} .\label{eq:horiz path space}
\end{equation}
Fixing a metric $g^{HX}$ on the Levi distribution $HX$, one define
the obvious the obvious length functional $l\left(\gamma\right)\coloneqq\int_{0}^{1}\left|\dot{\gamma}\right|dt$
on the above path space $\Omega_{H}$. By a classical theorem of Chow-Rashevsky
the horizontal path space \prettyref{eq:horiz path space} is non-empty
when the CR structure is of finite type; allowing the definition of
a distance function
\begin{equation}
d^{H}\left(x,x'\right)\coloneqq\inf_{\gamma\in\Omega_{H}\left(x,x'\right)}l\left(\gamma\right).\label{eq:sR distance}
\end{equation}

The weight $w\left(f\right)$ of a function $f$ at the point $x$
is defined to be the maximum integer $s\in\mathbb{N}_{0}$ for which
$a+b=s$ implies that $\left(u^{a}v^{b}f\right)\left(x\right)=0$;
where $u,v$ form a local frame for $HX$ near $x$. Similarly the
weight $w\left(P\right)$ of a differential operator $P$ at the point
$x\in X$ is the maximum integer for which $w\left(Pf\right)\geq w\left(P\right)+w\left(f\right)$
holds for each function $f\in C^{\infty}\left(X\right)$. It is known
that there exists a set of coordinates $\left(x_{1},x_{2},x_{3}\right)$
centered near a point $x\in X$ for which $\frac{\partial}{\partial x_{1}},\frac{\partial}{\partial x_{2}}$
forms a basis for $HM_{x}$ of the canonical flag and moreover each
coordinate function $\left(x_{1},x_{2},x_{3}\right)$ has weight $w\left(x\right)\coloneqq\left(1,1,r\left(x\right)\right)$
respectively; such a coordinate system is called privileged. In privileged
coordinates near $x$ the weight of a monomial $x^{\alpha}$, $\alpha\in\mathbb{N}_{0}$,
is thus $\alpha_{1}+\alpha_{2}+r\alpha_{3}$. The weight $w\left(f\right)$
of an arbitrary function $f\in C^{\infty}\left(X\right)$ is then
the minimum weight of the monomials appearing in its Taylor series
in these coordinates. The weight $w\left(V\right)$ of a vector field
$V=\sum_{j=1}^{3}f_{j}\partial_{x_{j}}$ is seen to be $w\left(V\right)\coloneqq\min\left\{ w\left(f_{1}\right)-1,w\left(f_{2}\right)-1,w\left(f_{3}\right)-r\right\} $.
The distance function and volume (with respect to an arbitrary volume
form $\mu$) of a radius $\varepsilon$ ball centered at the origin
in such coordinates are known to satisfy 
\begin{align*}
C\left(\left|x_{1}\right|+\left|x_{2}\right|+\left|x_{3}\right|^{1/r\left(x\right)}\right) & \leq d^{H}\left(x,0\right)\leq C'\left(\left|x_{1}\right|+\left|x_{2}\right|+\left|x_{3}\right|^{1/r\left(x\right)}\right)\\
C\epsilon^{2+r\left(x\right)} & \leq\mu\left(B\left(0;\varepsilon\right)\right)\leq C'\epsilon^{2+r\left(x\right)};\quad\varepsilon\in\left(0,1\right).
\end{align*}
Given a coordinate chart $U$ as above, denote by $S_{H}^{m}\left(U\right)$
the space of smooth functions $p\left(x\right)$ on $U\setminus\left\{ 0\right\} $
satisfying the estimates 
\begin{equation}
\left|\partial_{x}^{\alpha}p\right|\leq C_{\alpha}\left[d^{H}\left(x,0\right)\right]^{-m+\frac{2}{r}-\alpha.w_{x}}\mu\left(B\left(0;d^{H}\left(x,0\right)\right)\right)^{-1},\label{eq: function class}
\end{equation}
$x\neq0$,$\forall\alpha\in\mathbb{N}_{0}^{3}.$ It was shown in \cite[Sec.12]{Christ88},
cf. \cite{Nagel-Rosay-Stein-Wainger-89} that the restriction to $U$
of the Szeg\H{o} kernel lies in the class
\begin{equation}
\Pi\left(x,0\right)\in S_{H}^{\frac{2}{r}}\left(U\right)\label{eq:christ-M-M-NRSW-bounds}
\end{equation}
defined above.

\subsection{\label{subsec:Construction-of-coordinates}Construction of coordinates}

In \cite[Prop. 3.2]{Christ89-embedding} it was shown that the privileged
coordinate system near $x$ maybe further chosen so that
\begin{align}
T^{1,0}X & =\mathbb{C}\left[Z\right]\nonumber \\
Z & =\frac{1}{2}\left[\partial_{x_{1}}+\left(\partial_{x_{2}}p\right)\partial_{x_{3}}+i\left(\partial_{x_{2}}-\left(\partial_{x_{1}}p\right)\partial_{x_{3}}+R\right)\right];\label{eq:Christ normal form}
\end{align}
 where $p\left(x_{1},x_{2}\right)$ is a homogeneous real polynomial
of degree/weight $r\left(x\right)$, and $R=\sum_{j=1}^{3}r_{j}\left(x\right)\partial_{x_{j}}$
a real vector field of weight $w\left(R\right)\geq0$. Furthermore,
the pseudoconvexity of $X$ gives $\Delta p=\left(\partial_{x_{1}}^{2}+\partial_{x_{2}}^{2}\right)p\geq0$.
In this subsection we shall further show how to remove the remainder
term $R$ via almost analytic extension. We first have the following.
\begin{lem}
\label{lem: TZ is o infinity}There exists a locally defined complex
vector field $T$ such that $T_{x}=\partial_{x_{3}}$ and $\left[T,Z\right]$
vanishes to infinite order at $x$.
\end{lem}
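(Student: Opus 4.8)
The plan is to solve $[T,Z]=0$ formally to infinite order at $x'=0$ and then realise the formal solution as a genuine smooth vector field by Borel's lemma. Decompose $Z=Z_{(0)}+Z_{(1)}+\cdots$ according to the homogeneous degree (in the coordinates $(x_1,x_2,x_3)$) of its coefficients. The normal form \prettyref{eq:Christ normal form} shows that the coefficients of the remainder $R$ and of $\partial_{x_j}p$ all have positive weight, hence vanish at $0$, so the leading term is the one-variable Cauchy--Riemann operator $Z_{(0)}=\tfrac12(\partial_{x_1}+i\partial_{x_2})=\partial_{\bar w}$ with $w:=x_1+ix_2$. We look for $T=T_{(0)}+T_{(1)}+\cdots$ with $T_{(0)}=\partial_{x_3}$ and $T_{(d)}$ having coefficients homogeneous of degree $d$; each $T_{(d)}$ with $d\geq1$ vanishes at $0$, so the condition $T_0=\partial_{x_3}$ is automatic.

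Since the bracket of vector fields whose coefficients have degrees $a$ and $b$ has coefficients of degree $a+b-1$, and since $Z_{(0)}$ and $T_{(0)}$ have constant coefficients, equating the degree-$d$ part of $[T,Z]$ to zero gives for every $d\geq0$ the recursion
\begin{equation*}
\partial_{\bar w}\bigl(T_{(d+1)}\bigr)=\partial_{x_3}\bigl(Z_{(d+1)}\bigr)+\sum_{d_1=1}^{d}\bigl[T_{(d_1)},Z_{(d+1-d_1)}\bigr],
\end{equation*}
where $\partial_{\bar w}$ and $\partial_{x_3}$ act coefficient-wise. The right-hand side involves only $Z$ and the already-constructed $T_{(0)},\ldots,T_{(d)}$, so $T_{(d+1)}$ is determined once $\partial_{\bar w}$ is inverted; and since the formal equation $\partial_{\bar w}u=f$ is always solvable in polynomials (integrate in $\bar w$), each of the three scalar components can be solved -- say with the normalisation that $T_{(d+1)}$ contains no $\bar w$-independent monomials, which pins down the kernel. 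Iterating yields a formal vector field $T$ with $[T,Z]\equiv0$. Borel's lemma then produces a smooth, locally defined $T$ with this Taylor expansion and $T_0=\partial_{x_3}$; as the Taylor series of $[T,Z]$ at $0$ is the formal bracket of the two Taylor series, which vanishes by construction, $[T,Z]$ vanishes to infinite order at $0$.

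The only step that needs attention is the solvability of the recursion, and this is exactly where the finite-type normal form \prettyref{eq:Christ normal form} is used: the leading part of $Z$ is $\partial_{\bar w}$, a $\bar\partial$-operator in one complex variable, for which the formal equation carries no obstruction. As a sanity check, in the polynomial model $R\equiv0$ one verifies directly that $Z$ annihilates $z_1:=x_1+ix_2$ and $z_2:=x_3+ip(x_1,x_2)$, so an exactly commuting transverse field is available on the nose; the general case differs from the model only by the positive-weight remainder $R$, which the infinite-order correction above absorbs. I anticipate no further obstacle.
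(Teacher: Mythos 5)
Your proof is correct and follows essentially the same route as the paper's: both observe that the leading constant-coefficient part of $Z$ is $\tfrac{1}{2}(\partial_{x_1}+i\partial_{x_2})=\partial_{\bar w}$, reduce the commutation condition to a degree-by-degree recursion in which $\partial_{\bar w}$ is inverted on polynomials (the paper writes the component equations for the $t_j$ explicitly, you phrase it via homogeneous decompositions of vector fields and brackets), and then invoke Borel to realize the formal solution. The difference is purely presentational.
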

\begin{proof}
The desired equation for the components of $T=\sum_{j=1}^{3}t_{j}\left(x\right)\partial_{x_{j}}$
is seen to be one of the form 
\[
\left(\partial_{x_{1}}+i\partial_{x_{2}}\right)t_{j}=-\left(\partial_{x_{2}}p-i\partial_{x_{1}}p\right)\partial_{x_{3}}t_{j}+\delta_{03}T\left(\partial_{x_{2}}-i\partial_{x_{1}}\right)p+Tr_{j}-Rt_{j}+O\left(\left|x\right|^{\infty}\right),
\]
$j=1,2,3.$ As the component functions $p$, $r_{j}$ have degree
at least two and one respectively; the degree $k$ homogeneous part
on the left hand side above involves the Taylor coefficients of $t_{j}$
for $x^{\alpha}$, $\left|\alpha\right|=k-1$, while those on the
right hand side involve those for $x^{\alpha},\left|\alpha\right|<k-1$.
We may hence solve the above recursively for the Taylor coefficients
of $t_{j}$, beginning with $\left(t_{1},t_{2},t_{3}\right)=\left(0,0,1\right)+O\left(\left|x\right|\right)$,
and apply Borel's construction.
\end{proof}
Next we complexify the open neighborhood of $x\in U\subset\mathbb{R}^{3}$
on which the above coordinates are defined to an open set $U^{\mathbb{C}}\subset\mathbb{C}^{3}$
such that $U^{\mathbb{C}}\cap\mathbb{R}^{3}=U$. Denote by $z_{j}=x_{j}+iy_{j}$,
$j=1,2,3$, the corresponding complex coordinates. For a function
$f\in C_{c}^{\infty}\left(\mathbb{C}^{3}\right)$, we write $f\sim0$
if it vanishes to infinite order along the real plane: $\left|f\left(z\right)\right|=O\left(\left|\textrm{Im}z\right|^{\infty}\right)$.
A function $f\in C_{c}^{\infty}\left(\mathbb{C}^{3}\right)$, is said
to be almost analytic iff $\partial_{\bar{z}_{j}}f\sim0$, $j=1,2,3.$
A complex vector field $L=\sum_{j=1}^{3}\left[a_{j}\partial_{z_{j}}+b_{j}\partial_{\bar{z}_{j}}\right]$
is said to be almost analytic iff $Lf$ is almost analytic and $L\bar{f}\sim0$
for all almost analytic $f\in C_{c}^{\infty}\left(\mathbb{C}^{3}\right)$.
This is seen to be equivalent to $a_{j}$ being almost analytic and
$b_{j}\sim0$ for $j=1,2,3$. For two complex vector fields we write
$L_{1}\sim L_{2}$ iff $L_{1}-L_{2}$ is almost analytic. We choose
almost analytic extensions $\tilde{T},\tilde{Z}$ of $T,Z$ respectively.
We now have the next lemma.
\begin{lem}
\label{lem: linearize T tilde} There exist almost analytic complex
coordinates $w_{j}=z_{j}+z_{3}O\left(\left|z\right|\right)$, $j=1,2,3$,
on $U^{\mathbb{C}}$ such that $\tilde{T}-\partial_{w_{3}}$ vanishes
to infinite order at $x$.
\end{lem}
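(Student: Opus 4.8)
The plan is to straighten the almost analytic vector field $\tilde T$ to the constant coordinate vector field $\partial_{w_3}$ by flowing along it, working entirely in the almost analytic category. Recall from Lemma~\ref{lem: TZ is o infinity} that $T_x=\partial_{x_3}$, so $\tilde T_x=\partial_{z_3}$, and by construction $\tilde T$ is almost analytic, i.e. $\tilde T=\sum_{j=1}^3 \tilde t_j(z)\partial_{z_j}$ with each $\tilde t_j$ almost analytic and $\tilde t_j(x)=\delta_{j3}$. The idea is the standard flow-box construction adapted so that all manipulations respect the ideal of functions vanishing to infinite order along $\mathbb R^3$.

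First I would note that the defining ODE system for the flow of $\tilde T$, namely $\tfrac{d}{ds}\Phi(s,z)=\tilde t\big(\Phi(s,z)\big)$ with $\Phi(0,z)=z$, has holomorphic-in-the-parameter solutions up to an $O(|\mathrm{Im}\,z|^\infty)$ error: one solves the ODE formally to infinite order in Taylor series at $x$ and applies a Borel summation to produce $\tilde t_j$-coefficients, exactly as in the proof of Lemma~\ref{lem: TZ is o infinity}. Concretely, it is cleaner to build the new coordinates $w_j$ directly as almost analytic functions solving $\tilde T w_j \sim 0$ for $j=1,2$ and $\tilde T w_3 \sim 1$, with $w_j(z)=z_j+O(|z|^2)$ and, more precisely, $w_j - z_j$ divisible by $z_3$ up to an almost analytic correction. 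Because $\tilde T = \partial_{z_3} + \sum_j(\tilde t_j-\delta_{j3})\partial_{z_j}$ and the coefficients $\tilde t_j-\delta_{j3}$ vanish at $x$, the equations $\tilde T w_j \sim \text{(const)}$ can be solved recursively order by order in the Taylor expansion at $x$: at homogeneous degree $k$ the $\partial_{z_3}$ term feeds on the degree-$k$ coefficients of $w_j$ while the remaining terms involve only lower-order data, so each stage is a triangular linear solve. Borel's lemma then assembles a genuine smooth $w_j$, and one checks $\partial_{\bar z_l} w_j \sim 0$ so the $w_j$ are almost analytic; the Jacobian of $(w_1,w_2,w_3)$ at $x$ is the identity, so they form an almost analytic coordinate system on a possibly smaller $U^{\mathbb C}$.

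In the new coordinates, the two conditions $\tilde T w_1\sim 0$, $\tilde T w_2\sim 0$, $\tilde T w_3\sim 1$ say precisely that $\tilde T\sim \partial_{w_3}$ modulo the action on the $w$'s; to upgrade this to the stated pointwise statement I would expand $\tilde T=\sum_j(\tilde T w_j)\partial_{w_j}$, observe each coefficient $\tilde T w_j$ agrees with $\delta_{j3}$ to infinite order at $x$, and conclude $\tilde T-\partial_{w_3}$ vanishes to infinite order at $x$. The refinement $w_j - z_j \in z_3\cdot O(|z|)$ comes for free from the recursion: since $\tilde t_j-\delta_{j3}$ carries the $z_3$-free obstruction only through the $\partial_{z_3}$ term acting on the part of $w_j-z_j$ that is independent of $z_3$, one arranges the Borel summation so that the correction is always a multiple of $z_3$ (equivalently, $w_j\big|_{z_3=0}=z_j$, which is consistent because $\tilde T$ is transverse to $\{z_3=0\}$ at $x$ and one may prescribe the initial slice). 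The main obstacle is bookkeeping: ensuring simultaneously that (i) the formal solution can be summed to an \emph{almost analytic} function — this requires checking that the $\bar z$-derivatives of the summed series remain $O(|\mathrm{Im}\,z|^\infty)$, which follows from $\tilde t_j$ being almost analytic together with the fact that Borel summation of a Taylor series at a real point produces a function whose $\bar\partial$ vanishes to infinite order along $\mathbb R^3$ — and (ii) the divisibility-by-$z_3$ normalization is preserved at every step. Both are routine once the category $\sim$ is set up, which the excerpt has already done, so no genuinely new idea beyond the flow-box theorem in the almost analytic setting is needed.
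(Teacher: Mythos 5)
Your proposal is correct and follows essentially the same route as the paper's proof: after discarding the antiholomorphic part of $\tilde T$, you solve $\tilde T w_j=\delta_{j3}+O(|z|^{\infty})$ by the same triangular Taylor-series recursion at $x$ (the $\partial_{z_3}$ term determines degree-$k$ data from lower degrees because $t_j-\delta_{j3}$ vanish at $x$), assemble the $w_j$ by Borel/almost-analytic extension, and use the freedom in the $z_3$-integration, i.e.\ the normalization $w_j|_{z_3=0}=z_j$, to obtain the divisibility $w_j-z_j\in z_3\,O(|z|)$. This is exactly the paper's argument via the recursion equation \prettyref{eq: const coords}; the preliminary flow-box framing you mention is dropped in favor of the same direct construction, so the two proofs coincide.
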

\begin{proof}
Firstly we have $\tilde{T}\sim\sum_{j=1}^{3}t_{j}\left(z\right)\partial_{z_{j}}$
for some almost analytic functions $t_{j}$, $j=1,2,3$, satisfying
$\left(t_{1},t_{2},t_{3}\right)=\left(0,0,1\right)+O\left(\left|z\right|\right)$.
Next we find an almost analytic function $w_{1}\left(z\right)$ satisfying
$\sum_{j=1}^{3}t_{j}\partial_{z_{j}}w_{1}=O\left(\left|z\right|^{\infty}\right)$
or equivalently
\begin{equation}
\partial_{z_{3}}w_{1}=-\left[t_{1}\partial_{z_{1}}w_{1}+t_{2}\partial_{z_{2}}w_{1}+\left(t_{3}-1\right)\partial_{z_{3}}w_{1}\right]+O\left(\left|z\right|^{\infty}\right).\label{eq: const coords}
\end{equation}
The degree $k$ homogeneous part on the left hand side above involves
the Taylor coefficients of $w_{1}$ for $z^{\alpha}$, $\left|\alpha\right|=k$,
while those on the right hand side involve those for $z^{\alpha},\left|\alpha\right|<k$.
We may hence again solve the above recursively for the Taylor coefficients
of $w_{1}$, beginning with $w_{1}\left(z\right)=z_{1}+O\left(\left|z\right|^{2}\right)$,
and apply Borel's construction. Since solving the equation \prettyref{eq: const coords}
involves integration in $z_{3}$, the higher order terms in the Taylor
expansion $w_{1}\left(z\right)=z_{1}+z_{3}O\left(\left|z\right|\right)$
can be further taken to be multiples of $z_{3}$. In similar vein,
we find almost an analytic functions $w_{2}\left(z\right)=z_{2}+z_{3}O\left(\left|z\right|\right)$,
$w_{3}\left(z\right)=z_{3}\left(1+O\left(\left|z\right|\right)\right)$
satisfying $\sum_{j=1}^{3}t_{j}\partial_{z_{j}}w_{2}=O\left(\left|z\right|^{\infty}\right)$
and $\sum_{j=1}^{3}t_{j}\partial_{z_{j}}w_{3}=1+O\left(\left|z\right|^{\infty}\right)$
respectively. Thus $\left(w_{1},w_{2},w_{3}\right)$ is the required
coordinate system.
\end{proof}
We may now prove our main result of this subsection.
\begin{thm}
\label{thm:almost analytic coordinates} There exist almost analytic
complex coordinates $\tilde{z}_{j}=p_{j}+iq_{j}$, $j=1,2,3$ and
an almost analytic function $\varphi\left(\tilde{z}_{1},\tilde{z}_{2}\right)$
(of the first two new coordinates) on $U^{\mathbb{C}}$ such that 
\begin{enumerate}
\item $\tilde{z}_{j}=z_{j}+O\left(\left|z\right|^{2}\right)$, $j=1,2$,
$\tilde{z}_{3}=z_{3}+z_{3}O\left(\left|z\right|\right)+O\left(\left|z\right|^{\infty}\right)$,
\item $\tilde{Z}=\frac{1}{2}\left(\partial_{\tilde{z}_{1}}+i\partial_{\tilde{z}_{2}}\right)-\frac{i}{2}\left(\partial_{\tilde{z}_{1}}\varphi+i\partial_{\tilde{z}_{2}}\varphi\right)\partial_{\tilde{z}_{3}}$
and $\tilde{T}=\partial_{\tilde{z}_{3}}+O\left(\left|z\right|^{\infty}\right)$
for
\item $\varphi\left(\tilde{z}_{1},\tilde{z}_{2}\right)=\varphi_{0}\left(\tilde{z}_{1},\tilde{z}_{2}\right)+O\left(\left|z\right|^{r+1}\right)$
with $\varphi_{0}$ a homogeneous polynomial with real coefficients
satisfying $\left(\partial_{p_{1}}^{2}+\partial_{p_{2}}^{2}\right)\left(\left.\varphi_{0}\right|_{q=0}\right)\geq0$.
\end{enumerate}
\end{thm}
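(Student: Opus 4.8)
\emph{Overall plan and reduction.} The plan is to reduce, via the two preceding lemmas, to a situation where $\tilde{T}=\partial_{z_{3}}+O\left(\left|z\right|^{\infty}\right)$ and the coefficients of $\tilde{Z}$ are independent of $z_{3}$ to infinite order, and then to straighten $\tilde{Z}$ by one almost analytic change of the horizontal coordinates followed by one integration producing the potential $\varphi$. First I would apply Lemma~\ref{lem: linearize T tilde} and rename the resulting $w_{j}$ back to $z_{j}$, so that $\tilde{T}=\partial_{z_{3}}+O\left(\left|z\right|^{\infty}\right)$, with $z_{3}$ altered only in the manner permitted by (1). By Lemma~\ref{lem: TZ is o infinity} the bracket $\left[T,Z\right]$ vanishes to infinite order at $x'$; since $\left[\tilde{T},\tilde{Z}\right]$ is an almost analytic vector field whose restriction to the real slice is $\left[T,Z\right]$, it too vanishes to infinite order at $x'$ on $U^{\mathbb{C}}$. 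Together with $\tilde{T}=\partial_{z_{3}}+O\left(\left|z\right|^{\infty}\right)$ this forces $\partial_{z_{3}}$ of each coefficient of $\tilde{Z}$ to be $O\left(\left|z\right|^{\infty}\right)$, so that $\tilde{Z}=c_{1}\partial_{z_{1}}+c_{2}\partial_{z_{2}}+c_{3}\partial_{z_{3}}+O\left(\left|z\right|^{\infty}\right)$ with $c_{1},c_{2},c_{3}$ almost analytic functions of $\left(z_{1},z_{2}\right)$ alone modulo $O\left(\left|z\right|^{\infty}\right)$; reading off \prettyref{eq:Christ normal form} gives $c_{1}=\frac{1}{2}+O\left(\left|z\right|\right)$, $c_{2}=\frac{i}{2}+O\left(\left|z\right|\right)$ and $c_{3}=-\frac{i}{2}\left(\partial_{z_{1}}+i\partial_{z_{2}}\right)p+O\left(\left|z\right|^{r}\right)$.

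\emph{Horizontal straightening.} Next I would produce almost analytic functions $\tilde{z}_{1},\tilde{z}_{2}$ of $\left(z_{1},z_{2}\right)$ alone, with $\tilde{z}_{j}=z_{j}+O\left(\left|z\right|^{2}\right)$ and $\tilde{Z}\tilde{z}_{1}=\frac{1}{2}$, $\tilde{Z}\tilde{z}_{2}=\frac{i}{2}$ modulo $O\left(\left|z\right|^{\infty}\right)$. Solving order by order, at each homogeneity degree one must solve $L_{0}u=f$ for the next Taylor coefficient $u$, where $L_{0}:=\frac{1}{2}\left(\partial_{z_{1}}+i\partial_{z_{2}}\right)$ is the leading part of $c_{1}\partial_{z_{1}}+c_{2}\partial_{z_{2}}$ and $f$ is determined by the lower-order data; since $L_{0}$ is surjective from homogeneous polynomials of degree $k+1$ onto those of degree $k$ in two variables (kernel the multiples of $\left(z_{1}+iz_{2}\right)^{k+1}$) and the degree-zero compatibilities $L_{0}z_{1}=\frac{1}{2}$, $L_{0}z_{2}=\frac{i}{2}$ hold, the recursion proceeds, and Borel's construction (as in the two previous lemmas) furnishes $\tilde{z}_{1},\tilde{z}_{2}$. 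Keeping $\tilde{z}_{3}:=z_{3}$ unchanged, $\left(\tilde{z}_{1},\tilde{z}_{2},\tilde{z}_{3}\right)$ is still an almost analytic coordinate system and $\partial_{\tilde{z}_{3}}=\partial_{z_{3}}$, so $\tilde{T}=\partial_{\tilde{z}_{3}}+O\left(\left|z\right|^{\infty}\right)$ persists, and in these coordinates
\[
\tilde{Z}=\frac{1}{2}\partial_{\tilde{z}_{1}}+\frac{i}{2}\partial_{\tilde{z}_{2}}+g\,\partial_{\tilde{z}_{3}}+O\left(\left|z\right|^{\infty}\right),\qquad g:=\tilde{Z}\tilde{z}_{3},
\]
with $g$ an almost analytic function of $\left(\tilde{z}_{1},\tilde{z}_{2}\right)$ alone modulo $O\left(\left|z\right|^{\infty}\right)$ whose leading homogeneous part is $-\frac{i}{2}\left(\partial_{\tilde{z}_{1}}+i\partial_{\tilde{z}_{2}}\right)p$.

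\emph{The potential and conclusion.} It then remains to write $g=-\frac{i}{2}\left(\partial_{\tilde{z}_{1}}\varphi+i\partial_{\tilde{z}_{2}}\varphi\right)$ for an almost analytic $\varphi\left(\tilde{z}_{1},\tilde{z}_{2}\right)$; this inverts $L_{0}$ on $g$ and is solved order by order as above, or more concretely by integrating in the direction $\tilde{z}_{1}-i\tilde{z}_{2}$ along which $L_{0}$ is a single derivative (as $L_{0}$ annihilates $\tilde{z}_{1}+i\tilde{z}_{2}$), one-variable integration of an almost analytic function staying almost analytic. Fixing the kernel freedom so that the leading part of $\varphi$ equals $\varphi_{0}=p$, one obtains $\varphi-\varphi_{0}=O\left(\left|z\right|^{r+1}\right)$, with $\varphi_{0}$ a homogeneous real polynomial of degree $r$; moreover $p_{j}=x_{j}+O\left(\left|z\right|^{2}\right)$ and $q_{j}=y_{j}+O\left(\left|z\right|^{2}\right)$ to leading order, so $\left(\partial_{p_{1}}^{2}+\partial_{p_{2}}^{2}\right)\left(\left.\varphi_{0}\right|_{q=0}\right)=\Delta p\geq0$ by the pseudoconvexity recorded after \prettyref{eq:Christ normal form}. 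Finally, replacing $\tilde{Z}$ by the almost analytic operator $\frac{1}{2}\left(\partial_{\tilde{z}_{1}}+i\partial_{\tilde{z}_{2}}\right)-\frac{i}{2}\left(\partial_{\tilde{z}_{1}}\varphi+i\partial_{\tilde{z}_{2}}\varphi\right)\partial_{\tilde{z}_{3}}$---which differs from the original almost analytic extension of $Z$ only by an $O\left(\left|z\right|^{\infty}\right)$ term near $x'$ and so serves equally well for the pointwise parametrix---gives the exact identity (2), completing the proof.

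\emph{Main difficulty.} The obstacle is not any single computation but the bookkeeping needed to keep every object in the almost analytic category: that brackets of almost analytic vector fields are almost analytic, that the order-by-order recursions together with Borel summation genuinely produce almost analytic functions with $O\left(\left|z\right|^{\infty}\right)$ (rather than merely finite-order) remainders, that one-variable integration preserves almost analyticity, and---above all---that the $O\left(\left|z\right|^{\infty}\right)$ errors, which in general cannot be removed since $X$ need not be locally CR embeddable, are harmless for the pointwise statement. Granting that framework, the algebraic content---surjectivity of $\partial_{z_{1}}+i\partial_{z_{2}}$ on homogeneous polynomials and the identification $\varphi_{0}=p$---is routine.
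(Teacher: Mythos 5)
Your reduction and the recursion/Borel steps match the paper's proof closely up to the point where you have
\[
\tilde{Z}=\tfrac{1}{2}\left(\partial_{\tilde{w}_{1}}+i\partial_{\tilde{w}_{2}}\right)-\tfrac{i}{2}\left(\partial_{\tilde{w}_{1}}\varphi+i\partial_{\tilde{w}_{2}}\varphi\right)\partial_{\tilde{w}_{3}}+\tilde{Z}_{\infty},\qquad \tilde{Z}_{\infty}=O\left(\left|\tilde{w}\right|^{\infty}\right).
\]
The gap is in how you dispose of $\tilde{Z}_{\infty}$. You propose to simply redeclare $\tilde{Z}$ to be the model operator, observing that the change is $O\left(\left|z\right|^{\infty}\right)$. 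But $\tilde{Z}$ was fixed before the theorem as an almost analytic extension of the CR vector field $Z$, which means $\tilde{Z}\big|_{\{y=0\}}=Z$ exactly. The error $\tilde{Z}_{\infty}$ vanishes to infinite order at the single point $x'$, not on the real slice, so the replaced field restricted to $\{y=0\}$ equals $Z+O\left(\left|x\right|^{\infty}\right)$ and is no longer an almost analytic extension of $Z$, nor even a CR vector field. Statement (2) asserts an exact identity for the given $\tilde{Z}$ in the new coordinates (contrast the allowed $O\left(\left|z\right|^{\infty}\right)$ remainder in the same line for $\tilde{T}$), and this precision is not decorative: the paper later invokes it to conclude that the model operator applied to $\tilde{\Pi}_1$ is smooth.

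The paper's resolution is Nelson's scattering trick: set $\tilde{Z}_{1}$ equal to the model plus $(1-\chi)\tilde{Z}_{\infty}$, form the almost analytic map $W=\lim_{t\to\infty}e^{t\tilde{Z}}\circ e^{-t\tilde{Z}_{1}}$ (achieved in finite time since both flows leave $U^{\mathbb{C}}$), and take $\tilde{z}_{j}\coloneqq W^{*}\tilde{w}_{j}=\tilde{w}_{j}+O\left(\left|\tilde{w}\right|^{\infty}\right)$. This is a further $O\left(\left|z\right|^{\infty}\right)$ change of coordinates, not of vector field, so property (1) is preserved and the original $\tilde{Z}=W_{*}\tilde{Z}_{1}$ takes the exact model form near $0$. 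Your remark that the discrepancy is ``harmless for the pointwise parametrix'' is a reasonable heuristic, but it argues around the theorem rather than proving it; you should replace the vector-field substitution with the scattering conjugation to close the proof.
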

\begin{proof}
Firstly we have by definition $\tilde{Z}\sim\sum_{j=1}^{3}a_{j}\partial_{w_{j}}$,
for some almost analytic functions $a_{j}$, $j=1,2,3$ satisfying
\begin{align*}
a_{1}\left(0\right)=\frac{1}{2}, & \quad a_{2}\left(0\right)=\frac{i}{2}\quad,\\
a_{3}= & \frac{1}{2}\left(\partial_{w_{2}}\tilde{p}-i\partial_{w_{1}}\tilde{p}\right)+O\left(\left|w\right|^{r}\right)
\end{align*}
 and $\tilde{p}$ being an almost analytic extension of $p$. Furthermore,
from the preceding \prettyref{lem: TZ is o infinity}, \prettyref{lem: linearize T tilde}
we have $\left[\tilde{T},\tilde{Z}\right]=\left[\partial_{w_{3}},\tilde{Z}\right]+O\left(\left|w\right|^{\infty}\right)$
and may assume that $a_{j}$, $j=1,2,3$, are independent of $w_{3}$.
Next as in \prettyref{eq: const coords} we find almost analytic functions
$\tilde{w}_{j}\left(w_{1},w_{2}\right)=w_{j}+O\left(\left|w\right|^{\infty}\right)$,
$j=1,2$, such that 
\begin{align*}
a_{1}\partial_{w_{1}}\tilde{w}_{1}+a_{2}\partial_{w_{2}}\tilde{w}_{1}-\frac{1}{2} & =O\left(\left|w\right|^{\infty}\right)\\
a_{1}\partial_{w_{1}}\tilde{w}_{2}+a_{2}\partial_{w_{2}}\tilde{w}_{2}-\frac{i}{2} & =O\left(\left|w\right|^{\infty}\right).
\end{align*}
Setting $\tilde{z}_{3}=w_{3}$, we have then thus far achieved $\tilde{Z}=\frac{1}{2}\left(\partial_{\tilde{w}_{1}}+i\partial_{\tilde{w}_{2}}\right)+a_{3}\left(\tilde{w}_{1},\tilde{w}_{2}\right)\partial_{\tilde{w}_{3}}+O\left(\left|\tilde{w}\right|^{\infty}\right)$.
It is then easy to find $\varphi\left(\tilde{w}_{1},\tilde{w}_{2}\right)$
satisfying $a_{3}=-\frac{i}{2}\left(\partial_{\tilde{w}_{1}}\varphi+i\partial_{\tilde{w}_{2}}\varphi\right)+O\left(\left|\tilde{w}\right|^{\infty}\right)$
by a further application of the Borel construction giving $\tilde{Z}=\frac{1}{2}\left(\partial_{\tilde{w}_{1}}+i\partial_{\tilde{w}_{2}}\right)-\frac{i}{2}\left(\partial_{\tilde{w}_{1}}\varphi+i\partial_{\tilde{w}_{2}}\varphi\right)\partial_{\tilde{w}_{3}}+\tilde{Z}_{\infty}$
for some almost analytic vector field $\tilde{Z}_{\infty}=O\left(\left|\tilde{w}\right|^{\infty}\right)$. 

Finally to remove this infinite order error term one applies the scattering
trick of Nelson \cite[Ch. 3]{Nelson-book69}. Choose an almost analytic
function $\chi\in C_{c}^{\infty}\left(U^{\mathbb{C}}\right)$, equal
to one near zero, and set 
\begin{equation}
\tilde{Z}_{1}=\frac{1}{2}\left(\partial_{\tilde{w}_{1}}+i\partial_{\tilde{w}_{2}}\right)-\frac{i}{2}\left(\partial_{\tilde{w}_{1}}\varphi+i\partial_{\tilde{w}_{2}}\varphi\right)\partial_{\tilde{w}_{3}}+\left(1-\chi\right)\tilde{Z}_{\infty}.\label{eq:modified vector field}
\end{equation}
It is clear that the almost analytic flows of $\tilde{Z}$, $\tilde{Z}_{1}$
starting at $U^{\mathbb{C}}$ exit $U^{\mathbb{C}}$ in uniformly
finite time, outside which they are equal. Thus the limiting almost
analytic map 
\begin{equation}
W\coloneqq\lim_{t\rightarrow\infty}e^{t\tilde{Z}}\circ e^{-t\tilde{Z}_{1}}\label{eq:scattering operator}
\end{equation}
exists with the limit achieved in finite time. One then calculates
\begin{align}
\frac{d}{dt}\left(e^{-t\tilde{Z}_{1}}\circ e^{t\tilde{Z}}\right)^{*}\tilde{w}_{j} & =\left(e^{t\tilde{Z}}\right)^{*}\left(\tilde{Z}-\tilde{Z}_{1}\right)\left(e^{-t\tilde{Z}_{1}}\right)^{*}\tilde{w}_{j}=O\left(\left|\tilde{w}\right|^{\infty}\right)\nonumber \\
\textrm{and thus }\quad\tilde{z}_{j} & \coloneqq W^{*}\tilde{w}_{j}=\tilde{w}_{j}+O\left(\left|\tilde{w}\right|^{\infty}\right).\label{eq:final coordinates}
\end{align}
This finally gives 
\begin{align*}
\tilde{Z} & =W_{*}\tilde{Z}_{1}\\
 & =\frac{1}{2}\left(\partial_{\tilde{z}_{1}}+i\partial_{\tilde{z}_{2}}\right)-\frac{i}{2}\left(\partial_{\tilde{z}_{1}}\varphi+i\partial_{\tilde{z}_{2}}\varphi\right)\partial_{\tilde{z}_{3}}
\end{align*}
near zero from \prettyref{eq:modified vector field}, \prettyref{eq:scattering operator},
\prettyref{eq:final coordinates} proving the second part of the theorem.
The last part follows by a Taylor expansion and the corresponding
subharmonicity of $p\left(x_{1},x_{2}\right)$ \prettyref{eq:Christ normal form}.
\end{proof}
We remark that although Nelson's method may also be used to linearize
the almost analytic vector field $\tilde{Z}$ in some almost analytic
coordinates, the resulting coordinates thereby will not satisfy the
first property of the above \prettyref{thm:almost analytic coordinates}
which shall be used later.

Before putting the above coordinates to use in the next section, we
shall also need the construction of almost analytic continuations
of functions in the class $S_{H}^{m}$ defined in \prettyref{eq: function class}.
These shall be defined on the region
\begin{equation}
R_{\delta,U}\coloneqq\left\{ \left(x,y\right)\in U\times\mathbb{R}^{3}|\left|y\right|\leq\delta\left|x\right|^{2},\left|y_{3}\right|\leq\delta\left|x_{3}\right|\left|x\right|\right\} \label{eq:defined region}
\end{equation}
 for any $\delta>0$.
\begin{lem}
\label{lem:Almost-analytic-continuations}(Almost analytic continuations
in $S_{H}^{m}$) For each $\delta>0$ there exists almost analytic
extension map $\mathcal{E}:S_{H}^{m}\left(U\right)\rightarrow C^{\infty}\left(R_{\delta,U}\setminus\left\{ 0\right\} \right)$
satisfying 
\begin{align}
\left.\mathcal{E}f\right|_{\mathbb{R}} & =f,\nonumber \\
\partial_{\bar{z}}\mathcal{E}f & =O\left(\left|\textrm{Im}z\right|^{\infty}\right)\textrm{ uniformly on }R_{\delta,U}\textrm{ and }\nonumber \\
Lf\in C^{\infty}\left(U\right) & \implies\tilde{L}\mathcal{E}f\in C^{\infty}\left(R_{\delta,U}\right)\label{eq:properties of almost analytic extension}
\end{align}
for each $f\in S_{H}^{m}$ and vector field $L$ with almost analytic
extension $\tilde{L}$.
\end{lem}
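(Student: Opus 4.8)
The plan is to reduce the statement to the classical existence of almost analytic extensions of \emph{smooth} functions by exploiting the anisotropic scaling underlying the class $S_{H}^{m}$. Introduce the parabolic dilations $\delta_{\lambda}\left(x_{1},x_{2},x_{3}\right)=\left(\lambda x_{1},\lambda x_{2},\lambda^{r}x_{3}\right)$, which act holomorphically on $\mathbb{C}^{3}$ by $\delta_{\lambda}\left(z_{1},z_{2},z_{3}\right)=\left(\lambda z_{1},\lambda z_{2},\lambda^{r}z_{3}\right)$, and the homogeneous gauge $\rho\left(x\right)\coloneqq\left(\left(x_{1}^{2}+x_{2}^{2}\right)^{r}+x_{3}^{2}\right)^{1/2r}$, which is smooth away from the origin, comparable to $d^{H}\left(x,0\right)$, and satisfies $\rho\circ\delta_{\lambda}=\lambda\rho$ and $\left|x\right|\leq\rho\left(x\right)\lesssim\left|x\right|^{1/r}$ for $\left|x\right|\leq1$. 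Fix an anisotropic Littlewood--Paley partition $\sum_{j\in\mathbb{Z}}\phi\left(\delta_{2^{j}}x\right)=1$ on $\mathbb{R}^{3}\setminus\left\{ 0\right\} $ with $\phi\in C_{c}^{\infty}\left(\left\{ 1/2<\rho<2\right\} \right)$, set $\phi_{j}\coloneqq\phi\circ\delta_{2^{j}}$, $\psi\coloneqq\sum_{j\geq j_{0}}\phi_{j}$ equal to one near the origin, and write $f=\sum_{j\geq j_{0}}f\phi_{j}+\left(1-\psi\right)f$; the term $\left(1-\psi\right)f$ is supported away from the origin, where $f$ is already smooth, and is handled by the standard extension.

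The key point is that rescaling each dyadic piece to the fixed annulus $\left\{ 1/2<\rho<2\right\} $ produces functions with uniform $C^{\infty}$ bounds. Set $M\coloneqq-m+\frac{2}{r}-\left(2+r\right)$, the homogeneity degree dictated by \prettyref{eq: function class} together with $\mu\left(B\left(0;\varepsilon\right)\right)\sim\varepsilon^{2+r}$. Since under $\delta_{2^{-j}}$ each $x$-derivative of weight $w_{i}$ contributes a factor $2^{-jw_{i}}$ that exactly compensates the loss $2^{jw_{i}}$ in \prettyref{eq: function class}, the functions $g_{j}\coloneqq2^{jM}\left(f\phi_{j}\right)\circ\delta_{2^{-j}}$ satisfy $\left|\partial_{x}^{\alpha}g_{j}\right|\leq C_{\alpha}$ with $C_{\alpha}$ independent of $j$. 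Applying the classical construction of almost analytic extensions --- Borel summation $G_{j}\left(x+iy\right)=\sum_{k}\frac{1}{k!}\left(iy\cdot\partial_{x}\right)^{k}g_{j}\left(x\right)\chi\left(\lambda_{k}y\right)$ with $\chi\in C_{c}^{\infty}$, $\chi\equiv1$ near $0$, and $\lambda_{k}\uparrow\infty$ chosen from the bounds $C_{\alpha}$ and from the width $\epsilon_{\delta}\coloneqq C\delta\sup_{U}\rho+1$ of the complex strip we shall need --- yields $G_{j}\in C_{c}^{\infty}$, supported in $\left\{ \rho\left(\mathrm{Re}\,w\right)\sim1,\ \left|\mathrm{Im}\,w\right|<\epsilon_{\delta}\right\} $, with $\left.G_{j}\right|_{\mathbb{R}}=g_{j}$, uniform-in-$j$ $C^{\infty}$ bounds, and $\partial_{\bar{w}}G_{j}$ vanishing to infinite order along $\mathbb{R}^{3}$ at a rate uniform in $j$. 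One then sets
\[
\mathcal{E}f\left(z\right)\coloneqq\sum_{j\geq j_{0}}2^{-jM}G_{j}\left(\delta_{2^{j}}z\right)+\bigl(\text{standard a.a. extension of }\left(1-\psi\right)f\bigr).
\]
Because $G_{j}\left(\delta_{2^{j}}z\right)$ is supported where $\rho\left(\mathrm{Re}\,z\right)\sim2^{-j}$, only $O\left(1\right)$ terms are nonzero near any point of $R_{\delta,U}\setminus\left\{ 0\right\} $, so the sum is locally finite and hence smooth there; and restricting to $y=0$ gives $\sum_{j}f\phi_{j}+\left(1-\psi\right)f=f$, the first property.

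For the second property, holomorphy of $\delta_{2^{j}}$ gives $\partial_{\bar{z}_{i}}\left(G_{j}\circ\delta_{2^{j}}\right)=2^{jw_{i}}\left(\partial_{\bar{w}_{i}}G_{j}\right)\circ\delta_{2^{j}}$; and on the support, the defining constraints $\left|y_{1}\right|,\left|y_{2}\right|\leq\delta\left|x\right|^{2}$ and $\left|y_{3}\right|\leq\delta\left|x_{3}\right|\left|x\right|$ of $R_{\delta,U}$ translate, using $\left|x\right|\leq\rho$, $\left|x_{3}\right|\leq\rho^{r}$ and $2^{j}\sim\rho\left(x\right)^{-1}$, into $\left|\mathrm{Im}\left(\delta_{2^{j}}z\right)\right|\lesssim\delta\rho\left(x\right)$. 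Hence the uniform infinite-order vanishing of $\partial_{\bar{w}}G_{j}$ yields, for every $N$, $\left|\partial_{\bar{z}}\mathcal{E}f\right|\leq C_{N}\delta^{N}\rho\left(x\right)^{N+M-w_{i}}$, and likewise for derivatives; combining this with the alternative estimate $\left|\mathrm{Im}\left(\delta_{2^{j}}z\right)\right|\leq2^{jr}\left|y\right|$ shows $\partial_{\bar{z}}\mathcal{E}f$ vanishes to infinite order along $\left\{ \mathrm{Im}\,z=0\right\} $, uniformly on $R_{\delta,U}$. For the third property one argues exactly as in the uniqueness argument for almost analytic extensions that $\tilde{L}\mathcal{E}f=\mathcal{E}\left(Lf\right)+O\left(\left|\mathrm{Im}\,z\right|^{\infty}\right)$: writing $\tilde{L}=\sum\tilde{a}_{i}\partial_{z_{i}}$ with $\tilde{a}_{i}$ almost analytic, one has $\partial_{z_{i}}\mathcal{E}f=\partial_{x_{i}}\mathcal{E}f-\partial_{\bar{z}_{i}}\mathcal{E}f$ with the last term negligible by the second property, $\partial_{x_{i}}$ commutes with the extension up to negligible terms since the latter is built from the $\partial_{x}^{\alpha}f$, and a product of almost analytic functions agrees with the almost analytic extension of its boundary product up to $O\left(\left|\mathrm{Im}\,z\right|^{\infty}\right)$. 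When $Lf\in C^{\infty}\left(U\right)$ its dyadic pieces rescale to functions with uniform bounds and \emph{no} negative power of $2^{-j}$, so $\mathcal{E}\left(Lf\right)$ is the standard almost analytic extension of a function smooth at the origin, hence smooth on all of $R_{\delta,U}$; and the negligible error, by the bound $\left|\partial_{\bar{z}}\mathcal{E}f\right|\leq C_{N}\delta^{N}\rho^{N-C}$ together with $\rho\lesssim\left|x\right|^{1/r}$, is Euclidean-flat at the origin, hence extends smoothly across it.

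The main obstacle is not any single step --- each is classical once the scaling is in place --- but the bookkeeping of weights: one must check that every $O\left(\left|\mathrm{Im}\,z\right|^{\infty}\right)$ error above, after multiplication by the at-most-polynomial blow-up $\rho^{-C}$ of $f$, of $Lf$, and of their derivatives near the origin, still vanishes to infinite order there. This is exactly why the region $R_{\delta,U}$ is cut out by $\left|\mathrm{Im}\,z\right|\lesssim\left|\mathrm{Re}\,z\right|^{2}$ and $\left|\mathrm{Im}\,z_{3}\right|\lesssim\left|\mathrm{Re}\,z_{3}\right|\left|\mathrm{Re}\,z\right|$: these force the imaginary directions to be dominated by the parabolic scaling, equivalently $\left|\mathrm{Im}\left(\delta_{2^{j}}z\right)\right|\lesssim\delta\rho$ on every dyadic shell, which is precisely what converts ``infinite order along $\mathbb{R}^{3}$, uniformly in the shells'' into the asserted estimates.
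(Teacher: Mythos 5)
Your argument is correct in substance, but it proceeds by a genuinely different route than the paper. The paper applies the Borel--H\"ormander construction \emph{directly} to $f$, with cutoffs $\chi\left(\lambda_{|\alpha|}\left|y\right|\right)$ only in the imaginary variable: it observes that on $R_{\delta,U}$ the blow-up of $f^{(\alpha)}$ at the origin (of order $\rho^{M-\alpha\cdot w}$ with $M=-m+\frac{2}{r}-(2+r)$) is exactly compensated by the anisotropic smallness $\left|y_{1}\right|,\left|y_{2}\right|\lesssim\rho^{2}$, $\left|y_{3}\right|\lesssim\rho^{r+1}$ encoded in the region, so that $\left|\frac{(iy)^{\alpha}}{\alpha!}f^{(\alpha)}\right|\lesssim\delta^{|\alpha|}\rho^{M+|\alpha|}$, and the usual choice of rapidly growing $\lambda_{k}$ then gives $C^{\infty}$ convergence away from the origin. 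Your approach instead pre-packages this cancellation into an anisotropic Littlewood--Paley decomposition with the parabolic dilations $\delta_{\lambda}$: you rescale each dyadic shell to the unit annulus, where the $2^{jM}$-normalized pieces $g_{j}$ are bounded in $C^{\infty}$ uniformly in $j$, apply the classical extension there, and then glue back. This is a legitimate alternative --- it makes the underlying quasihomogeneous scaling of the symbol class $S_{H}^{m}$ explicit and yields the uniformity for free, at the price of more bookkeeping (the locally finite gluing, and the two-regime argument $\left|y\right|\gtrless\rho^{2r}$ needed to convert the shell-uniform estimates $\left|\mathrm{Im}\,\delta_{2^{j}}z\right|\lesssim\delta\rho$ and $\left|\mathrm{Im}\,\delta_{2^{j}}z\right|\leq2^{jr}\left|y\right|$ into genuine $O\left(\left|\mathrm{Im}\,z\right|^{\infty}\right)$ decay). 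The paper avoids the decomposition altogether. For the third property, both proofs reduce to the same observation that $g\coloneqq\tilde{L}\mathcal{E}f-\mathcal{E}\left(Lf\right)$ is an almost analytic continuation of zero whose Taylor coefficients in $y$ all vanish; your version phrases the smoothness across the origin via $\rho\lesssim\left|x\right|^{1/r}$ and the power decay in $\rho$, which matches the paper's intent. One detail worth being explicit about, which you gloss over: the uniqueness-up-to-$O\left(\left|\mathrm{Im}\,z\right|^{\infty}\right)$ of almost analytic extensions is applied here to functions whose $x$-derivatives blow up at the origin, so the implied constants in that comparison are themselves $\rho$-dependent; the argument still closes because $R_{\delta,U}$ forces $\left|y\right|\lesssim\rho^{2}$, but this is precisely the bookkeeping that must be spelled out to make the ``error is Euclidean-flat at the origin'' step rigorous.
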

\begin{proof}
The map $\mathcal{E}$ is defined by the usual Borel-Hörmander construction.
Namely with $\chi\in C_{c}^{\infty}\left(\mathbb{R}\right)$ and equal
to one near zero, set 
\begin{equation}
\left(\mathcal{E}f\right)\left(x,y\right)\coloneqq\sum_{\alpha}\frac{\left(iy\right)^{\alpha}}{\alpha!}f^{\left(\alpha\right)}\left(x\right)\chi\left(\lambda_{\left|\alpha\right|}\left|y\right|\right).\label{eq:almost analytic continuattion}
\end{equation}
Note that on the given region $R_{\delta,U}$ \prettyref{eq:defined region}
each successive term above satisfies the estimate $\frac{\left(iy\right)^{\alpha}}{\alpha!}f^{\left(\alpha\right)}\left(x\right)=O\left(\left|y\right|^{\left(\left|\alpha\right|-rm\right)/2}\right)$.
For a suitable sequence constants $\lambda_{k}\rightarrow\infty$
sufficiently fast, the series above is then seen to be $C^{\infty}$
convergent, and hence defining a smooth function, on compact subsets
of $R_{\delta,U}\setminus\left\{ 0\right\} $. The first property
in \prettyref{eq:properties of almost analytic extension} then follows
immediately from the above definition. The second property, follows
easily on differentiating the definition \prettyref{eq:almost analytic continuattion}
and applying the estimates \prettyref{eq: function class} on the
region $R_{\delta,U}$. Finally for the last property, note that $g\coloneqq\tilde{L}\mathcal{E}f-\mathcal{E}Lf$
is an almost analytic continuation of zero in the sense $\left.g\right|_{\mathbb{R}}=0$
and $\partial_{\bar{z}}g=O\left(\left|\textrm{Im}z\right|^{\infty}\right)\textrm{ uniformly on }R_{\delta,U}$.
It furthermore satisfies estimates similar to \prettyref{eq: function class}
by definition. The Taylor expansion of $g$ in the $y$-variable is
seen to be 
\[
g\left(x,y\right)\coloneqq\sum_{\left|\alpha\right|\leq N}\frac{\left(iy\right)^{\alpha}}{\alpha!}\underbrace{g^{\left(\alpha\right)}\left(x\right)}_{=0}+O\left(\left|y\right|^{\left(N+1-rm\right)/2}\right),
\]
giving $g=O\left(\left|\textrm{Im}z\right|^{\infty}\right)$ and $g\in C^{\infty}\left(R_{\delta,U}\right)$.
Since $Lf\in C^{\infty}\left(U\right)$ is smooth and $\mathcal{E}Lf\in C^{\infty}\left(R_{\delta,U}\right)$
by construction, the result follows. 
\end{proof}

\section{\label{sec:Szego-parametrix}Szeg\H{o} parametrix}

In this section we shall prove our main \prettyref{thm:main thm parametrix}.
It shall first be useful to define a requisite symbol calculus below.

\subsection{\label{subsec:Symbol-spaces-and} Symbol spaces and calculus}

Below we denote by $x=\left(\hat{x},x_{3}\right)$ local coordinates
on $\mathbb{R}^{3}$, with $\hat{x}=\left(x_{1},x_{2}\right)$ denoting
the local coordinates on $\mathbb{R}^{2}$. 

A smooth function $f\left(\hat{x},\hat{y}\right)\in C^{\infty}\left(\mathbb{R}^{2}\times\mathbb{R}^{2}\right)$
is said to lie in the class $f\in\hat{S}\left(\mathbb{R}^{2}\times\mathbb{R}^{2}\right)$
if for each $\left(\hat{\alpha},\hat{\beta}\right)\in\mathbb{N}_{0}^{4}$
there exists $N\left(\hat{\alpha},\hat{\beta}\right)\in\mathbb{N}$
such that
\begin{equation}
\left|\partial_{\hat{x}}^{\hat{\alpha}}\partial_{\hat{y}}^{\hat{\beta}}f\left(\hat{x},\hat{y}\right)\right|\leq C_{N,\hat{\alpha}\hat{\beta}}\frac{\left(1+\left|\hat{x}\right|+\left|\hat{y}\right|\right)^{N\left(\hat{\alpha},\hat{\beta}\right)}}{\left(1+\left|\hat{x}-\hat{y}\right|\right)^{-N}},\label{eq:diagonal schwartz class}
\end{equation}
 $\forall\left(\hat{x},\hat{y},N\right)\in\mathbb{R}^{2}\times\mathbb{R}^{2}\times\mathbb{N}.$
We note that for $f\in\hat{S}\left(\mathbb{R}^{2}\times\mathbb{R}^{2}\right)$
the functions
\begin{equation}
f\left(.,\hat{y}\right),\,f\left(\hat{x},.\right)\in\mathcal{S}\left(\mathbb{R}^{2}\right)\label{eq:fixed argument gives Schwartz}
\end{equation}
 are Schwartz for fixed $\hat{y}$ and $\hat{x}$ respectively. 

We now introduce some symbol spaces.
\begin{defn}
\label{def:symbol class} Let $r\in\mathbb{N}$, $r\geq2$. A function
$a\left(x,y,t\right)\in C^{\infty}\left(\mathbb{R}_{x,y}^{6}\times\mathbb{R}_{t}\right)$
is said to lie in the symbol class $\hat{S}_{\frac{1}{r}}^{m}$, $m\in\mathbb{R}$,
if for each $\left(\alpha,\beta,\gamma\right)\in\mathbb{N}_{0}^{7}$
there exists $N\left(\alpha,\beta,\gamma\right)\in\mathbb{N}$ such
that
\begin{align}
\left|\partial_{x}^{\alpha}\partial_{y}^{\beta}\partial_{t}^{\gamma}a(x,y,t)\right| & \leq C_{N,\alpha\beta\gamma}\left\langle t\right\rangle ^{m-\gamma+\frac{1}{r}\left(\left|\hat{\alpha}\right|+\left|\hat{\beta}\right|\right)+\alpha_{3}+\beta_{3}}\frac{\left(1+\left|t^{\frac{1}{r}}\hat{x}\right|+\left|t^{\frac{1}{r}}\hat{y}\right|\right)^{N\left(\alpha,\beta,\gamma\right)}}{\left(1+\left|t^{\frac{1}{r}}\hat{x}-t^{\frac{1}{r}}\hat{y}\right|\right)^{-N}},\label{symbolic estimates}\\
 & \quad\quad\quad\quad\qquad\qquad\forall\left(x,y,t,N\right)\in\mathbb{R}_{x,y}^{6}\times\mathbb{R}_{t}\times\mathbb{N}.\nonumber 
\end{align}
 We further set 
\begin{equation}
\hat{S}_{\frac{1}{r}}^{m,k}\coloneqq\bigoplus_{p+p'\leq k}\left(tx_{3}\right)^{p}\left(ty_{3}\right)^{p'}\hat{S}_{\frac{1}{r}}^{m},\quad\forall\left(m,k\right)\in\mathbb{R}\times\mathbb{N}_{0}.\label{eq:S^mk def.}
\end{equation}

The subset $\hat{S}_{\frac{1}{r},{\rm cl\,}}^{m}\subset\hat{S}_{\frac{1}{r}}^{m}$
of classical symbols is those $a\left(x,y,t\right)$ for which there
exist $a_{jpp'}\left(\hat{x},\hat{y}\right)\in\hat{S}\left(\mathbb{R}^{2}\times\mathbb{R}^{2}\right)$,
$j,p,p'\in\mathbb{N}_{0}$, such that
\begin{equation}
a\left(x,y,t\right)-\sum_{j=0}^{N}\sum_{p+p'\leq j}t^{m-\frac{1}{r}j}\left(tx_{3}\right)^{p}\left(ty_{3}\right)^{p'}a_{jpp'}\left(t^{\frac{1}{r}}\hat{x},t^{\frac{1}{r}}\hat{y}\right)\in\hat{S}_{\frac{1}{r}}^{m-\left(N+1\right)\frac{1}{r},N+1}\label{eq:symbolic expansion}
\end{equation}
$\forall N\in\mathbb{N}_{0}$. We also set 
\[
\hat{S}_{\frac{1}{r},{\rm cl\,}}^{m,k}\coloneqq\bigoplus_{p+p'\leq k}\left(tx_{3}\right)^{p}\left(ty_{3}\right)^{p'}\hat{S}_{\frac{1}{r},{\rm cl\,}}^{m}.
\]
\end{defn}
The following inclusions are clear
\begin{align}
\partial_{t}\hat{S}_{\frac{1}{r}}^{m,k} & \subset\hat{S}_{\frac{1}{r}}^{m-1,k}\nonumber \\
\partial_{\hat{x}}\hat{S}_{\frac{1}{r}}^{m,k},\,\partial_{\hat{y}}\hat{S}_{\frac{1}{r}}^{m,k} & \subset\hat{S}_{\frac{1}{r}}^{m+\frac{1}{r},k}\nonumber \\
\partial_{x_{3}}\hat{S}_{\frac{1}{r}}^{m,k},\,\partial_{y_{3}}\hat{S}_{\frac{1}{r}}^{m,k} & \subset\hat{S}_{\frac{1}{r}}^{m+1,k}\nonumber \\
\hat{S}_{\frac{1}{r}}^{m,k} & \subset\hat{S}_{\frac{1}{r}}^{m+1,k-1},\quad k\geq1,\nonumber \\
\hat{S}_{\frac{1}{r}}^{m,k} & \subset\hat{S}_{\frac{1}{r}}^{m,k+1},\nonumber \\
\hat{S}_{\frac{1}{r}}^{m,k} & \subset\hat{S}_{\frac{1}{r}}^{m',k},\quad m<m',\label{eq:inclusions symbol spaces}
\end{align}
with similar inclusions applying for $\hat{S}_{\frac{1}{r},\textrm{cl}}^{m,k}$.

Next we set 
\begin{align*}
\hat{S}_{\frac{1}{r}}^{m,k,-\infty} & \coloneqq\cap_{j\in\mathbb{N}_{0}}\hat{S}_{\frac{1}{r}}^{m-\frac{j}{r},k+j}\\
\hat{S}_{\frac{1}{r}}^{-\infty} & \coloneqq\cup_{m,k}\hat{S}_{\frac{1}{r}}^{m,k,-\infty}.
\end{align*}

Following a standard Borel construction, one has asymptotic summation:
for any $a_{j}\in\hat{S}_{\frac{1}{r}}^{m-\frac{1}{r}j,k+j}$, $j=0,1,\ldots$,
there exists $a\in\hat{S}_{\frac{1}{r}}^{m,k}$ such that 
\begin{equation}
a-\left(\sum_{j=1}^{N}a_{j}\right)\in\hat{S}_{\frac{1}{r}}^{m-\frac{1}{r}\left(N+1\right),k+N+1},\quad\forall N\in\mathbb{N},\label{eq:asymptotic sum}
\end{equation}
with a similar property being true for the classical symbols $\hat{S}_{\frac{1}{r},{\rm cl\,}}^{m}$.
Moreover the symbol $a$ \prettyref{eq:asymptotic sum} above is unique
modulo $\hat{S}_{\frac{1}{r}}^{m,k,-\infty}$.

We now define the quantizations of the symbols in \prettyref{def:symbol class}.
\begin{defn}
\label{exotic pseudos def.} An operator $G:C_{c}^{\infty}\left(\mathbb{R}^{3}\right)\rightarrow C^{-\infty}\left(\mathbb{R}^{3}\right)$
is said to be in the class $G\in\hat{L}_{\frac{1}{r},{\rm cl\,}}^{m,k}$
if its distribution kernel satisfies
\begin{equation}
G\left(x,y\right)\equiv g^{L}\coloneqq\int_{0}^{\infty}dt\,e^{it\left(x_{3}-y_{3}\right)}g\left(x,y,t\right)\quad\label{eq:quantization definition}
\end{equation}
for some $g\in\hat{S}_{\frac{1}{r},\textrm{cl}}^{m,k}+\hat{S}_{\frac{1}{r}}^{-\infty}$. 
\end{defn}
It is an easy exercise that for $G\in\hat{L}_{\frac{1}{r}}^{m}$,
$m<-1-k$, the kernel $G\left(.,y\right)\in C^{k}$ for fixed $y$.
We next have a reduction lemma showing that the show that the amplitude
$g$ in the quantization above \prettyref{eq:quantization definition}
maybe chosen independent of $x_{3}$ or $y_{3}$. 
\begin{lem}
For any $g\in\hat{S}_{\frac{1}{r},\textrm{cl}}^{m,k}$ there exist
$g_{1},g_{2}\in\hat{S}_{\frac{1}{r},\textrm{cl}}^{m,k}$ independent
of $x_{3},y_{3}$ respectively such that $g^{L}=g_{1}^{L}=g_{2}^{L}$
.
\end{lem}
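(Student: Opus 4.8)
The plan is to integrate by parts in $t$ inside the oscillatory integral \prettyref{eq:quantization definition}, trading factors of $x_3$ (resp. $y_3$) for $t$-derivatives of the amplitude and using the identity $x_3\,e^{it(x_3-y_3)} = \tfrac{1}{i}\partial_t e^{it(x_3-y_3)} + y_3\,e^{it(x_3-y_3)}$. Iterating this will let us eliminate the $x_3$-dependence order by order, at the expense of increasing $k$ and decreasing $m$ in a way that stays within the symbol spaces by the inclusions \prettyref{eq:inclusions symbol spaces}; the leftover tail is then resummed by the asymptotic summation \prettyref{eq:asymptotic sum}. The construction for $g_2$ (independent of $y_3$) is entirely symmetric, so I will only carry out the case of $g_1$.

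First I would reduce to the classical (non-$\hat S^{-\infty}$) part: writing $g$ in its symbolic expansion \prettyref{eq:symbolic expansion}, it suffices to treat a single term $t^{m-\frac1r j}(tx_3)^p(ty_3)^{p'} a_{jpp'}(t^{\frac1r}\hat x, t^{\frac1r}\hat y)$, and in fact by linearity a single monomial in $(tx_3)$, i.e. a symbol of the form $h(x,y,t) = (tx_3)^p\, h_0(\hat x,\hat y,x_3\text{-independent part},t)$ with $h_0\in\hat S^{m'}_{\frac1r}$ independent of $x_3$, where the $y_3$-dependence is harmless. For such a symbol I would perform the substitution
\begin{equation}
x_3\, e^{it(x_3-y_3)} = \frac{1}{i}\,\partial_t\! \left(e^{it(x_3-y_3)}\right) + y_3\, e^{it(x_3-y_3)},\label{eq:ibp-identity}
\end{equation}
apply it $p$ times to the factor $x_3^p$, and integrate the $\partial_t$ terms by parts (the boundary term at $t=0$ vanishes for $m$ sufficiently negative, and in general is smoothing after one extra integration by parts since the amplitude and all its $t$-derivatives are Schwartz in $t^{1/r}\hat x$; alternatively one absorbs it into the $\hat S^{-\infty}_{\frac1r}$ error). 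Each integration by parts moves a $\partial_t$ onto the amplitude $t^p (tx_3)^{-p}h(x,y,t) = t^p h_0$; using $\partial_t\hat S^{m,k}_{\frac1r}\subset\hat S^{m-1,k}_{\frac1r}$ together with $\hat S^{m,k}_{\frac1r}\subset\hat S^{m+1,k-1}_{\frac1r}$ one checks that the result lies in $\hat S^{m,k}_{\frac1r,\mathrm{cl}}$ with the $x_3$-power strictly reduced, while the $y_3$-powers produced via \prettyref{eq:ibp-identity} are exactly the $(ty_3)^{p'}$-factors already allowed in $\hat S^{m,k,}_{\frac1r,\mathrm{cl}}$. Thus one obtains $g^L = g^{(1)L}$ with $g^{(1)}\in\hat S^{m,k}_{\frac1r,\mathrm{cl}}$ having one fewer order of $x_3$, plus a controlled remainder; repeating and asymptotically summing the errors via \prettyref{eq:asymptotic sum} yields $g_1\in\hat S^{m,k}_{\frac1r,\mathrm{cl}}$ independent of $x_3$ with $g_1^L \equiv g^L$ modulo a kernel in $\hat L^{-\infty}$, which one folds back into $g_1$ using $\hat S^{-\infty}_{\frac1r}$.

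The main obstacle I anticipate is bookkeeping the symbol-space membership through the integration by parts: one must verify that differentiating $t^{p}h_0(t^{\frac1r}\hat x,t^{\frac1r}\hat y,t)$ in $t$ — which produces both an extra $t^{-1}$ factor and terms where the $\partial_t$ hits the rescaled arguments $t^{\frac1r}\hat x$, giving factors of $t^{\frac1r - 1}\hat x$ — still lands in the claimed class $\hat S^{m,k}_{\frac1r}$ after accounting for the prefactor $(tx_3)^{p}$ that was stripped off. Here the key point is that a factor of $\hat x$ is absorbed by the Schwartz-type decay in $t^{1/r}\hat x$ built into \prettyref{symbolic estimates} (it only changes the polynomial weight $N(\alpha,\beta,\gamma)$), and the bookkeeping of orders is precisely what the chain of inclusions \prettyref{eq:inclusions symbol spaces}, especially $\hat S^{m,k}_{\frac1r}\subset\hat S^{m+1,k-1}_{\frac1r}$, is designed to handle. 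Once this single-step estimate is nailed down the iteration and Borel summation are routine.
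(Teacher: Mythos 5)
Your approach of integrating by parts in $t$ using $\partial_t e^{it(x_3-y_3)} = i(x_3-y_3)e^{it(x_3-y_3)}$ to trade powers of $(tx_3)$ for $(ty_3)$-factors and lower-order terms, followed by asymptotic summation, is exactly the second half of the paper's proof (there written as $ty_3 = tx_3 + t(y_3-x_3)$ and IBP, but it is the same identity). However, the paper's proof has a first step which you omit and which does real work: it first defines $g_1 := [e^{i\partial_t\partial_{y_3}}g]_{y_3=x_3}$ via the Kuranishi trick and invokes H\"ormander's Theorem 7.6.5 \cite[Sec. 7.6]{HormanderI} to conclude that this $g_1$ satisfies $G = g_1^L$ exactly and lies in $\hat{S}^{m+6,k}_{\frac1r}$ (non-classical, with an order loss), and in particular that the operation maps $\hat{S}^{-\infty}_{\frac1r}$ to $\hat{S}^{-\infty}_{\frac1r}$. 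Only then is the classical expansion of this $g_1$ computed by your IBP mechanism. The point of the first step is that the $\hat{S}^{-\infty}_{\frac1r}$-remainder left over after asymptotic summation is itself already independent of the variable being removed, because it equals $g_1^{(K)} - g_1^{(A)}$ where both are independent of that variable; whereas in your scheme the asymptotic-summation remainder $g - g_1$ is merely in $\hat{S}^{-\infty}_{\frac1r}$ but still carries $x_3$-dependence, so "folding it back into $g_1$" as you suggest would reintroduce $x_3$ and is not legitimate. This is the genuine gap: manipulating the classical expansion term by term does not by itself control the non-classical tail. Two smaller inaccuracies: the boundary term at $t=0$ produced by each integration by parts is a smooth (in $(x,y)$) \emph{kernel}, not something absorbable into $\hat{S}^{-\infty}_{\frac1r}$, which by itself does not consist of symbols rapidly decaying in $t$; and the decay built into $\hat{S}(\mathbb{R}^2\times\mathbb{R}^2)$ in \prettyref{eq:diagonal schwartz class} is rapid in $t^{1/r}(\hat{x} - \hat{y})$, not in $t^{1/r}\hat{x}$ alone, so the phrase "Schwartz in $t^{1/r}\hat x$" is not accurate and in any case plays no role at the $t=0$ endpoint.
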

\begin{proof}
 By a Fourier transform, it is easy to see that
\begin{align*}
G & =g_{1}^{L}\quad\textrm{for }\\
g_{1}\left(\hat{x},x_{3},\hat{y},t\right) & =\left[e^{i\partial_{t}\partial_{y_{3}}}g\right]_{y_{3}=x_{3}}.
\end{align*}
Here the above notation follows \cite[Sec. 7.6]{HormanderI} wherein
the partial $y_{3},t$ Fourier transform $\mathcal{F}_{y_{3},t}$
of $e^{i\partial_{t}\partial_{y_{3}}}g$ is given
\[
e^{i\partial_{t}\partial_{y_{3}}}g=\mathcal{F}_{y_{3},t}^{-1}e^{i\tau\eta_{3}}\mathcal{F}_{y_{3},t}g
\]
 by multiplication by the exponential of the dual variables $\eta_{3},\tau$
respectively. From \cite[Thm. 7.6.5]{HormanderI} and \prettyref{eq:inclusions symbol spaces}
it is then easy to see that $g_{1}\in\hat{S}_{\frac{1}{r}}^{m+6,k}\left(U\right)$.
In particular we have $g_{1}\in\hat{S}_{\frac{1}{r}}^{-\infty}$ for
$g\in\hat{S}_{\frac{1}{r}}^{-\infty}$.

Next for $g\in\hat{S}_{\frac{1}{r},\textrm{cl}}^{m,k}$ we plug in
its classical expansion \prettyref{eq:symbolic expansion} into \prettyref{eq:quantization definition}.
By writing $ty_{3}=tx_{3}+t\left(y_{3}-x_{3}\right)$ and repeated
integration by parts using $\partial_{t}e^{it\left(x_{3}-y_{3}\right)}=i\left(x_{3}-y_{3}\right)e^{it\left(x_{3}-y_{3}\right)}$
we obtain $g_{1,N}\in\hat{S}_{\frac{1}{r},\textrm{cl}}^{m,k}$, $N\in\mathbb{N}_{0}$,
independent of $y_{3}$ such that $g_{1}-g_{1,N}\in\hat{S}_{\frac{1}{r}}^{m-\left(N+1\right)\frac{1}{r},k+N+1}$
, $\forall N\in\mathbb{N}_{0}$. By asymptotic summation we find $g_{1}\sim g_{1,1}+\sum_{N=1}^{\infty}\left(g_{1,N+1}-g_{1,N}\right)\in\hat{S}_{\frac{1}{r},\textrm{cl}}^{m,k}$
, independent of $y_{3}$, which satisfies $g-g_{1}\in\hat{S}_{\frac{1}{r}}^{m,k,-\infty}\subset\hat{S}_{\frac{1}{r}}^{-\infty}$.
From this and the first part of the proof the Lemma follows. The construction
of $g_{2}$ is similar.
\end{proof}
Following the above we shall define the principal symbol in $\hat{L}_{\frac{1}{r},{\rm cl\,}}^{m,k}$
via
\[
\sigma_{L}\left(G\right)=g_{000}\left(\hat{x},\hat{y}\right)\in\hat{S}(\mathbb{R}^{2}\times\mathbb{R}^{2}),
\]
$G\in\hat{L}_{\frac{1}{r},{\rm cl\,}}^{m,k}$, as the leading term
in the symbolic expansion \prettyref{eq:symbolic expansion}. The
following symbol exact sequence is then clear 
\[
0\rightarrow\hat{L}_{\frac{1}{r},{\rm cl\,}}^{m-\frac{1}{r},k+1}\rightarrow\hat{L}_{\frac{1}{r},{\rm cl\,}}^{m,k}\xrightarrow{\sigma_{L}}\hat{S}(\mathbb{R}^{2}\times\mathbb{R}^{2})\rightarrow0.
\]
The class $\hat{L}_{\frac{1}{r},{\rm cl\,}}^{m,k}(U)$ is clearly
closed under adjoints. The symbol of the adjoint is furthermore easily
computed 
\begin{equation}
\sigma_{L}\left(G^{*}\right)\left(\hat{x},\hat{y}\right)=\overline{\sigma_{L}\left(G\right)\left(\hat{y},\hat{x}\right)}.\label{eq:symbol adjoint}
\end{equation}

We next have the composition of operators in $\hat{L}_{\frac{1}{r},{\rm cl\,}}^{m,k}$.
\begin{prop}
\label{closure under composition} For any $G\in\hat{L}_{\frac{1}{r},{\rm cl\,}}^{m,k}$,
$H\in\hat{L}_{\frac{1}{r},{\rm cl\,}}^{m',k'}$ one has the composition
$G\circ H\in\hat{L}_{\frac{1}{r},{\rm cl\,}}^{m+m'-\frac{2}{r},k+k'}$.
Furthermore the leading symbol of the composition is given by
\begin{equation}
\sigma_{L}\left(G\circ H\right)\left(\hat{x},\hat{y}\right)=\int d\hat{u}\,\sigma_{L}\left(G\right)\left(\hat{x},\hat{u}\right)\sigma_{L}\left(H\right)\left(\hat{u},\hat{y}\right).\label{eq:leading symbol composition}
\end{equation}
 
\end{prop}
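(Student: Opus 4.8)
The plan is to reduce the operator composition to a single integral over the middle fibre variable and then to verify that the resulting amplitude again lies in the classical symbol scale, with order lowered by $\tfrac{2}{r}$ and filtration index $k+k'$. Recall from \eqref{eq:quantization definition} that $G\equiv g^{L}$ and $H\equiv h^{L}$ with $g\in\hat S_{\frac1r,\mathrm{cl}}^{m,k}+\hat S_{\frac1r}^{-\infty}$, $h\in\hat S_{\frac1r,\mathrm{cl}}^{m',k'}+\hat S_{\frac1r}^{-\infty}$. The cross terms involving an $\hat S_{\frac1r}^{-\infty}$ factor land, by the estimate below with one order let to $-\infty$, again in $\hat S_{\frac1r}^{-\infty}$, hence are absorbed by the $+\hat S_{\frac1r}^{-\infty}$ allowed in the definition of $\hat L_{\frac1r,\mathrm{cl}}^{m+m'-\frac2r,\,k+k'}$; so I would assume $g,h$ classical. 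Applying the reduction lemma stated above to the first factor in its second slot and to the second factor in its first slot, I would further assume --- writing $z=(\hat z,z_{3})$ for the variable over which $G\circ H$ is composed --- that $g=g(x,\hat z,t)$ and $h=h(\hat z,y,t)$ are independent of $z_{3}$, so that $g=\sum_{p\le k}(tx_{3})^{p}g_{p}$ and $h=\sum_{p'\le k'}(ty_{3})^{p'}h_{p'}$ with $g_{p}\in\hat S_{\frac1r}^{m}$, $h_{p'}\in\hat S_{\frac1r}^{m'}$ all $z_{3}$-independent.

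With this normalisation the $z_{3}$-integration is exact: from
\[
(G\circ H)(x,y)=\int d\hat z\,dz_{3}\int_{0}^{\infty}\!dt\int_{0}^{\infty}\!ds\;e^{it(x_{3}-z_{3})}\,e^{is(z_{3}-y_{3})}\,g(x,\hat z,t)\,h(\hat z,y,s)
\]
the $z_{3}$-integral produces $2\pi\,\delta(t-s)$, whence $G\circ H\equiv(g\# h)^{L}$ with $(g\# h)(x,y,t)=2\pi\int d\hat z\;g(x,\hat z,t)\,h(\hat z,y,t)$ (the constant $2\pi$ from the $z_{3}$-integration matching \eqref{eq:leading symbol composition} up to the normalisation in \eqref{eq:quantization definition}). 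It then suffices to prove the purely symbolic statement that $\int d\hat z\;a(x,\hat z,t)\,b(\hat z,y,t)\in\hat S_{\frac1r}^{\mu+\mu'-\frac2r}$ whenever $a\in\hat S_{\frac1r}^{\mu}$ depends only on $(x,\hat z,t)$ and $b\in\hat S_{\frac1r}^{\mu'}$ only on $(\hat z,y,t)$, together with its classical refinement. Indeed, granting this, the powers $(tx_{3})^{p}$, $(ty_{3})^{p'}$ ($p\le k$, $p'\le k'$) factor through the $\hat z$-integral, giving $(g\# h)\in\bigoplus_{p\le k,\,p'\le k'}(tx_{3})^{p}(ty_{3})^{p'}\hat S_{\frac1r}^{m+m'-\frac2r}\subset\hat S_{\frac1r}^{m+m'-\frac2r,\,k+k'}$.

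For the symbolic statement I would differentiate under the integral sign and then substitute $\hat z=t^{-1/r}\hat w$: the Jacobian $t^{-2/r}$ supplies exactly the drop $-\tfrac2r$ in the order, and the $\langle t\rangle$-powers in \eqref{symbolic estimates} multiply as expected. In the rescaled integrand $a$ contributes rapid decay in $|t^{1/r}\hat x-\hat w|$ and $b$ rapid decay in $|\hat w-t^{1/r}\hat y|$, while the on-diagonal polynomial factors $(1+|t^{1/r}\hat x|+|\hat w|)^{N_{1}}$ and $(1+|\hat w|+|t^{1/r}\hat y|)^{N_{2}}$ are absorbed by spending part of this decay via the triangle inequality; a Peetre-type estimate then bounds the $\hat w$-integral by a constant multiple of $(1+|t^{1/r}\hat x|+|t^{1/r}\hat y|)^{N_{3}}(1+|t^{1/r}\hat x-t^{1/r}\hat y|)^{-N'}$, which is of the shape required by \eqref{symbolic estimates} at order $\mu+\mu'-\tfrac2r$. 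The same estimate shows $\hat S(\mathbb{R}^{2}\times\mathbb{R}^{2})$ is closed under the fibre pairing $(f_{1},f_{2})\mapsto\int d\hat w\,f_{1}(\cdot,\hat w)\,f_{2}(\hat w,\cdot)$; inserting the classical expansions \eqref{eq:symbolic expansion} of $g$ and $h$ and rescaling term by term then yields the classical expansion of $(g\# h)$ with
\[
(g\# h)_{\ell p p'}(\hat x,\hat y)=2\pi\sum_{j+j'=\ell}\int d\hat w\;g_{jp}(\hat x,\hat w)\,h_{j'p'}(\hat w,\hat y),
\]
whose leading term $(\ell,p,p')=(0,0,0)$ is precisely the formula \eqref{eq:leading symbol composition}.

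I expect the fibre-composition estimate to be the main obstacle --- showing that the two off-diagonal decays combine, against the on-diagonal polynomial growth, into a single estimate of the same type with the order lowered by exactly $\tfrac2r$ --- because the anisotropic scaling $t^{1/r}$ in the $\hat x$- and $\hat y$-slots makes the bookkeeping of the exponents $N_{1},N_{2},N,\ldots$ somewhat delicate. A minor additional point is the region $t$ near $0$, where the off-diagonal decay in \eqref{symbolic estimates} degenerates and the $\hat z$-integral need not converge termwise; I would handle this with a cutoff $\chi\in C_{c}^{\infty}$ in $t$ supported near the origin, checking directly that $\chi(t)(g\# h)^{L}$ has a smooth kernel and so does not affect membership in $\hat L_{\frac1r,\mathrm{cl}}^{m+m'-\frac2r,\,k+k'}$.
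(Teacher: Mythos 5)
Your argument follows essentially the same route as the paper's proof: use the reduction lemma to make the amplitude of $G$ (respectively $H$) independent of the third component of the intermediate variable, carry out the $z_3$-integration (Fourier inversion, producing the $\delta(t-s)$), rescale $\hat z\mapsto t^{-1/r}\hat w$ to extract the $-\tfrac{2}{r}$ drop via the Jacobian, and then push the classical expansions through the fibre integral using the closure of $\hat S(\mathbb{R}^2\times\mathbb{R}^2)$ under the pairing $\int d\hat w\,f_1(\cdot,\hat w)f_2(\hat w,\cdot)$. Your extra care near $t=0$ (the $C^\infty$-smoothing cutoff) and your tracking of the $2\pi$ from the $z_3$-integration are sensible refinements the paper omits, but they do not change the substance of the argument.
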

\begin{proof}
Write $G=g_{1}^{L}$ , $H=h_{2}^{L}$ in terms of their $x_{3},y_{3}$
independent quantizations respectively. From Fourier inversion it
is easy to check that 
\begin{align*}
\left(G\circ H\right)\left(x,y\right) & =\int dt\,e^{it\left(x_{3}-y_{3}\right)}\left(g\circ h\right)\left(x,y,t\right)\quad\textrm{ for }\\
\left(g\circ h\right)\left(x,y,t\right) & \coloneqq\int d\hat{u}\,g\left(\hat{x},x_{3},\hat{u},t\right)h\left(\hat{u},\hat{y},y_{3},t\right)\\
 & =t^{-2\frac{1}{r}}\int dt\,e^{it(x_{3}-y_{3})}d\hat{v}g\left(\hat{x},x_{3},t^{-\frac{1}{r}}\hat{v},t\right)h\left(t^{-\frac{1}{r}}\hat{v},\hat{y},y_{3},t\right)
\end{align*}
upon a change of variables $t^{\frac{1}{r}}\hat{u}=\hat{v}$. The
$\hat{v}$ integral is seen to be convergent on account of \prettyref{eq:fixed argument gives Schwartz},
which also gives the necessary symbolic estimates for $g\circ h$.
To obtain the symbolic expansion, we plug $x_{3},y_{3}$ independent
symbolic expansions for $g$, $h$ respectively into the above to
obtain a symbolic expansion for the composed symbol $g\circ h$ along
with the formula \prettyref{eq:leading symbol composition} for the
leading part.
\end{proof}
Finally we show that our algebra of operators is a module over the
usual algebra of pseudodifferential operators.
\begin{prop}
\label{class is module over pso} Let $G\in\hat{L}_{\frac{1}{r},{\rm cl\,}}^{m,k}$
and let $P\in\Psi_{{\rm cl\,}}^{m'}$ be a classical pseudodifferential
operator on $U$ of order $k$. Then, $PG\in\hat{L}_{\frac{1}{r},{\rm cl\,}}^{m+m',k}$
with leading symbol
\begin{equation}
\sigma_{L}\left(PG\right)\left(\hat{x},\hat{y}\right)=\sigma\left(P\right)\left(0,0;0,1\right)\sigma_{L}\left(G\right)\left(\hat{x},\hat{y}\right).\label{eq:symbol of product with pdo}
\end{equation}
\end{prop}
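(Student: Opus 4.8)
The plan is to compute the composition $PG$ via a partial Fourier transform in the $x_{3}$--variable, exactly as in the proof of \prettyref{closure under composition}, and then to recognize the resulting amplitude as a classical symbol in $\hat{S}_{\frac{1}{r},{\rm cl\,}}^{m+m',k}$. First, by the reduction lemma above I would write $G=g_{1}^{L}$ with $g_{1}\in\hat{S}_{\frac{1}{r},{\rm cl\,}}^{m,k}$ \emph{independent of $x_{3}$}, and assume $P$ properly supported at the cost of a smoothing operator, whose composition with $G$ has $C^{\infty}$ kernel and so lies in $\hat{L}_{\frac{1}{r},{\rm cl\,}}^{-\infty}$. Representing $P$ by its full classical symbol $p\in S_{{\rm cl\,}}^{m'}$ and using \prettyref{eq:fixed argument gives Schwartz} to compute that, for fixed $y$, the Fourier transform of $x\mapsto G\left(x,y\right)$ equals $2\pi\,e^{-i\xi_{3}y_{3}}\,\widehat{g_{1}}\left(\hat{\xi},\hat{y},\xi_{3}\right)$ supported in $\left\{ \xi_{3}>0\right\} $ --- here $\widehat{g_{1}}$ is the Fourier transform of $g_{1}$ in its first $\mathbb{R}^{2}$ slot, which is Schwartz there again by \prettyref{eq:fixed argument gives Schwartz} --- a short computation gives
\[
\left(PG\right)\left(x,y\right)=\int_{0}^{\infty}dt\,e^{it\left(x_{3}-y_{3}\right)}q\left(x,\hat{y},t\right),\qquad q\left(x,\hat{y},t\right)\coloneqq\left(2\pi\right)^{-2}\int d\hat{\xi}\,e^{i\hat{x}\cdot\hat{\xi}}\,p\left(x,\hat{\xi},t\right)\,\widehat{g_{1}}\left(\hat{\xi},\hat{y},t\right),
\]
all manipulations being justified by \prettyref{eq:fixed argument gives Schwartz}. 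It then remains to prove $q\in\hat{S}_{\frac{1}{r},{\rm cl\,}}^{m+m',k}$ with leading term $\sigma\left(P\right)\left(0,0;0,1\right)\sigma_{L}\left(G\right)$.

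For the symbolic analysis of $q$ I would exploit that $\widehat{g_{1}}\left(\hat{\xi},\hat{y},t\right)$ is Schwartz in $\hat{\xi}$ and, after rescaling in \prettyref{symbolic estimates}, rapidly decaying beyond the scale $\left|\hat{\xi}\right|\sim t^{1/r}$. Thus inserting a cutoff $\chi\left(t^{-1/r}\hat{\xi}\right)$, $\chi\in C_{c}^{\infty}$ equal to $1$ near the origin, alters $q$ only by an element of $\hat{S}_{\frac{1}{r}}^{-\infty}$, and replacing $p$ by its classical expansion $p\sim\sum_{j\geq0}p_{m'-j}$ --- legitimate in the cone $\left\{ \left|\hat{\xi}\right|<t\right\} $ --- costs another $\hat{S}_{\frac{1}{r}}^{-\infty}$ error. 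On the cutoff's support $\left|\hat{\xi}\right|/t\to0$, so by homogeneity $p_{m'-j}\left(x,\hat{\xi},t\right)=t^{m'-j}p_{m'-j}\left(x,\hat{\xi}/t,1\right)$, and I would Taylor expand the right side in the joint variable $\left(\hat{x},x_{3},\hat{\xi}/t\right)$ about the origin. A monomial with exponents $\hat{\alpha}$ in $\hat{x}$, $l$ in $x_{3}$, $\hat{\gamma}$ in $\hat{\xi}/t$ contributes, after the $\hat{\xi}$--integral, a term
\[
c\,t^{m'-j-\left|\hat{\gamma}\right|}\,x_{3}^{l}\,\hat{x}^{\hat{\alpha}}\,\bigl(\partial_{\hat{x}}^{\hat{\alpha}}\partial_{x_{3}}^{l}\partial_{\hat{\eta}}^{\hat{\gamma}}p_{m'-j}\bigr)\left(0;0,0,1\right)\,\bigl(D_{\hat{x}}^{\hat{\gamma}}g_{1}\bigr)\left(\hat{x},\hat{y},t\right),
\]
since multiplying $\widehat{g_{1}}$ by $\hat{\xi}^{\hat{\gamma}}$ amounts to applying $D_{\hat{x}}^{\hat{\gamma}}$ to $g_{1}$. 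Now by \prettyref{eq:inclusions symbol spaces} the operation $D_{\hat{x}}^{\hat{\gamma}}$ raises the order by $\left|\hat{\gamma}\right|/r$, while --- directly from \prettyref{symbolic estimates} --- multiplication by $\hat{x}^{\hat{\alpha}}=t^{-\left|\hat{\alpha}\right|/r}\left(t^{1/r}\hat{x}\right)^{\hat{\alpha}}$ lowers it by $\left|\hat{\alpha}\right|/r$, and multiplication by $x_{3}^{l}=t^{-l}\left(tx_{3}\right)^{l}$ is neutral for the classes $\hat{S}_{\frac{1}{r}}^{a,k}$ (iterating $\hat{S}_{\frac{1}{r}}^{a,k}\subset\hat{S}_{\frac{1}{r}}^{a+1,k-1}$ gives $x_{3}^{l}\hat{S}_{\frac{1}{r}}^{a,k}\subset\hat{S}_{\frac{1}{r}}^{a,k}$). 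Hence this term lies in $\hat{S}_{\frac{1}{r}}^{\,m+m'-j-\left|\hat{\gamma}\right|\left(1-1/r\right)-\left|\hat{\alpha}\right|/r,\;k}$; since $r\geq2$ its order is $<m+m'$ unless $j=\left|\hat{\gamma}\right|=\left|\hat{\alpha}\right|=0$, in which case --- if also $l=0$ --- it is $p_{m'}\left(0;0,0,1\right)g_{1}$ to leading order, while if $l\geq1$ it carries the factor $\left(tx_{3}\right)^{l}$ and so, in the template \prettyref{eq:symbolic expansion}, sits in the slot $\left(j,p,p'\right)=\left(rl,l,0\right)$ with $j$--index $\geq1$. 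The Taylor remainders and the discarded tail $\left|\hat{\xi}\right|\gtrsim t^{1/r}$ land in $\hat{S}_{\frac{1}{r}}^{-\infty}$, and the coefficient functions that appear are convolutions in $\hat{x}$ of $g_{1}$'s coefficient functions with inverse Fourier transforms of the Schwartz functions $\hat{\eta}\mapsto\chi\left(\hat{\eta}\right)\partial_{\hat{\eta}}^{\hat{\gamma}}p_{m'-j}\left(0;\hat{\eta},1\right)$, hence stay in $\hat{S}\left(\mathbb{R}^{2}\times\mathbb{R}^{2}\right)$ by \prettyref{eq:diagonal schwartz class}. Thus $q\in\hat{S}_{\frac{1}{r},{\rm cl\,}}^{m+m',k}$, and the only term surviving into the principal symbol is $\sigma_{L}\left(PG\right)=p_{m'}\left(0;0,0,1\right)\sigma_{L}\left(G\right)=\sigma\left(P\right)\left(0,0;0,1\right)\sigma_{L}\left(G\right)$, which is \prettyref{eq:symbol of product with pdo}.

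The hard part will be the $x$--dependence of the full symbol $p$: it cannot be pulled out of the $\hat{\xi}$--integral, so the informal scalings ``$\hat{x}\sim t^{-1/r}$, $x_{3}\sim t^{-1}$, $\hat{\xi}\sim t^{1/r}$'' have to be upgraded to uniform estimates on the Taylor remainders in all three groups of variables simultaneously, and one must verify carefully that the $x_{3}$--expansion --- whose terms do \emph{not} lower the $\xi$--order $m'$ of $p$ --- only produces contributions that are re-absorbed into the filtration and therefore affect neither the order $m+m'$ nor the principal symbol. Checking that the omitted pieces (the non-classical part of $p$, the region $\left|\hat{\xi}\right|\gtrsim t$, and all Taylor tails) genuinely fall into $\hat{S}_{\frac{1}{r}}^{-\infty}$ is the remaining, essentially routine, technical step.
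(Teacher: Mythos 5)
Your proof is correct and takes essentially the same approach as the paper: compute the composed amplitude by Fourier inversion in $x_{3}$ (the paper keeps the $\hat{u}$--integral explicit whereas you carry out the $\hat{u}$--integral and phrase it via $\widehat{g_{1}}$, a purely cosmetic difference), establish the symbolic estimates from \prettyref{eq:fixed argument gives Schwartz}, rescale $\hat{\xi}\mapsto t^{1/r}\hat{\eta}$, insert the classical expansion of $p$ together with Taylor expansion at $(\hat{x},\hat{\eta})=0$ and then in $x_{3}$, and re-absorb the $(tx_{3})^{l}t^{-l}$ factors into the $\hat{S}_{\frac{1}{r}}^{m,k}$ filtration to keep the second index at $k$. (Minor: your amplitude $q$ should also carry the $y_{3}$--dependence inherited from $g_{1}$, since the reduction lemma only removes one of $x_{3},y_{3}$.)
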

\begin{proof}
First write the kernels
\begin{align*}
P\left(x,u\right) & =\frac{1}{\left(2\pi\right)^{3}}\int d\xi e^{i\left(x-u\right)\xi}p\left(x,\xi\right)\\
G\left(u,y\right) & =\int dte^{i\left(u_{3}-y_{3}\right)t}g\left(\hat{u};\hat{y},y_{3},t\right)
\end{align*}
 $p\in S_{{\rm cl\,}}^{m'}$, $g\in\hat{S}_{\frac{1}{r},\textrm{cl}}^{m,k}$
using an $u_{3}$ independent quantization for $G$. Then Fourier
inversion gives the composition to be 
\begin{align}
\left(P\circ G\right)\left(x,y\right) & =\int dt\,e^{i\left(x_{3}-y_{3}\right)t}q\left(x,y,t\right)\nonumber \\
q\left(x,y,t\right) & \coloneqq\frac{1}{\left(2\pi\right)^{2}}\int d\hat{u}d\hat{\xi}e^{i\left(\hat{x}-\hat{u}\right)\hat{\xi}}p\left(x,\hat{\xi},t\right)g\left(\hat{u};\hat{y},y_{3},t\right).\label{eq:formula composition}
\end{align}
Again the above amplitude satisfies necessary symbolic estimates on
account of \prettyref{eq:fixed argument gives Schwartz}. 

To obtain the symbolic expansion, a change of variables $\hat{v}=t^{\frac{1}{r}}\hat{u}$,$\hat{\eta}=t^{-\frac{1}{r}}\hat{\xi}$
first gives
\begin{align}
q\left(t^{-\frac{1}{r}}\hat{x},x_{3},t^{-\frac{1}{r}}\hat{y},y_{3},t\right) & =\int d\hat{u}d\hat{\xi}e^{i\left(t^{-\frac{1}{r}}\hat{x}-\hat{u}\right)\hat{\xi}}p\left(t^{-\frac{1}{r}}\hat{x},x_{3},\hat{\xi},t\right)g\left(\hat{u};t^{-\frac{1}{r}}\hat{y},y_{3},t\right)\nonumber \\
 & =\int d\hat{v}d\hat{\eta}e^{i\left(\hat{x}-\hat{v}\right)\hat{\eta}}p\left(t^{-\frac{1}{r}}\hat{x},x_{3},t^{\frac{1}{r}}\hat{\eta},t\right)g\left(t^{-\frac{1}{r}}\hat{v};t^{-\frac{1}{r}}\hat{y},y_{3},t\right).\label{eq:symbol of composition}
\end{align}
Next we plug in the symbolic expansion for $g$ as well as
\[
p\left(t^{-\frac{1}{r}}\hat{x},x_{3},t^{\frac{1}{r}}\hat{\eta},t\right)\sim t^{k}\left[p_{0}\left(0,x_{3},0,1\right)+\sum_{j=1}^{\infty}t^{-j/r}p_{j}\left(\hat{x},x_{3},\hat{\eta}\right)\right],
\]
obtained from the classical symbolic expansion for $p$, into \prettyref{eq:formula composition},
\prettyref{eq:symbol of composition}. A further Taylor expansion
in $x_{3}$ for $p_{0}\left(0,x_{3},0,1\right)\sim\sum_{j=0}^{\infty}t^{-j}\left(tx_{3}\right)^{j}\left(\partial_{x_{3}}^{j}p_{0}\right)\left(0,0,0,1\right)$
and each $p_{j}$ plugged into the above completes the proof.
\end{proof}
Finally we need some mapping properties of operators in $\hat{L}_{\frac{1}{r},{\rm cl\,}}^{m,k}$.
To introduce the functional spaces first define 
\begin{align}
\hat{S}_{\frac{1}{r}}^{m}\left(\mathbb{R}^{2}\right) & \subset\hat{S}_{\frac{1}{r}}^{m}\left(\mathbb{R}^{3}\times\mathbb{R}^{3}\right)\nonumber \\
\hat{S}_{\frac{1}{r},{\rm cl\,}}^{m}\left(\mathbb{R}^{2}\right) & \subset\hat{S}_{\frac{1}{r},{\rm cl\,}}^{m}\left(\mathbb{R}^{3}\times\mathbb{R}^{3}\right)\label{eq:x3 y independent classes}
\end{align}
 as the subspace of $x_{3},y$-independent elements in \prettyref{def:symbol class}.
Note that the above are included 
\begin{align}
\hat{S}_{\frac{1}{r}}^{m}\left(\mathbb{R}^{2}\right) & \subset S_{\frac{1}{r}}^{m}\left(\mathbb{R}^{2}\times\mathbb{R}_{t}\right)\nonumber \\
\hat{S}_{\frac{1}{r},{\rm cl\,}}^{m}\left(\mathbb{R}^{2}\right) & \subset S_{\frac{1}{r},\textrm{cl}}^{m}\left(\mathbb{R}^{2}\times\mathbb{R}_{t}\right)\label{eq:inclusion into Hormander class}
\end{align}
in the Hörmander symbol classes from the introduction.

We next define the space of partial $t$-Fourier transforms of the
classes \prettyref{eq:x3 y independent classes} below
\begin{align}
S_{H}^{m}\left(\mathbb{R}^{3}\right) & \coloneqq\left\{ p\in\mathcal{S}'\left(\mathbb{R}^{3}\right)|p=\int dte^{itx_{3}}a\left(t,\hat{x}\right),\;a\in\hat{S}_{\frac{1}{r}}^{m}\left(\mathbb{R}^{2}\right)\right\} \nonumber \\
S_{H,{\rm cl\,}}^{m}\left(\mathbb{R}^{3}\right) & \coloneqq\left\{ p\in\mathcal{S}'\left(\mathbb{R}^{3}\right)|p=\int dte^{itx_{3}}a\left(t,\hat{x}\right),\;a\in\hat{S}_{\frac{1}{r},{\rm cl\,}}^{m}\left(\mathbb{R}^{2}\right)\right\} .\label{eq:functional space}
\end{align}
It is an easy exercise using Fourier transforms to see that elements
of $S_{H}^{m}\left(\mathbb{R}^{3}\right)$ \prettyref{eq:functional space}
above are smooth outside the origin. While the space $S_{H}^{m}\left(U\right)$
consists of restrictions to $U$ of elements in the space $S_{H}^{m}\left(\mathbb{R}^{3}\right)$
defined above. It is further easy to see the inclusion 
\begin{equation}
S_{H}^{m}\left(\mathbb{R}^{3}\right)\subset C^{\alpha\ }\left(\mathbb{R}^{3}\right),\quad m<-1-\alpha.\label{eq:regularity of spaces}
\end{equation}
 We now have the following.
\begin{prop}
\label{prop:functional boundedness} For $G\in\hat{L}_{\frac{1}{r},{\rm cl\,}}^{m,k}\left(\mathbb{R}^{3}\right)$
and $p\in S_{H}^{m'}\left(\mathbb{R}^{3}\right)$ we have $Gp\in S_{H}^{m+m'-\frac{2}{r}}\left(\mathbb{R}^{3}\right)$.
A similar property holds for $S_{H,{\rm cl\,}}^{m}\left(\mathbb{R}^{3}\right)$.
\end{prop}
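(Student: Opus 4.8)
The plan is to compute $Gp$ directly as a one-parameter oscillatory integral in the frequency variable $t$, to recognise the amplitude that appears as a symbol in $\hat{S}_{\frac{1}{r}}^{m+m'-\frac{2}{r},k}$, and then to put it into the normal form of \eqref{eq:functional space} by invoking the reduction lemma. Concretely, I first use the reduction lemma to write $G=g^{L}$ with $g\in\hat{S}_{\frac{1}{r},\textrm{cl}}^{m,k}$ independent of $y_{3}$, and represent $p$ (acting in the $y$-variable) as $p\left(y\right)=\int ds\,e^{isy_{3}}a\left(s,\hat{y}\right)$ with $a\in\hat{S}_{\frac{1}{r}}^{m'}\left(\mathbb{R}^{2}\right)$. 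Substituting both into $\left(Gp\right)\left(x\right)=\int dy\,G\left(x,y\right)p\left(y\right)$ and carrying out the $y_{3}$-integration produces $\int dy_{3}\,e^{i\left(s-t\right)y_{3}}=2\pi\delta\left(s-t\right)$, which collapses the two frequencies; after the substitution $\hat{v}=t^{\frac{1}{r}}\hat{y}$ one obtains
\[
\left(Gp\right)\left(x\right)=2\pi\int_{0}^{\infty}dt\,e^{itx_{3}}\,b\left(\hat{x},x_{3},t\right),\qquad b\left(\hat{x},x_{3},t\right):=t^{-\frac{2}{r}}\int d\hat{v}\,g\bigl(\hat{x},x_{3},t^{-\frac{1}{r}}\hat{v},t\bigr)\,a\bigl(t,t^{-\frac{1}{r}}\hat{v}\bigr).
\]

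The second step is to check that $b$, regarded as a $y$-independent function, lies in $\hat{S}_{\frac{1}{r}}^{m+m'-\frac{2}{r},k}$. The $\hat{v}$-integral converges absolutely because, for each fixed value of the remaining variables, $g\left(\hat{x},x_{3},\cdot,t\right)$ is Schwartz in its third slot---this is where the off-diagonal Schwartz decay built into the estimates \eqref{symbolic estimates}, cf. \prettyref{eq:fixed argument gives Schwartz}, is used---while $a\left(t,\cdot\right)$ grows only polynomially, so the integrand is Schwartz in $\hat{v}$. Differentiating under the integral sign and combining the symbolic bounds for $g$ and $a$ with the Jacobian $t^{-2/r}$ produces exactly the order $m+m'-\frac{2}{r}$, the factors $\left(tx_{3}\right)^{p}$, $p\le k$, carried by $g$ surviving to account for the filtration index $k$. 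For the classical refinement one inserts the expansions \prettyref{eq:symbolic expansion} for $g$ and \prettyref{eq:classical symbols} for $a$ into the formula for $b$: each resulting term is a product of a power of $t$, a factor $\left(tx_{3}\right)^{p}$, and a $\hat{v}$-integral of one Schwartz profile of $g$ against one of $a$, which is again a Schwartz function of $t^{1/r}\hat{x}$; collecting powers of $t$ yields a classical expansion of $b$ in $\hat{S}_{\frac{1}{r},\textrm{cl}}^{m+m'-\frac{2}{r},k}$.

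Finally, since $b=b\left(\hat{x},x_{3},t\right)$ is $y$-independent, $Gp=\int_{0}^{\infty}dt\,e^{itx_{3}}b\left(\hat{x},x_{3},t\right)$ is the value at $y'=0$ of the Schwartz kernel of the operator $b^{L}$. Applying the reduction lemma once more, now to replace $b$ by an amplitude $\tilde{b}=\tilde{b}\left(\hat{x},y_{3}',t\right)$ independent of $x_{3}$ (still $\hat{y}'$-independent, in the same symbol class), and then setting $y'=0$, every $\left(ty_{3}'\right)^{p}$ factor with $p\ge1$ vanishes, so only the $p=0$ part of the $k$-filtration remains. The surviving symbol $\tilde{b}\left(\hat{x},0,t\right)$ is genuinely $\left(x_{3},y\right)$-independent and of order $m+m'-\frac{2}{r}$, so $Gp\in S_{H}^{m+m'-\frac{2}{r}}\left(\mathbb{R}^{3}\right)$; the classical statement follows in the same way, the reduction lemma preserving classicality.

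The main obstacle is the symbolic bookkeeping in the second step: one must verify that $b$ and all of its $\hat{x},x_{3},t$-derivatives obey the estimates \eqref{symbolic estimates} uniformly, with the precise dependence on $t^{1/r}\hat{x}$ and on $tx_{3}$, which comes down to the routine---if slightly tedious---fact that an integral $\int\varphi\left(\hat{x},\hat{v}\right)\psi\left(\hat{v}\right)d\hat{v}$ of a function with Schwartz decay in $\hat{x}-\hat{v}$ against a Schwartz $\psi$ is Schwartz in $\hat{x}$, with constants uniform in $t$. As an alternative one may observe that $p=\tilde{P}\delta_{0}$ for the operator $\tilde{P}$ with amplitude $a\left(t,\hat{x}\right)$ (which is $\left(x_{3},y\right)$-independent, hence in $\hat{S}_{\frac{1}{r},\textrm{cl}}^{m'}$), so that $Gp=\left(G\circ\tilde{P}\right)\delta_{0}$ and $G\circ\tilde{P}\in\hat{L}_{\frac{1}{r},\textrm{cl}}^{m+m'-\frac{2}{r},k}$ by Proposition \ref{closure under composition}; one then identifies $\left(G\circ\tilde{P}\right)\left(\cdot,0\right)$ with an element of $S_{H}^{m+m'-\frac{2}{r}}\left(\mathbb{R}^{3}\right)$ exactly as above.
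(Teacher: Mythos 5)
Your main argument is correct and essentially the paper's own: both start from a $y_{3}$-independent quantization of $G$ and the Fourier representation $p=\int dt\,e^{itx_{3}}a\left(t,\hat{x}\right)$, contract over $\hat{y}$ to obtain an amplitude $\left(g\circ a\right)\left(x,t\right)=\int g\left(x,\hat{y},t\right)a\left(t,\hat{y}\right)d\hat{y}$ of order $m+m'-\frac{2}{r}$ that still depends on $x_{3}$, and then remove the $x_{3}$-dependence modulo smoothing — the paper by direct integration by parts in $t$ using $\partial_{t}e^{ix_{3}t}=ix_{3}e^{ix_{3}t}$, you by wrapping the identical operation in the reduction lemma and evaluating the resulting kernel at $y'=0$ (so that the $\left(ty_{3}'\right)^{p}$ factors die and only the $k=0$ filtration survives). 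Your alternative at the end, writing $p=\tilde{P}\delta_{0}$ with $\tilde{P}\in\hat{L}_{\frac{1}{r},{\rm cl\,}}^{m',0}$ and invoking Proposition~\ref{closure under composition} to get $G\circ\tilde{P}\in\hat{L}_{\frac{1}{r},{\rm cl\,}}^{m+m'-\frac{2}{r},k}$, is a clean way to outsource the symbolic bookkeeping of the middle step, although it still needs the same reduction-lemma/$y'=0$ argument at the end to land in $S_{H}^{m+m'-\frac{2}{r}}\left(\mathbb{R}^{3}\right)$.
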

\begin{proof}
Again using a $y_{3}$ independent quantization for $G$, the Fourier
transform expression $p=\int dte^{itx_{3}}a\left(t,\hat{x}\right)$
and Fourier inversion gives 
\begin{align*}
Gp\left(x\right) & =\int dte^{ix_{3}t}\left(g\circ a\right)\left(x,t\right)\\
\left(g\circ a\right)\left(x,t\right) & \coloneqq\int g\left(x,\hat{y},t\right)a\left(t,\hat{y}\right)d\hat{y}.
\end{align*}
Next we plugin the symbolic expansion for $g$ into the above and
use repeated integration by parts using $\partial_{t}e^{ix_{3}t}=ix_{3}e^{ix_{3}t}$
to obtain $x_{3}$-independence of the amplitude modulo $C^{\infty}$.
Plugging in a classical expansion for $p\in S_{H,{\rm cl\,}}^{m}\left(\mathbb{R}^{3}\right)$
gives a similar expansion for $g\circ a$.
\end{proof}

\subsection{Local Bergman kernels}

In this section we shall define certain local Bergman kernels using
the coordinates introduced in Sec. \prettyref{subsec:Construction-of-coordinates}.
Furthermore these shall be shown to lie in the symbol classes introduced
in the previous section.

First with the notation as in \prettyref{thm:almost analytic coordinates}
one sets $V=U^{\mathbb{C}}\cap\left\{ q=0\right\} \subset\mathbb{R}_{p}^{3}$.
With $\chi\left(p_{1},p_{2}\right)\in C_{c}^{\infty}\left(\mathbb{R}^{2}\right)$
of sufficiently small support and equal to one near zero, the function
\begin{equation}
\varphi\left(p_{1},p_{2}\right)\coloneqq\left.\varphi_{0}\right|_{q=0}+\underbrace{\chi\left.\left(\varphi-\varphi_{0}\right)\right|_{q=0}}_{\eqqcolon\varphi_{1}}\label{eq:modified potential}
\end{equation}
is well defined on $\mathbb{R}^{2}$. This equals the restriction
of $\varphi$ to $V$ near the origin, and hence we use the same notation.
Next set 
\begin{equation}
\hat{Z}\coloneqq\frac{1}{2}\left(\partial_{p_{1}}+i\partial_{p_{2}}\right)-\frac{i}{2}\left(\partial_{p_{1}}\varphi+i\partial_{p_{2}}\varphi\right)\partial_{p_{3}}\label{eq: aa vector field restriction}
\end{equation}
 and define
\begin{align}
\bar{\partial}_{t}: & \Omega^{0,0}\left(\mathbb{R}^{2}\right)\rightarrow\Omega^{0,1}\left(\mathbb{R}^{2}\right)\nonumber \\
\bar{\partial}_{t}u & \coloneqq\left[\frac{1}{2}\left(\partial_{p_{1}}+i\partial_{p_{2}}\right)u+\frac{1}{2}t\left(\partial_{p_{1}}\varphi+i\partial_{p_{2}}\varphi\right)u\right]d\bar{z}\label{eq:Dolbeault}
\end{align}
with $d\bar{z}=dp_{1}-idp_{2}$. Define the Kodaira Laplacian via
\begin{align}
\Box_{t} & \coloneqq\bar{\partial}_{t}^{*}\bar{\partial}_{t}\label{eq:Kodaira Dirac =000026 laplacian}
\end{align}
acting on $\Omega^{0,0}$ . We denote by 
\begin{equation}
B_{t}:L^{2}\left(\mathbb{R}_{p}^{2}\right)\rightarrow\textrm{ker}\left(\Box_{t}\right)\label{eq:local Bergman}
\end{equation}
the local Bergman projector onto the kernel of $\Box_{t}$ and $B_{t}\left(p,p'\right)$
it Schwartz kernel. 

Replacing $\varphi$ with its leading polynomial $\varphi_{0}$ in
\prettyref{eq: aa vector field restriction}, \prettyref{eq:Dolbeault},
\prettyref{eq:Kodaira Dirac =000026 laplacian} one analogously defines
\begin{align}
\bar{\partial}_{t}^{0}u & \coloneqq\left[\frac{1}{2}\left(\partial_{p_{1}}+i\partial_{p_{2}}\right)u+\frac{1}{2}t\left(\partial_{p_{1}}\varphi_{0}+i\partial_{p_{2}}\varphi_{0}\right)u\right]d\bar{z}\label{eq:leading dolbeault}\\
\Box_{t}^{0} & \coloneqq\left(\bar{\partial}_{t}^{0}\right)^{*}\bar{\partial}_{t}^{0}\label{eq: leading Kodaira laplacian}
\end{align}
 as well as a corresponding Bergman projection $B_{t}^{0}$ with kernel
$B_{t}^{0}\left(\hat{p},\hat{p}'\right)$. 
\begin{thm}
\label{thm:local Bergman is a symbol} One has $B_{t}^{0}\left(\hat{p},\hat{p}'\right),\,B_{t}\left(\hat{p},\hat{p}'\right)\in\hat{S}_{\frac{1}{r},{\rm cl\,}}^{\frac{2}{r},0}$,
with furthermore 
\begin{align}
B_{t}^{0}\left(\hat{p},\hat{p}'\right) & =t^{\frac{2}{r}}b_{0}\left(t^{\frac{1}{r}}\hat{p},t^{\frac{1}{r}}\hat{p}'\right)\label{eq:B0 is symbol}\\
B_{t}\left(\hat{p},\hat{p}'\right) & =t^{\frac{2}{r}}b_{0}\left(t^{\frac{1}{r}}\hat{p},t^{\frac{1}{r}}\hat{p}'\right)+\hat{S}_{\frac{1}{r},{\rm cl\,}}^{\frac{1}{r},0}\label{eq:B is symbol}
\end{align}
 for some $b_{0}(\hat{p},\hat{p}')\in\hat{S}\left(\mathbb{R}^{2}\times\mathbb{R}^{2}\right)$. 
\end{thm}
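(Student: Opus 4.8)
The plan is to first establish the scaling identity \eqref{eq:B0 is symbol} for the model operator $\Box_t^0$, and then treat $B_t$ as a perturbation of $B_t^0$ by the lower-weight terms $\varphi_1 = \chi(\varphi - \varphi_0)|_{q=0}$. For the model: since $\varphi_0$ is homogeneous of weight $r$ in $(p_1,p_2)$, the rescaling $p \mapsto t^{-1/r}\hat p$ conjugates $\bar\partial_t^0$ (after an overall power of $t$) to $\bar\partial_1^0$; concretely, under $U_t u(\hat p) \coloneqq t^{1/r} u(t^{1/r}\hat p)$ one checks $U_t^{-1}\Box_t^0 U_t = t^{-2/r}\Box_1^0$ up to a scalar, and intertwining Bergman projectors onto kernels gives $B_t^0(\hat p,\hat p') = t^{2/r} B_1^0(t^{1/r}\hat p, t^{1/r}\hat p')$, which is \eqref{eq:B0 is symbol} with $b_0 \coloneqq B_1^0$. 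The remaining content of the model case is that $b_0 = B_1^0 \in \hat S(\mathbb{R}^2\times\mathbb{R}^2)$, i.e. that the Bergman kernel of the weighted $\bar\partial$-Laplacian with subharmonic weight $\varphi_0$ satisfies the off-diagonal decay and polynomial-growth estimates \eqref{eq:diagonal schwartz class}. This follows from standard Kohn–Hörmander–Agmon estimates: since $\Delta\varphi_0 \geq 0$ (and, being a nonzero homogeneous polynomial, is strictly positive away from a proper subvariety), the weighted space $L^2(e^{-2\varphi_0})$ carries a closed-range $\bar\partial$ with a spectral gap on $(0,1)$-forms, and exponential-decay (Agmon/finite-propagation) arguments for $\Box_1^0$ give the Gaussian-type off-diagonal bound $|B_1^0(\hat p,\hat p')| \lesssim (1+|\hat p - \hat p'|)^{-N}$ with at most polynomial growth in $|\hat p| + |\hat p'|$, with the same estimates after differentiation via elliptic regularity.

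For $B_t$ itself I would write $\Box_t = \Box_t^0 + E_t$ where, using $\varphi = \varphi_0 + \varphi_1$ with $\varphi_1 = O(|p|^{r+1})$, the error $E_t$ is a first-order operator whose coefficients are $t$ times derivatives of $\varphi_1$; in the rescaled variables $\hat v = t^{1/r}\hat p$ this is a perturbation of relative weight $t^{-1/r}$ (one extra power of the large weight $(1+|t^{1/r}\hat p|)$ is exactly offset by one factor $t^{-1/r}$ from the extra vanishing order of $\varphi_1$). The standard resolvent/Neumann-series identity for Bergman projectors,
\[
B_t - B_t^0 = -B_t^0 E_t B_t^0 + (\text{higher order}),
\]
together with the composition calculus of \prettyref{closure under composition} applied to the $x_3$-independent subclass (equivalently the $\hat S$-kernel convolution in the rescaled variable), then produces the expansion \eqref{eq:B is symbol} with error in $\hat S_{\frac1r,\mathrm{cl}}^{\frac1r,0}$, and iterating the Neumann series yields the full classical symbolic expansion, so that $B_t \in \hat S_{\frac1r,\mathrm{cl}}^{\frac2r,0}$. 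The coefficient $b_0$ is unchanged because $E_t$ is genuinely lower order, confirming that the leading term of $B_t$ equals that of $B_t^0$.

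The main obstacle is the model-case estimate $b_0 = B_1^0 \in \hat S(\mathbb{R}^2\times\mathbb{R}^2)$, and more precisely propagating it uniformly through the $t$-rescaling: one must show the off-diagonal decay and the finitely-many-derivatives-lose-at-most-polynomial-growth estimates hold with constants independent of $t$, which is where the closed-range hypothesis on $\bar\partial_b$ (inherited by the model $\Box_t^0$ through the finite-type/subharmonicity condition $\Delta\varphi_0 \ge 0$ and Christ's bounds \eqref{eq:christ-M-M-NRSW-bounds}) must be invoked to guarantee a $t$-uniform spectral gap. A secondary technical point is justifying the Neumann series convergence in the exotic symbol topology — i.e. that each successive term $B_t^0 (E_t B_t^0)^j$ genuinely gains a factor $t^{-j/r}$ in the class $\hat S_{\frac1r}^{\frac2r - j/r, j}$ — which requires the precise weight bookkeeping built into the classes $\hat S_{\frac1r}^{m,k}$ and the boundedness statement of \prettyref{prop:functional boundedness}.
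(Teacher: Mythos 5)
Your treatment of the model case is essentially the paper's: the rescaling $\boxdot_t^0 = t^{-2/r}(\delta_{t^{-1/r}})_*\Box_t^0 = \Box_1^0$ gives \prettyref{eq:B0 is symbol} exactly, and $b_0 \in \hat S(\mathbb{R}^2\times\mathbb{R}^2)$ follows from the spectral gap, a spectral cutoff $\chi(\Box_1^0)$, local elliptic regularity and the polynomial growth of the coefficients of $\Box_1^0$ (the paper invokes this directly; your "Agmon/finite-propagation" language is compatible). One small correction: the $t$-uniform spectral gap for the model is not inherited from the closed-range hypothesis on $\bar\partial_b$ — it follows from the Lichnerowicz/subelliptic estimate for the model, as in Prop.~\prettyref{prop:spectral gap}. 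The closed-range hypothesis plays no role in this theorem.

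The step from $B_t^0$ to $B_t$ is where the proposal breaks down. The identity you write,
\begin{equation*}
B_t - B_t^0 = -B_t^0 E_t B_t^0 + \textrm{(higher order)},
\end{equation*}
is not a correct perturbation formula for Bergman projectors. Projectors onto the kernels of $\Box_t$ and $\Box_t^0$ do not admit such a two-sided sandwich expansion: the rigorous resolvent version is a contour integral $B_t - B_t^0 = -\frac{1}{2\pi i}\oint (\Box_t - z)^{-1} E_t (\Box_t^0 - z)^{-1}\,dz$, which involves both resolvents (not projectors) and requires uniform control of $(\Box_t - z)^{-1}$ and $(\Box_t^0 - z)^{-1}$ away from their kernels. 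The paper explicitly notes this route is possible ("one may use a full expansion of the operator $\boxdot_t$ as in Sec.~\prettyref{sec:S1 invariant CR geometry}"), but it deliberately avoids it because it is heavier; your sketch of it is not enough to carry the argument through, since you do not set up the resolvent calculus or the $t^{-j/r}$ gain along the contour.

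What the paper actually does is algebraically sharper and bypasses any Neumann series in $E_t$. It introduces the twisted comparison operator
\begin{equation*}
\tilde B_t(\hat p,\hat p') = e^{-t\varphi_1(\hat p)}\,B_t^0(\hat p,\hat p')\,e^{t\varphi_1(\hat p')},
\end{equation*}
which has range exactly in $\ker(\bar\partial_t)$ (a pointwise identity, no approximation) and satisfies the \emph{exact} relations $B_t\tilde B_t = \tilde B_t$, $\tilde B_t B_t = B_t$, and hence $(I - R_t)B_t = \tilde B_t^*$ with $R_t \coloneqq \tilde B_t - \tilde B_t^*$. Crucially $\varphi_1$ is complex-valued, so $\tilde B_t$ is not self-adjoint and $R_t \neq 0$; its kernel is $e^{-t\varphi_1(\hat p)}B_t^0 e^{t\varphi_1(\hat p')} - e^{t\bar\varphi_1(\hat p)}B_t^0 e^{-t\bar\varphi_1(\hat p')}$, and a Taylor expansion plus the vanishing order $\varphi_1 = O(|\hat p|^{r+1})$ puts $R_t \in \hat S_{1/r,\mathrm{cl}}^{1/r,0}$. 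Iterating $(I - R_t^N)B_t = (I + R_t + \cdots + R_t^{N-1})\tilde B_t^*$ then gives the classical expansion with no need for resolvent perturbation theory at all. To repair your argument you would either need to adopt this comparison-operator device, or replace your sandwich formula with the full contour-integral resolvent calculus and verify the symbol estimates for the resolvents on the rescaled scale — the first option is shorter and is what the paper does.
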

\begin{proof}
Being symmetric and bounded below, the Kodaira Laplacians \prettyref{eq:Kodaira Dirac =000026 laplacian},
\prettyref{eq: leading Kodaira laplacian} are essentially self-adjoint.
Furthermore under the rescaling/dilation $\delta_{t^{-1/r}}\left(\hat{p},\hat{p}'\right)=\left(t^{-\frac{1}{r}}\hat{p},t^{-\frac{1}{r}}\hat{p}'\right)$
these are seen to satisfy
\begin{align}
\boxdot_{t}^{0}\coloneqq t^{-2/r}\left(\delta_{t^{-1/r}}\right)_{*}\Box_{t}^{0} & =\Box_{1}^{0}\nonumber \\
\boxdot_{t}\coloneqq t^{-2/r}\left(\delta_{t^{-1/r}}\right)_{*}\Box_{t} & =\Box_{1}^{0}+t^{-1/r}E,\label{eq:rescaled operators}
\end{align}
where 
\begin{align*}
E & =a\left(\hat{p},t\right)\bar{\partial}_{1}^{0}+b\left(\hat{p},t\right)\bar{\partial}_{1}^{0}+c\left(\hat{p},t\right)
\end{align*}
is a self-adjoint operator with the coefficients $a\left(\hat{p},t\right),$
$b\left(\hat{p},t\right)$ and $c\left(\hat{p},t\right)$ being uniformly
(in $t$) $C^{\infty}$ bounded. 

Next, as in \cite[Sec. 4.1]{Marinescu-Savale18}, see also Prop. \prettyref{prop:spectral gap}
below, we have 
\begin{align}
\textrm{Spec}\left(\Box_{t}^{0}\right) & \subset\left\{ 0\right\} \cup\left[c_{1}t^{2/r},\infty\right)\label{eq: spectral gap funs.-1}\\
\textrm{Spec}\left(\Box_{t}\right) & \subset\left\{ 0\right\} \cup\left[c_{1}t^{2/r}-c_{2},\infty\right).\label{eq:spectral gap fns 2}
\end{align}
We remark the fact that $\varphi$ here is complex valued makes no
difference to the above formulas \prettyref{eq: spectral gap funs.-1},
\prettyref{eq:spectral gap fns 2} so far as the leading part $\varphi_{0}$
is real and sub-harmonic. This is because the higher order Taylor
coefficients of $\varphi$ appear at lower order $O\left(t^{-1/r}\right)$
after rescaling in \prettyref{eq:rescaled operators}. Hence for any
$\chi\in C_{c}^{\infty}\left(-c_{1},c_{1}\right)$ with $\chi=1$
near $0$, the Bergman kernels equal
\begin{align*}
B_{t}^{0}\left(\hat{p},\hat{p}'\right) & =\chi\left(t^{-2/r}\Box_{t}^{0}\right)\left(\hat{p},\hat{p}'\right)=t^{2/r}\chi\left(\Box_{1}^{0}\right)\left(t^{1/r}p,t^{1/r}p'\right)\\
B_{t}\left(\hat{p},\hat{p}'\right) & =\chi\left(t^{-2/r}\Box_{t}\right)\left(\hat{p},\hat{p}'\right)=t^{2/r}\chi\left(\boxdot_{t}\right)\left(t^{1/r}p,t^{1/r}p'\right)
\end{align*}
for $t\gg0$. By standard elliptic arguments, the Schwartz kernels
of $\partial_{p}^{\alpha}\partial_{p'}^{\alpha'}\chi\left(\Box_{1}^{0}\right)$,
$\partial_{p}^{\alpha}\partial_{p'}^{\alpha'}\chi\left(\boxdot_{t}\right)$,
$\alpha,\alpha'\in\mathbb{N}_{0}^{2}$, are rapidly decaying off-diagonal.
Regarding their on-diagonal behavior, the growth of $\chi\left(\Box_{1}^{0}\right)\left(\hat{p},\hat{p}\right),\,\chi\left(\boxdot_{t}\right)\left(\hat{p},\hat{p}\right)$
as $\hat{p}\rightarrow\infty$ is controlled by the growth of the
coefficient functions of the operators \prettyref{eq:rescaled operators},
which in turn have polynomial growth. Hence $B_{t}^{0}$, $B_{t}$
satisfy estimates \ref{symbolic estimates} with $N\left(\hat{\alpha},\hat{\beta}\right)=r\left(\left|\hat{\alpha}\right|+\left|\hat{\beta}\right|\right)$,
cf. also \prettyref{eq:uniform derivative estimate} below. This gives
\prettyref{eq:B0 is symbol} with 
\begin{align}
b_{0}\left(\hat{p},\hat{p}'\right) & =\chi\left(\Box_{1}^{0}\right)\left(\hat{x},\hat{y}\right)\in\hat{S}\left(\mathbb{R}^{2}\times\mathbb{R}^{2}\right)\nonumber \\
B_{t}\left(\hat{p},\hat{p}'\right) & \in\hat{S}_{\frac{1}{r}}^{\frac{2}{r},0}.\label{eq:bergman kernels are non-classical symbols}
\end{align}

To show the classical expansion for the above one may use a full expansion
of the operator $\boxdot_{t}$ \prettyref{eq:rescaled operators}
as in \prettyref{sec:S1 invariant CR geometry} below. We shall however
give a different proof consistent with the rest of this section. To
this end, first begin with $\varphi=\varphi_{0}+\varphi_{1}$ from
\prettyref{eq:modified potential}, where
\begin{equation}
\varphi_{1}\left(\hat{p}\right)=O\left(\left|\hat{p}\right|^{r+1}\right),\label{eq:higher order part potential}
\end{equation}
$\varphi_{1}(\hat{p})\in C_{0}^{\infty}(\mathbb{R}^{2},\mathbb{C})$.
Next define the operator with distributional kernel 
\begin{align}
\tilde{B}_{t}: & L^{2}(\mathbb{R}^{2})\rightarrow L^{2}(\mathbb{R}^{2})\nonumber \\
\tilde{B}_{t}\left(\hat{p},\hat{p}'\right) & =e^{-t\varphi_{1}\left(\hat{p}\right)}B_{t}^{0}\left(\hat{p},\hat{p}'\right)e^{t\varphi_{1}\left(\hat{p}'\right)}.\label{eq:approximate local bergman kernel}
\end{align}
It is clear that the above $\bar{\partial}_{t}\tilde{B}_{t}=0$, $\Box_{t}\tilde{B}_{t}=0$
lies in the kernels of \prettyref{eq:Dolbeault}, \prettyref{eq:Kodaira Dirac =000026 laplacian}.
This gives $B_{t}\tilde{B}_{t}=\tilde{B}_{t}$ and 
\begin{equation}
\tilde{B}_{t}^{*}B_{t}=\tilde{B}_{t}^{*},\label{e-gue200512yydI}
\end{equation}
where $\tilde{B}_{t}^{*}$ is the adjoint of $\tilde{B}_{t}$. Let
$R_{t}:=\tilde{B}_{t}-\tilde{B}_{t}^{*}$ whose Schwartz kernel is
computed to be
\begin{equation}
R_{t}\left(\hat{p},\hat{p}'\right)=e^{-t\varphi_{1}\left(\hat{p}\right)}B_{t}^{0}\left(\hat{p},\hat{p}'\right)e^{t\varphi_{1}\left(\hat{p}'\right)}-e^{t\bar{\varphi_{1}}\left(\hat{p}\right)}B_{t}^{0}\left(\hat{p},\hat{p}'\right)e^{-t\bar{\varphi_{1}}\left(\hat{p}'\right)}.\label{e-gue200512yydII}
\end{equation}
Since $\left(\bar{\partial}+t\left(\bar{\partial}\varphi_{0}\right){}^{\wedge}\right)\left(e^{t\varphi_{1}}B_{t}\right)=0$,
we have 
\begin{equation}
\tilde{B}_{t}B_{t}=B_{t}.\label{e-gue200512yydIII}
\end{equation}
From \eqref{e-gue200512yydI} and \eqref{e-gue200512yydIII}, we get
$(I-R_{t})B_{t}=\tilde{B}_{t}^{*}$ and hence 
\begin{equation}
(I-R_{t}^{N})B_{t}=(I+R_{t}+R_{t}^{2}+\cdots+R_{t}^{N-1})\tilde{B}_{t}^{*},\ \ \mbox{ \ensuremath{\forall}\ensuremath{N\ensuremath{\in\mathbb{N}}}}.\label{e-gue200512yyda}
\end{equation}
From the first part \prettyref{eq:bergman kernels are non-classical symbols},
\prettyref{eq:higher order part potential}, \eqref{e-gue200512yydII}
and a Taylor expansion, it is easy to see that $R_{t}\in\hat{S}_{\frac{1}{r},{\rm cl\,}}^{\frac{1}{r},0}$
and 
\begin{equation}
\mbox{\ensuremath{R_{t}^{j}\in\hat{S}_{\frac{1}{r},{\rm cl\,}}^{\left(2-j\right)\frac{1}{r},0},\;} \ensuremath{\forall}\ensuremath{j\ensuremath{\in\mathbb{N}}}},\label{e-gue200512yydb}
\end{equation}
by an argument similar to Prop. \ref{closure under composition}.
From the above, \prettyref{eq:bergman kernels are non-classical symbols},
\eqref{e-gue200512yyda}, \eqref{e-gue200512yydb} and $\tilde{B}_{t}^{*}\in\hat{S}_{\frac{1}{r},{\rm cl\,}}^{\frac{2}{r}}$,
the theorem follows. 
\end{proof}
Following the above we now prove one of our main theorems \prettyref{thm:main thm parametrix}.
\begin{proof}[Proof of \prettyref{thm:main thm parametrix}]
 Choose $B$ as in \prettyref{eq:local Bergman} and $\chi\in C_{c}^{\infty}\left(\mathbb{R}^{3}\right)$
a cutoff equal to one near zero. Define the operator 
\begin{align}
\hat{B}:C_{c}^{\infty}\left(\mathbb{R}^{3}\right) & \rightarrow C^{-\infty}\left(\mathbb{R}^{3}\right)\nonumber \\
\hat{B} & \coloneqq\frac{1}{2\pi}\int_{0}^{\infty}dt\,e^{it\left(p_{3}-p_{3}'\right)}B_{t}\left(p,p'\right)\chi\left(t^{\frac{1}{r}-\epsilon}\hat{p}',t^{1-\frac{1}{2r}}p_{3}'\right).\label{eq:B hat definition}
\end{align}
By definition using \prettyref{thm:local Bergman is a symbol} and
a Taylor expansion of the cutoff we see $\hat{B}\in\hat{L}_{\frac{1}{r},{\rm cl\,}}^{\frac{2}{r},0}+\hat{L}_{\frac{1}{r},2}^{\frac{2}{r},0,-\infty}$,
for $\hat{L}_{\frac{1}{r},2}^{\frac{2}{r},0,-\infty}\coloneqq\bigcap_{j\in\mathbb{N}_{0}}\hat{L}_{\frac{1}{r}}^{\frac{2-j}{r},2j}$
. Furthermore for $\epsilon$ sufficiently small, that Schwartz kernel
of the above $\hat{B}\left(p,p'\right)$ can be shown to be smooth
away from $p=0$ and satisfies estimates similar to \prettyref{eq: function class}
in $p$. We let $\tilde{B}\left(z,p'\right)$ denote the almost analytic
continuation of the Schwartz kernel $\hat{B}\left(p,p'\right)$ in
the $p$-variable given by \prettyref{lem:Almost-analytic-continuations}.
Now consider the coordinates $\left(x_{1},x_{2},x_{3}\right)$ on
a neighborhood $U$ centered at the point $x'\in X$, along with $\left(p_{1},p_{2},p_{3}\right)$
being (the real parts of) the corresponding almost analytic coordinates
given by \ref{thm:almost analytic coordinates}. Letting $\chi_{1},\chi_{2}\in C_{c}^{\infty}\left(U\right)$
be such that $\chi_{1}=1$ on $\textrm{spt }\left(\chi_{2}\right)$
and $\chi_{2}=1$ near zero we set 
\begin{equation}
B\left(x,x'\right)\coloneqq\chi_{1}\left(x\right)\left(\left.\tilde{B}\right|_{y,y'=0}\right)\chi_{2}\left(x'\right)\in C^{-\infty}\left(X\times X\right).\label{eq:the operator B}
\end{equation}
Since $\hat{Z}\hat{B}=0$ we have $Z\left(\left.\tilde{B}\right|_{y,y'=0}\right)\in C^{\infty}$
by \prettyref{lem:Almost-analytic-continuations} from which is it
easy to check that $\bar{\partial}_{b}B$ is smooth . 

Let $\Pi:L^{2}\left(X\right)\rightarrow H_{b}^{0}\left(X\right)\coloneqq\left\{ u\in L^{2}\left(X\right)|\bar{\partial}_{b}u=0\right\} $
denote the Szeg\H{o} projection. Assuming $\bar{\partial}_{b}$ has
closed range it was shown in \cite[Prop. 4.1]{Christ89-embedding},
\cite{Christ88} that there exists a bounded linear operator $G:\textrm{Range}\left(\bar{\partial}_{b}\right)\rightarrow L^{2}\left(X\right)$
such that $\Pi=I-G\bar{\partial}_{b}$. Furthermore $G$ is microlocal
and it maps $G:\textrm{Range}\left(\bar{\partial}_{b}\right)\cap H^{s}\left(X\right)\rightarrow H^{s+\frac{1}{r}}\left(X\right)$,
$\forall s\in\mathbb{R}$. It now follows that $\Pi$ is microlocal
or that the Szeg\H{o} kernel is smooth away from the diagonal. Furthermore
$\Pi B=B-G\bar{\partial}_{b}B=B+C^{\infty}$ and 
\begin{equation}
B^{*}\Pi=B^{*}+C^{\infty}.\label{eq:BP=00003DB}
\end{equation}

Next, replace $\Pi\left(x,0\right)$ by $\Pi_{1}\left(x\right)=\chi_{1}\left(x\right)\Pi\left(x,0\right)$,
which has the same singularities near $x=0$, is compactly supported
and satisfies similar bounds to \prettyref{eq:christ-M-M-NRSW-bounds}.
We almost analytically continue $\Pi_{1}\left(x,0\right)$ in the
$x$ variable to define $\tilde{\Pi}_{1}\left(z,0\right)$. We may
further suppose that $\tilde{\Pi}_{1}$ is compactly supported by
construction. The restriction $\tilde{\Pi}_{1}\left(p,0\right)=\left.\tilde{\Pi}_{1}\right|_{q=0}$
is well-defined and we set $\tilde{\Pi}_{1,t}\left(p_{1},p_{2}\right)\coloneqq\int e^{-itp_{3}}\tilde{\Pi}_{1}\left(p,0\right)dp_{3}$.
Since $Z\Pi_{1}\in C^{\infty}$ it follows that for the almost analytic
extension $\tilde{Z}\tilde{\Pi}_{1}$ is smooth from \prettyref{lem:Almost-analytic-continuations}.
From \prettyref{thm:almost analytic coordinates} it follows that
$\left[\frac{1}{2}\left(\partial_{\tilde{z}_{1}}+i\partial_{\tilde{z}_{2}}\right)-\frac{i}{2}\left(\partial_{\tilde{z}_{1}}\varphi+i\partial_{\tilde{z}_{2}}\varphi\right)\partial_{\tilde{z}_{3}}\right]\tilde{\Pi}_{1}$
is smooth. Hence
\[
\bar{\partial}_{t}\tilde{\Pi}_{1,t}\coloneqq\left[\frac{1}{2}\left(\partial_{p_{1}}+i\partial_{p_{2}}\right)-\frac{i}{2}t\left(\partial_{p_{1}}\varphi+i\partial_{p_{2}}\varphi\right)\right]\tilde{\Pi}_{1,t}=O\left(t^{-\infty}\right)
\]
 in the Schwartz norm. From here it is clear that $B_{t}\tilde{\Pi}_{1,t}=\tilde{\Pi}_{1,t}+O\left(t^{-\infty}\right)$
in the Schwartz norm. Thus one has $\left(\hat{B}\tilde{\Pi}_{1}-\tilde{\Pi}_{1}\right)\left(p,0\right)\in C^{\infty}$
and hence by almost analytic continuation
\begin{equation}
\left(\tilde{B}\tilde{\Pi}_{1}-\tilde{\Pi}_{1}\right)\left(x,0\right)\in C^{\infty}.\label{eq:BP=00003DP mod smooth}
\end{equation}
Next for each $N\in\mathbb{N}$ define the operator with kernel
\begin{align}
B_{N}\left(x,x'\right) & =\sum_{\left|\alpha\right|,\left|\beta\right|\leq N}\frac{1}{\alpha!\beta!}\left(-\frac{\partial}{\partial x'}\right)^{\alpha}\left[\left(iy\left(x'\right)\right)^{\alpha}\left(iq\left(x\right)\right)^{\beta}\left(\frac{\partial}{\partial p}\right)^{\beta}B\left(p\left(x\right),p'\left(x'\right)\right)\left|\frac{dp}{dx}\left(x'\right)\right|\right]\chi_{1}\left(x'\right)\nonumber \\
\textrm{where}\quad p\left(x\right) & =p\left(x,0\right)=x+O\left(x^{2}\right)\nonumber \\
q\left(x\right) & =q\left(x,0\right)=O\left(x^{2}\right)\nonumber \\
y\left(x\right) & =y\left(p\left(x,0\right),0\right)=O\left(x^{2}\right)\label{eq:BN def.}
\end{align}
denote the coordinates coming from the change of variables \prettyref{thm:almost analytic coordinates},
while the multiplication factor $\left|\frac{dp}{dx}\right|$ is the
Jacobian for the change of variables with respect to the first. Following
an integration by parts argument using \prettyref{eq:almost analytic continuattion},
\prettyref{eq:BP=00003DP mod smooth}, Prop. \ref{class is module over pso},
and Prop. \prettyref{prop:functional boundedness} which motivates
the construction of \prettyref{eq:BN def.}, it is easy to see that
\begin{equation}
\left(B_{N}\Pi-\Pi_{1}\right)\left(x,0\right)\in S_{H}^{\frac{2-N}{r}}\label{eq:BNP - P}
\end{equation}
$\forall N\in\mathbb{N}$. Furthermore writing \prettyref{eq:BN def.}
in the $p,p'$ coordinates gives
\begin{align}
B_{N+1}-B_{N} & \in\hat{L}_{\frac{1}{r},{\rm cl\,}}^{\frac{2-N}{r},N}+\hat{L}_{\frac{1}{r},2}^{\frac{2}{r},0,-\infty}\nonumber \\
\textrm{hence }\quad B_{\infty}\coloneqq & B_{0}+\sum_{N=0}^{\infty}\left(B_{N+1}-B_{N}\right)\in\hat{L}_{\frac{1}{r},{\rm cl\,}}^{\frac{2}{r},0}+\hat{L}_{\frac{1}{r},2}^{\frac{2}{r},0,-\infty}\label{eq: B infty def}
\end{align}
is well-defined by asymptotic summation. The last two equations \prettyref{eq:BNP - P},
\prettyref{eq: B infty def} then give
\begin{equation}
\left(B_{\infty}\Pi-\Pi\right)\left(x,0\right)\in C^{\infty}.\label{eq:BinftyP=00003DP}
\end{equation}

Next the Schwartz kernel of the $\mu=e^{g\left(x\right)}dx$ adjoint
$B^{*}$ of $B$ is calculated to be 
\begin{equation}
B^{*}\left(x,x'\right)=e^{g\left(x'\right)-g\left(x\right)}\chi_{2}\left(x'\right)\overline{\tilde{B}\left(p\left(x'\right),p\left(x\right)+iq\left(x\right)\right)}\chi_{1}\left(x\right).\label{eq:B* computation}
\end{equation}
Following \prettyref{thm:almost analytic coordinates}, a Taylor expansion,
and writing in $p,p'$ coordinates, it is easy to see $B^{*}\in\hat{L}_{\frac{1}{r},{\rm cl\,}}^{\frac{2}{r},0}+\hat{L}_{\frac{1}{r},2}^{\frac{2}{r},0,-\infty}$.
Furthermore from the above and \prettyref{eq:BN def.} it has the
same principal symbol as $B_{\infty}$ 
\begin{align}
\sigma_{L}\left(B^{*}\right) & =\sigma_{L}\left(B_{\infty}\right)\quad\textrm{hence }\nonumber \\
R\coloneqq B_{\infty}-B^{*} & \in\hat{L}_{\frac{1}{r},{\rm cl\,}}^{\frac{1}{r},1}+\hat{L}_{\frac{1}{r},2}^{\frac{2}{r},0,-\infty}.\label{eq:R lies in class}
\end{align}

Finally combining \prettyref{eq:BP=00003DB} and \prettyref{eq:BinftyP=00003DP}
we have
\begin{align}
\Pi\left(x,0\right) & =\left[B_{\infty}\Pi\right]\left(x,0\right)+C^{\infty}\nonumber \\
 & =\left(B^{*}+R\right)\Pi\left(x,0\right)\nonumber \\
 & =\left[B^{*}+R\Pi\right]\left(x,0\right)+C^{\infty}\quad\textrm{ hence }\label{eq:relation Pi, B, R}\\
\left[\left(I-R^{N}\right)\Pi\right]\left(x,0\right) & =\left[1+R+R^{2}+\ldots+R^{N-1}\right]B^{*}\left(x,0\right)\label{eq:Pi in terms B , R}
\end{align}
$\forall N\in\mathbb{N}$. Following \ref{closure under composition},
\prettyref{eq:R lies in class} we have $R^{j}\in\hat{L}_{\frac{1}{r},{\rm cl\,}}^{\left(2-j\right)\frac{1}{r},j}+\hat{L}_{\frac{1}{r},2}^{\frac{2}{r},0,-\infty}$,
$j=1,2,\ldots$. Hence by asymptotic summation $\exists P\in\hat{L}_{\frac{1}{r},{\rm cl\,}}^{\frac{2}{r},0}$
such that 
\begin{equation}
P_{N}\coloneqq P-\left[1+R+R^{2}+\ldots+R^{N-1}\right]\in\hat{L}_{\frac{1}{r},{\rm cl\,}}^{\left(2-N\right)\frac{1}{r},N}+\hat{L}_{\frac{1}{r},2}^{\frac{2}{r},0,-\infty}\label{eq:asymptotic summation}
\end{equation}
$\forall N\in\mathbb{N}$. Since $B^{*}\left(x,0\right)\in S_{H,{\rm cl\,}}^{\frac{2}{r}}$
by definition, we have 
\begin{equation}
\Pi\left(x,0\right)=\left[PB^{*}\right]\left(x,0\right)+S_{H}^{\frac{2-N}{r}}\label{eq:last step of proof}
\end{equation}
$\forall N\in\mathbb{N}$, from \prettyref{eq:christ-M-M-NRSW-bounds},
\prettyref{eq:Pi in terms B , R}, \prettyref{eq:asymptotic summation}
and Prop. \prettyref{prop:functional boundedness}. Choosing $N$
large gives $\Pi\left(x,0\right)\in S_{H,{\rm cl\,}}^{\frac{2}{r}}$
using \prettyref{eq:regularity of spaces} and Prop. \prettyref{prop:functional boundedness}
which completes the proof on account of \prettyref{eq:x3 y independent classes}
and \prettyref{eq:inclusion into Hormander class}. 
\end{proof}
The next remark shows that our parametrix \prettyref{thm:main thm parametrix}
recovers the Boutet de Monvel-Sjöstrand parametrix at strongly pseudoconvex
points. 
\begin{rem}
\label{rem:-(Strongly-pseudoconvex} (Strongly pseudoconvex points)
Here we show that our main \prettyref{thm:main thm parametrix} recovers
the Boutet de Monvel-Sjöstrand description of the Szeg\H{o} kernel
at strongly pseudoconvex points $x'\in X$. As noted before, the type
of a strongly pseudoconvex point is $r_{x'}=2$. The two degree $2$
homogeneous polynomials in \prettyref{eq:Christ normal form} and
\prettyref{thm:almost analytic coordinates} can be further taken
to be $p\left(x_{1},x_{2}\right)=x_{1}^{2}+x_{2}^{2}$, $\varphi_{0}\left(\tilde{z}_{1},\tilde{z}_{2}\right)=\tilde{z}_{1}^{2}+\tilde{z}_{2}^{2}$
respectively. Following these, the model Bergman kernel is computed
to be an exponential $B_{t}^{0}\left(\hat{p},\hat{p}'\right)=tb_{0}\left(t^{\frac{1}{2}}p,t^{\frac{1}{2}}p'\right)$
\begin{align}
B_{t}^{0}\left(\hat{p},\hat{p}'\right) & =tb_{0}\left(t^{\frac{1}{2}}p,t^{\frac{1}{2}}p'\right)=te^{-t\Phi_{0}\left(\hat{p},\hat{p}'\right)}\nonumber \\
b_{0}\left(\hat{p},\hat{p}'\right) & =e^{-\Phi_{0}\left(\hat{p},\hat{p}'\right)}\nonumber \\
\Phi_{0}\left(\hat{p},\hat{p}'\right) & \coloneqq\frac{1}{4}\left(p_{1}^{2}+p_{2}^{2}+\left(p'_{1}\right)^{2}+\left(p'_{2}\right)^{2}+2p_{1}p'_{1}+2p_{2}p'_{2}+2ip'_{1}p_{2}-2ip_{1}p'_{2}\right)\label{eq:model bergman spc}
\end{align}
\cite[Sec. 4.1.6]{Ma-Marinescu}. And hence 
\[
\tilde{B}_{t}\left(p,p'\right)=te^{-t\left[\Phi_{0}\left(\hat{p},\hat{p}'\right)+\varphi_{1}\left(\hat{p}\right)-\varphi_{1}\left(\hat{p}'\right)\right]}
\]

Next the local Bergman kernel $B_{t}$ \prettyref{eq:local Bergman}
is by \ref{e-gue200512yyda} modulo $C^{N}$ a finite a sum of terms
of the form 
\[
\tilde{B}_{t}^{*}\left(\tilde{B}_{t}\tilde{B}_{t}^{*}\right)^{k}\quad\textrm{ or }\quad\left(\tilde{B}_{t}\tilde{B}_{t}^{*}\right)^{k}.
\]
Applying the complex stationary phase formula of Melin-Sjöstrand \cite[Sec. 2]{Melin-Sjostrand75},
the kernels of the above take the form $a_{k}\left(\hat{p}',\hat{p},t\right)e^{-t\Phi_{k}\left(\hat{p}',\hat{p}\right)}$,
where $a_{k}\in S_{0,\textrm{cl}}^{1}\left(\mathbb{R}_{p,p'}^{4}\times\mathbb{R}_{t}\right)$
is a classical symbol and $\Phi_{k}=\Phi_{0}+O\left(\left|\left(p,p'\right)\right|^{2}\right)$
a phase function agreeing with \prettyref{eq:model bergman spc} at
leading order. To get the Boutet de Monvel-Sjöstrand description however
one needs to ensure that all phase functions $\Phi_{k}$ agree. To
this end, one may replace $\tilde{B}_{t}$ \prettyref{eq:approximate local bergman kernel}
with the alternate approximation for the local Bergman kernel $B_{t}$
given by
\begin{align*}
\tilde{B}_{t}^{1}\left(\hat{p},\hat{p}'\right) & \coloneqq e^{t\Phi_{1}\left(\hat{p},\hat{p}'\right)}B_{t}^{0}\left(\hat{p},\hat{p}'\right)\\
\Phi_{1}\left(\hat{p},\hat{p}'\right) & \coloneqq\varphi_{1}\left(\hat{p}\right)+\varphi_{1}\left(\hat{p}'\right)-2\sum_{\alpha,\beta}\left(\partial_{\zeta}^{\alpha}\partial_{\bar{\zeta}}^{\beta}\varphi_{1}\right)\left(0\right)\frac{\zeta^{\alpha}\bar{\zeta'}^{\beta}}{\alpha!\beta!},
\end{align*}
$\zeta\coloneqq p_{1}+ip_{2}$. The proof of Thm. \prettyref{thm:local Bergman is a symbol},
\prettyref{thm:main thm parametrix} all carry through with $\tilde{B}_{t}$
replaced by $\tilde{B}_{t}^{1}$. The above further has the advantage
of being self-adjoint 
\begin{align*}
\tilde{B}_{t}^{1}\left(\hat{p},\hat{p}'\right) & =\overline{\tilde{B}_{t}^{1}\left(\hat{p}',\hat{p}\right)}\quad\textrm{ and equals }\\
\tilde{B}_{t}^{1}\left(\hat{p},\hat{p}'\right) & =e^{t\left[\Phi_{0}+\Phi_{1}\right]}
\end{align*}
in the strongly pseudoconvex case again using \prettyref{eq:model bergman spc}.
Furthermore, the composition of complex Fourier integral operators
and the complex stationary phase formula of Melin-Sjöstrand \cite[Sec. 2]{Melin-Sjostrand75}
in this case gives 
\begin{equation}
\left(\tilde{B}_{t}^{1}\right)^{2}=a\left(\hat{p},\hat{p}',t\right)e^{-t\Phi\left(\hat{p},\hat{p}'\right)}\label{eq:Boutet Sjostrand amplitude}
\end{equation}
with the same phase function $\Phi$ for $a\in S_{0,\textrm{cl}}^{1}\left(\mathbb{R}_{p,p'}^{4}\times\mathbb{R}_{t}\right)$
a classical symbol. Following this and repeating the argument for
\prettyref{thm:local Bergman is a symbol} with $\tilde{B}_{t}$ replaced
by $\tilde{B}_{t}^{1}$, the equations \prettyref{eq:approximate local bergman kernel},
\ref{e-gue200512yydII} and \ref{e-gue200512yyda} are seen to give
a similar form as \prettyref{eq:Boutet Sjostrand amplitude} for the
local Bergman kernel $B_{t}=a\left(\hat{p},\hat{p}',t\right)e^{-t\Phi\left(\hat{p},\hat{p}'\right)}$,
$a\in S_{0,\textrm{cl}}^{1}\left(\mathbb{R}_{p,p'}^{4}\times\mathbb{R}_{t}\right)$.
Plugging this form for the local Bergman kernel into the equations
\prettyref{eq:the operator B}, \prettyref{eq:BN def.}, \prettyref{eq:B* computation},
\prettyref{eq:Pi in terms B , R} and \prettyref{eq:last step of proof}
within the proof of \prettyref{thm:main thm parametrix}, and another
use of the Melin-Sjöstrand formula gives 
\begin{equation}
\Pi\left(x,0\right)=\int_{0}^{\infty}dt\,a\left(\hat{p},t\right)e^{itp_{3}-t\Phi\left(\hat{p},0\right)}\label{eq:Boutet de Monvel sjostrnd parametrix}
\end{equation}
for some $a\in S_{0,\textrm{cl}}^{1}\left(\mathbb{R}_{\hat{p}}^{2}\times\mathbb{R}_{t}\right)$
which is the pointwise version of the Boutet de Monvel-Sjöstrand form
for the parametrix at strongly pseudoconvex points.

We finally note that the reduction to the form \prettyref{eq:Boutet de Monvel sjostrnd parametrix}
above is possible on account of the explicit knowledge of the model
Bergman kernel $B_{t}^{0}$ \prettyref{eq:model bergman spc}, related
to Mehler's formula for the harmonic oscillator $\boxdot_{t}^{0}$,
at a strongly pseudoconvex point. At points of higher type the model
kernel to contend with is less explicit, modeled on anharmonic oscillators,
and one has to live with the description \prettyref{eq:Szego parametrix}.
\end{rem}

\section{\label{sec:Pseudoconvex-domains} Pseudoconvex domains}

We now consider the special case when the CR manifold is the boundary
of a domain $D$ in $\mathbb{C}^{2}$. Thus $D\subset\mathbb{C}^{2}$
is a relatively compact open subset with smooth boundary $X=\partial D$.
The CR structure on the boundary is simply obtained by restriction
$T^{1,0}X=T^{1,0}\mathbb{C}^{2}\cap T_{\mathbb{C}}X$ of the complex
tangent space of $\mathbb{C}^{2}$. 

We fix a Hermitian metric $\langle\,\cdot\,|\,\cdot\,\rangle$ on
$\mathbb{C}T\mathbb{C}^{2}$ so that $T^{1,0}\mathbb{C}^{2}\perp T^{0,1}\mathbb{C}^{2}$.
The Hermitian metric $\langle\,\cdot\,|\,\cdot\,\rangle$ on $\mathbb{C}T\mathbb{C}^{2}$
induces by duality, Hermitian metrics $\langle\,\cdot\,|\,\cdot\,\rangle$
on $\oplus_{0\leq p,q\leq2}T^{*p,q}\mathbb{C}^{2}$, where $T^{*p,q}\mathbb{C}^{2}$
denote the bundles of $(p,q)$ forms. With $dv$ being the induced
volume form on $\mathbb{C}^{2}$ let $(\,\cdot\,|\,\cdot\,)_{D}$
and $(\,\cdot\,|\,\cdot\,)_{\mathbb{C}^{2}}$ be the inner products
on $\Omega^{0,q}(\overline{D})$ and $\Omega_{0}^{0,q}(\mathbb{C}^{2})$
defined by 
\begin{equation}
\begin{split} & (\,f\,|\,h\,)_{D}=\int_{D}\langle\,f\,|\,h\,\rangle dv,\ \ f,h\in\Omega^{0,q}(\overline{D}),\\
 & (\,f\,|\,h\,)_{\mathbb{C}^{2}}=\int_{\mathbb{C}^{2}}\langle\,f\,|\,h\,\rangle dv,\ \ f,h\in\Omega_{0}^{0,q}(\mathbb{C}^{2}).
\end{split}
\label{e-gue200418yyd}
\end{equation}
Also denote by $\|\cdot\|_{D}$ and $\|\cdot\|_{\mathbb{C}^{2}}$
be the corresponding norms and by $L^{2}\left(D\right),\,L_{(0,q)}^{2}\left(D\right)$
the corresponding spaces of square integrable functions. Let $\rho\in C^{\infty}(\mathbb{C}^{2},\mathbb{R})$
be a defining function of $X$ satisfying $\rho=0$ on $X$, $\rho<0$
on $D$ and $\left.d\rho\right|_{X}\neq0$ . This maybe further chosen
to satisfy $\|d\rho\|=1$ on $X$. 

Let $\bar{\partial}:\Omega^{0,q}(\mathbb{C}^{2})\rightarrow\Omega^{0,q+1}(\mathbb{C}^{2})$
be the exterior differential operator and consider its formal adjoint
\begin{align*}
\overline{\partial}_{f}^{*}: & \Omega^{0,1}(\mathbb{C}^{2})\rightarrow C^{\infty}(\mathbb{C}^{2})\quad\textrm{satisfying}\\
(\,\bar{\partial}f\,|\,h\,)_{\mathbb{C}^{2}} & =(f\,|\,\overline{\partial}_{f}^{*}h\,)_{\mathbb{C}^{2}},\quad f\in C_{c}^{\infty}(\mathbb{C}^{2}),\;h\in\Omega^{0,1}(\mathbb{C}^{2}).
\end{align*}
 Also denote by $\bar{\partial}^{*}:L_{(0,1)}^{2}(D)\rightarrow L^{2}(D)$
the $L^{2}$ adjoint of $\bar{\partial}$, as an unbounded operator,
with respect to $(\,\cdot\,|\,\cdot\,)_{D}$. The Bergman kernel of
the domain is the distributional kernel $\Pi_{D}\left(z,z'\right)\in C^{-\infty}(D\times D)$
of the orthogonal projection
\[
\Pi_{D}:L^{2}(D)\rightarrow{\rm Ker\,}\bar{\partial}\subset L^{2}(D)
\]
with respect to $(\,\cdot\,|\,\cdot\,)_{D}$. The goal of this section
is to establish an asymptotic expansion for $\Pi_{D}\left(z,z\right)$
as $z\rightarrow x'$ approaches a point on the boundary $x'\in X$. 

This shall use the relation of the Bergman kernel with the Szeg\H{o}
kernel of the boundary via the Poisson operator \cite[Sec. 3b]{Boutet-Sjostrand76}.
To state this, let 
\[
\Box_{f}=\bar{\partial}\,\bar{\partial}_{f}^{*}+\bar{\partial}_{f}^{*}\,\bar{\partial}:C^{\infty}(\mathbb{C}^{2})\rightarrow C^{\infty}(\mathbb{C}^{2})
\]
denote the complex Laplace-Beltrami operator on functions and $\gamma$
the operator of restriction to the boundary $X$. The Poisson operator
is the solution operator to the Dirichlet problem on $D$ defined
by
\begin{align}
P: & C^{\infty}(X)\rightarrow C^{\infty}(\overline{D})\label{e-gue200420yydIq}\\
\Box_{f}Pu=0, & \ \ \gamma Pu=u,\ \ \text{ }\forall u\in C^{\infty}(X).\label{eq:e-gue200420yydI}
\end{align}
Its adjoint is defined to satisfy
\begin{align}
P^{*}:C^{\infty}(\overline{D}) & \rightarrow C^{-\infty}(X)\label{e-gue200420yydb}\\
(\,P^{*}u\,|\,v\,)_{X} & =(\,u\,|\,Pv\,)_{D},\ \ u\in C^{\infty}(\overline{D}),\ v\in C^{\infty}(X)
\end{align}

The microlocal structure of $P$ was described by Boutet de Monvel
\cite{Boutet1971}. Firstly, from \cite[pg. 29]{Boutet1971} the operators
$P,P^{*}$ extend continuously 
\begin{align*}
P:H^{s}(X) & \rightarrow H^{s+\frac{1}{2}}(\overline{D}),\\
P^{*}:H^{s}(\overline{D}) & \rightarrow H^{s+\frac{1}{2}}(X),\ \forall s\in\mathbb{R},
\end{align*}
and in particular map smooth functions onto smooth ones. Furthermore,
$P^{*}P:C^{\infty}(X)\rightarrow C^{\infty}(X)$ is an injective continuous
operator and its inverse $\left(P^{*}P\right)^{-1}$ is a classical
elliptic pseudodifferential operator of order one on $X$. Its principal
symbol is given by 
\begin{equation}
\sigma_{\left(P^{*}P\right){}^{-1}}=\sigma_{\left(2\sqrt{-\triangle_{X}}\right)},\label{e-gue200519yyd}
\end{equation}
\cite{Hsiao2010} where $\sigma_{\left(2\sqrt{-\triangle_{X}}\right)}$
denotes the principal symbol of the square root of the Laplace-Beltrami
operator. Next, there is a continuous operator 
\begin{align}
G: & H^{s}(\overline{D})\rightarrow H^{s+2}(\overline{D}),\ \forall s\in\mathbb{R},\quad\textrm{ satisfying }\label{eq:Green operator for Poisson}\\
G\Box_{f}+P\gamma & =I\ \ \mbox{on \ensuremath{C^{\infty}(\overline{D})}}.\label{eq: defining property green operator}
\end{align}
It furthermore follows from the methods of \cite{Boutet1971} that
the Schwartz kernel of $G$ satisfies the estimates 
\begin{equation}
\left|\partial_{z}^{\alpha}\partial_{w}^{\beta}G\left(z,w\right)\right|\leq C\left|z-w\right|^{-5-\left|\alpha\right|-\left|\beta\right|}\label{eq:Green function Schwartz estimates}
\end{equation}
$\forall z,w\in\overline{D}$, $\alpha,\beta\in\mathbb{N}_{0}^{4}$,
along the diagonal.

With 
\begin{align*}
\Gamma^{\wedge}: & T^{*0,q}\mathbb{C}^{2}\rightarrow T^{*0,q+1}\mathbb{C}^{2}\\
\Gamma^{\wedge,*}: & T^{*0,q+1}\mathbb{C}^{2}\rightarrow T^{*0,q}\mathbb{C}^{2},\quad\forall\Gamma\in T^{*0,1}\mathbb{C}^{2},
\end{align*}
denoting the wedge and contraction, adjoint with respect to $\langle\,\cdot\,|\,\cdot\,\rangle$,
operators, one has
\begin{equation}
\begin{split}I & =2(\bar{\partial}\rho)^{\wedge}(\bar{\partial}\rho)^{\wedge,*}+2(\bar{\partial}\rho)^{\wedge,*}(\bar{\partial}\rho)^{\wedge},\ \ \mbox{on \ensuremath{\Omega^{0,q}(\mathbb{C}^{2})}},\\
\bar{\partial}_{b} & =2\gamma(\bar{\partial}\rho)^{\wedge,*}(\bar{\partial}\rho)^{\wedge}\bar{\partial}P\ \ \mbox{on \ensuremath{C^{\infty}(X)}}.
\end{split}
\label{expression db}
\end{equation}

In using the above to describe the behavior of $\Pi_{D}$ near a boundary
point $x'\in X$ one uses the parametrix construction for the Szeg\H{o}
kernel $\Pi$ from \prettyref{thm:main thm parametrix}. Let $(x_{1},x_{2},x_{3})$
be the local coordinates on an open set $x'\in U\subset X$ on the
boundary as in the proof of Theorem~\ref{thm:main thm parametrix}
with $B$ the operator \prettyref{eq:the operator B} therein. Since
$B$ is smoothing away the diagonal we may again assume that $B$
is properly supported on $U$. It is well-known that $\Box_{b}$ is
elliptic outside its characteristic variety $\Sigma\coloneqq HX^{\perp}\subset T^{*}X$
given by the annihilator of the Levi distribution $HX$. The characteristic
variety $\Sigma$ carries an orientation given by $J^{t}d\rho$, with
$J^{t}$ denoting the dual complex structure on $T^{*}\mathbb{C}^{2}$,
and we denote by $\Sigma^{-}$ its negatively oriented part. By construction
\prettyref{eq:B hat definition} we have 
\begin{align*}
WF\left(Bu\right) & \subset\Sigma^{-}\cap T^{*}U,\quad\forall u\in C^{-\infty}\left(U\right).
\end{align*}
Let $\tilde{U}$ be an open set of $\mathbb{C}^{2}$ with $\tilde{U}\cap\overline{D}=U$.
We then have the next lemma.
\begin{lem}
\label{l-gue200419yydI} The Schwartz kernel $\bar{\partial}PB\left(z,x\right)\in C^{\infty}\left(\left(\tilde{U}\times U\right)\cap\left(\overline{D}\times X\right)\right)$ 
\end{lem}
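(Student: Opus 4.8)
The plan is to show that $PB$ is holomorphic up to a smooth error, which immediately gives $\bar\partial PB\in C^{\infty}$. In the interior this is automatic: $\Box_{f}PB=0$ and $\Box_{f}$ is elliptic, so $PB\left(z,x\right)$ is harmonic, hence smooth, in $z$ away from $X$, and only regularity as $z$ approaches $X$ needs attention. For that I would exploit that $B$ is a Szeg\H{o} parametrix. By the closed--range hypothesis $\Pi=I-G\bar\partial_{b}$ for a microlocal operator $G$ (proof of \prettyref{thm:main thm parametrix}, \cite[Prop. 4.1]{Christ89-embedding}, \cite{Christ88}), and since the construction of $B$ yields $\bar\partial_{b}B\in C^{\infty}$, one has $\Pi B=B-G\bar\partial_{b}B=B+C^{\infty}$. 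Taking $B$ properly supported on $U$ and writing $\bar\partial PB=\left(\bar\partial P\Pi\right)B+\bar\partial P\left(B-\Pi B\right)$, the second term is smooth because $\bar\partial$ and $P$ preserve $C^{\infty}$, so the lemma reduces to showing that $\bar\partial P\Pi$ is a smoothing operator.

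This reduction leads to the Boutet de Monvel--Sj\"ostrand observation \cite[Sec. 3b]{Boutet-Sjostrand76} relating the Bergman and Szeg\H{o} kernels: the Poisson extension $P\Pi u$ of a CR function on $X=\partial D$ is holomorphic on $D$ modulo a smooth function, equivalently $\bar\partial P\Pi$ is smoothing. The mechanism is that $\bar\partial P\Pi u$ is $\bar\partial$--closed and, by the pointwise splitting $I=2(\bar\partial\rho)^{\wedge}(\bar\partial\rho)^{\wedge,*}+2(\bar\partial\rho)^{\wedge,*}(\bar\partial\rho)^{\wedge}$ on $\Omega^{0,1}$ together with \prettyref{expression db}, has tangentially vanishing boundary value ($\bar\partial_{b}\Pi u=0$); solving $\bar\partial v=\bar\partial P\Pi u$ in $D$ with the necessary boundary regularity --- here finite type and pseudoconvexity of $D\subset\mathbb{C}^{2}$ enter through the subelliptic $\bar\partial$--estimates --- exhibits $P\Pi u$ as holomorphic modulo $C^{\infty}$. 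The hypothesis $WF\left(Bu\right)\subset\Sigma^{-}\cap T^{*}U$ is implicit here: $\Sigma^{-}$ is the component of the characteristic variety $HX^{\perp}$ on which $\Box_{b}$ on functions has its kernel, so that $\Pi$ and hence $B$ are microlocalised there, and this is exactly the side for which the extension into $D$ decays and the $\bar\partial$--argument applies.

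Alternatively one can argue concretely: splitting $\bar\partial$ near $X$ into its tangential part $\bar\partial_{b}$ and a complex--normal derivative, the claim becomes that the complex--normal component $2\gamma(\bar\partial\rho)^{\wedge,*}\bar\partial PB$ of $\gamma\bar\partial PB$ is smooth. Since $\bar\partial P$ is a Poisson operator in the Boutet de Monvel calculus, this component is an order--one classical pseudodifferential operator on $X$ --- characteristic along $\Sigma^{-}$, elliptic off it --- applied to $B$; using the explicit form of $B$ built from the local Bergman kernels of \prettyref{thm:local Bergman is a symbol} with $\hat{Z}\hat{B}=0$, the Poisson operator replaces to leading order the transversal variable $p_{3}$ by $p_{3}+i\rho$ (the frequency being on $\Sigma^{-}$, so that $e^{t\rho}$ decays into $D$), and the resulting kernel is annihilated by $\bar\partial$ at leading order thanks to $\hat{Z}\hat{B}=0$, the lower--order errors being removed by asymptotic summation.

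The main obstacle, in either route, is exactly this point: the smoothness of the complex--normal component $2\gamma(\bar\partial\rho)^{\wedge,*}\bar\partial PB$, equivalently the smoothing property of $\bar\partial P\Pi$. The tangential component is smooth for free from $\bar\partial_{b}B\in C^{\infty}$, but the operator yielding the normal component is elliptic of order one off $\Sigma^{-}=WF(B)$, so no soft microlocal argument makes it smooth; one genuinely needs the holomorphic extendibility into $D$ of CR data carrying wavefront set on the correct component, which rests on the $\bar\partial$--analysis inside the pseudoconvex domain. The interior step, the composition bookkeeping with properly supported operators, and the boundary splitting via $(\bar\partial\rho)^{\wedge}$ are routine.
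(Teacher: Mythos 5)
Your reduction to the statement that the complex--normal component of $\gamma\bar\partial PB$ is smooth (the tangential part being taken care of by $\bar\partial_{b}B\in C^{\infty}$) is correct and matches the paper's starting point; but the conclusion you draw from it --- that \emph{``no soft microlocal argument makes it smooth''} and that one must go back into $D$ and run the $\bar\partial$--theory --- is wrong, and this is exactly where your proof is missing its key idea. The paper \emph{does} finish with a soft microlocal argument. The extra equation you are not using is $\bar\partial_{f}^{*}\bar\partial PB=\Box_{f}PB=0$. Writing $\bar\partial PB=P\gamma\bar\partial PB$ and splitting $\gamma\bar\partial PB$ via $I=2(\bar{\partial}\rho)^{\wedge}(\bar{\partial}\rho)^{\wedge,*}+2(\bar{\partial}\rho)^{\wedge,*}(\bar{\partial}\rho)^{\wedge}$, the tangential piece contributes $C^{\infty}$, so one gets
\[
\bar{\partial}_{f}^{*}P\,\gamma\bigl(2(\bar{\partial}\rho)^{\wedge}(\bar{\partial}\rho)^{\wedge,*}\bigr)\bar{\partial}PB\in C^{\infty},
\]
and hence, composing with $\gamma(\bar{\partial}\rho)^{\wedge}$, a boundary pseudodifferential equation for the normal component $w:=\gamma\bigl(2(\bar{\partial}\rho)^{\wedge}(\bar{\partial}\rho)^{\wedge,*}\bigr)\bar{\partial}PB$. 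By \cite[Prop.\ 4.2]{Hsiao2010} the operator $\gamma(\bar{\partial}\rho)^{\wedge}\bar{\partial}_{f}^{*}P$ is \emph{elliptic} near $\Sigma^{-}$, and since $WF(w)\subset WF(B)\subset\Sigma^{-}$, elliptic regularity gives $w\in C^{\infty}$. That is the whole proof.

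The operator you analyse, $u\mapsto 2\gamma(\bar{\partial}\rho)^{\wedge,*}\bar\partial Pu$, is indeed characteristic along $\Sigma^{-}$, which is why you (correctly) see no direct way to conclude --- but it is not the operator one should look at. The device of hitting the normal component with $\gamma(\bar{\partial}\rho)^{\wedge}\bar{\partial}_{f}^{*}P$, which is elliptic on exactly the microsupport you care about, is what you are missing. Your first route (reduce to $\bar\partial P\Pi$ smoothing, then solve $\bar\partial v=\bar\partial P\Pi u$ using subelliptic estimates) is both unnecessary and not clearly non--circular: solving $\bar\partial v=\bar\partial P\Pi u$ shows $P\Pi u-v$ is holomorphic but does not of itself give $\bar\partial P\Pi u\in C^{\infty}$ without already controlling $v$ up to the boundary, and the detour through $\Pi$ is superfluous since the argument can be run directly on $B$. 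Your second route (explicit replacement $p_{3}\mapsto p_{3}+i\rho$ using $\hat Z\hat B=0$) is in the spirit of what the paper does later (Lemmas \prettyref{l-gue200523yyd}--\prettyref{l-gue200523yydI}) but is not carried out and is not needed here.
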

\begin{proof}
From \eqref{expression db}, we have 
\[
2\gamma(\bar{\partial}\rho)^{\wedge,*}(\bar{\partial}\rho)^{\wedge}\bar{\partial}PB=\bar{\partial}_{b}B\in C^{\infty}.
\]
Combining this with $\left(\bar{\partial}_{f}^{*}\bar{\partial}\right)P=0$,
$P\gamma\bar{\partial}P=\bar{\partial}P$, we have 
\begin{align}
0 & =\bar{\partial}_{f}^{*}\bar{\partial}PB\nonumber \\
 & =\bar{\partial}_{f}^{*}P\gamma\bar{\partial}PB\nonumber \\
 & =\bar{\partial}_{f}^{*}P\gamma\left(I-2(\bar{\partial}\rho)^{\wedge,*}(\bar{\partial}\rho)^{\wedge}\right)\bar{\partial}PB+C^{\infty}\nonumber \\
 & =\bar{\partial}_{f}^{*}P\gamma\left(2(\bar{\partial}\rho)^{\wedge}(\bar{\partial}\rho)^{\wedge,*}\right)\bar{\partial}PB+C^{\infty}.\label{eq:e-gue200419ycdII}
\end{align}
From \prettyref{eq:e-gue200419ycdII}, we deduce that 
\[
\gamma(\bar{\partial}\rho)^{\wedge}\bar{\partial}_{f}^{*}P\gamma(2(\bar{\partial}\rho)^{\wedge}(\bar{\partial}\rho)^{\wedge,*})\bar{\partial}PB\in C^{\infty}.
\]
It is known that \cite[Propsition 4.2]{Hsiao2010} that 
\[
\gamma(\bar{\partial}\rho)^{\wedge}\bar{\partial}_{f}^{*}P:C^{\infty}(X,I^{0,2}T^{*}\mathbb{C}^{2})\rightarrow C^{\infty}(X,I^{0,2}T^{*}\mathbb{C}^{2})
\]
is elliptic near\, $\Sigma^{-}$, where $I^{0,2}T^{*}\mathbb{C}^{2}$
is the vector bundle over $\mathbb{C}^{2}$ with fiber 
\[
I^{0,2}T^{*}\mathbb{C}^{2}=\left\{ (\bar{\partial}\rho)(z)\wedge g;\,g\in T_{z}^{*0,1}\mathbb{C}^{2}\right\} .
\]
Since ${\rm WF\,}\left(Bu\right)\subset\Sigma^{-}\cap T^{*}U$, $u\in C^{-\infty}\left(U\right)$,
we get $\gamma(2(\bar{\partial}\rho)^{\wedge}(\bar{\partial}\rho)^{\wedge,*})\bar{\partial}PBu$
is smooth and hence 
\[
\gamma(2(\bar{\partial}\rho)^{\wedge}(\bar{\partial}\rho)^{\wedge,*})\bar{\partial}PB\in C^{\infty}
\]
as required. 
\end{proof}
In view of \cite[Lemmas 4.1, 4.2]{Hsiao2010}, we see that $P$ and
$(P^{*}P)^{-1}P^{*}$ are smoothing away from the diagonal. Hence,
they maybe replaced by continuous properly supported operators 
\[
\begin{split} & L:C_{c}^{\infty}(\tilde{U}\cap\overline{D})\rightarrow C_{c}^{\infty}(U),\\
 & \hat{P}:C_{c}^{\infty}(U)\rightarrow C_{c}^{\infty}(\tilde{U}\cap\overline{D})
\end{split}
\]
such that 
\begin{equation}
\begin{split} & L-(P^{*}P)^{-1}P^{*}\equiv0\mod C^{\infty}((U\times\tilde{U})\cap(X\times\overline{D})),\\
 & \hat{P}-P\equiv0\mod C^{\infty}((\tilde{U}\times U)\cap(\overline{D}\times X)).
\end{split}
\label{e-gue190524syds}
\end{equation}
We now set
\begin{equation}
A:=\hat{P}BL:C^{\infty}(\tilde{U}\cap\overline{D})\rightarrow C^{\infty}(\tilde{U}\cap\overline{D}).\label{e-gue200419ycdhq}
\end{equation}
From Lemma~\ref{l-gue200419yydI}, we see that 
\begin{equation}
\bar{\partial}A\equiv0\mod C^{\infty}((\tilde{U}\times\tilde{U})\cap(\overline{D}\times\overline{D})).\label{e-gue200419yydi}
\end{equation}
Since the boundary $X$ is of finite type, it was proved by Kohn \cite{Kohn-dbarN-72}
that there is a pseudolocal continuous operator 
\[
\begin{split} & N:L^{2}(M)\rightarrow L_{(0,1)}^{2}(M)\cap{\rm Dom\,}\bar{\partial}^{*},\\
 & N:C^{\infty}(\overline{D})\rightarrow\Omega^{0,1}(\overline{D}),
\end{split}
\]
such that 
\begin{equation}
\Pi_{D}=I-\bar{\partial}^{*}N\bar{\partial}.\label{e-gue200419ycdg}
\end{equation}
From \eqref{e-gue200419yydi} and \eqref{e-gue200419ycdg}, we deduce
that 
\begin{equation}
\Pi_{D}A\equiv A\quad\mod C^{\infty}((\tilde{U}\times\tilde{U})\cap(\overline{D}\times\overline{D})).\label{PiA =00003DA}
\end{equation}
Next we take $\tilde{U}$ small enough enough so that $z=\left(x,\rho\right)$
form local coordinates of $\tilde{U}$. 

The Bergman kernel $\Pi_{D}$ is then known to satisfy the bounds
\begin{equation}
\left|\partial_{\rho}^{\gamma}\partial_{\rho'}^{\gamma'}\partial_{x}^{\alpha}\Pi_{D}\left(\left(x,\rho\right),\left(0,\rho'\right)\right)\right|\leq C_{\alpha\gamma\gamma'}\left(\left|\rho\right|+\left|\rho'\right|+d^{H}\left(x\right)\right)^{-w.\alpha-\gamma-\gamma'-r\left(x'\right)-2},\label{eq:Bergman kernel estimates}
\end{equation}
 $\forall\left(\alpha,\gamma,\gamma'\right)\in\mathbb{N}_{0}^{5},$
similar to \prettyref{eq: function class} (see \cite{McNeal89,Nagel-Rosay-Stein-Wainger-89}).
This gives corresponding estimates for the kernel $\gamma\Pi_{D}\left(\left(x,\rho\right),\left(0,\rho'\right)\right)$
which satisfies $\bar{\partial}_{b}\gamma\Pi_{D}=0$. Following these,
we can repeat the procedure in the proof of Theorem~\ref{thm:main thm parametrix}
to conclude 
\begin{equation}
(B_{\infty}\gamma\Pi_{D})((x,0),(0,\rho'))\equiv\gamma\Pi_{D}((x,0),(0,\rho'))\mod C^{\infty}((\tilde{U}\times\tilde{U})\cap(X\times\overline{D})),\label{e-gue200424yydI}
\end{equation}
as with eqn. \prettyref{eq:BinftyP=00003DP}.Next $\Pi_{D}=P\gamma\Pi_{D}$
gives $P^{*}\Pi_{D}=P^{*}P\gamma\Pi_{D}$ and hence $\left(P^{*}P\right)^{-1}P^{*}\Pi_{D}=\gamma\Pi_{D}$.
This combines with \eqref{e-gue200424yydI} to give
\begin{align}
(A_{\infty}\Pi_{D})(z,(0,\rho')) & \equiv\Pi_{D}(z,(0,\rho'))\mod C^{\infty}((\tilde{U}\times\tilde{U})\cap(\overline{D}\times\overline{D}))\label{e-gue200424yydII}\\
\textrm{ for }\quad A_{\infty} & \coloneqq\hat{P}B_{\infty}L.\nonumber 
\end{align}

We now have the following proposition.
\begin{lem}
\label{Bergman kernel in terms of Poisson} One has 
\[
\Pi_{D}(z,(0,\rho))=(PQP^{*})(z,(0,\rho))+C^{\infty}\left(\ensuremath{\tilde{U}\times\mathbb{R}_{\rho}}\right)
\]
for some properly supported $Q\in\hat{L}_{\frac{1}{r},{\rm cl\,}}^{1+\frac{2}{r}}$.
\end{lem}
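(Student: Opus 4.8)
The plan is to run the argument in the proof of Theorem~\ref{thm:main thm parametrix} one more time, now carrying the Poisson operator along, using as inputs the two relations already established above: $A_{\infty}\Pi_{D}\equiv\Pi_{D}$ (in the sense of the display preceding the statement, i.e.\ with the second argument on the normal ray $(0,\rho')$ through $x'$) and $\Pi_{D}A\equiv A$, both modulo $C^{\infty}$, where $A=\hat{P}BL$ and $A_{\infty}=\hat{P}B_{\infty}L$ and $B,B_{\infty},B^{*}$ are the operators from Section~\ref{sec:Szego-parametrix}. Taking the adjoint of $\Pi_{D}A\equiv A$ gives $A^{*}\Pi_{D}\equiv A^{*}$ with $A^{*}=L^{*}B^{*}\hat{P}^{*}$. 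I also record the elementary consequences of $\hat{P}\equiv P$, $L\equiv(P^{*}P)^{-1}P^{*}$ (hence $\hat{P}^{*}\equiv P^{*}$, $L^{*}\equiv P(P^{*}P)^{-1}$) together with $\Pi_{D}=P\gamma\Pi_{D}$: namely $L\hat{P}\equiv I$, $LL^{*}\equiv(P^{*}P)^{-1}$, $L\Pi_{D}=\gamma\Pi_{D}$ and $P^{*}\Pi_{D}\equiv(P^{*}P)\gamma\Pi_{D}$, all modulo $C^{\infty}$ on the relevant corners of $\overline{D}\times\overline{D}$, $X\times\overline{D}$, $\overline{D}\times X$, every operator being taken properly supported.

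First I would split $A_{\infty}=A^{*}+(A_{\infty}-A^{*})$ inside $\Pi_{D}\equiv A_{\infty}\Pi_{D}$ and use $A^{*}\Pi_{D}\equiv A^{*}$ to obtain $\Pi_{D}\equiv A^{*}+(A_{\infty}-A^{*})\Pi_{D}$. Applying $L$ on the left, using $L\Pi_{D}=\gamma\Pi_{D}$, $L\hat{P}\equiv I$, $LL^{*}\equiv(P^{*}P)^{-1}$, $\hat{P}^{*}\equiv P^{*}$ and $L\equiv(P^{*}P)^{-1}P^{*}$, and then inserting $P^{*}\Pi_{D}\equiv(P^{*}P)\gamma\Pi_{D}$ on the right, this collapses to the fixed-point identity
\[
\gamma\Pi_{D}\equiv(P^{*}P)^{-1}B^{*}P^{*}+\mathcal{S}\,\gamma\Pi_{D},\qquad\mathcal{S}:=\bigl(B_{\infty}(P^{*}P)^{-1}-(P^{*}P)^{-1}B^{*}\bigr)(P^{*}P),
\]
again modulo $C^{\infty}$ (with second argument on the ray $(0,\rho)$).

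The crucial point is that $\mathcal{S}$ has negative enough order to iterate, and this is where the already-constructed Szeg\H{o} parametrix enters. By the proof of Theorem~\ref{thm:main thm parametrix} (where $R=B_{\infty}-B^{*}$ was shown to have vanishing principal symbol) one has $\sigma_{L}(B_{\infty})=\sigma_{L}(B^{*})$; combined with Proposition~\ref{class is module over pso} and its right-multiplication analogue (the classes $\hat{L}_{\frac{1}{r},{\rm cl\,}}^{m,k}$ being closed under adjoints and $(P^{*}P)^{-1}$ self-adjoint), the operators $B_{\infty}(P^{*}P)^{-1}$ and $(P^{*}P)^{-1}B^{*}$ in $\hat{L}_{\frac{1}{r},{\rm cl\,}}^{1+\frac{2}{r}}$ share a principal symbol, so their difference lies in $\hat{L}_{\frac{1}{r},{\rm cl\,}}^{1+\frac{1}{r},1}$ modulo the residual smoothing classes already carried by $B_{\infty}$ and $B^{*}$. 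Since $P^{*}P\in\Psi_{{\rm cl\,}}^{-1}$ is smoothing of order one, Proposition~\ref{class is module over pso} gives $\mathcal{S}\in\hat{L}_{\frac{1}{r},{\rm cl\,}}^{\frac{1}{r},1}$ (modulo residual smoothing), whence by Proposition~\ref{closure under composition} $\mathcal{S}^{j}\in\hat{L}_{\frac{1}{r},{\rm cl\,}}^{\frac{2-j}{r},j}$ — exactly the decay of the powers $R^{j}$ in the proof of Theorem~\ref{thm:main thm parametrix}. Iterating the fixed-point identity $N$ times gives $(I-\mathcal{S}^{N})\gamma\Pi_{D}\equiv(I+\mathcal{S}+\cdots+\mathcal{S}^{N-1})(P^{*}P)^{-1}B^{*}P^{*}$, and asymptotic summation of the $\mathcal{S}^{j}$ produces $\mathcal{T}$ with $\mathcal{T}-(I+\mathcal{S}+\cdots+\mathcal{S}^{N-1})\in\hat{L}_{\frac{1}{r},{\rm cl\,}}^{\frac{2-N}{r},N}$ for all $N$. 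The Bergman kernel bounds \eqref{eq:Bergman kernel estimates} give $\gamma\Pi_{D}(\cdot,(0,\rho'))\in S_{H}^{2/r}$, so by Proposition~\ref{prop:functional boundedness} the remainders $\mathcal{S}^{N}\gamma\Pi_{D}$ become arbitrarily regular, and hence $\gamma\Pi_{D}\equiv\mathcal{T}(P^{*}P)^{-1}B^{*}P^{*}$ modulo $C^{\infty}$. Finally $\Pi_{D}=P\gamma\Pi_{D}$ modulo $C^{\infty}$ (from $G\Box_{f}+P\gamma=I$ and $\Box_{f}\Pi_{D}\equiv0$ on the interior) yields $\Pi_{D}\equiv PQP^{*}$ with $Q:=\mathcal{T}(P^{*}P)^{-1}B^{*}$, which lies in $\hat{L}_{\frac{1}{r},{\rm cl\,}}^{1+\frac{2}{r}}$ by Propositions~\ref{closure under composition}--\ref{class is module over pso} (using $(P^{*}P)^{-1}\in\Psi_{{\rm cl\,}}^{1}$ and $B^{*}\in\hat{L}_{\frac{1}{r},{\rm cl\,}}^{\frac{2}{r},0}$) and can be taken properly supported since $B^{*}$, $\hat{P}$, $L$ and the asymptotic sum all are.

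The step I expect to be the main obstacle is the one isolated above: verifying that $\mathcal{S}$ really iterates, i.e.\ that the principal-symbol cancellation $\sigma_{L}(B_{\infty})=\sigma_{L}(B^{*})$ survives left and right composition with the pseudodifferential operator $(P^{*}P)^{-1}$, and that the resulting difference — which a priori is only of order $1+\tfrac{1}{r}$, \emph{not} small — becomes of order $\tfrac{1}{r}$ after composition with the order $(-1)$ operator $P^{*}P$; it is precisely this smoothing that drives the geometric series, and without it the naive iteration of $A_{\infty}-A^{*}$ would not converge. A secondary, purely bookkeeping, point is to check that the algebraic identities of the first paragraph hold in the stated ``modulo $C^{\infty}$ on corners'' sense and that the residual smoothing classes of the type $\hat{L}_{\frac{1}{r},2}^{\frac{2}{r},0,-\infty}$ attached to $B_{\infty},B^{*}$ (hence to $\mathcal{S},\mathcal{T},Q$) are carried along correctly, exactly as in the proof of Theorem~\ref{thm:main thm parametrix}.
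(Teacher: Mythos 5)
Your proposal is correct and takes essentially the same route as the paper: you recover the same key cancellation $\sigma_{L}(B_{\infty})=\sigma_{L}(B^{*})$, the same iteration scheme, and (after unwinding) the same $Q=(P^{*}P)^{-1}\bigl(\sum E^{j}\bigr)B^{*}$, since your $\mathcal{S}=(P^{*}P)^{-1}E(P^{*}P)$ is precisely the conjugate of the paper's $E\coloneqq(P^{*}P)B_{\infty}(P^{*}P)^{-1}-B^{*}$. The only cosmetic difference is that you apply $L$ on the left to run the geometric-series argument at the level of the boundary trace $\gamma\Pi_{D}$ and reassemble $\Pi_{D}=P\gamma\Pi_{D}$ at the end, whereas the paper iterates directly on $\Pi_{D}$ with $R=A_{\infty}-A^{*}=P(P^{*}P)^{-1}EP^{*}$ and extracts the $E$-structure from $R^{j}$; the worry you flag about a right-multiplication analogue of Prop.~\ref{class is module over pso} is equally present in the paper's version and is handled by duality via \prettyref{eq:symbol adjoint}.
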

\begin{proof}
Denote by 
\begin{equation}
A^{*}:C_{c}^{\infty}(\tilde{U}\cap\overline{D})\rightarrow C^{\infty}(\tilde{U}\cap\overline{D}),\ \label{e-gue200513yyd}
\end{equation}
be the adjoint of $A$ with respect to $(\,\cdot\,|\,\cdot\,)$. From
\eqref{PiA =00003DA}, we have 
\begin{equation}
A^{*}\Pi_{D}\equiv A^{*}\mod C^{\infty}((\tilde{U}\times\tilde{U})\cap(\overline{D}\times\overline{D})).\label{e-gue200519ycd}
\end{equation}
Thus 
\begin{equation}
\begin{split} & (A_{\infty}\Pi_{D})(z,(0,\rho'))\\
 & =(A^{*}\Pi_{D})(z,(0,\rho'))+((A_{\infty}-A^{*})\Pi_{D})(z,(0,\rho'))\\
 & \equiv A^{*}(z,(0,\rho'))+(R\Pi_{D})(z,(0,\rho'))\mod C^{\infty}((\tilde{U}\times\tilde{U})\cap(\overline{D}\times\overline{D})),
\end{split}
\label{e-gue200424yydIII}
\end{equation}
for 
\begin{align}
R & \coloneqq A_{\infty}-A^{*}\nonumber \\
 & =PB_{\infty}(P^{*}P)^{-1}P^{*}-P(P^{*}P)^{-1}B^{*}P^{*}\mod C^{\infty}((\tilde{U}\times\tilde{U})\cap(\overline{D}\times\overline{D}))\nonumber \\
 & =P(P^{*}P)^{-1}\underbrace{\Bigr((P^{*}P)B_{\infty}(P^{*}P)^{-1}-B^{*}\Bigr)}_{E\coloneqq}P^{*}\mod C^{\infty}((\tilde{U}\times\tilde{U})\cap(\overline{D}\times\overline{D})).\label{eq:calculation remainder}
\end{align}
From \eqref{e-gue200424yydII} and \eqref{e-gue200424yydIII}, we
get 
\[
(I-R)\Pi_{D}(z,(0,\rho'))\equiv A^{*}(z,(0,\rho))\mod C^{\infty}((\tilde{U}\times\mathbb{R}_{\rho})\cap(\overline{D}\times\mathbb{R}_{\rho}))
\]
\begin{equation}
\Bigr((I-R^{N})\Pi_{D}\Bigr)(z,(0,\rho'))\equiv\Bigr((I+R+R^{2}+\cdots+R^{N-1})A^{*}\Bigr)(z,(0,\rho))\mod C^{\infty}((\tilde{U}\times\mathbb{R}_{\rho})\cap(\overline{D}\times\mathbb{R}_{\rho})).\label{important calculation}
\end{equation}
$\forall N\in\mathbb{N}$. 

From \prettyref{eq:symbol adjoint} and \prettyref{eq:symbol of product with pdo}
one has $E\in\hat{L}_{\frac{1}{r},{\rm cl\,}}^{\frac{1}{r},1}$ and
thus $E^{N}\in\hat{L}_{\frac{1}{r},{\rm cl\,}}^{(2-N)\frac{1}{r},N}$,
$\forall N\in\mathbb{N}$. By Prop. \ref{class is module over pso},
Prop. \ref{closure under composition} and asymptotic summation $\exists Q\in\hat{L}_{\frac{1}{r},{\rm cl\,}}^{1+\frac{2}{r}}$
such that 
\begin{align*}
Q & -(P^{*}P)^{-1}\Bigr((I+E+E^{2}+\cdots E^{N})B_{1}^{*}\Bigr)\in\hat{L}_{\frac{1}{r},{\rm cl\,}}^{1+\left(2-N\right)\frac{1}{r},N}.
\end{align*}
Thus for each $l$, $\exists N_{l}\in\mathbb{N}$ such that
\begin{align}
 & PQP^{*}-\Bigr((I+R+R^{2}+\cdots+R^{N-1})A^{*}\Bigr)(z,(0,\rho))\nonumber \\
= & P\left[Q-(P^{*}P)^{-1}\Bigr((I+E+E^{2}+\cdots+E^{N-1})B_{1}^{*}\Bigr)\right]P^{*}\in C^{l}((\tilde{U}\times\mathbb{R}_{\rho})\cap(\overline{D}\times\mathbb{R}_{\rho}))\label{eq:penultimate step proof}
\end{align}
$\forall N\geq N_{l}$. Finally from the kernel estimates \prettyref{eq:Bergman kernel estimates},
for each $l\geq0$, $\exists N_{l}'\in\mathbb{N}$ such that 
\begin{equation}
(R^{N}\Pi_{D})((x,0),(0,\rho))\in C^{\ell}((\tilde{U}\times\mathbb{R}_{\rho})\cap(\overline{D}\times\mathbb{R}_{\rho})).\label{last step proof}
\end{equation}
 $\forall N\geq N_{l}'$. From \ref{important calculation}, \prettyref{eq:penultimate step proof}
and \ref{last step proof} the Lemma follows.
\end{proof}
Next let 
\[
\Box_{f}=\bar{\partial}\,\bar{\partial}_{f}^{*}+\bar{\partial}_{f}^{*}\,\bar{\partial}:C^{\infty}(\mathbb{C}^{2})\rightarrow C^{\infty}(\mathbb{C}^{2})
\]
denote the complex Laplace-Beltrami operator on functions and we denote
$q_{0}$ the principle symbol of $\Box_{f}$. Repeating the proof
of \cite[Prop. 7.6]{Hsiao2010} one has the following.
\begin{lem}
\label{l-gue200522ycdh} There exists a smooth function $\phi(z,y)\in C^{\infty}((\tilde{U}\times U)\cap(\overline{D}\times X))$
such that 
\begin{equation}
\begin{split} & \phi(x,y)=x_{3}-y_{3},\\
 & \phi(z,y)=x_{3}-y_{3}-i\rho\sqrt{-\sigma_{\triangle_{X}}(x,(0,0,1))}+O(\left|\rho\right|^{2}),\\
 & \mbox{\ensuremath{q_{0}(z,d_{z}\phi)} vanishes to infinite order on \ensuremath{\rho=0}},
\end{split}
\label{eq: def of phase}
\end{equation}
where ${\rm Re\,}\sqrt{-\sigma_{\triangle_{X}}(x,(0,0,1))}>0$, $z=(x,\rho)$. 
\end{lem}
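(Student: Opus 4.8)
The plan is to build $\phi$ as a formal power series in the defining function $\rho$ which solves the eikonal equation $q_0(z,d_z\phi)=0$ to infinite order along $X$, and then to Borel-sum it in $\rho$; this is precisely the construction of the Poisson-operator phase carried out in \cite[Prop. 7.6]{Hsiao2010}, \cite[Sec. 3b]{Boutet-Sjostrand76}. Work in the coordinates $z=(x,\rho)$ on $\tilde U$ fixed above, which --- after the normalization $\|d\rho\|=1$ on $X$ and a choice of the $\rho$-dependence of $x=(x_1,x_2,x_3)$ that does not alter $x|_X$ --- we take adapted so that $\langle dx_k,d\rho\rangle$ vanishes on $X$, the pairing being with respect to $q_0$. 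Then $q_0(z,\xi)=\sum_{a,b}g^{ab}(z)\xi_a\xi_b$, $\xi=(\xi_1,\xi_2,\xi_3,\xi_\rho)$, is a positive definite quadratic form which is a positive multiple of $|\xi|_g^2$ and has no cross terms between the $x_k$ and $\rho$ on $\{\rho=0\}$, and $X=\{\rho=0\}$. Since $x_3$ is a coordinate on $\tilde U$ and $y_3$ is the third coordinate of $y\in U\subset X$, the boundary value $\phi|_{\rho=0}=x_3-y_3$ is meaningful, and one looks for $\phi(z,y)=\sum_{j\ge0}\rho^j\phi_j(x,y)$ with $\phi_0=x_3-y_3$; the recursion below will in fact make each $\phi_j$, $j\ge1$, independent of $y$.

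First I would treat the order-$\rho^0$ equation. At $\rho=0$ one has $d_z\phi=dx_3+\phi_1\,d\rho$, so, using the orthogonality $\langle dx_3,d\rho\rangle_{q_0}=0$ on $X$, the vanishing of the $\rho^0$-coefficient of $q_0(z,d_z\phi)$ is the quadratic equation $q_0(x,dx_3)+q_0(x,d\rho)\,\phi_1^2=0$. Both coefficients are strictly positive ($q_0$ is positive definite and $dx_3\ne0$), so $\phi_1=\pm i\sqrt{q_0(x,dx_3)/q_0(x,d\rho)}$ is purely imaginary and nonzero; I select the root making $\textrm{Im}(t\phi)\ge0$ in $D$ (recall $\rho<0$ there), i.e.\ the one for which the Poisson-type extension $e^{it\phi}$ decays into $D$. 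The positive multiple relating $q_0$ to $|\cdot|_g^2$ cancels in the ratio $q_0(x,dx_3)/q_0(x,d\rho)$, which equals $-\sigma_{\triangle_X}(x,(0,0,1))$ on $X$ by the adaptation of the coordinates together with the symbol relation recorded in \eqref{e-gue200519yyd}; hence the selected root is exactly $\phi_1=-i\sqrt{-\sigma_{\triangle_X}(x,(0,0,1))}$ with $\textrm{Re}\sqrt{\,\cdot\,}>0$ (the radicand being a positive real, this merely pins the branch). Getting this branch and the normalizing constant right, and matching the result to $\sigma_{\triangle_X}$, is the step I expect to require the most care.

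For the higher coefficients, collecting the $\rho^j$-coefficient of $q_0(z,d_z\phi)$ for $j\ge1$ gives, since the $\rho^j$-coefficient of $\partial_\rho\phi$ is $(j+1)\phi_{j+1}$, a linear equation for $\phi_{j+1}$ of the shape $2(j+1)\,q_0(x,d\rho)\,\phi_1\,\phi_{j+1}=F_j(\phi_0,\dots,\phi_j)$, where $F_j$ is an explicit smooth expression in $\phi_0,\dots,\phi_j$, their $x$-derivatives, and the finite $\rho$-jets of the $g^{ab}$. The coefficient $q_0(x,d\rho)\phi_1=-i\sqrt{q_0(x,dx_3)\,q_0(x,d\rho)}$ is nowhere zero --- this is exactly the non-characteristic, or elliptic, nature of the boundary for $\Box_f$ --- so each $\phi_{j+1}$ is uniquely and smoothly determined (smoothness because $q_0(x,dx_3)>0$). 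A Borel summation in $\rho$ then yields a genuine $\phi\in C^\infty\bigl((\tilde U\times U)\cap(\overline D\times X)\bigr)$ with the prescribed $\rho$-Taylor expansion; by construction $q_0(z,d_z\phi)$ has identically vanishing Taylor expansion in $\rho$ along $\{\rho=0\}$, i.e.\ it vanishes to infinite order there, while the first two identities of \eqref{eq: def of phase} hold by the choices $\phi_0=x_3-y_3$ and $\phi_1=-i\sqrt{-\sigma_{\triangle_X}(x,(0,0,1))}$. This completes the construction of \prettyref{l-gue200522ycdh}.
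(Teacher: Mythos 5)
Your construction---solving the eikonal equation $q_0(z,d_z\phi)=0$ to infinite order in $\rho$ from $\phi_0=x_3-y_3$, determining $\phi_1$ from the resulting quadratic and the higher $\phi_j$ from the non-vanishing linear coefficient $q_0(x,d\rho)\phi_1$, and Borel-summing---is correct and is precisely the Boutet de Monvel-style argument that the paper delegates to \cite[Prop. 7.6]{Hsiao2010}. One small slip: the identification of $q_0(x,dx_3)/q_0(x,d\rho)$ with $-\sigma_{\triangle_{X}}(x,(0,0,1))$ on $X$ rests on the coordinate normalizations $a^{4,4}(x)=1$, $a^{4,j}(x)=0$ together with $T(0)-\triangle_{X}$ being first order, as recorded at \eqref{eq:coefficients in Laplacian} in the proof of Lemma~\ref{l-gue200523yyd}, rather than on \eqref{e-gue200519yyd}, which gives the principal symbol of $(P^{*}P)^{-1}$ and plays no role in this step.
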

For our next result we shall need an extension of the symbol spaces
Definition \prettyref{def:symbol class}. Namely one may similarly
define the classes 
\begin{align}
 & \hat{S}_{\frac{1}{r}}^{m}\left(\mathbb{C}^{2}\times\mathbb{R}^{3}\times\mathbb{R}_{t}\right),\;\hat{S}_{\frac{1}{r}}^{m,k}\left(\mathbb{C}^{2}\times\mathbb{R}^{3}\times\mathbb{R}_{t}\right),\;\hat{S}_{\frac{1}{r},{\rm cl\,}}^{m,k}\left(\mathbb{C}^{2}\times\mathbb{R}^{3}\times\mathbb{R}_{t}\right)\label{eq:classes with rhp}\\
 & \hat{S}_{\frac{1}{r}}^{m}\left(\mathbb{C}^{2}\times\mathbb{C}^{2}\times\mathbb{R}_{t}\right),\;\hat{S}_{\frac{1}{r}}^{m,k}\left(\mathbb{C}^{2}\times\mathbb{C}^{2}\times\mathbb{R}_{t}\right),\;\hat{S}_{\frac{1}{r},{\rm cl\,}}^{m,k}\left(\mathbb{C}^{2}\times\mathbb{C}^{2}\times\mathbb{R}_{t}\right)\label{eq:classes with rho =000026 rho'}
\end{align}
as functions depending on additional $\rho$ or $\rho,\rho'$ variables.
The additional variables appear in a fashion similar to the $x_{3},y_{3}$
variables in the symbolic estimates and expansions. That is the equations
\ref{symbolic estimates}, \prettyref{eq:S^mk def.} and \prettyref{eq:symbolic expansion}
are replaced by 
\begin{align}
\left|\partial_{x}^{\alpha}\partial_{\rho}^{\alpha_{4}}\partial_{y}^{\beta}\partial_{t}^{\gamma}a(x,y,t)\right| & \leq C_{N,\alpha\beta\gamma}\left\langle t\right\rangle ^{m-\gamma+\frac{1}{r}\left(\left|\hat{\alpha}\right|+\left|\hat{\beta}\right|\right)+\alpha_{3}+\alpha_{4}+\beta_{3}}\frac{\left(1+\left|t^{\frac{1}{r}}\hat{x}\right|+\left|t^{\frac{1}{r}}\hat{y}\right|\right)^{N\left(\alpha,\beta,\gamma\right)}}{\left(1+\left|t^{\frac{1}{r}}\hat{x}-t^{\frac{1}{r}}\hat{y}\right|\right)^{-N}},\label{symbolic estimates-1}\\
 & \quad\quad\quad\quad\qquad\qquad\forall\left(x,y,t,N\right)\in\mathbb{R}_{x,y}^{6}\times\mathbb{R}_{t}\times\mathbb{N}.\nonumber 
\end{align}
\begin{equation}
\hat{S}_{\frac{1}{r}}^{m,k}\coloneqq\bigoplus_{p+q+p'\leq k}\left(tx_{3}\right)^{p}\left(t\rho\right)^{q}\left(ty_{3}\right)^{p'}\hat{S}_{\frac{1}{r}}^{m},\quad\forall\left(m,k\right)\in\mathbb{R}\times\mathbb{N}_{0}.\label{eq:S^mk def.-1}
\end{equation}
\begin{equation}
a\left(x,y,t\right)-\sum_{j=0}^{N}\sum_{p+q+p'\leq j}t^{m-\frac{1}{r}j}\left(tx_{3}\right)^{p}\left(t\rho\right)^{q}\left(ty_{3}\right)^{p'}a_{jpp'}\left(t^{\frac{1}{r}}\hat{x},t^{\frac{1}{r}}\hat{y}\right)\in\hat{S}_{\frac{1}{r}}^{m-\left(N+1\right)\frac{1}{r},N+1}\label{eq:symbolic expansion-1}
\end{equation}
in defining $\hat{S}_{\frac{1}{r}}^{m}\left(\mathbb{C}^{2}\times\mathbb{R}^{3}\times\mathbb{R}_{t}\right),\;\hat{S}_{\frac{1}{r}}^{m,k}\left(\mathbb{C}^{2}\times\mathbb{R}^{3}\times\mathbb{R}_{t}\right),\;\hat{S}_{\frac{1}{r},{\rm cl\,}}^{m,k}\left(\mathbb{C}^{2}\times\mathbb{R}^{3}\times\mathbb{R}_{t}\right)$
respectively. And similarly for the classes \prettyref{eq:classes with rho =000026 rho'}.

We now have the following
\begin{lem}
\label{l-gue200523yyd} Let $H=h^{L}\in\hat{L}_{\frac{1}{r},{\rm cl\,}}^{m,k}$
be an operator in the class \ref{exotic pseudos def.} with distribution
kernel 
\[
H(x,y)=h^{L}\left(x,y\right)=\int_{0}^{\infty}e^{i(x_{3}-y_{3})t}h\left(x,y,t\right)dt.
\]

Then there exists $\alpha(z;y,t)\in\hat{S}_{\frac{1}{r},{\rm cl\,}}^{m,k}\left(\mathbb{C}^{2}\times\mathbb{R}^{3}\times\mathbb{R}_{t}\right)$,
with $\alpha\left(x,0;y,t\right)=h\left(x,y,t\right)$, such that
\begin{align*}
\Lambda(z,y) & =\int_{0}^{\infty}e^{i\phi(z,y)t}\alpha\left(z,y,t\right)dt\quad\textrm{ with}\\
\left(PH-\Lambda\right)((0,\rho),y) & \in C^{\infty}\left(\mathbb{R}_{\rho}\times U\right).
\end{align*}
\end{lem}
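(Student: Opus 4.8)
The plan is to treat $\Lambda$ as an approximate solution of the Dirichlet problem for $\Box_{f}$ with boundary datum $H$, and to deduce $\Lambda\equiv PH$ from the parametrix identity $G\Box_{f}+P\gamma=I$ of \prettyref{eq: defining property green operator}. So suppose first that $\alpha\in\hat{S}_{\frac{1}{r},{\rm cl\,}}^{m,k}\left(\mathbb{C}^{2}\times\mathbb{R}^{3}\times\mathbb{R}_{t}\right)$ has been constructed with $\alpha\left(x,0;y,t\right)=h\left(x,y,t\right)$ and with the property that $\Box_{f}\Lambda$ (with $\Box_{f}$ acting in the $z$-variable) has Schwartz kernel in $C^{\infty}\left(\left(\tilde{U}\times U\right)\cap\left(\overline{D}\times X\right)\right)$. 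Because $\phi\left(x,y\right)=x_{3}-y_{3}$ on $\left\{ \rho=0\right\} $ by \prettyref{eq: def of phase}, the restriction of $\Lambda$ to $\rho=0$ equals $H$. Applying $\left(G\Box_{f}+P\gamma\right)\Lambda\left(\cdot,y\right)=\Lambda\left(\cdot,y\right)$ and using $G:H^{s}\left(\overline{D}\right)\to H^{s+2}\left(\overline{D}\right)$ together with $P:C^{\infty}\left(X\right)\to C^{\infty}\left(\overline{D}\right)$ then gives $\Lambda\left(z,y\right)=G\left(\Box_{f}\Lambda\right)\left(z,y\right)+P\left(\gamma\Lambda\right)\left(z,y\right)\equiv PH\left(z,y\right)\bmod C^{\infty}$, and restricting to $z=\left(0,\rho\right)$ yields the assertion. (One also uses here that $\int_{0}^{\infty}e^{i\phi t}\beta\,dt\in C^{\infty}$ whenever $\beta$ lies in the residual class $\hat{S}_{\frac{1}{r}}^{-\infty}$; this follows as in the remark after \prettyref{exotic pseudos def.} from ${\rm Im}\,\phi\geq0$ and the rapid decay of $\beta$.)

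Thus the content is the construction of $\alpha$. Expanding $\Box_{f}$ in the $z$-variable and conjugating by the exponential gives the standard WKB identity $e^{-i\phi t}\Box_{f}\left(e^{i\phi t}\alpha\right)=-t^{2}q_{0}\left(z,d_{z}\phi\right)\alpha+t\,\mathcal{T}\alpha+\Box_{f}\alpha$, where $\mathcal{T}$ is a first-order differential operator in $z$ with smooth coefficients whose principal part is the transport vector field $\tfrac{2}{i}\left\langle \partial_{\xi}q_{0}\left(z,d_{z}\phi\right),\partial_{z}\right\rangle $. By \prettyref{l-gue200522ycdh} the eikonal term $q_{0}\left(z,d_{z}\phi\right)$ vanishes to infinite order on $\left\{ \rho=0\right\} $; since in the extended calculus of \prettyref{def:symbol class}, \prettyref{eq:classes with rhp} the variable $\rho$ carries the same weight as $x_{3}$, so that a power $\rho^{N}$ may be traded for a factor $t^{-N}\left(t\rho\right)^{N}$, the term $t^{2}q_{0}\left(z,d_{z}\phi\right)\alpha$ lies in $\hat{S}_{\frac{1}{r}}^{-\infty}$ and is negligible. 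Moreover $\mathcal{T}$ is transverse to $\left\{ \rho=0\right\} $: by \prettyref{eq: def of phase} one has $\partial_{\rho}\phi|_{\rho=0}=-i\sqrt{-\sigma_{\triangle_{X}}\left(x,\left(0,0,1\right)\right)}$ with positive real part, so the $\partial_{\rho}$-component of $\mathcal{T}$ at $\rho=0$ has nonzero real part.

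One then solves the transport equations. Writing $\alpha\sim\sum_{j\geq0}\alpha_{j}$ with $\alpha_{j}\in\hat{S}_{\frac{1}{r},{\rm cl\,}}^{m-\frac{j}{r},k+j}$, one obtains at each order an equation $\mathcal{T}\alpha_{j}=\left(\textrm{expression in }\alpha_{0},\ldots,\alpha_{j-1}\right)$ with prescribed restriction to $\left\{ \rho=0\right\} $, namely $\alpha_{0}|_{\rho=0}=h$ and $\alpha_{j}|_{\rho=0}=0$ for $j\geq1$; by transversality of $\mathcal{T}$ each of these is an ODE in $\rho$ solved by integration outward from the boundary, and this integration is compatible with the $\left(t\rho\right)^{q}$-weights and the $t^{1/r}\hat{x}$-rescaling (keeping $\alpha_{j}$ in the asserted class) and preserves the classical, $t$-polyhomogeneous structure since $h$, $\mathcal{T}$ and $\Box_{f}$ do. A Borel summation in the classes \prettyref{eq:classes with rhp}, using asymptotic summation as in \prettyref{eq:asymptotic sum}, then produces $\alpha$ for which $\Box_{f}\Lambda$ is reduced to a kernel of the form $\int_{0}^{\infty}e^{i\phi t}\beta\,dt$ with $\beta\in\hat{S}_{\frac{1}{r}}^{-\infty}$, hence smooth. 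The main obstacle is precisely this middle step carried out inside the exotic $r$-dependent calculus: verifying that the terms of the WKB identity have the expected mapping behaviour on the classes \prettyref{eq:classes with rhp}, that the eikonal term is genuinely residual (this is exactly where \prettyref{l-gue200522ycdh} and the $\rho\leftrightarrow t^{-1}$ weight correspondence enter), and that the $\rho$-integration solving the transport ODEs keeps the amplitudes classical and inside the classes; the passage from these facts to $PH\equiv\Lambda$ via $G\Box_{f}+P\gamma=I$ is then formal.
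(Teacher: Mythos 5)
Your proposal follows the paper's proof essentially step for step: the WKB ansatz in the phase $\phi$ from Lemma~\prettyref{l-gue200522ycdh}, the observation that the eikonal term $q_{0}\left(z,d_{z}\phi\right)$ is residual via the $\rho^{N}=t^{-N}\left(t\rho\right)^{N}$ weight trade, the recursive correction driven by the transversal $\partial_{\rho}$-transport term (the paper writes this as the closed-form iteration $h_{N+1}=h_{N}-\tfrac{2\rho}{t\sqrt{-\sigma_{\triangle_{X}}}}\left(\cdots\right)$ rather than integrating an ODE in $\rho$, but the two are interchangeable at the level of formal asymptotics and both preserve the boundary restriction), Borel summation in the extended classes \prettyref{eq:classes with rhp}, and the conclusion via $G\Box_{f}+P\gamma=I$. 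The one point you wave through as ``formal'' that the paper handles explicitly is the last Green's-operator step: rather than asserting $\Box_{f}\Lambda$ itself has a smooth kernel, the paper integrates by parts in $u_{3}$ against the kernel bounds \prettyref{eq:Green function Schwartz estimates} to show $G\Box_{f}\Lambda\left(\left(0,\rho\right),y\right)\in C^{\infty}$, which is what the conclusion actually requires.
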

\begin{proof}
Denote the Riemannian metric on $T\mathbb{C}^{2}$, induced from the
Hermitian metric $\langle\,\cdot\,|\,\cdot\,\rangle$, by 
\[
g=\sum_{j,k=1}^{4}g_{j,k}(z)dx_{j}\otimes dx_{k},\ \ dx_{4}=d\rho
\]
and let $\left(g_{j,k}(z)\right)_{1\leq j,k\leq4}^{-1}=\left(g^{j,k}(z)\right)_{1\leq j,k\leq4}$
be the inverse metric on $T^{*}\mathbb{C}^{2}$. In the local coordinates
$z=(x,\rho)$ chosen one has
\begin{align}
\Box_{f} & =-\frac{1}{2}\Bigl(a^{4,4}(z)\frac{\partial^{2}}{\partial\rho^{2}}+2\sum_{j=1}^{3}a^{4,j}(z)\frac{\partial^{2}}{\partial\rho\partial x_{j}}+T(\rho)\Bigl)+\text{\,first order },\quad\textrm{where }\label{eq: Laplacian locally}\\
T(\rho) & =\sum_{j,k=1}^{3}a^{j,k}(z)\frac{\partial^{2}}{\partial x_{j}\partial x_{k}}.\label{eq: term in Laplacian}
\end{align}
Furthermore, $T(0)-\triangle_{X}$ is a first order operator on the
boundary with
\begin{align}
a^{4,4}(x) & =1,\ \ \nonumber \\
a^{4,j}(x) & =0,\ \ j=1,\ldots,3.\label{eq:coefficients in Laplacian}
\end{align}

From the above \prettyref{eq: def of phase}, \prettyref{eq: Laplacian locally},
\prettyref{eq: term in Laplacian} and \prettyref{eq:coefficients in Laplacian}
we now compute 
\begin{equation}
\begin{split}\Box_{f}\left[\int_{0}^{\infty}e^{i\phi(z,y)t}\beta(x,y,t)dt\right] & \equiv\int_{0}^{\infty}e^{i\phi(z,y)t}\left[\frac{t}{2}\sqrt{-\sigma_{\triangle_{X}}(x,(0,0,1))}\partial_{\rho}\beta+L\beta\right]dt\\
 & \qquad\,\mod\,C^{\infty}((\tilde{U}\times\tilde{U})\cap(\overline{D}\times X)),\quad\textrm{where }\\
L & =\left(t\rho b_{1}\left(z,y\right)+b_{2}\left(z,y\right)\right)\partial_{\rho}+L_{2,x}+tL_{1,x}
\end{split}
\label{eq:computation Box f}
\end{equation}
for some smooth $t-$independent functions $b_{1},b_{2}$ and second/first
order differential operators $L_{2,x}$, $L_{1,x}$ respectively in
the $\left(x_{1},x_{2},x_{3}\right)$ variables. It is easy to check
that the above maps 
\begin{align*}
\frac{\rho}{t}L:\hat{S}_{\frac{1}{r},{\rm cl\,}}^{m,k} & \rightarrow\hat{S}_{\frac{1}{r},{\rm cl\,}}^{m,k}\\
\left(\frac{\rho}{t}L\right)^{N}:\hat{S}_{\frac{1}{r},{\rm cl\,}}^{m,k} & \rightarrow\rho^{N-k}\hat{S}_{\frac{1}{r},{\rm cl\,}}^{m,k}+\hat{S}_{\frac{1}{r},{\rm cl\,}}^{m-\frac{N}{r},k+N},\quad N\geq k.
\end{align*}

Next setting 
\begin{align*}
h_{1}(z,y,t) & \coloneqq h(x,y,t)-\frac{2\rho}{t\sqrt{-\sigma_{\triangle_{X}}(x,(0,0,1))}}Lh\\
 & =h(x,y,t)-\frac{2\rho}{t\sqrt{-\sigma_{\triangle_{X}}(x,(0,0,1))}}\left(L_{2,x}+tL_{1,x}\right)h\in\hat{S}_{\frac{1}{r},{\rm cl\,}}^{m,k}
\end{align*}
and following \prettyref{eq:computation Box f} one computes
\begin{align*}
\Box_{f}\left[\int_{0}^{\infty}e^{i\phi(z,y)t}h_{1}(z,y,t)dt\right] & \equiv\int_{0}^{\infty}e^{i\phi(z,y)t}r_{1}(z,y,t)dt\,\mod\,C^{\infty}((\tilde{U}\times\tilde{U})\cap(\overline{D}\times X)),\\
r_{1} & \in\rho^{1-k}\hat{S}_{\frac{1}{r},{\rm cl\,}}^{m+2,k}+\hat{S}_{\frac{1}{r},{\rm cl\,}}^{m+2-\frac{1}{r},k+1},\quad1\geq k.
\end{align*}
Continuing in this way, we can find $h_{N}(z,y,t)\in\hat{S}_{\frac{1}{r},{\rm cl\,}}^{m,k}$
such that 
\begin{align*}
h_{N+1}-h_{N} & \in\rho^{N-k}\hat{S}_{\frac{1}{r},{\rm cl\,}}^{m,k}+\hat{S}_{\frac{1}{r},{\rm cl\,}}^{m-\frac{N}{r},k+N}\quad\textrm{ and }\\
\Box_{f}\left[\int_{0}^{\infty}e^{i\phi(z,y)t}h_{N}(z,y,t)dt\right] & \equiv\int_{0}^{\infty}e^{i\phi(z,y)t}r_{N}(z,y,t)dt\,\mod\,C^{\infty}((\tilde{U}\times\tilde{U})\cap(\overline{D}\times X))\\
r_{N} & \in\rho^{N-k}\hat{S}_{\frac{1}{r},{\rm cl\,}}^{m+2,k}+\hat{S}_{\frac{1}{r},{\rm cl\,}}^{m+2-\frac{N}{r},k+N},\quad N\geq k.
\end{align*}
By asymptotic summation we can find $\alpha\coloneqq h_{1}+\sum_{N=1}^{\infty}\left(h_{N+1}-h_{N}\right)\in\hat{S}_{\frac{1}{r},{\rm cl\,}}^{m,k}$
satisfying 
\begin{align*}
\Box_{f}\left[\underbrace{\int_{0}^{\infty}e^{i\phi(z,y)t}\alpha(z,y,t)dt}_{=\Lambda}\right] & \equiv\int_{0}^{\infty}e^{i\phi(z,y)t}r_{\infty}(z,y,t)dt\,\mod\,C^{\infty}((\tilde{U}\times\tilde{U})\cap(\overline{D}\times X))\\
r_{\infty} & \in\rho^{\infty}\hat{S}_{\frac{1}{r},{\rm cl\,}}^{m+2,k}+\hat{S}_{\frac{1}{r},{\rm cl\,}}^{m+2,k,-\infty}\\
\gamma\Lambda & =H.
\end{align*}

Finally we apply the Green's operator $G$ \prettyref{eq:Green operator for Poisson}
to both sides of the above equation to get 
\[
G\Box_{f}\left(\left(0,\rho\right),y\right)=\int dw\int_{0}^{\infty}dtG\left(\left(0,\rho\right),w\right)e^{i\phi(w,y)t}r_{\infty}(w,y,t)dt\,.
\]
Writing $w=\left(u_{1},u_{2},u_{3},\rho'\right)$ and repeated integration
by parts using $e^{iu_{3}t}=\frac{1}{it}\partial_{u_{3}}e^{iu_{3}t}$,
\prettyref{eq:Green function Schwartz estimates} and \prettyref{eq: def of phase}
gives the above 
\begin{align*}
G\Box_{f}\left(\left(0,\rho\right),y\right) & \in C^{\infty}\left(\mathbb{R}_{\rho}\times U\right)\quad\textrm{ and hence }\\
\left[PH-\Lambda\right]\left(\left(0,\rho\right),y\right) & \in C^{\infty}\left(\mathbb{R}_{\rho}\times U\right)
\end{align*}
from \prettyref{eq: defining property green operator} as required.
\end{proof}
Similarly, as Lemma~\ref{l-gue200522ycdh}, we can find $\Phi(z,w)\in C^{\infty}((\tilde{U}\times\tilde{U})\cap(\overline{D}\times\overline{D}))$
such that $\Phi(z,y)=\phi(z,y)$ and 
\begin{equation}
\begin{split} & \Phi(z,w)=\phi(z,y)-i\rho'\sqrt{-\sigma_{\triangle_{X}}(y,(0,0,1))}+O(\left|\rho'\right|^{2}),\\
 & \mbox{\ensuremath{q_{0}(w,-\overline{d}_{w}\Phi)} vanishes to infinite order on \ensuremath{\rho'=0}}.
\end{split}
\label{two variable phase}
\end{equation}
A similar argument to Lemma~\ref{l-gue200523yyd} then gives the
following.
\begin{lem}
\label{l-gue200523yydI} Let $H=h^{L}\in\hat{L}_{\frac{1}{r},{\rm cl\,}}^{m,k}$
be an operator in the class \ref{exotic pseudos def.} with distribution
kernel 
\[
H(x,y)=h^{L}\left(x,y\right)=\int_{0}^{\infty}e^{i(x_{3}-y_{3})t}h\left(x,y,t\right)dt.
\]

Then there exists $\alpha(z;w,t)\in\hat{S}_{\frac{1}{r},{\rm cl\,}}^{m,k}\left(\mathbb{C}^{2}\times\mathbb{R}^{3}\times\mathbb{R}_{t}\right)$,
with $\alpha\left(x,0;y,0,t\right)=h\left(x,y,t\right)$, such that
\begin{align*}
\Lambda(z,w) & =\int_{0}^{\infty}e^{i\Phi(z,w)t}\alpha\left(z,w,t\right)dt\quad\textrm{ with}\\
\left(PHP^{*}-\Lambda\right)((0,\rho),y) & \in C^{\infty}\left(\mathbb{R}_{\rho}\times\mathbb{R}_{\rho'}\right).
\end{align*}
\end{lem}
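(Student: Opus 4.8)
The plan is to reduce the first factor $P$ to Lemma~\ref{l-gue200523yyd} and then run the same eikonal/transport scheme once more in the second (boundary) variable so as to absorb $P^{*}$. First I would apply Lemma~\ref{l-gue200523yyd} to $H=h^{L}$, obtaining a symbol $\alpha_{1}\in\hat{S}_{\frac{1}{r},{\rm cl\,}}^{m,k}\left(\mathbb{C}^{2}\times\mathbb{R}^{3}\times\mathbb{R}_{t}\right)$ with $\alpha_{1}\left(x,0;y,t\right)=h\left(x,y,t\right)$ and
\[
\left(PH-\Lambda_{1}\right)\left(\left(0,\rho\right),y\right)\in C^{\infty},\qquad\Lambda_{1}\left(z,y\right)=\int_{0}^{\infty}e^{i\phi\left(z,y\right)t}\alpha_{1}\left(z,y,t\right)dt.
\]
Because $P^{*}$ is pseudolocal and continuous on Sobolev spaces, composing a smooth kernel with $P^{*}$ on the right stays smoothing, so it suffices to produce a $\Lambda$ of the asserted form with $\Lambda_{1}P^{*}\equiv\Lambda\bmod C^{\infty}$.

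For the second step it is convenient to conjugate: since $P^{*}\left(y,w\right)=\overline{P\left(w,y\right)}$ one has $\overline{\left(\Lambda_{1}P^{*}\right)\left(z,\cdot\right)}=P^{(w)}\bigl(\overline{\Lambda_{1}\left(z,\cdot\right)}\bigr)$, where $P^{(w)}$ is the Poisson operator acting in the second variable. I would construct $\alpha\left(z,w,t\right)$ in the two-variable symbol class introduced around \eqref{eq:classes with rho =000026 rho'}, with $\alpha\left(x,0;y,0,t\right)=h\left(x,y,t\right)$ and $\alpha\left(z,y,0,t\right)=\alpha_{1}\left(z,y,t\right)$, so that, using $\Phi\left(z,y\right)=\phi\left(z,y\right)$ from \eqref{two variable phase}, one gets $\gamma^{(w)}\Lambda=\Lambda_{1}$. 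The remaining Taylor coefficients of $\alpha$ in the transversal variable $\rho'$ are chosen so that
\[
\Box_{f}^{(w)}\Bigl[\,\overline{\Lambda\left(z,\cdot\right)}\,\Bigr]\equiv\int_{0}^{\infty}e^{-i\overline{\Phi\left(z,w\right)}t}\,r_{\infty}\left(z,w,t\right)dt\bmod C^{\infty},\qquad r_{\infty}\in{\rho'}^{\infty}\hat{S}_{\frac{1}{r},{\rm cl\,}}^{m+2,k}+\hat{S}_{\frac{1}{r},{\rm cl\,}}^{m+2,k,-\infty}.
\]
The input here is precisely the second line of \eqref{two variable phase}: $q_{0}\left(w,-\overline{d}_{w}\Phi\right)$ vanishes to infinite order on $\rho'=0$, i.e. $-\overline{\Phi}$ solves the eikonal equation of $\Box_{f}^{(w)}$ there to infinite order, while the accompanying transport equation carries a nonvanishing $\partial_{\rho'}$-coefficient (the analogue of the $\frac{t}{2}\sqrt{-\sigma_{\triangle_{X}}}\,\partial_{\rho}$ term of \eqref{eq:computation Box f}). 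One then solves it recursively for the $\rho'$-jet of $\alpha$, tracking the symbol orders via the inclusions \eqref{eq:inclusions symbol spaces}, and closes up by asymptotic summation, exactly as in Lemma~\ref{l-gue200523yyd}.

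Finally I would invoke the identity $G\Box_{f}+P\gamma=I$ in the $w$ variable applied to $\overline{\Lambda\left(z,\cdot\right)}$, giving
\[
P^{(w)}\bigl(\overline{\Lambda_{1}\left(z,\cdot\right)}\bigr)=\overline{\Lambda\left(z,\cdot\right)}-G^{(w)}\Box_{f}^{(w)}\overline{\Lambda\left(z,\cdot\right)}.
\]
The last term is disposed of as at the end of the proof of Lemma~\ref{l-gue200523yyd}: one inserts the Green kernel bound \eqref{eq:Green function Schwartz estimates}, integrates by parts in $w_{3}$ using $\partial_{w_{3}}\Phi=-1+O\left(\left|z-w\right|\right)\ne0$, and uses the ${\rho'}^{\infty}$ and $-\infty$-order decay of $r_{\infty}$ to conclude that $G^{(w)}\Box_{f}^{(w)}\overline{\Lambda\left(z,\cdot\right)}$ is smooth on $\mathbb{R}_{\rho}\times\mathbb{R}_{\rho'}$. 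Conjugating back yields $\Lambda_{1}P^{*}\equiv\Lambda\bmod C^{\infty}$ and hence $PHP^{*}\equiv\Lambda\bmod C^{\infty}$. The main obstacle is the bookkeeping in this last paragraph --- showing that applying $G^{(w)}$ kills $r_{\infty}$ smoothly down to the corner $\rho=\rho'=0$ --- together with verifying that the transport recursion respects the mixed $\hat{S}_{\frac{1}{r},{\rm cl\,}}^{m,k}$ filtration; both are, however, routine repetitions of the corresponding points in Lemma~\ref{l-gue200523yyd}.
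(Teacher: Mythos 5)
Your proposal is correct and fleshes out exactly what the paper means by ``a similar argument to Lemma~\ref{l-gue200523yyd}'': you first apply that lemma to reduce $PH$ to a Fourier integral $\Lambda_1$ with phase $\phi$, then handle $P^*$ by conjugating to the Poisson operator in the $w$-variable and running the same eikonal/transport recursion in $\rho'$, using the second line of \eqref{two variable phase} for the eikonal and the non-vanishing $\partial_{\rho'}$-coefficient coming from ${\rm Re}\sqrt{-\sigma_{\triangle_X}}>0$ for the transport; the Green-kernel integration-by-parts step at the end is the same as in Lemma~\ref{l-gue200523yyd}. The identity $P^*(y,w)=\overline{P(w,y)}$, the boundary compatibility $\gamma^{(w)}\Lambda=\Lambda_1$ (via $\Phi|_{\rho'=0}=\phi$), and the bookkeeping in the mixed class $\hat{S}_{\frac{1}{r},{\rm cl}}^{m,k}(\mathbb{C}^2\times\mathbb{C}^2\times\mathbb{R}_t)$ are all as intended, so this matches the paper's approach.
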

Finally from the above we can prove one of our the main theorems \prettyref{thm:Fefferman thm.}.
Setting $z=\left(0,\rho\right)$, $w=\left(0,\rho\right)$ in \ref{Bergman kernel in terms of Poisson}
and the above \ref{l-gue200523yydI} gives 
\[
\Pi_{D}((0,\rho),(0,\rho))=\int_{0}^{\infty}e^{i\Phi((0,\rho),(0,\rho))t}\alpha((0,0,\rho)),(0,0,\rho);t)dt+C^{\infty}\left(\mathbb{R}_{\rho}\right).
\]
for $\alpha\in\hat{S}_{\frac{1}{r},{\rm cl\,}}^{1+\frac{2}{r}}(\tilde{U}\times\tilde{U}\times\mathbb{R}_{+})$.
Plugging the classical symbolic expansion for $\alpha$ into the above
and using 
\[
\Phi((0,\rho),(0,\rho))=-2i\rho\sqrt{-\sigma_{\triangle_{X}}(0,(0,0,1))}+O(\left|\rho\right|^{2})
\]
 gives
\[
\Pi_{D}((0,\rho),(0,\rho))=\sum_{j=0}^{N}\frac{1}{\left(-\rho\right){}^{2+\frac{2}{r}-\frac{1}{r}j}}a_{j}+\sum_{j=0}^{N}b_{j}\left(-\rho\right){}^{j}\log\left(-\rho\right)+O\left(\left(-\rho\right)^{\frac{N-2-2r}{r}}\right),
\]
$\forall N\in\mathbb{N},$ proving \prettyref{thm:Fefferman thm.}.

\section{\label{sec:S1 invariant CR geometry}$S^{1}$ invariant case}

In this section we investigate the Szeg\H{o} kernel parametrix in
the circle invariant case obtaining a more concrete version of our
main theorem \prettyref{thm:main thm parametrix}.

Thus we now assume that $X$ is equipped with a CR $S^{1}$-action
which is transversal; that is the generator $T$ of the $S^{1}$action
satisfies
\begin{align}
\left[T,C^{\infty}\left(T^{1,0}X\right)\right] & \subset C^{\infty}\left(T^{1,0}X\right)\label{eq: action is CR}\\
\mathbb{C}\left[T\right]\oplus T^{1,0}X\oplus T^{0,1}X & =TX\otimes\mathbb{C}.\label{eq:transversal decomposition}
\end{align}
Denote by $s_{x}\coloneqq\left|S_{x}\right|$ cardinality of the stabilizer
$S_{x}\coloneqq\left\{ e^{i\theta}\in S^{1}|e^{i\theta}x=x\right\} $
of the point $x\in X$ with respect to the circle action. For a locally
free circle action as above the function $x\mapsto s_{x}$ is also
upper semi-continuous with the compactness of $X$ implying $s\coloneqq\max_{x\in X}s_{x}<\infty$.
We further set $s_{0}\coloneqq\min_{x\in X}s_{x}$, $X_{i,j}\coloneqq\left\{ x\in X|s_{x}=j,\,r_{x}=i\right\} $
to obtain a decomposition of the manifold $X=\bigcup_{i=1,j=2}^{s,r}Y_{i,j}$
where each $X_{\leq i,\leq j}\coloneqq\bigcup_{i'=1,j'=2}^{i,j}Y_{i,j}$
is open and $X_{s_{0},\leq r}\subset X$ is dense. The $m$-th Fourier
mode of the Szeg\H{o} kernel $\Pi_{m}\left(x,x'\right)$, $m\in\mathbb{Z}$,
taken with respect to an $S^{1}$-invariant volume form $\mu$, is
now a smooth function on the product. What corresponds to the singularity
in \prettyref{eq:Szego parametrix} is its asymptotic behavior as
$m\rightarrow\infty$. This is described below and is the main theorem
of this section. 
\begin{thm}
\label{thm:Szego kernel expansion theorem}Let $X$ be a compact pseudoconvex
three dimensional CR manifold of finite type admitting a transversal,
CR circle action. The $m$-th Fourier mode of the Szeg\H{o} kernel
has the pointwise expansion on diagonal 
\begin{equation}
\Pi_{m}\left(x,x\right)=\phi_{m}\left(s_{x}\right)\left[m^{2/r_{x}}\sum_{j=0}^{N}c_{j}\left(x\right)m^{-2j/r_{x}}+O\left(m^{-2N/r_{x}}\right)\right],\quad\forall N\in\mathbb{N},\label{eq:Szego expansion}
\end{equation}
as $m\rightarrow\infty$. Here each $c_{j}$ is a smooth function
on $X$, with the leading term $c_{0}=\Pi^{g_{x}^{HX},j^{r_{x}-2}\mathscr{L},J^{HX}}\left(0,0\right)>0$
given in terms of certain model Bergman kernels on the Levi-distribution
$HX$ at $x$ and the phase factor $\phi_{m}\left(s_{x}\right)\coloneqq\sum_{l=0}^{s_{x}-1}e^{i\frac{2\pi lm}{s_{x}}}=\begin{cases}
s_{x}; & s_{x}\mid m,\\
0; & s_{x}\nmid m.
\end{cases}$
\end{thm}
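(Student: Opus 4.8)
The plan is to fix a point $x'\in X$ of type $r=r_{x'}$ and stabilizer order $s=s_{x'}$, reduce the on-diagonal value $\Pi_m(x',x')$ to that of the local Bergman kernel $B_m$ of \prettyref{eq:local Bergman} at the parameter $t=m$, and then expand the latter by means of \prettyref{thm:local Bergman is a symbol}. Since the circle action is CR and transversal, $\bar\partial_b$ and $\Box_b=\bar\partial_b^*\bar\partial_b$ commute with the generator $T$ and hence respect the Fourier decomposition $L^2(X)=\bigoplus_{m\in\mathbb Z}L^2_m(X)$, with $\Pi=\bigoplus_m\Pi_m$. Writing $\Box_{b,m}\coloneqq\Box_b|_{L^2_m}$, the finite type and pseudoconvexity hypotheses give the spectral gap $\mathrm{Spec}(\Box_{b,m})\subset\{0\}\cup[c_1m^{2/r}-c_2,\infty)$ (this is Prop.~\prettyref{prop:spectral gap}, proved by reduction to the models of \prettyref{subsec:Construction-of-coordinates} and a partition of unity). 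Consequently $\Pi_m=\chi(m^{-2/r}\Box_{b,m})$ for all $m\gg0$, where $\chi\in C_c^\infty(-c_1,c_1)$ equals one near $0$; in particular $\ker\bar\partial_b\cap L^2_m$ is finite dimensional, $\bar\partial_b$ has closed range, and $\Pi_m$ is smoothing.

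Next, near the orbit through $x'$ one chooses $S^1$-adapted coordinates, namely the circle-invariant refinement of the construction of \prettyref{subsec:Construction-of-coordinates}, realizing $X$ locally as the quotient of $\mathbb C_z\times(\mathbb R/2\pi\mathbb Z)_\theta$ by the stabilizer $\mathbb Z/s$, with $T=\partial_\theta$ and $T^{1,0}X=\mathbb C Z$ for the ($\theta$-independent) vector field $Z$ of \prettyref{thm:almost analytic coordinates}. On $L^2_m$, writing $u=e^{im\theta}f$, the equation $\bar\partial_bu=0$ becomes a weighted Cauchy--Riemann equation of the form \prettyref{eq:Dolbeault} with $t=m$, so that $\Box_{b,m}$ is, up to conjugation, the Kodaira Laplacian $\Box_m$ of \prettyref{eq:Kodaira Dirac =000026 laplacian}. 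Combining the off-diagonal smoothing of $\chi(m^{-2/r}\Box_{b,m})$, the spectral gap for $\Box_m$ from \prettyref{thm:local Bergman is a symbol} (which gives $\chi(m^{-2/r}\Box_m)=B_m$ for $m\gg0$), and an averaging over $\mathbb Z/s$ --- the step that produces the exact character sum $\phi_m(s)=\sum_{l=0}^{s-1}e^{2\pi ilm/s}$, as in the orbifold Bergman kernel computations of \cite{Dai-Liu-Ma2006-Bergman-kernel,Ma-Marinescu} --- one obtains
\[
\Pi_m(x',x')=\phi_m(s)\,B_m(0,0)+O(m^{-\infty}).
\]

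It remains to expand $B_m(0,0)$. By \prettyref{thm:local Bergman is a symbol}, $B_m\in\hat{S}_{1/r,\mathrm{cl}}^{2/r,0}$; here it is cleanest to use the full asymptotic expansion of the rescaled operator $\boxdot_m=m^{-2/r}(\delta_{m^{-1/r}})_*\Box_m\sim\Box_1^0+\sum_{i\ge1}m^{-i/r}E_i$ in powers of $m^{-1/r}$, whose coefficients $E_i$ are differential operators with polynomial coefficients built from the degree-$(r+i)$ Taylor part $\varphi_{r+i}$ of the potential $\varphi$. Feeding this into the functional calculus for $\chi(\boxdot_m)$ --- for instance expanding $(\boxdot_m-\zeta)^{-1}$ --- and evaluating on the diagonal at $0$ gives $B_m(0,0)=m^{2/r}\sum_{i\ge0}m^{-i/r}d_i$, with leading coefficient $d_0=B_1^0(0,0)=\Pi^{g_{x'}^{HX},j^{r_{x'}-2}\mathscr{L},J^{HX}}(0,0)$ (here $\Box_1^0$ is the model built from $g_{x'}^{HX}$, $J^{HX}$ and the homogeneous potential $\varphi_0$, whose Levi form is $j^{r_{x'}-2}\mathscr{L}$), and each $d_i$ depending only on the jet $(\varphi_0,\varphi_{r+1},\dots,\varphi_{r+i})$. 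The vanishing of the odd-index $d_i$, which converts this into a $2/r$-step expansion, comes from two observations. First, $r_{x'}$ is even: up to a positive factor $j^{r_{x'}-2}\mathscr{L}$ equals the non-negative homogeneous polynomial $(\partial_{p_1}^2+\partial_{p_2}^2)\varphi_0$ of degree $r_{x'}-2$ on $H_{x'}X\cong\mathbb R^2$, and a non-negative homogeneous polynomial of odd degree in two variables is identically zero, which would contradict $x'$ having type exactly $r_{x'}$. Hence $\varphi_0(-z)=\varphi_0(z)$ and $\varphi_{r+i}(-z)=(-1)^i\varphi_{r+i}(z)$, so that $B_m(0,0)$ --- a coordinate-free on-diagonal value --- is unchanged when $z\mapsto-z$ acts on the jet. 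Second, the exact scaling identity $B^{\varphi(\lambda\cdot)}_m(0,0)=\lambda^2B^{\varphi}_m(0,0)=B^{\psi}_{\lambda^rm}(0,0)$, where $\psi$ has the same $\varphi_0$ but $\varphi_{r+i}$ replaced by $\lambda^i\varphi_{r+i}$, forces each $d_i$ to be homogeneous of weight $i$ in the higher jets; taking $\lambda=-1$ and combining with the $z\mapsto-z$ invariance gives $d_i=(-1)^id_i$, hence $d_i=0$ for odd $i$. Setting $c_j=d_{2j}$ and inserting into the previous display yields \prettyref{eq:Szego expansion}; positivity $c_0>0$ follows from the positivity on the diagonal of the model Bergman kernel $B_1^0$, and each $c_j$ depends smoothly on the base point --- with \prettyref{eq:Szego expansion} locally uniform --- within a stratum of constant type and stabilizer order, because the jets $\varphi_0,\varphi_{r+i}$ do.

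The most delicate steps are the localization --- extracting the precise stabilizer factor $\phi_m(s_x)$ and comparing the global functional calculus with the local model uniformly near the (possibly singular) orbit --- and the parity-and-scaling argument, which is exactly what promotes the $m^{-1/r_x}$-step expansion that \prettyref{thm:local Bergman is a symbol} provides directly to the $m^{-2/r_x}$-step expansion asserted in the theorem.
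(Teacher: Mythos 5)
Your overall strategy matches the paper's: reduce via the spectral gap to a local Bergman kernel in a BRT trivialization near the orbit of $x'$, extract the stabilizer character sum $\phi_m(s_{x'})$ by averaging over the $\theta$-integral, and then expand the local kernel. Where you genuinely diverge is in how you obtain the $m^{-2/r}$-step expansion: the paper simply appeals to Theorem \ref{thm:local Bergman expansion} (for the curved-metric kernel $\tilde B_m$, citing \cite{Marinescu-Savale18}), whereas you derive the $2/r$-step structure from the $1/r$-step expansion of Theorem \ref{thm:local Bergman is a symbol} by an explicit parity-and-scaling argument. This argument is attractive: the observation that $r_{x'}$ must be even because $j^{r_{x'}-2}\mathscr L$ is a non-negative homogeneous polynomial in two variables is correct, the change-of-variables identity $B^{\varphi(\lambda\cdot)}_m(0,0)=\lambda^2 B^\varphi_m(0,0)$ together with $\varphi(\lambda\cdot)=\lambda^r\psi$ (with $\psi_{r+i}=\lambda^i\varphi_{r+i}$) does yield weight-$i$ homogeneity of $d_i$, and specializing to $\lambda=-1$ (using $r$ even) combined with reflection invariance of the on-diagonal value does force $d_i=0$ for $i$ odd. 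The paper does not spell out any of this; it buries the parity in the citation.

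However there is a genuine gap you should address. You identify $\Box_{b,m}$ on the $m$-th Fourier mode with the \emph{flat-metric} Kodaira Laplacian $\Box_m$ of \prettyref{eq:Kodaira Dirac =000026 laplacian} and invoke its Bergman kernel $B_m$ from \prettyref{eq:local Bergman}. This identification is only correct at the level of $\ker\bar\partial_b$: the Kohn Laplacian and Bergman projector for the CR manifold are taken with respect to the chosen $S^1$-invariant Hermitian metric on $HX$, which in a BRT chart is a curved metric on $Y$. The paper's Section \prettyref{sec:S1 invariant CR geometry} carefully works with the modified operator $\tilde\Box_m$ \prettyref{eq:modified local Bochner} and its projector $\tilde B_m$ \prettyref{eq:local Bergman kernel}, not the flat-metric $B_m$ of Section \prettyref{sec:Szego-parametrix}, precisely because the $L^2$ inner products differ. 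Once this is corrected, your rescaled expansion $\boxdot_m\sim\Box_1^0+\sum_{i\ge 1}m^{-i/r}E_i$ acquires additional contributions from the Taylor jet of the metric: in geodesic normal coordinates the degree-$k$ jet of $g^{TY}$ enters $E_i$ at weight $k$, so $d_i$ depends on both the potential jets $\varphi_{r+j}$, $j\le i$, and the metric jets $g_{(k)}$, $k\le i$, contrary to your statement that $d_i$ depends only on the $\varphi_{r+j}$. Fortunately the parity argument does extend: the degree-$k$ metric jet also flips by $(-1)^k$ under $y\mapsto -y$ and is likewise weight-$k$ homogeneous under the rescaling $y\mapsto\lambda y$ (which now dilates both potential and metric jets), so the same cancellation $d_i=(-1)^i d_i$ goes through. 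You should make this extension explicit; as written, your proof proves the theorem only in the special case of a flat transverse metric, which cannot be arranged globally.
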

We first begin with some requisite CR geometry in the circle invariant
case.

\subsection{$S^{1}$ invariant CR geometry}

Let $HX\coloneqq\textrm{Re}\left(T^{1,0}X\oplus T^{0,1}X\right)\subset TX$
be the real Levi distribution. The volume form $\mu$ is further assumed
to be $S^{1}$ invariant. We let $h^{T^{1,0}X}$ be an $S^{1}$ invariant
Hermitian metric on $T^{1,0}X$ and denote by $h^{T^{0,1}X}$ the
invariant Hermitian metric on $T^{0,1}X$. This gives one $h^{TX}$
on $TX\otimes\mathbb{C}$ for which \prettyref{eq: action is CR}
is an orthogonal decomposition with $\left|T\right|=1$. We also denote
by $g^{TX}$ the induced Riemannian metric on the real tangent space
and by $\left\langle ,\right\rangle $ the corresponding $\mathbb{C}$-bilinear
form on $TX\otimes\mathbb{C}$. It is easy to see that $h^{T^{0,1}X}$
may be chosen so that the volume form $\mu$ arises as the Riemannian
volume of such an invariant metric.

Such a $S^{1}$-invariant CR manifold is locally the unit circle bundle
of a Hermitian, holomorphic line bundle $\left(L,h^{L}\right)$
\begin{align}
X=S^{1}L\xrightarrow{\pi}Y;\quad\label{eq:locally circle bundle}
\end{align}
 on a complex manifold $Y$, $\textrm{dim}_{\mathbb{C}}Y=1$. To describe
the equivalence, choose a local hypersurface $Y\subset X$ through
a given point $x\in X$ transversal to generator $T\pitchfork TY$
satisfying $T_{x}Y=\left(HX\right)_{x}$. The map $TY\hookrightarrow TX\rightarrow TX/\mathbb{R}\left[T\right]\cong HX$
is then an isomorphism inducing a integrable complex structure on
$TY$ and a corresponding Hermitian metric $h^{T^{1,0}Y}$ on its
complex tangent space from $h^{T^{1,0}X}$. We choose $S^{1}\times Y\eqqcolon X_{0}\rightarrow Y$
to be the trivial circle bundle with the map $\iota:X_{0}\rightarrow X$;
$\iota\left(y,e^{i\theta}\right)=ye^{i\theta}$ being a local diffeomorphism
between collar neighborhoods
\begin{align}
\iota: & \left(-\frac{\pi}{s_{x}},\frac{\pi}{s_{x}}\right)\times Y\xrightarrow{\sim}NY\subset X\nonumber \\
s_{x} & \coloneqq\left|S_{x}\right|\label{eq:collar neighbourhood}
\end{align}
of the zero section of $X_{0}$ and $Y\subset X$. The Levi-distribution
then defines a horizontal distribution on $X_{0}$ giving corresponding
connections $\nabla^{L}$ on $X_{0}$ and the associated Hermitian
line bundle $L\coloneqq\mathbb{C}\times Y\rightarrow Y$ corresponding
to the trivial representation of $S^{1}$. By the integrability condition
the curvature of the corresponding connection is a $\left(1,1\right)$
form on $Y$ and hence the $\left(0,1\right)$ part of the connection
prescribes a holomorphic structure on $L$. It is now clear that $X_{0}$
it the unit circle bundle of $L$ with $\iota$ being the required
CR isomorphism by definition. We also note that pseudoconvexity of
the CR structure corresponds to semi-positivity of the curvature $R^{L}$
of $\nabla^{L}$.

We may also obtain a local coordinate expression for the CR structure.
To this end, start with a local orthonormal basis $\left\{ e_{1},e_{2}=Je_{1}\right\} $
of $T_{x}Y=\left(HX\right)_{x}$. Using the exponential map obtain
a geodesic coordinate system on a geodesic ball $B_{2\varrho}\left(x\right)$
centered at $x\in Y$ . The point $x$ corresponding to point $\left(y,\mathtt{l}_{y}\right)\in\left(S^{1}L\right)_{y}$
in the fiber above $y$, one may parallel transport $\mathtt{l}_{y}$
along geodesic rays in $Y$ to obtain a local orthonormal frame $\mathtt{l}$
for $L$. In such a parallel frame the connection on the tensor product
is of the following form 
\begin{align}
\nabla^{\Lambda^{0,*}\otimes L^{m}} & =d+a^{\Lambda^{0,*}}+ma^{L}\nonumber \\
a_{j}^{\Lambda^{0,*}} & =\int_{0}^{1}d\rho\left(\rho y^{k}R_{jk}^{\Lambda^{0,*}}\left(\rho x\right)\right)\nonumber \\
a_{j}^{L} & =\int_{0}^{1}d\rho\left(\rho y^{k}R_{jk}^{L}\left(\rho x\right)\right)\label{eq:connection formulas}
\end{align}
in terms of the respective curvatures of $\nabla^{T^{1,0}X}$, $\nabla^{L}$
see \cite[Sec. 4]{Marinescu-Savale18}. The connection one form may
further be written 
\begin{align}
a_{1}^{L}+ia_{2}^{L} & =\partial_{z}\varphi\nonumber \\
 & =\frac{1}{r_{x}}\bar{z}\left(\sum_{\left|\alpha\right|=r_{x}-2}R_{\alpha}^{L}y^{\alpha}\right)+O\left(y^{r_{x}}\right)\label{eq:connection expansion}
\end{align}
in terms of a potential function and a Taylor expansion with the tensor
$R_{\alpha}^{L}$ denoting the first non-vanishing jet of the curvature
at $x$. In some local coordinate system $\left(\theta,y\right)\in\left(-\frac{\pi}{s_{x}},\frac{\pi}{s_{x}}\right)\times B_{2\varrho}\left(x\right),$
$s_{x}\coloneqq\left|S_{x}\right|,$ on an open set $U\subset X$
the CR structure on $X$ is then locally given by 
\begin{align}
T^{1,0}X & =\mathbb{C}\left[\partial_{z}+i\left(\partial_{z}\varphi\right)\partial_{\theta}\right],\nonumber \\
T & =\partial_{\theta},\label{eq: BRT condition}
\end{align}
$z=y_{1}+iy_{2}$, by construction. A trivialization/coordinate system
in which the CR looks as above \prettyref{eq: BRT condition} is referred
to as a \textit{BRT trivialization} \cite[Thm II.1]{Baouendi-Rothschild-Treves-85}. 

Following its local description as a unit circle bundle, several notions/formulas
from complex geometry carry over to the $S^{1}$-invariant CR geometry
of $X$. Firstly, the tangential CR operator $\bar{\partial}_{b}$
locally corresponds to Dolbeault differential $\bar{\partial}$ on
$Y$ \prettyref{eq:locally circle bundle} under pullback $\bar{\partial}_{b}\left(\pi^{*}\omega\right)=\pi^{*}\bar{\partial}\omega$,
$\omega\in\Omega^{0,*}\left(Y\right)$ and similarly for their adjoints.
An analog of the Chern connection then exists on $T^{0,1}X$. This
is the unique ($S^{1}$-invariant) connection $\nabla^{T^{0,1}X}$
compatible with $\left\langle ,\right\rangle $ satisfying $\nabla_{T}^{T^{0,1}X}=\mathcal{L}_{T}$
and whose $\left(0,1\right)$-component agrees with the tangential
CR operator 
\begin{align*}
\nabla_{U}^{T^{0,1}X} & =i_{U}\bar{\partial}_{b},\quad U\in T^{0,1}X.
\end{align*}
Locally, $\nabla_{T}^{T^{0,1}X}$ is just the pullback of the Chern
connection $\nabla^{T^{0,1}Y}$ from $Y$. Complex conjugation then
defines a connection $\nabla^{T^{1,0}X}$ on $\left(T^{1,0}X\right)^{*}$.
We denote by the same notation dual connections on $T^{1,0}X/T^{0,1}X$
and set $\tilde{\nabla}^{TX}\coloneqq d\oplus\nabla^{T^{1,0}X}\oplus\nabla^{T^{0,1}X}$
to be a connection on $TX\otimes\mathbb{C}$ with $d$ denoting the
trivial connection on $\mathbb{C}\left[T\right]$ under the decomposition
\prettyref{eq: action is CR}. The connection $\tilde{\nabla}^{TX}$
preserves the real tangent space $TX\subset TX\otimes\mathbb{C}$
and we denote by $\mathcal{T}$ its torsion. The torsion $\mathcal{T}$
maps $T^{1,0}X\otimes T^{1,0}X$ into $T^{1,0}X$ (respectively for
$T^{0,1}X$), $T^{1,0}X\otimes T^{0,1}X$ into $\mathbb{C}\left[T\right]$
and vanishes on $HX\otimes\mathbb{C}\left[T\right]$. Indeed its components
involving the generator $T$ are 
\begin{align*}
\mathcal{T}\left(.,T\right) & =\mathcal{T}\left(T,.\right)=0\\
\left\langle \mathcal{T}\left(U,\bar{V}\right),T\right\rangle  & =i\mathscr{L}\left(U,\bar{V}\right),\quad U,V\in T^{1,0}X.
\end{align*}
Next with $\nabla^{TX}$ being the Levi-Civita connection and $P^{HX}$
being the horizontal projection onto $HX$, define a new connection
on $TX$ via $\nabla^{HX}\coloneqq d\oplus P^{HX}\nabla^{TX}$ with
respect to the decomposition $TX=\mathbb{R}\oplus HX$. Locally; $\nabla^{HX}$
is the pullback of the Levi-Civita connection $\nabla^{TY}$ from
$Y$ \prettyref{eq:locally circle bundle}. The torsion $\mathcal{T}^{HX}$
of $\nabla^{HX}$ thus has 
\[
\mathcal{T}^{HX}\left(U,V\right)=i\mathscr{L}\left(U,\bar{V}\right)T,\quad U,V\in HX,
\]
as its only non-vanishing component. The difference $S\coloneqq\tilde{\nabla}^{TX}-\nabla^{HX}$
maybe computed 
\begin{align}
\left\langle S\left(U\right)V,W\right\rangle =0, & \left\langle S\left(U\right)\bar{V},W\right\rangle =-\frac{1}{2}\left\langle \mathcal{T}\left(U,\bar{V}\right),W\right\rangle ;\label{eq:diff tensor 1}\\
\left\langle S\left(T\right)U,\bar{V}\right\rangle =\frac{i}{2}\mathscr{L}\left(U,\bar{V}\right), & \left\langle S\left(U\right)\bar{V},T\right\rangle =-\left\langle S\left(U\right)T,\bar{V}\right\rangle =0,\label{eq: diff tensor 2}
\end{align}
$U,V,W\in T^{1,0}X,$ in terms of the torsion and Levi forms. 

One next defines the Bismut connection $\nabla^{B}$ on $TX$ via
$\nabla^{B}\coloneqq\nabla^{HX}+S^{B}$; 
\begin{align*}
\left\langle S^{B}\left(T\right)U,\bar{V}\right\rangle = & \frac{i}{2}\mathscr{L}\left(U,\bar{V}\right)\\
\left\langle S^{B}\left(U\right)V,W\right\rangle \coloneqq & 0,
\end{align*}
$U,V,W\in T^{1,0}X.$ Its horizontal projection $P^{HX}\nabla^{B}$
locally agrees with the pullback of the Bismut connection of $Y$
\cite[Def. 1.2.9]{Ma-Marinescu}. The connection $\nabla^{B}$ preserves
the decomposition \prettyref{eq: action is CR} and hence induces
a connection on $T^{1,0}X$ , $\left(T^{0,1}X\right)^{*}$ and their
exterior powers which we again denote by $\nabla^{B}$. Finally set
\begin{equation}
\nabla^{B,\Lambda^{0,*}}\coloneqq\nabla^{B}+\left\langle S\left(.\right)w,\bar{w}\right\rangle ,\label{eq:Bismut connection}
\end{equation}
$w\in T^{1,0}X$, $\left|w\right|=1$. We now define the Clifford
multiplication endomorphism 
\begin{align*}
c:\left(TX\right)^{*} & \rightarrow\textrm{End}\left(\Lambda^{*}T^{0,1}X\right)\\
c\left(v\right) & \coloneqq\sqrt{2}\left(v^{1,0}\wedge-i_{v^{0,1}}\right),\quad\forall v\in\left(HX\right)^{*},\\
c\left(\theta\right)\text{\ensuremath{\omega}} & \coloneqq\pm\text{\ensuremath{\omega}},\quad\forall\omega\in\Lambda^{\textrm{even/odd}}T^{0,1}X.
\end{align*}
Next, the Kohn-Dirac operator 
\begin{align}
D_{b} & \coloneqq\sqrt{2}\left(\bar{\partial}_{b}+\bar{\partial}_{b}^{*}\right)\label{eq:Kohn Dirac}\\
 & =c\circ\nabla^{B,\Lambda^{0,*},H}\nonumber 
\end{align}
maybe written as the composition of Clifford multiplication with the
horizontal component of the Bismut connection
\[
\nabla^{B,\Lambda^{0,*},H}\coloneqq\pi^{H}\circ\nabla^{B,\Lambda^{0,*}}:C^{\infty}\left(X;\Lambda^{0,*}\right)\rightarrow C^{\infty}\left(X;H^{*}X\otimes\Lambda^{0,*}\right)
\]
(cf. \cite[Thm 1.4.5]{Ma-Marinescu}). We then have the following
Lichnerowicz formula.
\begin{thm}
\label{thm:Lichnerowicz formula} The Kohn Laplacian \prettyref{eq:Kohn Dirac}
satisfies 
\begin{equation}
2\Box_{b}=D_{b}^{2}=\underbrace{\left(\nabla^{B,\Lambda^{0,*},H}\right)^{*}\nabla^{B,\Lambda^{0,*},H}}_{\Delta^{B,\Lambda^{0,*}}\coloneqq}+\frac{1}{2}r^{X}\bar{w}i_{\bar{w}}+\mathscr{L}\left(w,\bar{w}\right)\left[2\bar{w}i_{\bar{w}}-1\right]i\mathcal{L}_{T}\label{eq:Lichnerowicz formula 3D}
\end{equation}
where $\frac{1}{2}r^{X}\coloneqq R^{T^{1,0}X}\left(w,\bar{w}\right)$
for $w\in T^{1,0}X$, $\left|w\right|=1$.
\end{thm}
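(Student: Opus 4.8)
The plan is to square the Kohn--Dirac operator directly from its Bochner form $D_b = c\circ\nabla^{B,\Lambda^{0,*},H}$ in \prettyref{eq:Kohn Dirac} and apply the generalized Weitzenb\"ock--Lichnerowicz expansion, keeping track of the fact that only the \emph{horizontal} part of the Bismut connection enters. Since $D_b,\bar\partial_b,\bar\partial_b^*,\Box_b$ all commute with $\mathcal{L}_T$ and hence decompose into Fourier modes along the $T$-direction, one may equivalently work in a BRT trivialization \prettyref{eq: BRT condition}; there the restriction of $D_b^2$ to the $m$-th mode is the square of the Dolbeault--Dirac operator $\sqrt2(\bar\partial+\bar\partial^*)$ on the Riemann surface $Y$ of \prettyref{eq:locally circle bundle} twisted by $L^m$, and \prettyref{eq:Lichnerowicz formula 3D} becomes the classical Bochner--Kodaira formula on $Y$ with $m$ replaced by $i\mathcal{L}_T$ and $R^L(w,\bar w)$ by $\mathscr{L}(w,\bar w)$. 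This reduction dictates the shape of the answer and I would use it to fix normalizations, but I would present the argument intrinsically.

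First I would fix $x\in X$ together with an orthonormal frame $e_1,e_2=Je_1$ of $HX$ near $x$ that is synchronous for $\nabla^{HX}$, so that $\nabla^{HX}_{e_i}e_j(x)=0$ and the divergence/mean-curvature correction in $(\nabla^{B,\Lambda^{0,*},H})^*\nabla^{B,\Lambda^{0,*},H}$ vanishes at $x$. Writing $D_b=\sum_{i=1}^{2}c(e^i)\nabla^{B,\Lambda^{0,*}}_{e_i}$ and using that $\nabla^{B,\Lambda^{0,*}}$ is, by construction, a Clifford connection compatible with $\nabla^{HX}$ on horizontal directions (on $HX$ it agrees with $\nabla^{HX}$ up to the scalar endomorphism $\langle S(\cdot)w,\bar w\rangle$ of \prettyref{eq:Bismut connection}, which commutes with $c(\cdot)$), I would obtain at $x$
\[
D_b^{2} \;=\; -\sum_{i}\bigl(\nabla^{B,\Lambda^{0,*}}_{e_i}\bigr)^{2}\;+\;2\,c(e^1)c(e^2)\Bigl(R^{B,\Lambda^{0,*}}(e_1,e_2)+\nabla^{B,\Lambda^{0,*}}_{[e_1,e_2]}\Bigr).
\]
The first sum is precisely $\Delta^{B,\Lambda^{0,*}}$ at $x$: the torsion of $\nabla^{HX}$, by the discussion around \prettyref{eq: diff tensor 2}, is purely vertical and therefore does not contribute to the horizontal Bochner Laplacian. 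It remains to identify the two remaining terms.

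For the bracket term, the vertical component of $[e_1,e_2]$ equals $\mathscr{L}(e_1,e_2)\,T$ up to the sign normalization in \prettyref{eq:Levi form}, while the horizontal component is killed by the frame choice at $x$; from \prettyref{eq:Bismut connection} and \prettyref{eq: diff tensor 2} the remaining endomorphism part of $\nabla^{B,\Lambda^{0,*}}_T$ beyond $\mathcal{L}_T$ is of zeroth order, hence of lower weight after the BRT rescaling, so the bracket term is $2\,c(e^1)c(e^2)\,\mathscr{L}(e_1,e_2)\,\mathcal{L}_T$ modulo such terms. A short Clifford computation with $w=\tfrac{1}{\sqrt2}(e_1-ie_2)$ gives $c(e^1)c(e^2)=i\bigl[\,2\,\bar w\wedge i_{\bar w}-1\,\bigr]$, while $\mathscr{L}(e_1,e_2)=\tfrac12\mathscr{L}(w,\bar w)$ under \prettyref{eq:Levi form-1}; assembling these yields exactly $\mathscr{L}(w,\bar w)\bigl[\,2\,\bar w i_{\bar w}-1\,\bigr]i\mathcal{L}_T$. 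For the curvature term, since $\dim_{\mathbb C}Y=1$ the connection $\nabla^{B,\Lambda^{0,*}}=\nabla^{B}+\langle S(\cdot)w,\bar w\rangle$ respects the splitting $\Lambda^{0,*}T^{0,1}X=\Lambda^{0,0}\oplus\Lambda^{0,1}$ and its curvature $R^{B,\Lambda^{0,*}}(e_1,e_2)$ is accordingly the sum of a scalar piece from $d\langle S(\cdot)w,\bar w\rangle$ and the $\Lambda^{0,1}$ piece built from $R^{T^{1,0}X}$; one checks, using the first Bianchi identity together with \prettyref{eq:diff tensor 1}--\prettyref{eq: diff tensor 2}, that the scalar piece cancels and $2\,c(e^1)c(e^2)R^{B,\Lambda^{0,*}}(e_1,e_2)=\tfrac12 r^{X}\bar w i_{\bar w}$ with $\tfrac12 r^{X}=R^{T^{1,0}X}(w,\bar w)$, which is the last term of \prettyref{eq:Lichnerowicz formula 3D}. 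The identity $2\Box_b=D_b^2$ itself is automatic in dimension three since $\bar\partial_b^2=0$ for degree reasons ($\Omega^{0,2}X=0$).

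The step I expect to be the main obstacle is the bookkeeping of the three connections $\nabla^{HX}$, $\nabla^{B}$, $\tilde\nabla^{TX}$, which differ by the tensors $S$, $S^B$ of \prettyref{eq:diff tensor 1}--\prettyref{eq: diff tensor 2}: one must verify carefully that $\nabla^{B,\Lambda^{0,*}}$ really is a horizontal Clifford connection so that the Weitzenb\"ock expansion above is legitimate, separate the endomorphism contributions that genuinely survive from those absorbed into lower-order error, and confirm that the vertical torsion of $\nabla^{HX}$ produces no spurious term in $\Delta^{B,\Lambda^{0,*}}$ and no extra curvature contribution. As a consistency check I would restrict \prettyref{eq:Lichnerowicz formula 3D} to functions, where it reads $2\bar\partial_b^*\bar\partial_b=\Delta^{B,\Lambda^{0,*}}-\mathscr{L}(w,\bar w)\,i\mathcal{L}_T$; on a pseudoconvex $X$ this is manifestly nonnegative and, after a BRT rescaling, recovers the model operators $\Box^0_t$ of \prettyref{eq: leading Kodaira laplacian} and the spectral gap \prettyref{eq: spectral gap funs.-1} used earlier.
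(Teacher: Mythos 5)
Your approach is genuinely different from the paper's. The paper's proof reduces to the Fourier modes $\Omega_m^{0,*}(X)$, uses the BRT identification $C_m^\infty(X)\cong C^\infty(Y;L^m)$ so that $D_b$, $\nabla^{B,\Lambda^{0,*},H}$ and $\mathcal{L}_T$ become the Dolbeault--Dirac operator, the Bismut connection on $L^m$, and multiplication by $im$ respectively, and then simply quotes the known Lichnerowicz formula \cite[Thm 1.4.7]{Ma-Marinescu} on $Y$. Your proposal instead carries out the Weitzenb\"ock expansion intrinsically on $X$ in a synchronous frame, which is more self-contained and in principle illuminating; but as written it does not close.

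Two concrete issues. First, the Weitzenb\"ock formula you quote,
\[
D_b^{2}=-\sum_i\bigl(\nabla^{B,\Lambda^{0,*}}_{e_i}\bigr)^{2}+2\,c(e^1)c(e^2)\bigl(R^{B,\Lambda^{0,*}}(e_1,e_2)+\nabla^{B,\Lambda^{0,*}}_{[e_1,e_2]}\bigr),
\]
has a spurious factor of $2$. With $c(e^i)^2=-1$, $c(e^1)c(e^2)=-c(e^2)c(e^1)$, and a frame synchronous for the Clifford connection, the direct expansion $D_b^2=\sum_{i,j}c(e^i)c(e^j)\nabla_{e_i}\nabla_{e_j}$ gives $-\sum_i\nabla_{e_i}^2+c(e^1)c(e^2)[\nabla_{e_1},\nabla_{e_2}]$ in real dimension two, i.e.\ $\sum_{i<j}$ is a single term with coefficient $1$, not $2$. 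Since your Clifford identity $c(e^1)c(e^2)=i[2\bar w\,i_{\bar w}-1]$ and the polarization $\mathscr{L}(e_1,e_2)=\tfrac12\mathscr{L}(w,\bar w)$ from \prettyref{eq:Levi form} and \prettyref{eq:Levi form-1} already combine to produce $\tfrac12\mathscr{L}(w,\bar w)[2\bar w\,i_{\bar w}-1]\,i\mathcal{L}_T$, your computation as written only reproduces the target Levi-form term because the factor of $2$ was inserted by hand. You need to either justify that factor (it does not come from the Weitzenb\"ock identity) or locate the compensating factor elsewhere (e.g.\ in the precise relation between $[e_1,e_2]^{\mathrm{vert}}$ and the scalar $\mathscr{L}(e_1,e_2)$ once the orientation and contraction against the contact form are pinned down).

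Second, and more seriously, Theorem~\ref{thm:Lichnerowicz formula} is an \emph{exact} operator identity, not a leading-order statement, yet your treatment of the bracket term discards the zeroth-order endomorphism discrepancy between $\nabla^{B,\Lambda^{0,*}}_T$ and $\mathcal{L}_T$ as ``of lower weight after the BRT rescaling.'' That move is not available here. From \prettyref{eq:Bismut connection} and \prettyref{eq: diff tensor 2} one has $\nabla^{B,\Lambda^{0,*}}_T=\nabla^{HX}_T+S^B(T)+\langle S(T)w,\bar w\rangle$, and none of the three pieces is manifestly $\mathcal{L}_T$; the extra endomorphisms contribute genuine zeroth-order terms to $c(e^1)c(e^2)\nabla_{[e_1,e_2]}$ which must either cancel or combine with $c(e^1)c(e^2)R^{B,\Lambda^{0,*}}(e_1,e_2)$ to give exactly $\tfrac12 r^X\,\bar w\,i_{\bar w}$. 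Your sentence ``one checks, using the first Bianchi identity together with \prettyref{eq:diff tensor 1}--\prettyref{eq: diff tensor 2}, that the scalar piece cancels'' is precisely the computation that needs to be spelled out; it is not a consequence one can wave at. By contrast the paper's route bypasses this bookkeeping entirely: all the connection comparisons are already packaged inside the cited formula of Ma--Marinescu, and the only thing to verify is the dictionary between the $S^1$-invariant data on $X$ and the holomorphic data on $Y$. If you want the intrinsic argument to stand on its own you should explicitly compute $\nabla^{B,\Lambda^{0,*}}_T-\mathcal{L}_T$ and $R^{B,\Lambda^{0,*}}(e_1,e_2)$ and exhibit the cancellation, rather than appealing to weight counting.
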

\begin{proof}
On account of $S^{1}$ invariance, both sides of the formula commute
with the generator $\mathcal{L}_{T}$ . It then suffices to check
their equality on sections that are locally of the form $s\left(y,e^{i\theta}\right)=s_{0}\left(y\right)e^{im\theta}$,
$s_{0}\in\Omega^{0,*}\left(Y\right)$, eigenspaces of $\mathcal{L}_{T}$,
on the unit circle bundle \prettyref{eq:locally circle bundle}. Further
one locally has the correspondence
\begin{equation}
C_{m}^{\infty}\left(X\right)\cong C^{\infty}\left(Y;L^{m}\right)\label{eq:local correspondence with sections}
\end{equation}
between sections on $X$ that are $m$-eigenspaces of $\mathcal{L}_{T}$
and sections of $L^{m}$ on $Y$ for example. Under this correspondence
$D_{b}$, $\nabla^{B,\Lambda^{0,*},H}$ act by the Dolbeault-Dirac
operators and Bismut connection on tensor powers $L^{m}$. The Lie
derivative $\mathcal{L}_{T}$ acts by multiplication by $m$ while
the curvature of $L$ is identified with the Levi form by definition.
The curvatures $r^{X},\,R^{T^{1,0}X}$ are pulled back from the scalar
and Chern curvatures on $Y$ while the horizontal components of $\Theta$
agree with the components of the corresponding tensor on $Y$. With
these identifications, the Lichnerowicz formula \prettyref{eq:Lichnerowicz formula 3D}
is locally the same as \cite[Thm 1.4.7]{Ma-Marinescu}.
\end{proof}
The (horizontal) Bochner Laplacian appearing in \prettyref{eq:Lichnerowicz formula 3D}
can be written 
\[
\Delta^{B,\Lambda^{0,*}}\coloneqq\sum_{j=1}^{2m}\left[-\left(\nabla_{U_{j}}^{B,\Lambda^{0,*}}\right)^{2}s+\left(\textrm{div}U_{j}\right)\nabla_{U_{j}}^{B,\Lambda^{0,*}}s\right],
\]
in terms of a real orthonormal basis $\left\{ U_{j}\right\} _{j=1}^{2m}$
for $HX$. It is a sub-Riemannian Laplacian associated to the metric
(bracket generating) distribution $HX$ and the natural Riemannian
volume see \cite[Sec. 2]{Marinescu-Savale18}. From the above expression
it is clearly a hypoelliptic operator of Hörmander type \cite{Hormander1967}.
It satisfies a hypoelliptic estimate: $\exists C>0$ such that 
\begin{equation}
\left\langle \Delta^{B,\Lambda^{0,*}}u,u\right\rangle +\left\Vert u\right\Vert ^{2}\geq C\left\Vert u\right\Vert _{H^{1/r}}^{2}\label{eq:subelliptic estimate}
\end{equation}
$\forall u\in\Omega^{0,*}\left(X\right)$ (see \cite{Rothschild-Stein76}).
Here $r$ is the maximal type of a point on $X$. This is also referred
to as the step or degree of non-holonomy of the Levi distribution
$HX$ in sub-Riemannian geometry.

\subsection{\label{subsec:Spectral-gap-and}Spectral gap and closed range}

In this subsection we show that the spectral gap property for the
Kohn Laplacian as well as the closedness of the range for $\bar{\partial}_{b}$
in three dimensions automatically follow in the circle invariant case.

First, each $\Omega^{0,q}\left(X\right)$ has an orthogonal decomposition
(with respect to the chosen invariant metric) into the Fourier modes
for the $S^{1}$ action
\begin{align}
\Omega^{0,q}\left(X\right) & =\oplus_{m\in\mathbb{Z}}\Omega_{m}^{0,q}\left(X\right)\quad\textrm{ where }\label{eq: Fourier modes}\\
\Omega_{m}^{0,q}\left(X\right) & \coloneqq\left\{ \omega\in\Omega_{m}^{0,q}\left(X\right)|\mathcal{L}_{T}\omega=im\omega\right\} .\nonumber 
\end{align}
Indeed the orthogonal projection of any $\omega\in\Omega^{0,q}\left(X\right)$
onto its $m$th Fourier mode is given by 
\begin{align}
P_{m}: & \Omega^{0,q}\left(X\right)\rightarrow\Omega_{m}^{0,q}\left(X\right)\nonumber \\
\left(P_{m}\omega\right)\left(x\right) & \coloneqq\int_{S^{1}}d\theta\,\omega\left(x.e^{i\theta}\right)e^{-im\theta}\label{eq:projection onto m fourier comp.}
\end{align}
Since the $S^{1}$ action is assumed to be CR, we have $\left[\bar{\partial}_{b},T\right]=0$.
Hence the tangential CR operator preserves the Fourier modes \prettyref{eq: Fourier modes}
$\left[\bar{\partial}_{b},P_{m}\right]=0$, $\bar{\partial}_{b}:\Omega_{m}^{0,q}\left(X\right)\rightarrow\Omega_{m}^{0,q+1}\left(X\right)$
. One may then define the $m$-th equivariant Kohn-Rossi cohomology
\[
H_{b,m}^{*}\left(X\right)\coloneqq\frac{\textrm{ker}\left[\bar{\partial}_{b}:\Omega_{m}^{0,q}\left(X\right)\rightarrow\Omega_{m}^{0,q+1}\left(X\right)\right]}{\textrm{Im}\left[\bar{\partial}_{b}:\Omega_{m}^{0,q}\left(X\right)\rightarrow\Omega_{m}^{0,q+1}\left(X\right)\right]}.
\]
Its adjoint $\bar{\partial}_{b}^{*}$ with respect to the invariant
metric further commutes $\left[T,\bar{\partial}_{b}^{*}\right]=0$
with the generator. A similar commutation then applies to the Kohn
Laplacian $\left[T,\Box_{b}\right]=0$, thus giving a decomposition
\begin{align*}
\Box_{b} & =\oplus_{q=0,1}\oplus_{m\in\mathbb{Z}}\Box_{b,m}^{q}\\
\Box_{b,m}^{q} & \coloneqq\left.\Box_{b}\right|_{\Omega_{m}^{0,q}}:\Omega_{m}^{0,q}\left(X\right)\rightarrow\Omega_{m}^{0,q}\left(X\right).
\end{align*}
The equivariant version of the Hodge theorem holds
\begin{equation}
\textrm{ker}\left(\Box_{b,m}^{q}\right)=H_{b,m}^{*}\left(X\right)\label{eq:hodge theorem}
\end{equation}
\cite[Thm 3.7]{Cheng-Hsiao-Tsai-2015}.

The $S^{1}$-invariant operator $\Box^{X}\coloneqq-T^{2}+\Box_{b}$
is elliptic and self-adjoint with respect to $\left\langle ,\right\rangle $,
$dx$. There is thus a complete orthonormal basis $\varphi_{j,m}^{q}$,
$j=0,1,\ldots$, $m\in\mathbb{Z}$, of $L^{2}\left(X;\Lambda^{0,q}\right)$
consisting of joint eigenvectors $\Box^{X}\varphi_{j,m}^{q}=\lambda_{j,m}^{q}\varphi_{j,m}^{q};\,T\varphi_{j,m}^{q}=im\varphi_{j,m}^{q}$
with 
\begin{align*}
0 & \leq m^{2}\leq\lambda_{0,m}^{q}\leq\lambda_{1,m}^{q}\leq\ldots\nearrow\infty,
\end{align*}
$\forall m\in\mathbb{Z}$. Thus, for each fixed $m\in\mathbb{Z}$
the set $\left\{ \varphi_{j,m}^{q}\right\} _{j=0}^{\infty}$ is then
an orthonormal basis of $L_{m}^{2}\left(X;\Lambda^{0,q}\right)\coloneqq\left\{ \omega\in L_{m}^{2}\left(X;\Lambda^{0,q}\right)|T\omega=im\omega\right\} $
of eigenvectors for $\Box_{b,m}^{q}$. Note that $\Box_{b,m}^{q}$
is an unbounded operator on $L_{m}^{2}\left(X;\Lambda^{0,q}\right)$
with domain $\textrm{Dom}\left(\Box_{b,m}^{q}\right)=\left\{ \omega\in L_{m}^{2}\left(X;\Lambda^{0,q}\right)|\Box_{b,m}^{q}\omega\in L_{m}^{2}\left(X;\Lambda^{0,q}\right)\right\} $.
Similarly one also has 
\begin{align*}
\textrm{Dom}\left(\bar{\partial}_{b}\right) & =\left\{ \omega\in L^{2}\left(X;\Lambda^{0,*}\right)|\bar{\partial}_{b}\omega\in L^{2}\left(X;\Lambda^{0,*}\right)\right\} ,\\
\textrm{Dom}\left(\bar{\partial}_{b}^{*}\right) & =\left\{ \omega\in L^{2}\left(X;\Lambda^{0,*}\right)|\bar{\partial}_{b}^{*}\omega\in L^{2}\left(X;\Lambda^{0,*}\right)\right\} ,
\end{align*}
as unbounded operators on $L^{2}$.

We now have the following spectral gap property for $\Box_{b,m}^{q}$.
\begin{prop}
\label{prop:spectral gap}There exists a constants $c_{1},c_{2}>0$
such that 
\begin{align}
\textrm{Spec}\left(\Box_{b,m}^{0}\right) & \subset\left\{ 0\right\} \cup\left[c_{1}\left|m\right|^{2/r}-c_{2},\infty\right)\label{eq: spectral gap funs.}\\
\textrm{Spec}\left(\Box_{b,m}^{1}\right) & \subset\left[c_{1}\left|m\right|^{2/r}-c_{2},\infty\right)\label{eq:spectral gap one forms}
\end{align}
 for each $m\in\mathbb{Z}$.
\end{prop}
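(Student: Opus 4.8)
The plan is to combine the Lichnerowicz formula of \prettyref{thm:Lichnerowicz formula} with a parabolic rescaling argument that reduces the assertion to a spectral gap for the model Kodaira Laplacians on the Levi distribution, the latter being precisely the model analysis underlying \prettyref{thm:local Bergman is a symbol} (cf. \prettyref{eq:rescaled operators}) and available from \cite[Sec. 4.1]{Marinescu-Savale18}. Since $\Box^{X}=-T^{2}+\Box_{b}$ is elliptic and self-adjoint, each $\Box_{b,m}^{q}$ has discrete spectrum, so it suffices to prove the a priori estimate
\begin{equation}
\langle\Box_{b,m}^{q}u,u\rangle\geq\left(c_{1}\left|m\right|^{2/r}-c_{2}\right)\left\Vert u\right\Vert ^{2},\qquad u\in\Omega_{m}^{0,q}\left(X\right),\label{eq:apriori gap}
\end{equation}
for $q=1$, together with its analogue for $q=0$ restricted to the orthogonal complement of $\textrm{ker}\,\Box_{b,m}^{0}=H_{b,m}^{0}\left(X\right)$ (see \prettyref{eq:hodge theorem}); it suffices to treat $m\to+\infty$, the estimate being vacuous for $\left|m\right|$ in a bounded range once $c_{2}$ is large. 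By \prettyref{thm:Lichnerowicz formula} one has, on $\Omega_{m}^{0,q}\left(X\right)$, the identity $2\Box_{b,m}^{q}=\Delta^{B,\Lambda^{0,*}}+V_{q,m}$, where $V_{q,m}$ is a self-adjoint endomorphism linear in $m$, namely $V_{0,m}=-m\,\mathscr{L}\left(w,\bar{w}\right)$ on functions and $V_{1,m}=\tfrac{1}{2}r^{X}-m\,\mathscr{L}\left(w,\bar{w}\right)$ on $\left(0,1\right)$-forms, with $\mathscr{L}\left(w,\bar{w}\right)\geq0$ by pseudoconvexity.

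Next I would localize. Cover $X$ by finitely many Baouendi--Rothschild--Treves charts as in \prettyref{eq: BRT condition}, in which the $m$-th Fourier mode becomes sections of $L^{m}$ and $\Box_{b,m}^{q}$ the associated $L^{m}$-twisted Kodaira Laplacian. Near a chart centre $x_{0}$ of type $r_{0}=r\left(x_{0}\right)$, perform the parabolic dilation $\delta_{\left|m\right|^{-1/r_{0}}}$ of the normal coordinates; arguing as for \prettyref{eq:rescaled operators} with $t=m$, the operator $\left|m\right|^{-2/r_{0}}\Box_{b,m}^{q}$ converges, uniformly on compacta and in all derivatives, to the homogeneous model $\Box_{1}^{0}$ (respectively its $\left(0,1\right)$-form analogue) built from the leading jet $j^{r_{0}-2}\mathscr{L}$ at $x_{0}$, with an error that is relatively bounded of size $O\left(\left|m\right|^{-1/r_{0}}\right)$. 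Since $x\mapsto r\left(x\right)$ is upper semicontinuous we have $r_{0}\leq r$, hence $\left|m\right|^{2/r_{0}}\geq\left|m\right|^{2/r}$, so the local bounds are at least as strong as \prettyref{eq:apriori gap} requires, the worst case being along the closed stratum of points of type exactly $r$.

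The crux is the model gap. The homogeneous model $\Box_{1}^{0}=\left(\bar{\partial}_{1}^{0}\right)^{*}\bar{\partial}_{1}^{0}$ attached to a leading potential $\varphi_{0}$ satisfies $\textrm{Spec}\,\Box_{1}^{0}\subset\left\{ 0\right\} \cup\left[c,\infty\right)$ on functions and $\textrm{Spec}\,\Box_{1}^{0}\subset\left[c,\infty\right)$ on $\left(0,1\right)$-forms, for a $c>0$ depending only on $j^{r_{0}-2}\mathscr{L}$ and the fixed metric. The finite-type hypothesis is decisive here: it forces $\varphi_{0}$ to be a nonzero homogeneous polynomial of degree $r_{0}$ with $\Delta\varphi_{0}\geq0$ and $\Delta\varphi_{0}\not\equiv0$, which is what makes $\bar{\partial}_{1}^{0}$ have closed range — so that $0$ is isolated in the spectrum on functions and there are no $L^{2}$ harmonic $\left(0,1\right)$-forms — and supplies the quantitative gap; this, together with uniformity of $c$ as $j^{r_{0}-2}\mathscr{L}$ ranges over the (after normalization compact) family of admissible leading jets, is exactly \cite[Sec. 4.1]{Marinescu-Savale18}, and it reappears in the proof of \prettyref{thm:local Bergman is a symbol}. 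To pass from these rescaled local gaps to the global \prettyref{eq:apriori gap} I would use a quadratic partition of unity localized at scale $\left|m\right|^{-1/r+\delta}$ for small $\delta>0$: the commutator errors it produces are then $O\left(\left|m\right|^{2/r-2\delta}\right)=o\left(\left|m\right|^{2/r}\right)$ — bounded via the subelliptic estimate \prettyref{eq:subelliptic estimate} — and these, with the bounded term $\tfrac{1}{2}r^{X}$, are absorbed into $c_{2}$; for $q=0$ the comparison of $\textrm{ker}\,\Box_{b,m}^{0}$ with the model Bergman spaces is carried out as in the standard localization of the Bergman kernel.

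I expect the genuine difficulty to be this model gap at weakly pseudoconvex points of higher type, where $\Delta\varphi_{0}$ degenerates along rays: there the positivity of the gap is not a pointwise curvature statement but a spectral fact about anharmonic oscillators, and one must ensure the constant $c$ can be taken uniform as $x_{0}$ varies over $X$ — equivalently, over a compact family of normalized leading jets — the stratification by the upper semicontinuous type function being neutralized by the inequality $\left|m\right|^{2/r_{0}}\geq\left|m\right|^{2/r}$. A secondary point is to carry out the patching at the correct parabolic scale so that the commutator errors are genuinely $o\left(\left|m\right|^{2/r}\right)$ and not merely $O\left(\left|m\right|^{1/2}\right)$, and to arrange the convergence of the rescaled operators uniformly in both $m$ and the base point.
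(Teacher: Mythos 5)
Your strategy is a genuinely different route from the paper's, and it is considerably more labor-intensive where the paper is almost a one-liner. Both arguments start from the Lichnerowicz identity \prettyref{eq:Lichnerowicz formula 3D} and use pseudoconvexity to discard the $m$-linear curvature term. But at the point where you propose to localize into BRT charts, rescale parabolically, invoke the model gap for the homogeneous anharmonic operators, and then glue with a quadratic partition of unity, the paper simply applies the Rothschild--Stein subelliptic estimate \prettyref{eq:subelliptic estimate} once, globally on $X$: since $\Delta^{B,\Lambda^{0,*}}$ is a H\"ormander-type hypoelliptic operator with step $\leq r$ uniformly over the compact manifold, one has $\left\langle \Delta^{B,\Lambda^{0,*}}u,u\right\rangle +\left\Vert u\right\Vert ^{2}\geq C\left\Vert u\right\Vert _{H^{1/r}}^{2}$, and for $u$ in the $m$-th Fourier mode the $H^{1/r}$-norm already dominates $\left|m\right|^{1/r}\left\Vert u\right\Vert$. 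That observation eliminates, in one stroke, every localization, rescaling, model-gap and patching step in your plan. The paper then obtains \prettyref{eq: spectral gap funs.} from \prettyref{eq:spectral gap one forms} purely algebraically, via the Dirac-operator isomorphism between the nonzero eigenspaces of $\Box_{b,m}^{0}$ and $\Box_{b,m}^{1}$, rather than by separately analyzing $\ker\Box_{b,m}^{0}$ as you suggest.

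You have in fact correctly diagnosed the weak points of your own approach, and I would flag them as real: your patching is done at a fixed parabolic scale $\left|m\right|^{-1/r+\delta}$, but at a point of type $r_{0}<r$ the natural rescaling scale is $\left|m\right|^{-1/r_{0}}$, which is of a different order, so the cutoff scale and the rescaling scale do not match across the type stratification; the commutator errors you estimate as $O\left(\left|m\right|^{2/r-2\delta}\right)$ are not obviously $o$ of the local gap $\left|m\right|^{2/r_{0}}$ once you leave the top stratum; and uniformity of the model constant $c$ over the (non-compact, only after normalization possibly compact) family of leading jets needs an argument. The key thing you missed is that the hypoelliptic estimate you were reaching for to control commutators is itself the entire proof — it already encodes, globally and uniformly, exactly the power $\left|m\right|^{2/r}$ you are trying to extract point-by-point from the models, making the entire localization machinery unnecessary for this Proposition (it is, of course, still needed for the finer asymptotics in \prettyref{thm:local Bergman is a symbol} and \prettyref{thm:Szego kernel expansion theorem}).
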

\begin{proof}
The Lichnerowicz formula \prettyref{eq:Lichnerowicz formula 3D} when
restricted for the restriction to the $m$-th Fourier mode $\Omega_{m}^{0,1}\left(X\right)$
gives
\[
2\Box_{b,m}^{1}=\underbrace{\Delta_{m}^{B,\Lambda^{0,1}}}_{\eqqcolon\left.\Delta^{B,\Lambda^{0,*}}\right|_{\Omega_{m}^{0,1}\left(X\right)}}+\frac{1}{2}r^{X}+\mathscr{L}\left(w,\bar{w}\right)m
\]
in terms of an invariant orthonormal frame $w\in T^{1,0}X$. Following
the subelliptic estimate \prettyref{eq:subelliptic estimate} and
pseudoconvexity gives 
\begin{align*}
2\lambda_{j.m}^{q}=\left\langle 2\Box_{b,m}^{1}\varphi_{j,m},\varphi_{j,m}\right\rangle  & =\left\langle \left[\Delta_{m}^{B,\Lambda^{0,1}}+\frac{1}{2}r^{X}+\mathscr{L}\left(w,\bar{w}\right)m\right]\varphi_{j,m},\varphi_{j,m}\right\rangle \\
 & \geq c_{1}\left\Vert \varphi_{j,m}\right\Vert _{H^{1/r}}^{2}-c_{2}\left\Vert \varphi_{j,m}\right\Vert ^{2}\\
 & =c_{1}\left\Vert T^{1/r}\varphi_{j,m}\right\Vert ^{2}-\left\Vert \varphi_{j,m}\right\Vert ^{2}=c_{1}m^{2/r}-c_{2}
\end{align*}
for $m\geq0$. Thus $\textrm{Spec}\left(\Box_{b,m}^{1}\right)\subset\left[c_{1}m^{2/r}-c_{2},\infty\right)$
for $m\geq0$ proving the second part \prettyref{eq:spectral gap one forms}.
Similarly one proves $\textrm{Spec}\left(\Box_{b,m}^{0}\right)\subset\left[c_{1}\left|m\right|^{2/r}-c_{2},\infty\right)$
for $m\leq0$. The first part \prettyref{eq: spectral gap funs.}
follows on noting that the Dirac operator $D_{b,m}\coloneqq\left.D_{b}\right|_{\Omega_{m}^{0,*}\left(X\right)}$
gives an isomorphism between the non-zero eigenspaces of $\Box_{b,m}^{0}$
and $\Box_{b,m}^{1}$ for each $m\in\mathbb{Z}$.
\end{proof}
It follows immediately from \prettyref{eq:hodge theorem}, \prettyref{eq:spectral gap one forms}
that $H_{b,m}^{1}\left(X\right)=0$ for $m\gg0$. This gives 
\begin{align*}
d_{m}\coloneqq\textrm{dim }H_{b,m}^{0}\left(X\right) & =\phi_{m}\left(s_{0}\right)\int_{X}\textrm{Td}_{b}\left(T^{1,0}X\right)e^{-md\theta}\wedge\theta\\
 & =\phi_{m}\left(s_{0}\right)m\left[\int_{X}d\theta\wedge\theta\right]+O\left(1\right)
\end{align*}
following the index theorem of \cite[Cor. 1.13]{Cheng-Hsiao-Tsai-2015},
where $\textrm{Td}_{b}\left(T^{1,0}X\right)$ denotes the tangential/invariant
Todd class of $T^{1,0}X$ \cite[Sec. 2.3]{Cheng-Hsiao-Tsai-2015}
and $\theta\left(T\right)=1$, $\theta\left(HX\right)=0$.

As another application of Proposition \prettyref{prop:spectral gap}
one has the estimate $\textrm{Spec}^{+}\left(D_{b,m}^{2}\right)\subset\left[c_{1}\left|m\right|^{2/r}-c_{2},\infty\right)$
for each $m\in\mathbb{Z}$ on the positive spectrum of the Dirac operator.
One then sees
\[
\left\Vert D_{b}^{2}\omega\right\Vert \geq c_{1}\left\Vert D_{b}\omega\right\Vert ,\quad\forall\omega\in\Omega^{0,1}\left(X\right),
\]
on its decomposition into Fourier modes. The last inequality is rewritten
$\left\Vert \bar{\partial}_{b}\bar{\partial}_{b}^{*}\omega\right\Vert \geq c_{1}\left\Vert \bar{\partial}_{b}^{*}\omega\right\Vert $,
$\forall\omega\in\Omega^{0,1}\left(X\right)$ and hence 
\begin{equation}
\left\Vert \bar{\partial}_{b}\omega\right\Vert \geq c_{1}\left\Vert \omega\right\Vert ,\,\forall\omega\in\textrm{Dom}\left(\bar{\partial}_{b}\right)\cap\overline{\textrm{Range}\left(\bar{\partial}_{b}^{*}\right)},\label{eq: closed range property}
\end{equation}
which is equivalent to the closed range property for $\bar{\partial}_{b}$
(see \cite{Hormander65-L2est} Sec. 1). 

\subsection{\label{sec:Szego-kernel-expansion}Szeg\H{o} kernel expansion}

We now investigate the asymptotic expansion of the Szeg\H{o} kernel
on diagonal. In the presence of a locally free circle action, its
$m$-th Fourier component of the Szeg\H{o} kernel $\Pi_{b,m}\left(x,x'\right)\coloneqq\frac{1}{2\pi}\int\Pi_{b}\left(x,x'e^{i\theta}\right)e^{im\theta}d\theta$
is given as the Schwartz kernel of the orthogonal projector
\begin{equation}
\Pi_{b,m}\coloneqq\Pi_{b}\circ P_{m}:L^{2}\left(X\right)\rightarrow\textrm{ker}\left(\Box_{b,m}^{0}\right)\subset L^{2}\left(X\right).\label{eq:Fourier component Szego projection}
\end{equation}
It is also written in terms of orthonormal zero eigenfunctions $\left\{ \psi_{1}^{m},\ldots,\psi_{d_{m}}^{m}\right\} =\left\{ \varphi_{j,m}^{0}\in L^{2}\left(X\right)|\lambda_{j,m}^{0}=0\right\} $
of $\Box_{b,m}^{0}$ via
\begin{equation}
\Pi_{b,m}\left(x,x'\right)\coloneqq\sum_{j=1}^{d_{m}}\psi_{j}^{m}\left(x\right)\overline{\psi_{j}^{m}\left(x'\right)}\label{eq:Bergman kernel in eigenfunctions}
\end{equation}
of $\textrm{ker}\left(\Box_{b,m}^{0}\right)$. The singularity of
the Szeg\H{o} kernel \prettyref{thm:main thm parametrix} corresponds
to the on-diagonal asymptotics of the Fourier component $\Pi_{b,m}\left(x,x\right)$
as $m\rightarrow\infty$ in the circle invariant case and we wish
to describe it in this subsection.

We shall first localize the problem. We use the local description
\prettyref{eq:locally circle bundle} of $X$ as the unit circle bundle
of $X=S^{1}L\xrightarrow{\pi}Y$ of a Hermitian holomorphic line bundle
$\left(L,\nabla^{L},h^{L}\right)$ over a complex Hermitian manifold
$Y$. Finally and as partly noted before under the identification
\prettyref{eq:local correspondence with sections} one has the $m$th
Fourier component of the Kohn Laplacian
\begin{equation}
\Box_{b,m}^{0}=\Box_{m}\coloneqq\frac{1}{2}D_{m}^{2}\label{eq:local Bochner Laplacian}
\end{equation}
is locally given in terms of the Kodaira Laplacian on tensor powers
$C^{\infty}\left(Y;L^{m}\right)$. We now define a modify the frame
$\left\{ e_{1},e_{2}\right\} $, used in the expressions \prettyref{eq:connection formulas},
and define the frame $\left\{ \tilde{e}_{1},\tilde{e}_{2}\right\} $
on $\mathbb{R}^{2}$ which agrees with $\left\{ e_{1},e_{2}\right\} $
on $B_{\varrho}\left(y\right)$ and with $\left\{ \partial_{x_{1}},\partial_{x_{2}}\right\} $
outside $B_{2\varrho}\left(x\right)$. Also define the modified metric
$\tilde{g}^{TY}$ and almost complex structure $\tilde{J}$ on $\mathbb{R}^{2}$
to be standard in this frame and hence agreeing with $g^{TY}$, $J$
on $B_{\varrho}\left(x\right)$. The Christoffel symbol of the corresponding
modified induced connection on $\Lambda^{0,*}$ now satisfies 
\[
\tilde{a}^{\Lambda^{0,*}}=0\quad\textrm{ outside }B_{2\varrho}\left(x\right).
\]
Being identified with the Levi form, the curvature $R^{L}$ is semi-positive
by assumption with its order of vanishing $\textrm{ord}_{x}\left(R^{L}\right)=r_{x}-2\in2\mathbb{N}_{0}$
being given in terms of the type of the point $x$. 

We may then Taylor expand the curvature 
\begin{align}
R^{L} & =\underbrace{\sum_{\left|\alpha\right|=r_{x}-2}R_{\alpha}^{L}y^{\alpha}dy_{1}dy_{2}}_{=R_{0}^{L}}+O\left(y^{r_{x}-1}\right)\quad\textrm{ with }\label{eq:curvature Taylor expansion}\\
iR_{0}^{L}\left(e_{1},e_{2}\right) & \geq0.\label{eq: first term semi-positive}
\end{align}
Now define the modified connection on $L$ via 
\begin{align}
\tilde{\nabla}^{L} & =d+\left[\underbrace{\int_{0}^{1}d\rho\,\rho y^{k}\left(\tilde{R}^{L}\right)_{jk}\left(\rho y\right)}_{=\tilde{a}_{j}^{L}}\right]dy_{j},\quad\textrm{ where}\nonumber \\
\tilde{R}^{L} & =\chi\left(\frac{\left|y\right|}{2\varrho}\right)R^{L}+\left[1-\chi\left(\frac{\left|y\right|}{2\varrho}\right)\right]R_{0}^{L}.\label{eq:modified connection-1}
\end{align}
which agrees with $\nabla^{L}$ on $B_{\varrho}\left(y\right)$. Note
that the curvature $\tilde{R}^{L}$ of $\tilde{\nabla}^{L}$ above
is also semi-positive by definition. Furthermore one also has $\tilde{R}^{L}=R_{0}^{L}+O\left(\varrho^{r_{y}-1}\right)$
and that the $\left(r_{x}-2\right)$-th derivative/jet of $\tilde{R}^{L}$
is non-vanishing at all points on $\mathbb{R}^{2}$ for 
\begin{equation}
0<\varrho<c\left|j^{r_{y}-2}R^{L}\left(y\right)\right|.\label{eq: curv =00003D000026 jet comp.}
\end{equation}
Here $c$ is a uniform constant on the manifold. We then define the
modified Kodaira Dirac operator on $\mathbb{R}^{2}$ by the similar
formula

\begin{equation}
\tilde{D}_{m}=c\circ\tilde{\nabla}^{\Lambda^{0,*}\otimes L^{m}}\label{eq:local Dirac}
\end{equation}
which agrees with $D_{m}$ on $B_{\varrho}\left(y\right)$. The above
satisfies a similar Lichnerowicz formula for the corresponding Kodaira
Laplacian
\begin{align}
2\tilde{\Box}_{m}\coloneqq\tilde{D}_{m}^{2} & =\left(\tilde{\nabla}^{\Lambda^{0,*}\otimes L^{m}}\right)^{*}\tilde{\nabla}^{\Lambda^{0,*}\otimes L^{m}}+m\tilde{R}^{L}\left(w,\bar{w}\right)\left[2\bar{w}i_{\bar{w}}-1\right]+\frac{1}{2}\tilde{r}^{X}\bar{w}i_{\bar{w}}\label{eq:modified local Bochner}
\end{align}
where $w=\frac{1}{\sqrt{2}}\left(\tilde{e}_{1}-i\tilde{e}_{2}\right)$,
$\tilde{r}^{X}\coloneqq\tilde{R}^{T^{1,0}X}\left(w,\bar{w}\right)$,
the adjoint being taken with respect to the metric $\tilde{g}^{TY}$
and corresponding volume form. The above \prettyref{eq:modified local Bochner}
again agrees with 
\begin{equation}
\tilde{\Box}_{m}=\Box_{m}\quad\textrm{on }B_{\varrho}\left(y\right)\label{eq:agreement with localization}
\end{equation}
while the endomorphism $\tilde{r}^{X}$ vanishes outside $B_{\varrho}\left(y\right)$.
Being semi-bounded below \prettyref{eq:modified local Bochner} is
essentially self-adjoint. A similar argument as Corollary \prettyref{prop:spectral gap}
gives a spectral gap 
\begin{equation}
\textrm{Spec}\left(\tilde{\Box}_{m}\right)\subset\left\{ 0\right\} \cup\left[c_{1}m^{2/r_{x}}-c_{2},\infty\right).\label{eq: spectral gap}
\end{equation}
Thus for $m\gg0$, the resolvent $\left(\tilde{\Box}_{m}-z\right)^{-1}$
is well-defined in a neighborhood of the origin in the complex plane.
From local elliptic regularity, the Bergman projector
\begin{equation}
\tilde{B}_{m}:L^{2}\left(\mathbb{R}^{2};L^{\otimes m}\right)\rightarrow\ker\left(\tilde{\Box}_{m}\right)\label{eq:local Bergman kernel}
\end{equation}
then has a smooth Schwartz kernel with respect to the Riemannian volume
of $\tilde{g}^{TY}$.

Now we choose a set of such BRT trivializations $\left\{ U_{j}=\left(-\frac{\pi}{s_{x_{j}}},\frac{\pi}{s_{x_{j}}}\right)\times B_{2\varrho}\left(x_{j}\right)\right\} _{j=1}^{N}$
\prettyref{eq: BRT condition} centered at $\left\{ x_{j}\in X\right\} _{j=1}^{N}$
with corresponding modified Laplacians $\left\{ \tilde{\Box}_{m,j}\right\} _{j=1}^{N}$
and Bergman projectors $\left\{ \tilde{B}_{m,j}\right\} _{j=1}^{N}$
such that $\left\{ U_{j}^{0}\coloneqq\left(-\frac{\pi}{2s_{x_{j}}},\frac{\pi}{2s_{x_{j}}}\right)\times B_{\varrho}\left(x_{j}\right)\right\} _{j=1}^{N}$cover
$X$. Choose a partition of unity $\left\{ \chi_{j}\in C_{c}^{\infty}\left(U_{j}^{0};\left[0,1\right]\right)\right\} _{j=1}^{N}$
subordinate to the the BRT cover, $j=1,\ldots,N$. Further choose
$\psi_{j}\in C_{c}^{\infty}\left(U_{j};\left[0,1\right]\right)$ such
that $\psi_{j}=1$ on $\textrm{spt}\left(\chi_{j}\right)$ and 
\begin{equation}
\sigma_{j}\in C_{c}^{\infty}\left(-\frac{\pi}{s_{x_{j}}},\frac{\pi}{s_{x_{j}}}\right)_{\theta}\textrm{ with }\int\sigma_{j}d\theta=1\label{eq:angular cutoff}
\end{equation}
 in each such trivialization. We note that a finite propagation argument
as in \cite[Sec. 1.6]{Ma-Marinescu} gives 
\begin{equation}
\chi_{j}\tilde{B}_{m,j}\psi_{j}=\chi_{j}\tilde{B}_{m,j}\quad\textrm{mod }O\left(m^{-\infty}\right)\label{eq:finite propag.}
\end{equation}
thus the right hand side above maybe assumed to be properly supported
mod $O\left(m^{-\infty}\right)$.

Now define the approximate Szeg\H{o} kernel via
\begin{align}
\tilde{\Pi}_{m} & \coloneqq\left(\sum_{j=1}^{\infty}\int dy'd\theta'\,\chi_{j}\left(y,\theta\right)e^{im\theta}\tilde{B}_{m,j}\left(y,y'\right)e^{-im\theta'}\psi_{j}\left(y',\theta'\right)\sigma_{j}\left(\theta'\right)\right)\label{eq:localized Szego kernel prelim}\\
\tilde{\Pi}_{b,m} & \coloneqq\tilde{\Pi}_{m}\circ P_{m}.\label{eq:localized Szego kernel}
\end{align}
We now have the following localization lemma.
\begin{lem}
\label{lem: localization Szego kernel} The approximate Szeg\H{o}
kernel \prettyref{eq:localized Szego kernel prelim} satisfies 
\begin{equation}
\tilde{\Pi}_{b,m}-\Pi_{b,m}=O\left(m^{-\infty}\right)\label{eq:localization Szego kernel}
\end{equation}
in the $C^{\infty}$ norm on the product $X\times X$.
\end{lem}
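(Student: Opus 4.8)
The plan is to run the by-now standard localization scheme for Bergman kernels of (semi)positive line bundles in the present $S^{1}$-invariant CR setting, cf.\ \cite[Sec.~1.6,~4.2]{Ma-Marinescu}, \cite[Sec.~4]{Marinescu-Savale18}, \cite{Hsiao-Marinescu-JDG2017}. The two structural ingredients are: \emph{locality} --- in the BRT trivialization $U_{j}$ and restricted to the $m$-th Fourier mode, $\Box_{b,m}^{0}$ coincides on $B_{\varrho}(x_{j})$ with the modified operator $\tilde{\Box}_{m,j}$ (\prettyref{eq:local Bochner Laplacian}, \prettyref{eq:agreement with localization}); and \emph{off-diagonal decay} --- the kernels of $\tilde{B}_{m,j}$ and of the resolvent $\bigl(\tilde{\Box}_{m,j}\big|_{(\ker\tilde{\Box}_{m,j})^{\perp}}\bigr)^{-1}$ are, with all derivatives, $O(m^{-\infty})$ once the two arguments lie a fixed positive distance apart. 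The decay follows as usual from unit finite propagation speed (with respect to the sub-Riemannian metric of $HX$) for the wave operators $\cos(t\sqrt{\tilde{\Box}_{m,j}})$, together with the spectral gaps \prettyref{eq: spectral gap}, \prettyref{prop:spectral gap} and the subelliptic estimate \prettyref{eq:subelliptic estimate}; by the same mechanism, if $\tilde{\Box}_{m,j}$ and $\tilde{\Box}_{m,k}$ both agree on a ball with the pullback of $\Box_{b,m}^{0}$, then their localized Bergman kernels agree there modulo $O(m^{-\infty})$ up to the bundle transition factor --- which is exactly what the phases $e^{\pm im\theta},e^{\pm im\theta'}$ and cutoffs $\psi_{j},\sigma_{j}$ in \prettyref{eq:localized Szego kernel prelim} are arranged to carry. (Here one takes $\psi_{j}$ supported in $B_{\varrho}(x_{j})$ times an angular factor, equal to $1$ on a neighborhood of $\operatorname{spt}\chi_{j}$.)

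Granting this, I would prove three relations, all understood modulo $O(m^{-\infty})$, first in operator norm on $L^{2}$ and then upgraded to the $C^{\infty}(X\times X)$ estimate by elliptic regularity for the elliptic self-adjoint operator $\Box^{X}=-T^{2}+\Box_{b}$ (note $\ker\Box_{b,m}^{0}$ lies in the $\lambda\le m^{2}$ eigenspaces, so all Sobolev norms there are controlled by powers of $m$): \textbf{(i)}~$\tilde{\Pi}_{b,m}^{*}\equiv\tilde{\Pi}_{b,m}$; \textbf{(ii)}~$\Box_{b,m}^{0}\tilde{\Pi}_{b,m}\equiv0$; \textbf{(iii)}~$\tilde{\Pi}_{b,m}u\equiv u$ for every $u\in\ker\Box_{b,m}^{0}$. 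For (i), each $\tilde{B}_{m,j}$ is self-adjoint, so $\tilde{\Pi}_{b,m}^{*}$ arises from \prettyref{eq:localized Szego kernel prelim} by the cutoff interchange $\chi_{j}\leftrightarrow\psi_{j}$; since $\psi_{j}\equiv1$ near $\operatorname{spt}\chi_{j}$ the difference is supported where the two arguments are at fixed positive distance, hence $O(m^{-\infty})$. For (ii), on $\operatorname{spt}\chi_{j}$ the operator $\Box_{b,m}^{0}$ acts on the first argument of $\tilde{B}_{m,j}$ via $\tilde{\Box}_{m,j}$, and $\tilde{\Box}_{m,j}(\chi_{j}\tilde{B}_{m,j})=[\tilde{\Box}_{m,j},\chi_{j}]\tilde{B}_{m,j}$ since $\tilde{\Box}_{m,j}\tilde{B}_{m,j}=0$; the contributions with the two arguments far apart are $O(m^{-\infty})$, while near the diagonal all $\tilde{B}_{m,j}$ coincide with one kernel modulo $O(m^{-\infty})$, so these terms telescope to $[\Box_{b,m}^{0},\sum_{j}\chi_{j}](\,\cdot\,)=0$ using $\sum_{j}\chi_{j}\equiv1$. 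For (iii), $u\in\ker\Box_{b,m}^{0}$ corresponds in $U_{j}$ to a section $s_{j}$ with $\Box_{m}s_{j}=0$ on $B_{2\varrho}(x_{j})$, whence $(I-\tilde{B}_{m,j})(\psi_{j}s_{j})=\bigl(\tilde{\Box}_{m,j}\big|_{(\ker)^{\perp}}\bigr)^{-1}[\Box_{m},\psi_{j}]s_{j}$; as $[\Box_{m},\psi_{j}]$ is supported off $\operatorname{spt}\chi_{j}$, multiplying by $\chi_{j}$ and using off-diagonal decay of the resolvent makes this $O(m^{-\infty})\|u\|$, so $\chi_{j}\tilde{B}_{m,j}(\psi_{j}s_{j})\equiv\chi_{j}s_{j}$, and summing over $j$ (with $\sum_{j}\chi_{j}\equiv1$ and the normalization \prettyref{eq:angular cutoff}) gives $\tilde{\Pi}_{b,m}u\equiv u$.

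These assemble formally. By (ii) and the gap $\operatorname{Spec}(\Box_{b,m}^{0})\subset\{0\}\cup[c_{1}m^{2/r}-c_{2},\infty)$ one gets $(I-\Pi_{b,m})\tilde{\Pi}_{b,m}=\bigl(\Box_{b,m}^{0}\big|_{(\ker)^{\perp}}\bigr)^{-1}(I-\Pi_{b,m})\Box_{b,m}^{0}\tilde{\Pi}_{b,m}\equiv0$, i.e.\ $\Pi_{b,m}\tilde{\Pi}_{b,m}\equiv\tilde{\Pi}_{b,m}$; taking adjoints and using (i) yields $\tilde{\Pi}_{b,m}\Pi_{b,m}\equiv\tilde{\Pi}_{b,m}$, while (iii) applied on the range of $\Pi_{b,m}$ gives $\tilde{\Pi}_{b,m}\Pi_{b,m}\equiv\Pi_{b,m}$; comparing the last two yields $\tilde{\Pi}_{b,m}\equiv\Pi_{b,m}$, which is \prettyref{eq:localization Szego kernel}. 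I expect the one genuinely delicate point to be the off-diagonal decay package --- the finite propagation speed estimates for the sub-Laplacians $\Box_{b,m}^{0},\tilde{\Box}_{m,j}$ and the comparison of localized kernels on chart overlaps --- on which the cancellations in (ii) and (iii) entirely rest; this is standard (\cite[Sec.~1.6,~4.2]{Ma-Marinescu}, \cite{Marinescu-Savale18}), and everything else is routine functional analysis and elliptic bootstrapping.
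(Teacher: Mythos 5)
Your plan is in the same spirit as the paper's---finite propagation speed, the spectral gap \prettyref{eq: spectral gap} / \prettyref{prop:spectral gap}, and the reduction to a pair of approximate projection identities---and your item (iii) is literally the paper's equation \prettyref{eq:approx szego proj prop.}. But the algebraic assembly you use differs from the paper's in a way that creates real trouble. The paper \emph{only} establishes the two identities $\tilde{\Pi}_{b,m}\Pi_{b,m}\equiv\Pi_{b,m}$ and $\tilde{\Pi}_{b,m}\Box_b\equiv 0$ (your (iii) and the \emph{adjoint} of your (ii)), and then computes directly $\tilde{\Pi}_{b,m}^*=P_m\tilde{\Pi}_{b,m}^*=\bigl(N_m\Box_b P_m+\Pi_{b,m}\bigr)\tilde{\Pi}_{b,m}^*\equiv\Pi_{b,m}\tilde{\Pi}_{b,m}^*\equiv\Pi_{b,m}$, whence $\tilde{\Pi}_{b,m}\equiv\Pi_{b,m}$ by self-adjointness of $\Pi_{b,m}$. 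This is not cosmetic: since $\tilde{\Pi}_{b,m}^*=P_m\tilde{\Pi}_m^*$, its range automatically lies in $L^2_m$, so the decomposition $P_m=N_m\Box_{b,m}^0+\Pi_{b,m}$ applies verbatim. Your route needs neither (i) nor the partial inverse, but it does need your (ii) $\Box_{b,m}^0\tilde{\Pi}_{b,m}\equiv 0$, and there the difficulties accumulate.

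Concretely, the range of $\tilde{\Pi}_{b,m}=\tilde{\Pi}_m\circ P_m$ is \emph{not} contained in $L^2_m$, because the cutoffs $\chi_j(y,\theta)$ in \prettyref{eq:localized Szego kernel prelim} depend on $\theta$; so the operator $\Box_{b,m}^0\tilde{\Pi}_{b,m}$, and a fortiori $(\Box_{b,m}^0|_{(\ker)^\perp})^{-1}(I-\Pi_{b,m})\Box_{b,m}^0\tilde{\Pi}_{b,m}$, is not well-posed as written. Moreover, even after replacing $\Box_{b,m}^0$ by $\Box_b$, your computation of (ii) produces the commutator terms $\sum_j[\Box_b,\chi_j]\bigl(e^{im\theta}\tilde B_{m,j}(\cdot)\bigr)$, and the proposed telescoping cancellation $\sum_j[\Box_b,\chi_j]\equiv 0$ only closes if one first shows the local kernels $\tilde B_{m,j}$ and $\tilde B_{m,k}$ agree modulo $O(m^{-\infty})$ (and transition factors) on chart overlaps --- a separate cross-chart lemma which you flag but do not supply, and which is suspiciously close in content to the statement you are trying to prove. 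The paper's choice of applying $\Box_b$ (respectively $\Pi_{b,m}$) on the \emph{input} side rather than the output side sidesteps both the $L^2_m$-membership issue and the cross-chart comparison entirely: one only ever applies $\tilde B_{m,j}$ to a function $g_0$ that is already local and annihilated by $\tilde\Box_{m,j}$ near $\operatorname{spt}\chi_j$, so that $\tilde B_{m,j}\tilde\Box_{m,j}=0$ kills everything up to commutators supported off-diagonal. I would recommend re-orienting your (ii) to the $\tilde{\Pi}_{b,m}\Box_b$ direction and assembling via $\tilde{\Pi}_{b,m}^*=P_m\tilde{\Pi}_{b,m}^*$ as the paper does. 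Your remark (iii) also states $(I-\tilde B_{m,j})(\psi_j s_j)=(\tilde\Box_{m,j}|_\perp)^{-1}[\Box_m,\psi_j]s_j$, which is off by the term $\psi_j(\tilde\Box_{m,j}-\Box_m)s_j$ supported outside $B_\varrho(x_j)$; it is harmless after multiplying by $\chi_j$ and invoking off-diagonal decay, but the cutoff entering the commutator should be one supported in $B_\varrho(x_j)$, where $\tilde\Box_{m,j}=\Box_m$, and equal to $1$ near $\operatorname{spt}\chi_j$.
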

\begin{proof}
We first show that by direct computation that 
\begin{equation}
\tilde{\Pi}_{b,m}\Pi_{b,m}=\Pi_{b,m}\quad\textrm{ mod }O\left(m^{-\infty}\right).\label{eq:approx szego proj prop.}
\end{equation}
To this end, let $f\in C^{\infty}\left(X\right)$ and $g\coloneqq\Pi_{b,m}f$.
Then $g\in\textrm{ker }\left(\Box_{b,m}^{0}\right)$ and thus $g\left(ye^{i\theta}\right)=g_{0}\left(y\right)e^{im\theta}$
with $\tilde{\Box}_{m,j}g_{0}=0$ on each $B_{\varrho}\left(x_{j}\right)$
by \prettyref{eq:local Bochner Laplacian}, \prettyref{eq:agreement with localization}.
With 
\begin{equation}
\tilde{B}_{m,j}g_{0}=g_{0}\textrm{ mod }O\left(m^{-\infty}\right)\textrm{ on }B_{\varrho}\left(x_{j}\right)\label{eq:local bergman property}
\end{equation}
following from a finite propagation argument, we may calculate 
\begin{align*}
\tilde{\Pi}_{b,m}g & =\tilde{\Pi}_{m}g\\
 & =\sum_{j=1}^{\infty}\int dy'd\theta'\,\chi_{j}\left(x\right)e^{im\theta}\tilde{B}_{m,j}\left(y,y'\right)e^{-im\theta'}\sigma_{j}\left(\theta'\right)g_{0}\left(y'\right)e^{im\theta'}\\
 & =\sum_{j=1}^{\infty}\chi_{j}\left(x\right)g\quad\textrm{ mod }O\left(m^{-\infty}\right)\\
 & =g\quad\textrm{ mod }O\left(m^{-\infty}\right)
\end{align*}
using \prettyref{eq:angular cutoff}, \prettyref{eq:localized Szego kernel prelim}
and \prettyref{eq:local bergman property} showing \prettyref{eq:approx szego proj prop.}.

In similar vein, with $P_{m}f=g\in C_{m}^{\infty}\left(X\right)$
satisfying $g\left(ye^{i\theta}\right)=g_{0}\left(y\right)e^{im\theta}$
on each $B_{\varrho}\left(x_{j}\right)$ as before we calculate
\begin{align}
\tilde{\Pi}_{b,m}\Box_{b}f & =\tilde{\Pi}_{m}\circ P_{m}\circ\Box_{b}f\nonumber \\
 & =\tilde{\Pi}_{m}\circ\Box_{b,m}g\nonumber \\
 & =\sum_{j=1}^{\infty}\int dy'd\theta'\,\chi_{j}\left(x\right)e^{im\theta}\tilde{B}_{m,j}\left(y,y'\right)e^{-im\theta'}\sigma_{j}\left(\theta'\right)\tilde{\Box}_{m,j}g_{0}\left(y'\right)e^{im\theta'}\quad\textrm{ mod }O\left(m^{-\infty}\right)\nonumber \\
 & =0\quad\textrm{ mod }O\left(m^{-\infty}\right)\label{eq:projection annihilates}
\end{align}
using \prettyref{eq:local Bochner Laplacian}, \prettyref{eq:agreement with localization}
and another finite propagation argument. 

Finally letting 
\begin{align}
N_{m}: & L_{m}^{2}\left(X\right)\rightarrow\textrm{Dom}\left(\Box_{b,m}^{0}\right),\nonumber \\
N_{m}f & =\begin{cases}
0; & f\in\textrm{ker}\left(\Box_{b,m}^{0}\right),\\
\Box_{b,m}^{-1}f; & f\in\textrm{ker}\left(\Box_{b,m}^{0}\right)^{\perp}
\end{cases}\label{eq:partial inverse}
\end{align}
 denote the partial inverse of $\Box_{b,m}^{0}$ we calculate
\begin{align*}
\tilde{\Pi}_{b,m}^{*} & =P_{m}\tilde{\Pi}_{b,m}^{*}\\
 & =\left(N_{m}\Box_{b}P_{m}+\Pi_{b,m}\right)\tilde{\Pi}_{b,m}^{*}\\
 & =\Pi_{b,m}\tilde{\Pi}_{b,m}^{*}\quad\textrm{ mod }O\left(m^{-\infty}\right)\\
 & =\Pi_{b,m}\quad\textrm{ mod }O\left(m^{-\infty}\right)
\end{align*}
following \prettyref{eq:approx szego proj prop.}, \prettyref{eq:projection annihilates}
and proving the proposition on account of the self-adjointness of
$\Pi_{b,m}$.
\end{proof}
We note that the on-diagonal asymptotic expansion for the local Bergman
kernel $\tilde{B}_{m}\left(y,y\right)$ follows in a similar fashion
as \prettyref{thm:local Bergman is a symbol}. A slight difference
here is that $\tilde{B}_{m}\left(y,y\right)$ is defined with respect
to a more general metric while the metric in \prettyref{thm:local Bergman is a symbol}
is flat. This however makes little difference to the argument and
gives the following (cf. \cite[Thm. 3]{Marinescu-Savale18}).
\begin{thm}
\label{thm:local Bergman expansion} For any differential operator
$P$ of order $l$, the derivative of the local Bergman kernel has
the pointwise expansion on diagonal
\begin{equation}
P\Pi^{\tilde{\Box}_{m}}\left(y,y\right)=m^{\left(2+l\right)/r_{y}}\left[\sum_{j=0}^{N}c_{j}\left(P,y\right)m^{-2j/r_{y}}+O\left(m^{l-2N+1}\right)\right],\label{eq:local Bergman expansion}
\end{equation}
$\forall N\in\mathbb{N}_{0}$. The leading term, for $P=1$, is given
$c_{0,0}\left(1,y\right)=\Pi^{g_{x}^{HX},j^{r_{x}-2}\mathscr{L},J^{HX}}\left(0,0\right)>0$
in terms of the Bergman kernel of the model Kodaira Laplacian on $HX$
(see \cite[Sec. A]{Marinescu-Savale18}).
\end{thm}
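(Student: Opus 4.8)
The plan is to repeat the rescaling scheme from the proof of \prettyref{thm:local Bergman is a symbol}, now carrying along the general Riemannian metric $\tilde{g}^{TY}$, and to import the perturbative Bergman kernel expansion of \cite[Sec. A]{Marinescu-Savale18} (cf. \cite[Thm. 3]{Marinescu-Savale18}). Since the modified Kodaira Laplacian $\tilde{\Box}_{m}$ of \prettyref{eq:modified local Bochner} is semibounded below it is essentially self-adjoint, and the spectral gap \prettyref{eq: spectral gap} gives $\Pi^{\tilde{\Box}_{m}}=\chi(m^{-2/r}\tilde{\Box}_{m})$ for any $\chi\in C_{c}^{\infty}(-c_{1},c_{1})$ equal to one near $0$, once $m\gg0$; this reduces matters to the Schwartz kernel of a function of the Laplacian, exactly as before. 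By a finite propagation argument as in \prettyref{eq:finite propag.}, $\Pi^{\tilde{\Box}_{m}}(y,\cdot)$ is determined modulo $O(m^{-\infty})$ by $\tilde{\Box}_{m}$ in a small ball around the evaluation point $y$, so one may work in privileged coordinates of weight $(1,1)$ centered at $y$.

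In those coordinates the potential of $\tilde{\nabla}^{L}$ has vanishing jet of order $r_{y}-2$ at $y$, with homogeneous leading polynomial $\varphi_{0}$ that is subharmonic and nondegenerate. Applying the dilation $\delta_{m^{-1/r_{y}}}$ one gets $m^{-2/r_{y}}(\delta_{m^{-1/r_{y}}})_{*}\tilde{\Box}_{m}=\Box_{1}^{(y)}+m^{-1/r_{y}}\mathcal{E}_{m}$, where $\Box_{1}^{(y)}$ is the model Kodaira Laplacian on $H_{y}X\cong\mathbb{C}$ associated to $g_{y}^{HX}$, $j^{r_{y}-2}\mathscr{L}$ and $J$, and $\mathcal{E}_{m}$ is a differential operator whose coefficients --- now also incorporating the Taylor coefficients of the metric, the Christoffel symbols $\tilde{a}^{\Lambda^{0,*}}$, the term $\tilde{r}^{X}$ and the volume-form Jacobian --- are uniformly $C^{\infty}$ bounded in $m$. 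Thus the general metric enters only through $\mathcal{E}_{m}$ and the lower-order part of $\Box_{1}^{(y)}$ relative to the flat situation of \prettyref{thm:local Bergman is a symbol}. The model $\Box_{1}^{(y)}$ again has a spectral gap $\{0\}\cup[c,\infty)$ and a Bergman projector $\Pi^{g_{y}^{HX},j^{r_{y}-2}\mathscr{L},J^{HX}}$ which is a smoothing operator with rapidly decaying kernel, as in \cite[Sec. A]{Marinescu-Savale18}.

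Then I would represent $\chi$ of the rescaled operator by the Cauchy integral $\frac{1}{2\pi i}\oint_{\Gamma}(z-\boxdot_{m})^{-1}\,dz$ over a small contour $\Gamma$ in the gap, expand the resolvent in a Neumann series in $m^{-1/r_{y}}$ about $(z-\Box_{1}^{(y)})^{-1}$, and control the remainders using uniform (in $m$) subelliptic (energy) estimates for the dilated family $\boxdot_{m}$ together with the off-diagonal decay and polynomial on-diagonal growth of the model and perturbed kernels --- this is exactly the analysis carried out in \cite[Sec. A]{Marinescu-Savale18}. Rescaling back, applying the order-$l$ operator $P$ (which contributes the factor $m^{l/r_{y}}$) and evaluating at $y$ yields \prettyref{eq:local Bergman expansion}; the odd steps in $m^{-1/r_{y}}$ drop out on the diagonal by the parity argument of \cite{Marinescu-Savale18} (the leading polynomial $\varphi_{0}$ has even degree $r_{y}$, so the order-$m^{-(2j+1)/r_{y}}$ corrections are odd operators), leaving powers of $m^{-2/r_{y}}$. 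The leading coefficient is $c_{0}(1,y)=\Pi^{g_{y}^{HX},j^{r_{y}-2}\mathscr{L},J^{HX}}(0,0)$, which is strictly positive since the model Kodaira Laplacian has nontrivial kernel by the subharmonicity and nondegeneracy of $\varphi_{0}$.

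The only genuinely new input relative to \prettyref{thm:local Bergman is a symbol} is the uniform analysis of the anisotropically rescaled, merely hypoelliptic operators $\boxdot_{m}$ with variable metric coefficients: one needs uniform energy/subelliptic bounds and weighted Sobolev estimates that make the resolvent expansion rigorous. Since these are established in \cite[Sec. A]{Marinescu-Savale18}, the expansion then follows formally just as in \prettyref{thm:local Bergman is a symbol}.
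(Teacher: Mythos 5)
Your proof is correct and follows the resolvent/Neumann-series rescaling strategy of \cite[Sec.\ A]{Marinescu-Savale18}, which is precisely what the paper cites here (the paper itself gives no argument beyond ``this follows in a similar fashion as Theorem \ref{thm:local Bergman is a symbol}, cf.\ \cite[Thm.\ 3]{Marinescu-Savale18}''). One observation worth making: the proof of Theorem \ref{thm:local Bergman is a symbol} that the paper nominally points to actually used a different device for the classical expansion (the approximate kernel $\tilde{B}_{t}=e^{-t\varphi_{1}}B_{t}^{0}e^{t\varphi_{1}}$ and the identity $(I-R_{t}^{N})B_{t}=(I+\cdots+R_{t}^{N-1})\tilde{B}_{t}^{*}$), and that argument as written only produces a full expansion in all powers $m^{-j/r_{y}}$, not just the even ones. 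The collapse to powers of $m^{-2/r_{y}}$ claimed in \prettyref{eq:local Bergman expansion} genuinely requires the parity observation you supply (that since $r_{y}$ is even, the $m^{-j/r_{y}}$ coefficient in the Taylor expansion of $\boxdot_{m}$ is an odd differential operator for odd $j$, hence contributes nothing on-diagonal), which the paper leaves to the citation of \cite{Marinescu-Savale18}. So your proposal is not only consistent with the intended argument but fills in a step the paper's proof sketch silently defers.
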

Following this and the localization property of the Szeg\H{o} kernel
just proved now implies \prettyref{thm:Szego kernel expansion theorem}
as below.
\begin{proof}[Proof of \prettyref{thm:Szego kernel expansion theorem}]
 By the localization property \prettyref{eq:localization Szego kernel}
it suffices to show the pointwise expansion of the approximate Szeg\H{o}
kernel \prettyref{eq:localized Szego kernel}. In showing the expansion
at $x\in X$, we may further assume the BRT cover and partition of
unity defining \prettyref{eq:localized Szego kernel} is chosen so
that $\chi_{j}=\begin{cases}
1, & j=1\\
0, & j>1
\end{cases}$, near $x$. We then compute 
\begin{align}
\tilde{\Pi}_{b,m} & =\int_{0}^{2\pi}d\theta\,\tilde{\Pi}_{m}\left(x,xe^{i\theta}\right)e^{im\theta}\nonumber \\
 & =\sum_{l=0}^{s_{x}-1}\int_{\frac{2\pi}{s_{x}}l}^{\frac{2\pi}{s_{x}}\left(l+1\right)}d\theta\,\tilde{\Pi}_{m}\left(x,xe^{i\theta}\right)e^{im\theta}\nonumber \\
 & =\sum_{l=0}^{s_{x}-1}e^{i\frac{2\pi lm}{s_{x}}}\int_{0}^{\frac{2\pi}{s_{x}}l}d\theta\,\tilde{\Pi}_{m}\left(x,xe^{i\theta}\right)e^{im\theta}\nonumber \\
 & =\sum_{l=0}^{s_{x}-1}e^{i\frac{2\pi lm}{s_{x}}}\int_{0}^{\frac{2\pi}{s_{x}}l}d\theta\,\tilde{\Pi}_{m}\left(x,xe^{i\theta}\right)e^{im\theta}\nonumber \\
 & =\left(\sum_{l=0}^{s_{x}-1}e^{i\frac{2\pi lm}{s_{x}}}\right)\left(\int d\theta\,\sigma_{1}\left(\theta\right)\right)\tilde{B}_{m}\left(y,y\right)\nonumber \\
 & =\left(\sum_{l=0}^{s_{x}-1}e^{i\frac{2\pi lm}{s_{x}}}\right)m^{2/r_{y}}\left[\sum_{j=0}^{N}c_{j}\left(x\right)m^{-2j/r_{y}}+O\left(m^{-\left(2N+1\right)/r_{y}}\right)\right],\quad\forall N\in\mathbb{N},\label{eq:proof of expansion}
\end{align}
from \prettyref{eq:localized Szego kernel}, \prettyref{thm:local Bergman expansion}
to prove \prettyref{eq:Szego expansion}.
\end{proof}
As noted in \cite[Rem. 24]{Marinescu-Savale18} the expansion for
the local Bergman kernels $\tilde{B}_{m}\left(y,y\right)$ is the
same as the positive case on $X_{2,s}$ (the strongly pseudoconvex
points) and furthermore uniform in any $C^{l}$-topology on compact
subsets of $X_{2,s}$ cf.\ \cite[Theorem 4.1.1]{Ma-Marinescu}. In
particular the first two coefficients for $y\in Y_{2}$ are given
by 
\begin{align}
c_{0}\left(y\right) & =\Pi^{g_{x}^{HX},j^{r_{x}-2}\mathscr{L},J^{HX}}\left(0,0\right)=\frac{1}{2\pi}\tau^{L}\nonumber \\
c_{1}\left(y\right) & =\frac{1}{16\pi}\tau^{L}\left[\kappa-\Delta\ln\tau^{L}\right].\label{eq: recovering positive case}
\end{align}
The derivative expansion on $X_{2,s}$ is also known to satisfy $c_{0}=c_{1}=\ldots=c_{\left[\frac{l-1}{2}\right]}=0$
(i.e. begins at the same leading order $m$). 

In the next section we shall also need uniform estimates on the local
Bergman kernels as below.
\begin{thm}
\label{thm:uniform est. on local Bergman} The local Bergman kernel
satisfies the estimate 
\begin{equation}
\left[\inf_{x\in X_{r,\leq s}}\Pi^{g_{x}^{HX},j^{r_{x}-2}\mathscr{L},J^{HX}}\left(0,0\right)\right]\left[1+o\left(1\right)\right]m^{2/r}\leq\tilde{B}_{m}\left(y,y\right)\leq\left[\sup_{x\in X}\Pi^{g_{x}^{HX},j^{r_{x}-2}\mathscr{L},J^{HX}}\left(0,0\right)\right]\left[1+o\left(1\right)\right]m\label{eq:uniform Bergman estimate}
\end{equation}
with the $o\left(1\right)$ terms being uniform in $x\in X$. 

Furthermore, there exists constants $C_{l}$, $l=0,1,\ldots$, uniform
in $y\in Y$, such that for any differential operator $P_{l}$ of
order $l$, the derivative of the local Bergman kernel satisfies the
estimate
\begin{equation}
\left|P_{l}\tilde{B}_{m}\left(y,y\right)\right|\leq m^{//3}C_{l}\tilde{B}_{m}\left(y,y\right).\label{eq:uniform derivative estimate}
\end{equation}
\end{thm}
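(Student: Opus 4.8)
The plan is to deduce both estimates from the parabolic rescaling used in the proofs of \prettyref{thm:local Bergman is a symbol} and \prettyref{thm:local Bergman expansion}, made uniform in the base point by the spectral gap \eqref{eq: spectral gap} --- whose constants $c_1,c_2$ may be taken uniform over $X$ in view of the uniform subelliptic estimate \eqref{eq:subelliptic estimate} --- together with a compactness argument over the rescaled limits. Write $\Pi^{\mathrm{mod}}_x\coloneqq\Pi^{g_x^{HX},j^{r_x-2}\mathscr{L},J^{HX}}(0,0)$ for the model value appearing in \prettyref{thm:local Bergman expansion}. With $\chi\in C_c^\infty(-c_1/2,c_1/2)$ a fixed cutoff equal to $1$ near the origin, \eqref{eq: spectral gap} gives $\tilde{B}_m=\chi(m^{-2/\rho}\tilde{\Box}_m)$ for $m\gg0$ whenever $\rho$ is either the local type $r_y$ at the chart centre or the global maximal type $r$, and the dilation $\delta_{m^{-1/\rho}}$ conjugates $m^{-2/\rho}\tilde{\Box}_m$ into a model Kodaira Laplacian plus an $O(m^{-1/\rho})$ perturbation with coefficients uniformly $C^\infty$-bounded on compacta, as in \eqref{eq:rescaled operators}, the rescaled spectrum lying in $\{0\}\cup[c_1/2,\infty)$ uniformly. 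Consequently the resolvents near the origin are uniformly bounded, and with the uniform (sub)elliptic estimates one gets that $\partial^\alpha$ of $\chi$ of the rescaled operator, restricted to the diagonal, is bounded uniformly in the base point and in $m$, for each $\alpha$. In particular $\sup_{x\in X}\Pi^{\mathrm{mod}}_x<\infty$, whereas $\inf_{x\in X_{r,\le s}}\Pi^{\mathrm{mod}}_x>0$ since $X_{r,\le s}$ is closed in the compact manifold $X$ and $x\mapsto\Pi^{\mathrm{mod}}_x$ is continuous and strictly positive there by \prettyref{thm:local Bergman expansion}.

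For the upper bound and the derivative estimate I would rescale by the exponent $r_y$ of the local type, giving $\tilde{B}_m(y,y)=m^{2/r_y}\chi(\boxdot_m)(0,0)$. Since $r_y\ge2$, the upper bound $\tilde{B}_m(y,y)\le[\sup_X\Pi^{\mathrm{mod}}_x](1+o(1))\,m$ reduces to the uniform assertion $\limsup_m\sup_y\chi(\boxdot_m)(0,0)\le\sup_X\Pi^{\mathrm{mod}}_x$, which follows from the uniform bound of the previous paragraph and a compactness argument, any $C^\infty_{\mathrm{loc}}$-subsequential limit of the rescaled operators being a subharmonic-weighted Bergman model whose diagonal ground-state value is $\le\sup_X\Pi^{\mathrm{mod}}_x$, degeneration of the rescaled potential only decreasing this value. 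For the derivative estimate, an order-$l$ differential operator becomes $m^{l/r_y}$ times an order-$l$ operator after the dilation, so that combining the uniform derivative bounds with the reproducing property of the projector $\tilde{B}_m$ and the uniform upper bound just obtained yields $|P_l\tilde{B}_m(y,y)|\le C_l\,\tilde{B}_m(y,y)\,m^{l/2}$, the gain of at most $m^{1/r_y}\le m^{1/2}$ per derivative producing the power of $m$ in \eqref{eq:uniform derivative estimate}.

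The lower bound is the main obstacle, and for it one must instead rescale by the \emph{global} maximal-type exponent $m^{-1/r}$ at every base point, so that the rescaled gap $\{0\}\cup[c_1-c_2m^{-2/r},\infty)\supset\{0\}\cup[c_1/2,\infty)$ remains uniform; then $\tilde{B}_m(y,y)/m^{2/r}=\chi(\boxdot_m^{\mathrm{glob}})(0,0)$ and the point is to show $\liminf_m\inf_y\chi(\boxdot_m^{\mathrm{glob}})(0,0)\ge\inf_{X_{r,\le s}}\Pi^{\mathrm{mod}}_x$. I would argue by contradiction: if some sequence $y_{m_k}\to y_\infty$ has $\chi(\boxdot_{m_k}^{\mathrm{glob}})(0,0)\to L<\inf_{X_{r,\le s}}\Pi^{\mathrm{mod}}_x$, one examines the rescaled operators $\boxdot_{m_k}^{\mathrm{glob}}$ based at $y_{m_k}$; passing to a subsequence we may assume the type $r_{y_{m_k}}$ is constant. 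If $r_{y_{m_k}}=r$, then by the normal forms \eqref{eq:Christ normal form}, \prettyref{thm:almost analytic coordinates} the rescaled operator converges in $C^\infty_{\mathrm{loc}}$ to the genuine model Kodaira Laplacian at $y_\infty$, so $\chi(\boxdot_{m_k}^{\mathrm{glob}})(0,0)\to\Pi^{\mathrm{mod}}_{y_\infty}\ge\inf_{X_{r,\le s}}\Pi^{\mathrm{mod}}_x>L$, a contradiction. If instead $r_{y_{m_k}}<r$, the leading (subharmonic, by part~(3) of \prettyref{thm:almost analytic coordinates}) part of the potential, of degree $r_{y_{m_k}}<r$, is multiplied by $m_k^{1-r_{y_{m_k}}/r}\to\infty$ under the rescaling, and a finer analysis of which terms survive shows $\chi(\boxdot_{m_k}^{\mathrm{glob}})(0,0)\to\infty$, again contradicting convergence to the finite number $L$. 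The delicate point throughout is precisely this second regime and the comparison of the value at the origin of the perturbed degree-$r$ model with that of the unperturbed one: one uses, among other ingredients, the monotonicity of weighted Bergman kernels in the weight together with a careful accounting of which lower-order curvature jets survive the $m^{-1/r}$-rescaling. This is the analysis carried out in \cite[Sec.~5]{Marinescu-Savale18} for the Riemann-surface case, and it transfers here with only notational changes.
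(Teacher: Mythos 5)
Your general strategy (parabolic rescaling $+$ uniform spectral gap $+$ compactness of subsequential limits $+$ contradiction) is similar in spirit to the paper's, but the execution diverges precisely at the one place where the real work happens: the lower bound in the transitional regime. The paper does \emph{not} fix a single rescaling exponent; instead, for each base point $y$ it partitions the ball $B_{c_\varepsilon|j^{r_y-2}R^L|}(y)$ into finitely many regions indexed by which rescaled jet $(|j^{k}R^L/j^{k-1}R^L|m)^{2/(k+2)}$ dominates, and in each region rescales by the corresponding $\delta_{m^{-1/(k+2)}}$, $k=0,1,\dots,r_y-2$, so that the rescaled operator has a uniformly bounded limit; a compactness argument over a finite cover of such balls then gives the uniform bound. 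This multiscale partition \emph{is} the proof; it is not a notational variant of a known Riemann-surface case.

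The concrete gap in your lower-bound argument is the dichotomy after extracting a constant-type subsequence. You pass to $r_{y_{m_k}}\equiv r_k$ and claim: if $r_k=r$ the rescaled operators converge to the model; if $r_k<r$ the degree-$r_k$ potential is amplified by $m_k^{(r-r_k)/r}\to\infty$ so $\chi(\boxdot_{m_k}^{\mathrm{glob}})(0,0)\to\infty$. The second branch is false as stated. If $y_{m_k}\to y_\infty$ with $r_{y_\infty}=r$ while $r_{y_{m_k}}=r_k<r$, then $|j^{r_k-2}R^L(y_{m_k})|\to|j^{r_k-2}R^L(y_\infty)|=0$, so the rescaled leading coefficient
\[
m_k^{(r-r_k)/r}\,\bigl|j^{r_k-2}R^L(y_{m_k})\bigr|
\]
is an indeterminate $\infty\cdot 0$ depending on the unconstrained relation between $m_k$ and the rate of degeneration of the jet. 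The rescaled potential may stay bounded, blow up, or oscillate; your argument gives no control. Acknowledging this as ``the delicate point'' and deferring it to \cite[Sec.~5]{Marinescu-Savale18} does not close the gap, because what is needed is exactly the region-by-region rescaling spelled out in the paper's own proof --- the thing you would be citing is the argument you were asked to supply. The upper bound and the derivative estimate in your proposal are reasonable (degeneration of the rescaled weight can only decrease the diagonal value of the Bergman projector, and differentiation after rescaling by $\delta_{m^{-1/r_y}}$ costs at most $m^{1/2}$ per derivative), but they too implicitly rely on the uniform control of $\chi(\boxdot_m)(0,0)$ in the transitional regime, so the same partition is needed to make them rigorous.
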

\begin{proof}
Note that theorem \prettyref{thm:local Bergman expansion} already
shows 
\begin{equation}
\Pi_{m}\left(y,y\right)\geq C_{r_{y}}\left(\left|j^{r_{y}-2}R^{L}\right|m\right)^{2/r_{y}}-c_{y}\label{eq: first non-uniform estimate}
\end{equation}
$\forall y\in Y$, with $c_{y}=c\left(\left|j^{r_{y}-2}R^{L}\left(y\right)\right|^{-1}\right)=O_{\left|j^{r_{y}-2}R^{L}\left(y\right)\right|^{-1}}\left(1\right)$
being a ($y$-dependent) constant given in terms of the norm of the
first non-vanishing jet. The norm of this jet affects the choice of
$\varrho$ needed for \prettyref{eq: curv =00003D000026 jet comp.};
which in turn affects the $C^{\infty}$-norms of the coefficients
of \prettyref{eq:local Bergman expansion} via \prettyref{eq:modified connection-1}.
We first show that this estimate extends to a small ($\left|j^{r_{y}-2}R^{L}\left(y\right)\right|$-
dependent) size neighborhood of $y$. To this end, for any $\varepsilon>0$
there exists a uniform constant $c_{\varepsilon}$ depending only
on $\varepsilon$ and$\left\Vert R^{L}\right\Vert _{C^{r}}$ such
that 
\begin{equation}
\left|j^{r_{y}-2}R^{L}\left(\mathsf{y}\right)\right|\geq\left(1-\varepsilon\right)\left|j^{r_{y}-2}R^{L}\left(y\right)\right|,\label{eq: first jet comparable}
\end{equation}
$\forall\mathsf{y}\in B_{c_{\varepsilon}\left|j^{r_{y}-2}R^{L}\right|}\left(y\right).$

We begin by rewriting the model Kodaira Laplacian $\tilde{\Box}_{m}$
\prettyref{eq:modified local Bochner} near $y$ in terms of geodesic
coordinates centered at $\mathsf{y}$. In the region 
\[
\mathsf{y}\in B_{c_{\varepsilon}\left|j^{r_{y}-2}R^{L}\right|}\left(y\right)\cap\left\{ C_{0}\left(\left|j^{0}R^{L}\left(\mathsf{y}\right)\right|m\right)\geq m^{2/r_{y}}\Pi^{g_{y}^{TY},j_{y}^{r_{y}-2}R^{L},J_{y}^{TY}}\left(0,0\right)\right\} 
\]
a rescaling of $\tilde{\Box}_{m}$ by $\delta_{m^{-1/2}}$, now centered
at $\mathsf{y}$, shows 
\begin{align}
\Pi_{m}\left(\mathsf{y},\mathsf{y}\right) & =m\Pi^{g_{\mathsf{y}}^{TY},j_{\mathsf{y}}^{0}R^{L},J_{\mathsf{y}}^{TY}}\left(0,0\right)+O_{\left|j^{r_{y}-2}R^{L}\left(y\right)\right|^{-1}}\left(1\right)\nonumber \\
 & =m\left|j^{0}R^{L}\left(\mathsf{y}\right)\right|\Pi^{g_{\mathsf{y}}^{TY},\frac{j_{\mathsf{y}}^{0}R^{L}}{\left|j^{0}R^{L}\left(\mathsf{y}\right)\right|},J_{\mathsf{y}}^{TY}}\left(0,0\right)+O_{\left|j^{r_{y}-2}R^{L}\left(y\right)\right|^{-1}}\left(1\right)\nonumber \\
 & \geq m^{2/r_{y}}\Pi^{g_{y}^{TY},j_{y}^{r_{y}-2}R^{L},J_{y}^{TY}}\left(0,0\right)+O_{\left|j^{r_{y}-2}R^{L}\left(y\right)\right|^{-1}}\left(1\right)\label{eq:est. region 1}
\end{align}
as in \prettyref{eq: first non-uniform estimate}. Now, in the region
\begin{align*}
\mathsf{y} & \in B_{c_{\varepsilon}\left|j^{r_{y}-2}R^{L}\right|}\left(y\right)\cap\left\{ C_{1}\left(\left|j^{1}R^{L}\left(\mathsf{y}\right)/j^{0}R^{L}\left(\mathsf{y}\right)\right|m\right)^{2/3}\right.\\
 & \qquad\left.\geq m^{2/r_{y}}\Pi^{g_{y}^{TY},j_{y}^{r_{y}-2}R^{L},J_{y}^{TY}}\left(0,0\right)\geq C_{0}\left(\left|j^{0}R^{L}\left(\mathsf{y}\right)\right|m\right)\right\} 
\end{align*}
a rescaling of $\tilde{\Box}_{m}$ by $\delta_{m^{-1/3}}$ centered
at $\mathsf{y}$ similarly shows 
\begin{align}
\Pi_{m}\left(\mathsf{y},\mathsf{y}\right) & =m^{2/3}\left[1+O\left(m^{2/r-2/3}\right)\right]\Pi^{g_{\mathsf{y}}^{TY},j_{\mathsf{y}}^{1}R^{L}/j_{\mathsf{y}}^{0}R^{L},J_{\mathsf{y}}^{TY}}\left(0,0\right)\nonumber \\
 & \qquad+O_{\left|j^{r_{y}-2}R^{L}\left(y\right)\right|^{-1}}\left(1\right)\nonumber \\
 & =m^{2/3}\left[1+O\left(m^{2/r-2/3}\right)\right]\left|j_{\mathsf{y}}^{1}R^{L}/j_{\mathsf{y}}^{0}R^{L}\right|^{2/3}\Pi^{g_{\mathsf{y}}^{TY},\frac{j_{\mathsf{y}}^{1}R^{L}/j_{\mathsf{y}}^{0}R^{L}}{\left|j_{\mathsf{y}}^{1}R^{L}/j_{\mathsf{y}}^{0}R^{L}\right|},J_{\mathsf{y}}^{TY}}\left(0,0\right)\nonumber \\
 & \qquad+O_{\left|j^{r_{y}-2}R^{L}\left(y\right)\right|^{-1}}\left(1\right)\\
 & \geq\left(1-\varepsilon\right)m^{2/r_{y}}\Pi^{g_{y}^{TY},j_{y}^{r_{y}-2}R^{L},J_{y}^{TY}}\left(0,0\right)+O_{\left|j^{r_{y}-2}R^{L}\left(y\right)\right|^{-1}}\left(1\right)\label{eq: est. region 2}
\end{align}
Next, in the region 
\begin{align*}
\mathsf{y} & \in B_{c_{\varepsilon}\left|j^{r_{y}-2}R^{L}\right|}\left(y\right)\cap\left\{ C_{2}\left(\left|j^{2}R^{L}\left(\mathsf{y}\right)/j^{1}R^{L}\left(\mathsf{y}\right)\right|m\right)^{1/2}\right.\\
 & \left.\geq m^{2/r_{y}}\Pi^{g_{y}^{TY},j_{y}^{r_{y}-2}R^{L},J_{y}^{TY}}\left(0,0\right)\geq\max\left[C_{0}\left(\left|j^{0}R^{L}\left(\mathsf{y}\right)\right|m\right),C_{1}\left(\left|j^{1}R^{L}\left(\mathsf{y}\right)/j^{0}R^{L}\left(\mathsf{y}\right)\right|m\right)^{2/3}\right]\right\} 
\end{align*}
a rescaling of $\tilde{\Box}_{m}$ by $\delta_{m^{-1/4}}$ centered
at $\mathsf{y}$ shows 
\begin{align}
\Pi_{m}\left(\mathsf{y},\mathsf{y}\right) & =m^{1/2}\left[1+O\left(m^{2/r-1/2}\right)\right]\Pi^{g_{\mathsf{y}}^{TY},j_{\mathsf{y}}^{2}R^{L}/j_{\mathsf{y}}^{1}R^{L},J_{\mathsf{y}}^{TY}}\left(0,0\right)+O_{\left|j^{r_{y}-2}R^{L}\left(y\right)\right|^{-1}}\left(1\right)\nonumber \\
 & =m^{1/2}\left[1+O\left(m^{2/r-1/2}\right)\right]\left|j_{\mathsf{y}}^{2}R^{L}/j_{\mathsf{y}}^{1}R^{L}\right|^{1/2}\Pi^{g_{\mathsf{y}}^{TY},\frac{j_{\mathsf{y}}^{2}R^{L}/j_{\mathsf{y}}^{1}R^{L}}{\left|j_{\mathsf{y}}^{2}R^{L}/j_{\mathsf{y}}^{1}R^{L}\right|},J_{\mathsf{y}}^{TY}}\left(0,0\right)+O_{\left|j^{r_{y}-2}R^{L}\left(y\right)\right|^{-1}}\left(1\right)\nonumber \\
 & \geq\left(1-\varepsilon\right)m^{2/r_{y}}\Pi^{g_{y}^{TY},j_{y}^{r_{y}-2}R^{L},J_{y}^{TY}}\left(0,0\right)+O_{\left|j^{r_{y}-2}R^{L}\left(y\right)\right|^{-1}}\left(1\right)\label{eq: est. region 3}
\end{align}
Continuing in this fashion, we are finally left with the region 
\begin{align*}
\mathsf{y} & \in B_{c_{\varepsilon}\left|j^{r_{y}-2}R^{L}\right|}\left(y\right)\cap\left\{ m^{2/r_{y}}\Pi^{g_{y}^{TY},j_{y}^{r_{y}-2}R^{L},J_{y}^{TY}}\left(0,0\right)\right.\\
 & \left.\geq\max\left[C_{0}\left(\left|j^{0}R^{L}\left(\mathsf{y}\right)\right|m\right),\ldots,C_{r_{y}-3}\left(\left|j^{r_{y}-3}R^{L}\left(\mathsf{y}\right)/j^{r_{y}-4}R^{L}\left(\mathsf{y}\right)\right|m\right)^{2/\left(r_{y}-1\right)}\right]\right\} .
\end{align*}
In this region we have 
\[
\left|j^{r_{y}-2}R^{L}\left(\mathsf{y}\right)/j^{r_{y}-3}R^{L}\left(\mathsf{y}\right)\right|\geq\left(1-\varepsilon\right)\left|j^{r_{y}-2}R^{L}\left(y\right)\right|+O\left(m^{2/r_{y}-2/\left(r_{y}-1\right)}\right)
\]
following \prettyref{eq: first jet comparable} with the remainder
being uniform. A rescaling by $\delta_{m^{-1/r_{y}}}$ then giving
a similar estimate in this region, we have finally arrived at 
\[
\Pi_{m}\left(\mathsf{y},\mathsf{y}\right)\geq\left(1-\varepsilon\right)m^{2/r_{y}}\Pi^{g_{y}^{TY},j_{y}^{r_{y}-2}R^{L},J_{y}^{TY}}\left(0,0\right)+O_{\left|j^{r_{y}-2}R^{L}\left(y\right)\right|^{-1}}\left(1\right)
\]
$\forall\mathsf{y}\in B_{c_{\varepsilon}\left|j^{r_{y}-2}R^{L}\right|}\left(y\right)$.

Finally a compactness argument finds a finite set of points $\left\{ y_{j}\right\} _{j=1}^{N}$
such that the corresponding $B_{c_{\varepsilon}\left|j^{r_{y_{j}}-2}R^{L}\right|}\left(y_{j}\right)$'s
cover $Y$. This gives a uniform constant $c_{1,\varepsilon}>0$ such
that 
\[
\Pi_{m}\left(y,y\right)\geq\left(1-\varepsilon\right)\left[\inf_{y\in Y_{r}}\Pi^{g_{y}^{TY},j_{y}^{r-2}R^{L},J_{y}^{TY}}\left(0,0\right)\right]m^{2/r}-c_{1,\varepsilon}
\]
$\forall y\in Y$ , $\varepsilon>0$ proving the lower bound of \prettyref{eq:uniform Bergman estimate}.
The argument for the upper bound is similar. 

The proof of the uniform estimate on the derivative \prettyref{eq:uniform derivative estimate}
is similar. Given $\varepsilon>0$ we find a uniform $c_{\varepsilon}$
such that \prettyref{eq: first jet comparable} holds for each $y\in Y$
and $\mathsf{y}\in B_{c_{\varepsilon}\left|j^{r_{y}-2}R^{L}\right|}\left(y\right)$.
Then rewrite the model Kodaira Laplacian $\tilde{\Box}_{m}$ \prettyref{eq:modified local Bochner}
near $y$ in terms of geodesic coordinates centered at $\mathsf{y}$.
In the region 
\[
\mathsf{y}\in B_{c_{\varepsilon}\left|j^{r_{y}-2}R^{L}\right|}\left(y\right)\cap\left\{ C_{0}\left(\left|j^{0}R^{L}\left(\mathsf{y}\right)\right|m\right)\geq m^{2/r_{y}}\Pi^{g_{y}^{TY},j_{y}^{r_{y}-2}R^{L},J_{y}^{TY}}\left(0,0\right)\right\} 
\]
a rescaling of $\tilde{\Box}_{m}$ by $\delta_{m^{-1/2}}$, now centered
at $\mathsf{y}$, shows 
\begin{align*}
\partial^{\alpha}\Pi_{m}\left(\mathsf{y},\mathsf{y}\right) & =\frac{m}{2\pi}\left(\partial^{\alpha}\tau^{L}\left(\mathsf{y}\right)\right)+O_{\left|j^{r_{y}-2}R^{L}\left(y\right)\right|^{-1}}\left(1\right)
\end{align*}
following \prettyref{eq: recovering positive case} as $r_{\mathsf{y}}=2$.
Diving the above by \prettyref{eq:est. region 1} gives 
\begin{align*}
\frac{\left|\partial^{\alpha}\Pi_{m}\left(\mathsf{y},\mathsf{y}\right)\right|}{\Pi_{m}\left(\mathsf{y},\mathsf{y}\right)} & \leq\frac{\left|\partial^{\alpha}\tau^{L}\left(\mathsf{y}\right)\right|}{\tau^{L}\left(\mathsf{y}\right)}+O_{\left|j^{r_{y}-2}R^{L}\left(y\right)\right|^{-1}}\left(m^{-1}\right)\\
 & \leq m^{\left|\alpha\right|/3}\left[\sup_{y\in Y}\frac{\left|\left[j^{\left|\alpha\right|}\Pi^{g_{y}^{TY},j_{y}^{1}R^{L}/j_{y}^{0}R^{L},J_{y}^{TY}}\right]\left(0,0\right)\right|}{\Pi^{g_{y}^{TY},j_{y}^{1}R^{L}/j_{y}^{0}R^{L},J_{y}^{TY}}\left(0,0\right)}\right]\Pi_{m}\left(\mathsf{y},\mathsf{y}\right)\\
 & +O_{\left|j^{r_{y}-2}R^{L}\left(y\right)\right|^{-1}}\left(m^{-1}\right)
\end{align*}
Next, in the region 
\begin{align*}
\mathsf{y} & \in B_{c_{\varepsilon}\left|j^{r_{y}-2}R^{L}\right|}\left(y\right)\cap\left\{ C_{1}\left(\left|j^{1}R^{L}\left(\mathsf{y}\right)/j^{0}R^{L}\left(\mathsf{y}\right)\right|m\right)^{2/3}\right.\\
 & \left.\geq m^{2/r_{y}}\Pi^{g_{y}^{TY},j_{y}^{r_{y}-2}R^{L},J_{y}^{TY}}\left(0,0\right)\geq C_{0}\left(\left|j^{0}R^{L}\left(\mathsf{y}\right)\right|m\right)\right\} 
\end{align*}
a rescaling of $\tilde{\Box}_{m}$ by $\delta_{m^{-1/3}}$ centered
at $\mathsf{y}$ similarly shows 
\begin{align*}
\partial^{\alpha}\Pi_{m}\left(\mathsf{y},\mathsf{y}\right) & =m^{\left(2+\left|\alpha\right|\right)/3}\left[1+O\left(m^{2/r-2/3}\right)\right]\left[\partial^{\alpha}\Pi^{g_{\mathsf{y}}^{TY},j_{\mathsf{y}}^{1}R^{L}/j_{\mathsf{y}}^{0}R^{L},J_{\mathsf{y}}^{TY}}\right]\left(0,0\right)\\
 & \qquad+O_{\left|j^{r_{y}-2}R^{L}\left(y\right)\right|^{-1}}\left(m^{\left(1+\left|\alpha\right|\right)/3}\right).
\end{align*}
Dividing this by \prettyref{eq: est. region 2} gives 
\begin{align*}
\frac{\left|\partial^{\alpha}\Pi_{m}\left(\mathsf{y},\mathsf{y}\right)\right|}{\Pi_{m}\left(\mathsf{y},\mathsf{y}\right)} & \leq m^{\left|\alpha\right|/3}\left(1+\varepsilon\right)\frac{\left|\left[\partial^{\alpha}\Pi^{g_{\mathsf{y}}^{TY},j_{\mathsf{y}}^{1}R^{L}/j_{\mathsf{y}}^{0}R^{L},J_{\mathsf{y}}^{TY}}\right]\left(0,0\right)\right|}{\left[\Pi^{g_{\mathsf{y}}^{TY},j_{\mathsf{y}}^{1}R^{L}/j_{\mathsf{y}}^{0}R^{L},J_{\mathsf{y}}^{TY}}\right]\left(0,0\right)}\\
 & \qquad+O_{\left|j^{r_{y}-2}R^{L}\left(y\right)\right|^{-1}}\left(m^{\left(\left|\alpha\right|-1\right)/3}\right)\\
 & \leq m^{\left|\alpha\right|/3}\left(1+\varepsilon\right)\left[\sup_{y\in Y}\frac{\left|\left[j^{\left|\alpha\right|}\Pi^{g_{y}^{TY},j_{y}^{1}R^{L}/j_{y}^{0}R^{L},J_{y}^{TY}}\right]\left(0,0\right)\right|}{\Pi^{g_{y}^{TY},j_{y}^{1}R^{L}/j_{y}^{0}R^{L},J_{y}^{TY}}\left(0,0\right)}\right]\\
 & \qquad+O_{\left|j^{r_{y}-2}R^{L}\left(y\right)\right|^{-1}}\left(m^{\left(\left|\alpha\right|-1\right)/3}\right).
\end{align*}
Continuing in this fashion as before eventually gives

\begin{align*}
\frac{\left|\partial^{\alpha}\Pi_{m}\left(\mathsf{y},\mathsf{y}\right)\right|}{\Pi_{m}\left(\mathsf{y},\mathsf{y}\right)} & \leq m^{\left|\alpha\right|/3}\left(1+\varepsilon\right)\left[\sup_{y\in Y}\frac{\left|\left[j^{\left|\alpha\right|}\Pi^{g_{y}^{TY},j_{y}^{1}R^{L}/j_{y}^{0}R^{L},J_{y}^{TY}}\right]\left(0,0\right)\right|}{\Pi^{g_{y}^{TY},j_{y}^{1}R^{L}/j_{y}^{0}R^{L},J_{y}^{TY}}\left(0,0\right)}\right]\\
 & \qquad+O_{\left|j^{r_{y}-2}R^{L}\left(y\right)\right|^{-1}}\left(m^{\left(\left|\alpha\right|-1\right)/3}\right)
\end{align*}
$\forall y\in Y$, $\mathsf{y}\in B_{c_{\varepsilon}\left|j^{r_{y}-2}R^{L}\right|}\left(y\right)$,
$\forall\alpha\in\mathbb{N}_{0}^{2}$. By compactness one again finds
a uniform $c_{1,\varepsilon}$ such that 
\[
\frac{\left|\partial^{\alpha}\Pi_{m}\left(y,y\right)\right|}{\Pi_{m}\left(y,y\right)}\leq m^{\left|\alpha\right|/3}\left(1+\varepsilon\right)\left[\sup_{y\in Y}\frac{\left|\left[j^{\left|\alpha\right|}\Pi^{g_{y}^{TY},j_{y}^{1}R^{L}/j_{y}^{0}R^{L},J_{y}^{TY}}\right]\left(0,0\right)\right|}{\Pi^{g_{y}^{TY},j_{y}^{1}R^{L}/j_{y}^{0}R^{L},J_{y}^{TY}}\left(0,0\right)}\right]+c_{1,\varepsilon}
\]
$\forall y\in Y$, proving the lemma. 
\end{proof}

\section{\label{sec:Equivariant-CR-Embedding}Equivariant CR Embedding }

In this section we construct the CR embedding for $X$ required to
prove \prettyref{thm: main embedding thm}. Firstly, setting $m=p.\left(s!\right)\in\left(s!\right).\mathbb{N}_{0}$
in \prettyref{eq:hodge theorem}, \prettyref{eq:Bergman kernel in eigenfunctions},
\prettyref{eq:uniform Bergman estimate} and \prettyref{eq:proof of expansion}
the base locus 
\begin{equation}
\textrm{Bl}_{p}\left(X\right)\coloneqq\left\{ x\in X|s\left(x\right)=0,\,\forall s\in H_{b,p.\left(s!\right)}^{0}\left(X\right)\right\} =\emptyset\label{eq: base locus  is empty}
\end{equation}
is empty for $p\gg0$. Thus the subspace 
\begin{equation}
\Phi_{p,x}\coloneqq\left\{ s\in H_{b,p.\left(s!\right)}^{0}\left(X\right)|s\left(x\right)=0\right\} \subset H_{b,p.\left(s!\right)}^{0}\left(X\right)\label{eq:Kodaira hyperplane}
\end{equation}
is a hyperplane for each $x\in X$. Identifying the Grassmanian $\mathbb{G}\left(d_{p.\left(s!\right)}-1;H_{b,p.\left(s!\right)}^{0}\left(X\right)\right)$,
$d_{p.\left(s!\right)}=\textrm{dim }H_{b,p.\left(s!\right)}^{0}\left(X\right)$,
with the projective space $\mathbb{P}\left[H_{b,p.\left(s!\right)}^{0}\left(X\right)^{*}\right]$,
by sending a non-zero element of $H_{b,p.\left(s!\right)}^{0}\left(X\right)^{*}$
to its kernel, gives a well-defined Kodaira map 
\begin{equation}
\Phi_{p}:X\rightarrow\mathbb{P}\left[H_{b,p.\left(s!\right)}^{0}\left(X\right)^{*}\right]\label{eq:Kodaira map}
\end{equation}
for $p\gg0$.

In terms of the basis $\left\{ \psi_{1}^{p.\left(s!\right)},\ldots,\psi_{d_{p.\left(s!\right)}}^{p.\left(s!\right)}\right\} $
of $\textrm{ker}\left(\Box_{b,p.\left(s!\right)}^{0}\right)=H_{b,p.\left(s!\right)}^{0}\left(X\right)$,
with $\bar{\partial}_{b}\psi_{j}=0$, $j=1,\ldots,d_{p.\left(s!\right)}$,
and corresponding dual basis of $H_{b,p.\left(s!\right)}^{0}\left(X\right)^{*}$
the map is written 
\begin{equation}
\Phi_{p}\left(x\right)\coloneqq\left(\psi_{1}^{p.\left(s!\right)}\left(x\right),\ldots,\psi_{d_{p.\left(s!\right)}}^{p.\left(s!\right)}\left(x\right)\right)\in\mathbb{C}^{d_{p.\left(s!\right)}}\label{eq:Kodaira map in triv.}
\end{equation}
and is seen to be CR. 

We now define the augmented Kodaira map 
\begin{align}
\Psi_{p}: & X\rightarrow\mathbb{C}^{N},\nonumber \\
\Psi_{p}\coloneqq & \left(\Phi_{p},\Psi_{p}^{1},\ldots,\Psi_{p}^{s}\right),\nonumber \\
\Psi_{p}^{k}\coloneqq & \left(\underbrace{\psi_{1}^{p.k},\ldots,\psi_{d_{p.k}}^{p.k}}_{\eqqcolon\Psi_{p}^{k,0}};\underbrace{\psi_{1}^{\left(p+1\right).k},\ldots,\psi_{d_{\left(p+1\right).k}}^{\left(p+1\right).k}}_{\eqqcolon\Psi_{p}^{k,1}}\right),\;1\le k\leq s,\nonumber \\
N\coloneqq & d_{p.\left(s!\right)}+\sum_{k=1}^{s}\left(d_{p.k}+d_{\left(p+1\right).k}\right),\label{eq:augmented Kodaira map}
\end{align}
which is again CR. We shall now show that the above augmented map
is an embedding for $p\gg0$. We first show that it is an immersion,
whereby it suffices to show that its first component $\Phi_{p}$ \prettyref{eq:Kodaira map in triv.}
defines an immersion; the augmented components of \prettyref{eq:augmented Kodaira map}
are required to separate further points.
\begin{thm}
The Kodaira map $\Phi_{p}$ \prettyref{eq:Kodaira map} is an immersion
for $p\gg0$.
\end{thm}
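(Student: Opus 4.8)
The plan is to reduce the immersion property of $\Phi_{p}$ to a strict positivity statement for the Szeg\H{o} kernel and its first derivatives on the diagonal, and then to establish that positivity uniformly over $X$ using the local Bergman kernel expansions of Section~\prettyref{sec:S1 invariant CR geometry}. Since $\Phi_{p}$ is $S^{1}$-invariant it is constant along the orbits, so the content of the statement is that $d\Phi_{p}$ is injective on the horizontal bundle $HX$ at every point; for the affine realization \eqref{eq:Kodaira map in triv.} this is moreover equivalent to $d\Phi_{p}$ being injective on all of $TX$. Fixing $x\in X$, writing $m=p\cdot(s!)$ and letting $Z$ be a local generator of $T^{1,0}X$ near $x$, the facts that $\Phi_{p}$ is CR, that $\dim_{\mathbb{C}}T^{1,0}X=1$, and that $\bar{Z}$ annihilates every $u\in H^{0}_{b,m}(X)$ together imply that $d\Phi_{p}|_{HX}$ is injective at $x$ iff the two functionals $u\mapsto u(x)$ and $u\mapsto(Zu)(x)$ on $H^{0}_{b,m}(X)$ are linearly independent. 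By the reproducing property \eqref{eq:Bergman kernel in eigenfunctions} these functionals are represented by $\kappa_{x}=\Pi_{b,m}(\cdot,x)$ and $\tau_{x}=\sum_{j}\overline{(Z\psi_{j}^{m})(x)}\,\psi_{j}^{m}$, so the linear independence is equivalent to positivity of the Hermitian Gram determinant
\[
  D_{m}(x):=\Pi_{b,m}(x,x)\,\bigl(\partial_{Z}\bar{\partial}_{Z'}\Pi_{b,m}\bigr)(x,x)-\bigl|\bigl(\partial_{Z}\Pi_{b,m}\bigr)(x,x)\bigr|^{2},
\]
where $\partial_{Z}$ is the $Z$-derivative in the first variable and $\bar{\partial}_{Z'}$ the $\bar{Z}$-derivative in the second; by Cauchy--Schwarz $D_{m}(x)\geq0$, and the whole point is strictness. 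So the task is to show $D_{m}(x)>0$ for every $x\in X$ once $p\gg0$.

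Next I would localize. By Lemma~\prettyref{lem: localization Szego kernel}, $\Pi_{b,m}=\tilde{\Pi}_{b,m}+O(m^{-\infty})$ near the diagonal in every $C^{\ell}$-norm, so the three diagonal quantities in $D_{m}(x)$ may be computed from the localized kernel \eqref{eq:localized Szego kernel prelim}, built from the local Bergman kernels $\tilde{B}_{m,j}$ of the modified model Laplacians; and for $m=p\cdot(s!)$ one has $s_{x}\mid m$, hence $\phi_{m}(s_{x})=s_{x}$. Fixing $x$ of type $r=r_{x}$, passing to a BRT trivialization centered at $x$ and rescaling by $\delta_{m^{-1/r}}$, Theorems~\prettyref{thm:local Bergman is a symbol} and~\prettyref{thm:local Bergman expansion} give that $m^{-2/r}\tilde{B}_{m}(m^{-1/r}u,m^{-1/r}v)$ converges, together with all derivatives, to the model Bergman kernel $\mathcal{B}_{x}:=\Pi^{g_{x}^{HX},\,j^{r_{x}-2}\mathscr{L},\,J^{HX}}$ on $H_{x}X$. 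Since each additional $Z$- or $\bar{Z}'$-derivative in the unrescaled variables contributes a factor $m^{1/r}$, this yields $\Pi_{b,m}(x,x)=s_{x}m^{2/r}[\mathcal{B}_{x}(0,0)+o(1)]$, $(\partial_{Z}\Pi_{b,m})(x,x)=s_{x}m^{3/r}[(\partial_{Z}\mathcal{B}_{x})(0,0)+o(1)]$ and $(\partial_{Z}\bar{\partial}_{Z'}\Pi_{b,m})(x,x)=s_{x}m^{4/r}[(\partial_{Z}\bar{\partial}_{Z'}\mathcal{B}_{x})(0,0)+o(1)]$, so that $D_{m}(x)=s_{x}^{2}\,m^{6/r}\,[\mathcal{D}_{x}+o(1)]$ with $\mathcal{D}_{x}:=\mathcal{B}_{x}(0,0)\,(\partial_{Z}\bar{\partial}_{Z'}\mathcal{B}_{x})(0,0)-|(\partial_{Z}\mathcal{B}_{x})(0,0)|^{2}$ the Gram determinant for the model.

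It then remains to check $\mathcal{D}_{x}>0$ and that this, and the $o(1)$ above, are uniform in $x$. By Cauchy--Schwarz in $\ker\Box_{1}^{0}$, the inequality $\mathcal{D}_{x}>0$ says exactly that the value functional and the $Z$-derivative functional on the model Bergman space on $H_{x}X$ are independent, i.e.\ that the model Kodaira map immerses at the origin; this is classical for the Gaussian model at strongly pseudoconvex points (where $\mathcal{B}_{x}$ is the explicit kernel of Remark~\prettyref{rem:-(Strongly-pseudoconvex}, which plainly contains sections nonvanishing at $0$ and ``$z$-type'' sections vanishing at $0$ with nonzero derivative), and at higher-type points it follows from the description of the anharmonic model Bergman spaces in \cite[Sec.~A]{Marinescu-Savale18}. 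For the uniformity of $\mathcal{D}_{x}>0$ and of the $o(1)$, I would run the multi-scale argument of Theorem~\prettyref{thm:uniform est. on local Bergman} essentially verbatim: subdivide $X$ according to the relative sizes $|j^{k}R^{L}|/|j^{k-1}R^{L}|$ of successive jets of the Levi form, rescale in each region by the appropriate $\delta_{m^{-1/(k+2)}}$, and use that the normalized model data (the jets of $R^{L}$, bounded in $C^{r}$) form a compact family on which $x\mapsto\mathcal{D}_{x}$ is continuous and hence bounded below; this gives $D_{m}(x)\geq\varepsilon_{0}\,m^{6/r}$ for a uniform $\varepsilon_{0}>0$ and all $x\in X$, $m\gg0$, which is precisely the injectivity of $d\Phi_{p}|_{HX}$ everywhere.

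The hard part is exactly this last uniformity: the leading order $m^{2/r_{x}}$ of the Szeg\H{o} kernel, and therefore of $D_{m}(x)$, jumps as the type $r_{x}$ varies, so no single rescaling controls $D_{m}(x)$ over all of $X$; one has to thread the jet-comparison and multi-scale estimates already set up for Theorem~\prettyref{thm:uniform est. on local Bergman}, together with a compactness argument keeping the family of model Gram determinants $\mathcal{D}_{x}$ bounded away from zero. The remaining steps---the reduction to $D_{m}(x)>0$, the localization, and the pointwise rescaling---are routine given the machinery already in place.
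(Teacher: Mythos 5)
Your reduction of the immersion statement to the positivity of the Gram determinant
\[
D_{m}(x)=\Pi_{b,m}(x,x)\,(\partial_{Z}\bar{\partial}_{Z'}\Pi_{b,m})(x,x)-\bigl|(\partial_{Z}\Pi_{b,m})(x,x)\bigr|^{2}
\]
is correct (the argument that $d\Phi_{p}$ is injective on $T_{x}X$ iff the evaluation and $Z$-derivative functionals on $H^{0}_{b,m}(X)$ are linearly independent checks out, using that $T$ acts by $im$ and $\bar{Z}$ annihilates CR functions). The localization via Lemma~\prettyref{lem: localization Szego kernel} and the rescaling to the model kernel $\mathcal{B}_{x}$ are also in line with the machinery of Section~\prettyref{sec:Szego-kernel-expansion}. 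This is a genuinely different route from the paper, which instead constructs three explicit CR functions $v_{j}=\Pi_{m,b}u_{j}$ with $u_{j}$ equal to cut-off and rescaled coordinate functions, and then shows the $3\times3$ Jacobian of $(v_{1},v_{2},v_{3})$ is nondegenerate by bounding each entry from \prettyref{eq:derivative 11}--\prettyref{eq:derivative 31}.

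The serious gap is your claim $\mathcal{D}_{x}>0$ for the model. You assert it ``follows from the description of the anharmonic model Bergman spaces in \cite[Sec.~A]{Marinescu-Savale18},'' but that reference establishes the on-diagonal positivity $\mathcal{B}_{x}(0,0)>0$ of the model Bergman kernel, not the strict positivity of the $2\times2$ Gram determinant $\mathcal{D}_{x}=\mathcal{B}_{x}(0,0)(\partial_{Z}\bar{\partial}_{Z'}\mathcal{B}_{x})(0,0)-|(\partial_{Z}\mathcal{B}_{x})(0,0)|^{2}$. The latter is a strictly stronger statement: Cauchy--Schwarz gives $\mathcal{D}_{x}\geq0$, and to exclude equality one must show that the weighted Bergman space $\{f\,e^{-\varphi_{0}}: f \text{ holomorphic}, \int|f|^{2}e^{-2\varphi_{0}}<\infty\}$ contains, in addition to an element nonvanishing at $0$, an element vanishing at $0$ with nonzero derivative (equivalently $1$ and $\zeta$ both belong to the weighted space). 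At higher type, the integrability of $|\zeta|^{2}e^{-2\varphi_{0}}$ for a general homogeneous subharmonic $\varphi_{0}$ with possibly vanishing angular part is not automatic and requires an argument. Without $\mathcal{D}_{x}>0$, the leading terms $m^{2/r}\cdot m^{4/r}$ and $(m^{3/r})^{2}$ in your $D_{m}(x)$ cancel and your asymptotic gives no information. The paper avoids precisely this abstraction: the lower bound in \prettyref{eq:derivative 11} comes from integrating the manifestly nonnegative quantity $(y_{2}')^{2}(\sum R_{\alpha}^{L}y'^{\alpha})\geq0$ against the model kernel on a small support where it is close to $\mathcal{B}_{x}(0,0)>0$, so the needed positivity is reduced to the already-established $\mathcal{B}_{x}(0,0)>0$ together with the semi-positivity of the Levi jet and a parity argument for the off-diagonal Jacobian entries, rather than to the stronger and unverified $\mathcal{D}_{x}>0$. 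Your multi-scale uniformity plan is otherwise sound, but it too requires the pointwise $\mathcal{D}_{x}>0$ as input before compactness can be invoked.
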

\begin{proof}
We work in a BRT trivialization \prettyref{eq: BRT condition}, \prettyref{eq: BRT condition}
near $x\in X$. We choose $\chi\in C_{c}^{\infty}\left(\left(-\varepsilon,\varepsilon\right);\left[0,1\right]\right)$
, $\chi=1$ on $\left(-\frac{\varepsilon}{2},\frac{\varepsilon}{2}\right)$,
$\sigma_{0}\left(\theta\right)\in C_{c}^{\infty}\left(-\frac{2\pi}{s},\frac{2\pi}{s}\right)$,
$\int\theta\sigma_{0}\left(\theta\right)d\theta=1$ and set 
\begin{align}
u_{1} & =y_{2}\chi\left(m^{1/r_{x}}y\right)\sigma\left(\theta\right)e^{im\theta}\nonumber \\
u_{2} & =y_{1}\chi\left(m^{1/r_{x}}y\right)\sigma\left(\theta\right)e^{im\theta}\nonumber \\
u_{3} & =\chi\left(m^{1/r_{x}}y\right)\left(m\theta\right)\sigma_{0}\left(m\theta\right)e^{im\theta}\quad\textrm{ and }\nonumber \\
v_{j} & =\Pi_{m,b}u_{j},\quad\quad j=1,2,3,\label{eq:embedding CR functions}
\end{align}
with $m=p.\left(s!\right)$ and $x=\left(y,\theta\right)$ being BRT
coordinates.

The equations $\bar{\partial}_{b}\Pi_{m,b}\left(.,x\right)=0$, $\bar{\partial}_{b}^{*}\Pi_{m,b}\left(.,x\right)=0$
written in the BRT chart give
\begin{align}
\partial_{y_{1}}\Pi_{m,b}\left(x',x\right) & =m\left[\frac{1}{r_{x}}y_{2}'\left(\sum_{\left|\alpha\right|=r_{x}-2}R_{\alpha}^{L}y'^{\alpha}\right)+O\left(y'^{r_{x}}\right)\right]\Pi_{m,b}\left(x',x\right)\nonumber \\
\partial_{y_{2}}\Pi_{m,b}\left(x',x\right) & =m\left[-\frac{1}{r_{x}}y_{1}'\left(\sum_{\left|\alpha\right|=r_{x}-2}R_{\alpha}^{L}y'^{\alpha}\right)+O\left(y'^{r_{x}}\right)\right]\Pi_{m,b}\left(x',x\right)\label{eq:Derivative of Szego kernel}
\end{align}
from \prettyref{eq: BRT condition}, \prettyref{eq:connection expansion}.
Further note that 
\begin{align}
\overline{\Pi_{m,b}u_{j}\left(x\right)} & =\int dx'\,\overline{\Pi_{m,b}\left(x,x'\right)}\overline{u_{j}\left(x'\right)}\nonumber \\
 & =\int dx'\,\Pi_{m,b}\left(x',x\right)\overline{u_{j}\left(x'\right)}\nonumber \\
 & =s_{x}\int dx'\,\tilde{\Pi}_{m}\left(x',x\right)\overline{u_{j}\left(x'\right)}\quad\textrm{mod }O\left(m^{-\infty}\right)\label{eq: manipulation}
\end{align}
from \prettyref{eq:localization Szego kernel}. We now estimate the
derivative of the CR functions \prettyref{eq:embedding CR functions}.
Below $c_{ij}\left(\left|j^{r_{x}-2}\mathcal{L}\right|\right)$, $1\leq i,j\leq3$,
continuous positive functions of the norms of the jet of the Levi
form. We then have 
\begin{align}
\partial_{y_{1}}\overline{v_{1}} & =s_{x}m\int dy'd\theta\left[\frac{1}{r_{x}}\left(y_{2}'\right)^{2}\left(\sum_{\left|\alpha\right|=r_{x}-2}R_{\alpha}^{L}y'^{\alpha}\right)+O\left(y'^{r_{x}+1}\right)\right]\times\nonumber \\
 & \qquad\qquad\tilde{\Pi}_{m}\left(x',x\right)\chi\left(m^{1/r_{x}}y'\right)\sigma\left(\theta\right)e^{im\theta}+O\left(m^{-\infty}\right)\nonumber \\
 & =s_{x}\int dy'd\theta\left[\frac{1}{r_{x}}\left(y_{2}'\right)^{2}\left(\sum_{\left|\alpha\right|=r_{x}-2}R_{\alpha}^{L}y'^{\alpha}\right)+O\left(m^{-1/r_{x}}y'^{r_{x}+1}\right)\right]\times\nonumber \\
 & \qquad\qquad\tilde{\Pi}_{m}\left(m^{-1/r_{x}}x',0\right)\chi\left(y'\right)\sigma\left(\theta\right)e^{im\theta}\nonumber \\
 & =s_{x}\int dy'd\theta\left[\frac{1}{r_{x}}\left(y_{2}'\right)^{2}\left(\sum_{\left|\alpha\right|=r_{x}-2}R_{\alpha}^{L}y'^{\alpha}\right)+O\left(m^{-1/r_{x}}y'^{r_{x}+1}\right)\right]\times\nonumber \\
 & \qquad\qquad\left[\Pi^{g_{x}^{HX},j^{r_{x}-2}\mathscr{L},J^{HX}}\left(x',0\right)+O\left(m^{-1/r_{x}}\right)\right]\chi\left(y'\right)\sigma\left(\theta\right)e^{im\theta}\nonumber \\
 & \geq\varepsilon^{r_{x}+2}c_{11}\left(\left|j^{r_{x}-2}\mathcal{L}\right|\right)+O_{x}\left(m^{-1/r_{x}}\right)\label{eq:derivative 11}
\end{align}
using \prettyref{eq:Derivative of Szego kernel} and \prettyref{eq: manipulation}.

And similarly,
\begin{align}
\partial_{y_{2}}\overline{v_{1}} & =s_{x}m\int dy'd\theta\left[\frac{1}{r_{x}}y_{1}'y_{2}'\left(\sum_{\left|\alpha\right|=r_{x}-2}R_{\alpha}^{L}y'^{\alpha}\right)+O\left(y'^{r_{x}+1}\right)\right]\times\nonumber \\
 & \qquad\qquad\tilde{\Pi}_{m}\left(x',0\right)\chi\left(m^{1/r_{x}}y'\right)\sigma\left(\theta\right)e^{im\theta}+O\left(m^{-\infty}\right)\nonumber \\
 & =s_{x}\int dy'd\theta\left[\frac{1}{r_{x}}y_{1}'y_{2}'\left(\sum_{\left|\alpha\right|=r_{x}-2}R_{\alpha}^{L}y'^{\alpha}\right)+O\left(m^{-1/r_{x}}y'^{r_{x}+1}\right)\right]\times\nonumber \\
 & \qquad\qquad\tilde{\Pi}_{m}\left(m^{-1/r_{y}}x',0\right)\chi\left(y'\right)\sigma\left(\theta\right)e^{im\theta}\nonumber \\
 & =s_{x}\int dy'd\theta\left[\frac{1}{r_{x}}y_{1}'y_{2}'\left(\sum_{\left|\alpha\right|=r_{x}-2}R_{\alpha}^{L}y'^{\alpha}\right)+O\left(m^{-1/r_{x}}y'^{r_{x}+1}\right)\right]\times\nonumber \\
 & \qquad\qquad\left[\Pi^{g_{x}^{HX},j^{r_{x}-2}\mathscr{L},J^{HX}}\left(x',0\right)+O\left(m^{-1/r_{x}}\right)\right]\chi\left(y'\right)\sigma\left(\theta\right)e^{im\theta}\nonumber \\
 & =s_{x}\int dy'd\theta\left[O\left(y'^{r_{x}+1}\right)+O\left(m^{-1/r_{x}}\right)\right]\chi\left(y'\right)\sigma\left(\theta\right)e^{im\theta}\nonumber \\
 & \leq\varepsilon^{r_{y}+3}c_{12}\left(\left|j^{r_{x}-2}\mathcal{L}\right|\right)+O_{x}\left(m^{-1/r_{x}}\right)\label{eq:derivative 12}
\end{align}
using a Taylor expansion $\Pi^{g_{x}^{HX},j^{r_{x}-2}\mathscr{L},J^{HX}}\left(y,0\right)=\Pi_{0}+y_{1}\Pi_{1}+y_{2}\Pi_{2}$
; $\Pi_{0}=\Pi^{g_{x}^{HX},j^{r_{x}-2}\mathscr{L},J^{HX}}\left(0,0\right)$
on $\textrm{spt}\left(\chi\right)$. Finally we compute
\begin{align*}
\partial_{\theta}\overline{v_{1}} & =s_{x}m\int dy'd\theta\tilde{\Pi}_{m}\left(x',x\right)y_{1}'\chi\left(m^{1/r_{x}}y'\right)\sigma\left(\theta\right)e^{im\theta}\\
 & =s_{x}m^{1-1/r_{x}}\int dy'd\theta m^{-2/r_{x}}\tilde{\Pi}_{m}\left(m^{-1/r_{y}}x',0\right)y_{1}'\chi\left(y'\right)\sigma\left(\theta\right)e^{im\theta}\\
 & =s_{x}m^{1-1/r_{x}}\int dy'd\theta\left[\Pi^{g_{x}^{HX},j^{r_{x}-2}\mathscr{L},J^{HX}}\left(x',0\right)+O\left(m^{-1/r_{x}}\right)\right]y_{1}'\chi\left(y'\right)\sigma\left(\theta\right)e^{im\theta}\\
 & \leq m^{1-1/r_{x}}\varepsilon^{4}c_{13}\left(\left|j^{r_{x}-2}\mathcal{L}\right|\right)+O_{x}\left(m^{1-2/r_{x}}\right).
\end{align*}
We have similar estimates on derivatives of $v_{2}$ 
\begin{align}
\partial_{y_{1}}v_{2} & \leq\varepsilon^{r_{y}+2}c_{21}\left(\left|j^{r_{y}-2}R^{L}\right|\right)+O_{x}\left(m^{-1/r_{x}}\right)\nonumber \\
\partial_{y_{2}}v_{2} & \geq\varepsilon^{r_{y}+2}c_{22}\left(\left|j^{r_{y}-2}R^{L}\right|\right)+O_{x}\left(m^{-1/r_{x}}\right)\nonumber \\
\partial_{\theta}v_{2} & \leq m^{1-1/r_{x}}\varepsilon^{4}c_{23}\left(\left|j^{r_{x}-2}\mathcal{L}\right|\right)+O_{x}\left(m^{1-2/r_{x}}\right).\label{eq: derivatives 21 =000026 22}
\end{align}
for two further constants $c_{21}\left(\left|j^{r_{y}-2}R^{L}\right|\right)$,
$c_{22}C\left(\left|j^{r_{y}-2}R^{L}\right|\right)\left(\left|j^{r_{y}-2}R^{L}\right|\right)$
depending only on the norm of jet of the Levi tensor at $x$.

Finally, and in similar vein, we estimate the derivative of $v_{3}$
\begin{align}
\partial_{\theta}v_{3} & =m\int dzd\theta\tilde{\Pi}_{b,m}\left(z,0\right)\chi\left(m^{1/r_{x}}y\right)\left(m\theta\right)\sigma_{0}\left(m\theta\right)\nonumber \\
 & =\int dzd\theta\left[\Pi^{g_{x}^{HX},j^{r_{x}-2}\mathscr{L},J^{HX}}\left(y,0\right)+O\left(m^{-1/r_{x}}\right)\right]\chi\left(y\right)\theta\sigma_{0}\left(\theta\right)\nonumber \\
 & \geq\varepsilon^{2}c_{33}\left(\left|j^{r_{y}-2}R^{L}\right|\right)+O_{x}\left(m^{-1/r_{x}}\right)\label{eq:derivative 33}
\end{align}
and 
\begin{align}
\partial_{y_{1}}v_{3} & =m\int dzd\theta\tilde{\Pi}_{b,m}\left(z,0\right)\chi\left(m^{1/r_{x}}y\right)\left(m\theta\right)\sigma_{0}\left(m\theta\right)\nonumber \\
 & =m\int dzd\theta\left[\frac{1}{r_{x}}y_{2}\left(\sum_{\left|\alpha\right|=r_{x}-2}R_{\alpha}^{L}y^{\alpha}\right)+O\left(y^{r_{x}}\right)\right]\times\nonumber \\
 & \qquad\qquad\tilde{\Pi}_{b,m}\left(z,0\right)\chi\left(m^{1/r_{x}}z\right)\left(m\theta\right)\sigma_{0}\left(m\theta\right)\nonumber \\
 & =\int dzd\theta\left[O\left(m^{-1}m^{1/r_{r}}y^{r_{x}-1}\right)\right]\tilde{\Pi}_{b,m}\left(m^{-1/r_{y}}z,0\right)\chi\left(z\right)\theta\sigma\left(\theta\right)\nonumber \\
 & \leq m^{-1+1/r_{x}}\varepsilon^{r_{x}+1}c_{13}\left(\left|j^{r_{y}-2}R^{L}\right|\right)+O_{x}\left(m^{-1+2/r_{x}}\right).\label{eq:derivative 31}
\end{align}
and similarly for$\partial_{y_{2}}v_{3}.$ Following these estimates
there exists $C\left(\left|j^{r_{y}-2}R^{L}\right|\right)$ such that
the differential of $x\mapsto\left(v_{1},v_{2},v_{3}\right)$, and
thus of $\Phi_{p}$, $m=p.\left(s!\right)$, is invertible at $x$
for $\varepsilon<C\left(\left|j^{r_{y}-2}R^{L}\right|\right)$ and
$p>C\left(\left|j^{r_{y}-2}R^{L}\right|\right)$. Thus for some $C_{1}\left(\left|j^{r_{y}-2}R^{L}\right|\right)$
the differential of $\Phi_{p}$ is invertible on a $C_{1}\left(\left|j^{r_{y}-2}R^{L}\right|\right)$
ball centered at $x$ for $p>C_{1}\left(\left|j^{r_{y}-2}R^{L}\right|\right)$;
which completes the proof following a compactness argument.
\end{proof}
Next to show the Kodaira map is injective, one needs the following
definition.
\begin{defn}
The peak function $S_{x_{0}}^{p}\in H_{b,p.\left(s!\right)}^{0}\left(X\right)$
at $x_{0}\in X$ is the unit norm element of the orthogonal complement
to $\Phi_{p,x_{0}}^{\perp}\subset H_{b,p.\left(s!\right)}^{0}\left(X\right)$
\prettyref{eq:Kodaira hyperplane}.
\end{defn}
Clearly, if the orthonormal basis $\left\{ \psi_{1},\ldots,\psi_{d_{p.\left(s!\right)}}\right\} $
of $\textrm{ker}\left(\Box_{b,p.\left(s!\right)}^{0}\right)=H_{b,p.\left(s!\right)}^{0}\left(X\right)$
is chosen so that $\psi_{j}\left(x_{0}\right)=0$, $1\leq j\leq d_{p.\left(s!\right)}-1$,
one has $S_{x_{0}}^{p}=\psi_{d_{p.\left(s!\right)}}$. From \prettyref{eq:Bergman kernel in eigenfunctions}
one then has 
\begin{align}
\left|S_{x_{0}}^{p}\left(x_{0}\right)\right|^{2} & =\Pi_{b,p.\left(s!\right)}\left(x_{0},x_{0}\right)\nonumber \\
S_{x_{0}}^{p}\left(x\right) & =\frac{1}{\Pi_{b,p.\left(s!\right)}\left(x_{0},x_{0}\right)}\Pi_{b,p.\left(s!\right)}\left(x,x_{0}\right).S_{x_{0}}^{p}\left(x_{0}\right).\label{eq:peak section in terms of Bergman kernel}
\end{align}

\begin{thm}
The augmented Kodaira map $\Psi_{p}$ \prettyref{eq:Kodaira map}
is injective for $p\gg0$.
\end{thm}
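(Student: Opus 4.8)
The plan is to argue by contradiction, reducing injectivity of $\Psi_{p}$ to three separate mechanisms sorted by the relative position of a pair of would-be indistinguishable points. Suppose $\Psi_{p}$ fails to be injective along a sequence $p=p_j\to\infty$: choose $x_j\neq x_j'$ with $\Psi_{p_j}(x_j)=\Psi_{p_j}(x_j')$ and, after passing to a subsequence, $x_j\to x_0$, $x_j'\to x_0'$. For a Fourier mode $m$ set $e_m(x):=\bigl(\psi_1^m(x),\dots,\psi_{d_m}^m(x)\bigr)$, the coherent state, so that $\langle e_m(x),e_m(x')\rangle=\Pi_{b,m}(x,x')$ and $\lvert e_m(x)\rvert^2=\Pi_{b,m}(x,x)\ge0$ by \prettyref{eq:Bergman kernel in eigenfunctions}. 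Equality of the components of $\Psi_{p_j}$ at $x_j,x_j'$ says exactly that $e_m(x_j)=e_m(x_j')$ --- equivalently, since the $\psi_l^m$ are linearly independent, $\Pi_{b,m}(\,\cdot\,,x_j)\equiv\Pi_{b,m}(\,\cdot\,,x_j')$ --- for every mode occurring in \prettyref{eq:augmented Kodaira map}, i.e.\ for $m=p_j(s!)$ and for $m=p_jk,\ (p_j+1)k$ with $1\le k\le s$. In particular $\Pi_{b,m}(x_j,x_j)=\Pi_{b,m}(x_j',x_j')=\Pi_{b,m}(x_j,x_j')$ for each such $m$.

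First suppose $x_0$ and $x_0'$ lie on distinct $S^1$-orbits. Then $d^H(x_j,S^1\!\cdot x_j')\ge\delta_0>0$ for $j$ large, and the localization Lemma~\prettyref{lem: localization Szego kernel}, together with the exponential off-diagonal decay of the model Bergman kernels beyond the scale $m^{-1/r}$ --- a consequence of finite propagation \prettyref{eq:finite propag.} and the spectral gap \prettyref{eq: spectral gap} --- yields $\Pi_{b,p_j(s!)}(x_j,x_j')=O(p_j^{-\infty})$. Since $\Pi_{b,p_j(s!)}(x_j,x_j')=\Pi_{b,p_j(s!)}(x_j,x_j)\ge c\,p_j^{2/r}$ by the uniform lower bound \prettyref{thm:uniform est. on local Bergman}, this is a contradiction.

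Now suppose $x_0,x_0'$ lie on a common orbit. If $x_0=x_0'$ one is reduced to separating nearby points: the immersion theorem just proved, sharpened through the rescaled limit and the point-separating property of the model Bergman projector to local injectivity of $\Psi_{p_j}$ on every ball $B_{\delta_1}(x)$ of a uniform radius $\delta_1>0$ for $p_j\gg0$, forces $x_j=x_j'$. If $x_0\neq x_0'$, let $w_j=x_j'e^{i\gamma_j}\in S^1\!\cdot x_j'$ be a point nearest $x_j$, with $\gamma_j\to\gamma_\infty$ (subsequence), so that $\varepsilon_j:=d^H(x_j,w_j)\to0$ and $w_j\to x_0$. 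Bihomogeneity gives $\Pi_{b,m}(x_j,w_j)=e^{-im\gamma_j}\Pi_{b,m}(x_j,x_j)$ and $\Pi_{b,m}(w_j,w_j)=\Pi_{b,m}(x_j,x_j)$, so the Cauchy--Schwarz inequality $\lvert\Pi_{b,m}(x_j,w_j)\rvert^2\le\Pi_{b,m}(x_j,x_j)\Pi_{b,m}(w_j,w_j)$ is saturated; the near-diagonal rescaled asymptotics of $\Pi_{b,m}$ around $x_0$, together with the strict off-diagonal decay of the model Bergman kernel, then force $\varepsilon_j=o(m^{-1/r})$. The uniform derivative estimate \prettyref{eq:uniform derivative estimate} then gives $\lvert e_m(w_j)-e_m(x_j)\rvert\lesssim\varepsilon_j\,m^{1/r}\lvert e_m(x_j)\rvert=o\bigl(\lvert e_m(x_j)\rvert\bigr)$, whence $\Pi_{b,m}(x_j,w_j)=\Pi_{b,m}(x_j,x_j)\bigl(1+o(1)\bigr)$ and, comparing with the relation above, $e^{im\gamma_j}\to1$. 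Applying this with the two modes $m=p_j s_{x_j}$ and $m=(p_j+1)s_{x_j}$ (which belong to \prettyref{eq:augmented Kodaira map} since $s_{x_j}\le s$, and are base-point-free near $x_j$ because $s_x\mid s_{x_j}$ nearby) and dividing the exponentials gives $s_{x_j}\gamma_j\to0\pmod{2\pi}$. Passing to a further subsequence on which $s_{x_j}$ is constant and using $s_{x_j}\mid s_{x_0}$ (slice theorem), we get $\gamma_\infty\in\tfrac{2\pi}{s_{x_j}}\mathbb Z\subset\tfrac{2\pi}{s_{x_0}}\mathbb Z$, so $e^{-i\gamma_\infty}\in S_{x_0}$; since $w_j\to x_0$ gives $x_0'=x_0e^{-i\gamma_\infty}$, we conclude $x_0'=x_0$, a contradiction.

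The main obstacle is uniformity: both the local injectivity on balls of a radius independent of the base point and the off-diagonal decay of $\Pi_{b,m}$ for points on different orbits must hold with constants uniform over $X$, which is precisely what \prettyref{thm:uniform est. on local Bergman} and the finite-propagation localization deliver; the point-separation input for the model Bergman projector at higher-type points (an anharmonic oscillator model rather than a Gaussian) is the one genuinely new ingredient beyond the strongly pseudoconvex case. The arithmetic role of the full range $1\le k\le s$ in \prettyref{eq:augmented Kodaira map} is to furnish, at each $x\in X$, the base-point-free pair of modes $p\,s_x$, $(p+1)\,s_x$ whose difference detects the stabilizer $S_x$.
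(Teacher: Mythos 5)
Your Case i (limit points on distinct orbits) matches the paper's argument in substance: since $e_m(x_j)=e_m(x_j')$ forces $\lvert\Pi_{b,m}(x_j,x_j')\rvert=\Pi_{b,m}(x_j,x_j)\ge c\,m^{2/r}$ by \prettyref{thm:uniform est. on local Bergman}, while the finite-propagation localization makes the left side $O(m^{-\infty})$ for points that stay far on distinct orbits. The paper phrases this through the peak sections \prettyref{eq:peak section in terms of Bergman kernel}, but it is the same Cauchy--Schwarz saturation.

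For Case ii (limit points on one orbit) your route is genuinely different from the paper's. The paper avoids any rescaled near-diagonal analysis here: it restricts $\Phi_p$ to the local BRT slice $Y$ to get a local holomorphic map and invokes the Noetherian property of analytic sets (as in \cite[Sec.\ 5.1]{Ma-Marinescu}) to conclude $[x^1_{p_j}]=[x^2_{p_j}]$ for $j\gg0$, after which only the arithmetic on stabilizers---which you carry out correctly using the modes $p\cdot k$ and $(p+1)\cdot k$---is left. Your replacement by quantitative estimates is a reasonable alternative in spirit, but as written it has two real gaps. First, the step ``the near-diagonal rescaled asymptotics of $\Pi_{b,m}$, together with the strict off-diagonal decay of the model Bergman kernel, force $\varepsilon_j=o(m^{-1/r})$'' needs a separation-of-points lemma for every higher-type model $\Pi^{g_x^{HX},j^{r_x-2}\mathscr L,J^{HX}}$: that $\lvert\Pi^{\rm model}(p,p')\rvert<\sqrt{\Pi^{\rm model}(p,p)\Pi^{\rm model}(p',p')}$ for $p\neq p'$, uniformly over the compact family of Levi jets on $X$. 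You flag this as the ``new ingredient'' but do not prove it; without it the implication does not close (and you must also dispatch the subsequence where $m^{1/r}\varepsilon_j$ is bounded away from $0$ or tends to $\infty$, which is not covered by your Case i because $w_j\to x_0$). Second, in the subcase $x_0=x_0'$ you appeal to ``local injectivity of $\Psi_{p_j}$ on balls of a uniform radius,'' but the immersion theorem as proved gives only invertibility of $D\Phi_p$ on balls of uniform radius; promoting this to a uniform injectivity radius additionally requires uniform second-derivative control (a quantitative inverse-function estimate), which is nontrivial and not contained in the immersion statement. The paper's Noetherian argument sidesteps both of these issues at once, which is precisely why it is used.

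The final arithmetic---tracking $e^{im\gamma_j}\to1$ for the two modes $m=p_j s_{x_j}$ and $(p_j+1)s_{x_j}$, dividing to get $s_{x_j}\gamma_j\to0\ (\mathrm{mod}\ 2\pi)$, and using the slice-theorem divisibility $s_{x_j}\mid s_{x_0}$ to place $e^{-i\gamma_\infty}$ in $S_{x_0}$---is correct and is the same role the auxiliary components play in the paper.
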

\begin{proof}
We assume to the contrary that there are two sequences of points $x_{p_{j}}^{1}$,
$x_{p_{j}}^{2}$ , $j=1,2,\ldots$, such that $p_{j}\rightarrow\infty$
as $j\rightarrow\infty$ and 
\begin{align}
x_{p_{j}}^{1} & \neq x_{p_{j}}^{2}\nonumber \\
\Psi_{p_{j}}\left(x_{p_{j}}^{1}\right) & =\Psi_{p_{j}}\left(x_{p_{j}}^{2}\right),\quad\forall j.\label{eq:injectivity failure}
\end{align}
By compactness we may further suppose $x_{p_{j}}^{1}\rightarrow x^{1}$,
$x_{p_{j}}^{2}\rightarrow x^{2}$ as $j\rightarrow\infty$. 

Case i: Suppose $e^{i\theta}x^{1}\neq x^{2}$, $\forall e^{i\theta}\in S^{1}$,
i.e. the limit points do not lie on the same $S^{1}$ orbit. The equation
\prettyref{eq:injectivity failure} in particular implies $\Phi_{p_{j}}\left(x_{p_{j}}^{1}\right)=\Phi_{p_{j}}\left(x_{p_{j}}^{2}\right)$
by definition \prettyref{eq:augmented Kodaira map}. Thus \prettyref{eq:peak section in terms of Bergman kernel}
implies 
\[
\Pi_{b,p_{j}.\left(s!\right)}\left(x_{p_{j}}^{1},x_{p_{j}}^{1}\right)\Pi_{b,p_{j}.\left(s!\right)}\left(x_{p_{j}}^{2},x_{p_{j}}^{2}\right)=\left|\Pi_{b,p_{j}.\left(s!\right)}\left(x_{p_{j}}^{1},x_{p_{j}}^{2}\right)\right|^{2}.
\]
The left hand side above is uniformly bounded below by $cp_{j}^{4/r}$
on account of \prettyref{eq:localized Szego kernel prelim}, \prettyref{eq:localization Szego kernel},
\prettyref{eq:uniform Bergman estimate}. While the right hand side
can be seen to be $O\left(p_{j}^{-\infty}\right)$ on choosing the
$S^{1}$ orbits of the BRT charts containing $x^{1}$, $x^{2}$ in
defining \prettyref{eq:localized Szego kernel prelim} to be disjoint.

Case ii: Suppose $e^{i\theta}x^{1}=x^{2},$ for some $e^{i\theta}\in S^{1}$.
We now again consider a BRT chart \prettyref{eq: BRT condition} of
the form $U=\left(-\frac{\pi}{s_{x^{1}}},\frac{\pi}{s_{x^{1}}}\right)\times B_{2\varrho}\left(x^{1}\right)$
containing the point $x^{1}$. As before this is obtained as the unit
circle bundle $S^{1}L\rightarrow Y$ over a hypersurface $Y\subset X$
containing the point $x^{1}$. For each $j$ we denote by $\left[x_{p_{j}}^{1}\right]$,
$\left[x_{p_{j}}^{2}\right]\in Y$ the unique points satisfying $x_{p_{j}}^{1}\in S^{1}.\left[x_{p_{j}}^{1}\right],\,x_{p_{j}}^{2}\in S^{1}\left[x_{p_{j}}^{2}\right].$
The inclusion further defines a local holomorphic map $\Phi_{p}:Y\rightarrow\mathbb{P}\left[H_{b,p.\left(s!\right)}^{0}\left(X\right)^{*}\right]$
satisfying $\Phi_{p_{j}}\left(\left[x_{p_{j}}^{1}\right]\right)=\Phi_{p_{j}}\left(\left[x_{p_{j}}^{2}\right]\right)$,
$j=1,2,\ldots$. Via the Noetherian property for analytic sets as
in \cite[Sec. 5.1]{Ma-Marinescu} this gives $\left[x_{p_{j}}^{1}\right]=\left[x_{p_{j}}^{2}\right]$
or $x_{p_{j}}^{2}=e^{i\theta}x_{p_{j}}^{1}\in S^{1}.x_{p_{j}}^{1}$
for $j\gg0$. Thus $x_{p_{j}}^{1},\,x_{p_{j}}^{2}$ lie on the same
orbit and with $k=s_{x_{p_{j}}^{1}}=s_{x_{p_{j}}^{2}}$ we have 
\begin{align*}
\Psi_{p}^{k}\left(x_{p_{j}}^{1}\right) & =\Psi_{p}^{k}\left(x_{p_{j}}^{2}\right)\\
\parallel\quad & \quad\parallel\\
\left(\Psi_{p}^{k,0}\left(x_{p_{j}}^{1}\right),\Psi_{p}^{k,1}\left(x_{p_{j}}^{1}\right)\right) & \left(e^{ip.k\theta}\Psi_{p}^{k,0}\left(x_{p_{j}}^{1}\right),e^{i\left(p+1\right).k\theta}\Psi_{p}^{k,1}\left(x_{p_{j}}^{1}\right)\right).
\end{align*}
It now follows that $\theta\in\frac{2\pi}{k}\mathbb{Z}$ implying
$x_{p_{j}}^{1}=x_{p_{j}}^{2}$ and contradicting \prettyref{eq:injectivity failure}.
\end{proof}
We note that following the closed range property \prettyref{eq: closed range property}
for $\bar{\partial}_{b}$ of \prettyref{subsec:Spectral-gap-and}
our embedding theorem \prettyref{thm: main embedding thm}, aside
from the equivariance, can be obtained from the main theorem of \cite{Christ89-embedding}.

\textbf{Acknowledgments.} The authors would like to thank J. Sjöstrand
and G. Marinescu for their guidance and several discussions regarding
Boutet de Monvel-Sjöstrand theory. The authors also thank professors
L. Lempert, D. Phong and T. Ohsawa for their insightful comments regarding
this work.

\bibliographystyle{siam}
\bibliography{biblio}
 
\end{document}